\def\input@path{{figures/}}
\newtheorem{theorem}{Theorem}
\newtheorem{corollary}[theorem]{Corollary}
\newtheorem{proposition}[theorem]{Proposition}
\newtheorem{lemma}[theorem]{Lemma}
\newtheorem{definition}[theorem]{Definition}
\newtheorem{conjecture}[theorem]{Conjecture}
\theoremstyle{definition}
\newtheorem{example}[theorem]{Example}
\newtheorem{remark}[theorem]{Remark}
\newcommand{\R}{\mathbb{R}} 
\newcommand{\N}{\mathbb{N}} 
\newcommand{\Z}{\mathbb{Z}} 
\renewcommand{\b}[1]{\mathbf{#1}} 
\newcommand{\fS}{\mathfrak{S}} 
\newcommand{\set}[2]{\left\{ #1 \;\middle|\; #2 \right\}} 
\newcommand{\bigset}[2]{\big\{ #1 \;\big|\; #2 \big\}} 
\newcommand{\ssm}{\smallsetminus} 
\newcommand{\dotprod}[2]{\langle #1 \, | \, #2 \rangle} 
\newcommand{\eqdef}{\mbox{\,\raisebox{0.2ex}{\scriptsize\ensuremath{\mathrm:}}\ensuremath{=}\,}} 
\renewcommand{\implies}{\;\Longrightarrow\;} 
\newcommandx{\rel}[1][1=R]{\mathbin{\mathrm{#1}}} 
\newcommandx{\notrel}[1][1=R]{\mathbin{\!\raisebox{.02cm}{$\not$}\hspace{.02cm}\mathrm{#1}\hspace*{.01cm}}} 
\newcommand{\IRel}{\mathcal{R}} 
\newcommand{\ISym}{\mathcal{S}} 
\newcommand{\IAntisym}{\mathcal{A}} 
\newcommand{\IClo}{\mathcal{C}} 
\newcommand{\ISemiClo}{\mathcal{SC}} 
\newcommand{\IPos}{\mathcal{P}} 
\newcommand{\WOEP}{\mathsf{WOEP}} 
\newcommand{\WOFP}{\mathsf{WOFP}} 
\newcommand{\WOIP}{\mathsf{WOIP}} 
\newcommandx{\CO}{\mathsf{CO}} 
\newcommandx{\COEP}{\mathsf{COEP}} 
\newcommandx{\COFP}{\mathsf{COFP}} 
\newcommandx{\COIP}{\mathsf{COIP}} 
\newcommand{\BOEP}{\mathsf{BOEP}} 
\newcommand{\BOFP}{\mathsf{BOFP}} 
\newcommand{\BOIP}{\mathsf{BOIP}} 
\newcommandx{\Inc}[1]{#1^{\mathsf{Inc}}} 
\newcommandx{\Dec}[1]{#1^{\mathsf{Dec}}}
\newcommand{\cl}[1]{#1^{\mathsf{cl}}} 
\newcommand{\pcd}[1]{#1^{\mathsf{pcd}}} 
\newcommand{\ncd}[1]{#1^{\mathsf{ncd}}} 
\newcommand{\extensions}{\mathcal{E}} 
\newcommand{\linearExtensions}{\mathcal{L}} 
\newcommand{\meet}{\wedge} 
\newcommand{\join}{\vee} 
\newcommand{\meetC}{\wedge_\IClo} 
\newcommand{\joinC}{\vee_\IClo} 
\newcommand{\meetSC}{\wedge_\ISemiClo} 
\newcommand{\joinSC}{\vee_\ISemiClo} 
\newcommand{\meetR}{\wedge_\IRel} 
\newcommand{\joinR}{\vee_\IRel} 
\newcommand{\joinWO}{\vee_W} 
\newcommand{\meetWO}{\wedge_W} 
\newcommand{\joinFWO}{\vee_{FW}} 
\newcommand{\meetFWO}{\wedge_{FW}} 
\newcommand{\joinWOEP}{\vee_\WOEP} 
\newcommand{\meetWOEP}{\wedge_\WOEP} 
\newcommand{\joinWOIP}{\vee_\WOIP} 
\newcommand{\meetWOIP}{\wedge_\WOIP} 
\newcommand{\joinWOFP}{\vee_\WOFP} 
\newcommand{\meetWOFP}{\wedge_\WOFP} 
\newcommand{\joinCOEP}{\vee_{\COEP(c)}} 
\newcommand{\meetCOEP}{\wedge_{\COEP(c)}} 
\newcommand{\joinCOIP}{\vee_{\COIP(c)}} 
\newcommand{\meetCOIP}{\wedge_{\COIP(c)}} 
\newcommand{\joinBOEP}{\vee_\BOEP} 
\newcommand{\meetBOEP}{\wedge_\BOEP} 
\newcommand{\joinBOIP}{\vee_\BOIP} 
\newcommand{\meetBOIP}{\wedge_\BOIP} 
\DeclareMathOperator{\inv}{inv} 
\DeclareMathOperator{\des}{des} 
\newcommand{\wole}{\preccurlyeq} 
\newcommand{\projDown}{\pi_\downarrow} 
\newcommand{\projUp}{\pi^\uparrow} 
\newcommand{\ProjDown}{\Pi_\downarrow} 
\newcommand{\ProjUp}{\Pi^\uparrow} 
\DeclareMathOperator{\cone}{cone} 
\newcommandx{\Asso}{\mathsf{Asso}} 
\newcommandx{\Perm}{\mathsf{Perm}} 
\newcommandx{\Para}[1][1=n]{\mathsf{Para}(#1)} 
\newcommand{\face}{\mathrm{F}} 
\newcommand{\tinymath}[1]{\scalebox{.5}{$#1$}} 
\newcommand{\tinymeet}{{\tinymath{\meet}}} 
\newcommand{\tinyjoin}{{\tinymath{\join}}} 
\newcommand{\zm}{z_\tinymeet} 
\newcommand{\Km}{{K_\tinymeet}} 
\newcommand{\zj}{z_\tinyjoin} 
\newcommand{\Kj}{{K_\tinyjoin}} 
\newcommand{\fref}[1]{Figure~\ref{#1}} 
\newcommand{\ie}{\textit{i.e.}~} 
\definecolor{darkblue}{rgb}{0,0,0.7} 
\definecolor{green}{RGB}{57,181,74} 
\newcommand{\darkblue}{\color{darkblue}} 
\newcommand{\defn}[1]{\emph{\darkblue #1}} 
\title{The weak order on Weyl posets}
\thanks{VP~was partially supported by the French ANR grant SC3A~(15\,CE40\,0004\,01).}
\author{Jo\"el Gay}
\author{Vincent Pilaud}
\address[JG]{LRI, Univ.\,Paris-Sud \& LIX, \'Ecole Polytechnique, Palaiseau}
\email{joel.gay@lri.fr}
\address[VP]{CNRS \& LIX, \'Ecole Polytechnique, Palaiseau}
\email{vincent.pilaud@lix.polytechnique.fr}
\urladdr{\url{http://www.lix.polytechnique.fr/~pilaud/}}
\begin{document}

\begin{abstract}
We define a natural lattice structure on all subsets of a finite root system that extends the weak order on the elements of the corresponding Coxeter group.
For crystallographic root systems, we show that the subposet of this lattice induced by antisymmetric closed subsets of roots is again a lattice.
We then study further subposets of this lattice which naturally correspond to the elements, the intervals and the faces of the permutahedron and the generalized associahedra of the corresponding Weyl group.
These results extend to arbitrary finite crystallographic root systems the recent results of G.~Chatel, V.~Pilaud and V.~Pons on the weak order on posets and its induced subposets.
\end{abstract}

\vspace*{-.9cm}

\maketitle

\vspace{-.4cm}

\section{Introduction}

The weak order is a fundamental ordering of the elements of a Coxeter group.
It can be defined as the prefix order in reduced expressions of the elements of the group, or more geometrically as the inclusion poset of the inversion sets of the elements of the group.
For finite Coxeter groups, the weak order is known to be a lattice~\cite{Bjorner} and its Hasse diagram is the graph of the permutahedron of the group oriented in a linear direction.
The rich theory of congruences of the weak order~\cite{Reading-latticeCongruences} yield to the construction of Cambrian lattices~\cite{Reading-cambrianLattices} with its  connection to Coxeter Catalan combinatorics and finite type cluster algebras~\cite{FominZelevinsky-ClusterAlgebrasI, FominZelevinsky-ClusterAlgebrasII}.
This point of view was fundamental for the construction of generalized associahedra~\cite{HohlwegLangeThomas}.
We refer to the survey papers~\cite{Reading-survey, Reading-FiniteCoxeterGroupsChapter, Hohlweg} for details on these subjects.

More recently, some efforts were devoted to develop certain extensions of the weak order beyond the elements of the group.
This led in particular to the notion of facial weak order of a finite Coxeter group, pioneered in type~$A$ in~\cite{KrobLatapyNovelliPhanSchwer}, defined for arbitrary finite Coxeter groups in~\cite{PalaciosRonco}, and proved to be a lattice in~\cite{DermenjianHohlwegPilaud}.
This order is a lattice on the faces of the permutahedron that extends the weak order on the vertices.

In type~$A$, an even more general notion of weak order on integer binary relations was recently introduced in~\cite{ChatelPilaudPons}.
This order is defined by~\mbox{$\rel[R] \wole \rel[S] \iff \Inc{\rel[R]} \supseteq \Inc{\rel[S]}$ and $\Dec{\rel[R]} \subseteq \Dec{\rel[S]}$} for any two binary relations~$\rel[R], \rel[S]$ on~$[n]$, where~$\Inc{\rel} \eqdef \set{(a,b) \in \rel}{a < b}$ and~$\Dec{\rel} \eqdef \set{(b,a) \in \rel}{a < b}$ respectively denote the increasing and decreasing subrelations of~$\rel[R]$.
It turns out that the subposet of this weak order induced by posets on~$[n]$ is a lattice.
In fact, many relevant lattices can be recovered as subposets of the weak order on posets induced by certain families of posets.
Such families include the vertices, the intervals and the faces of the permutahedron, associahedra~\cite{Loday, HohlwegLange}, permutreehedra~\cite{PilaudPons-permutrees}, cube, etc.
For the vertices, the corresponding lattices are the weak order on permutations, the Tamari lattice on binary trees, the type~$A$ Cambrian lattices, the permutree lattices~\cite{PilaudPons-permutrees}, the boolean lattice on binary sequences,~etc.

The goal of this paper is to extend these results beyond type~$A$.
We define the \defn{weak order} on subsets~$\rel[R], \rel[S]$ of a finite root system~$\Phi$ by \mbox{$\rel[R] \wole \rel[S] \iff \rel[R]^+ \supseteq \rel[S]^+$ and~$\rel[R]^- \subseteq \rel[S]^-$}, where~$\rel^+ \eqdef {\rel} \cap \Phi^+$ and~$\rel^- \eqdef {\rel} \cap \Phi^-$.
This order is a lattice on all subsets of~$\Phi$, which are the analogues of type~$A$ integer binary relations.
In turn, the analogues of type~$A$ integer posets are \defn{$\Phi$-posets}, \ie subsets~$\rel$ of~$\Phi$ that are both antisymmetric ($\alpha \in \rel$ implies~$-\alpha \notin \rel$) and closed (in the sense of~\cite{Bourbaki}, $\alpha, \beta \in \rel$ and~$\alpha + \beta \in \Phi$ implies $\alpha + \beta \in \rel$).
Our central result is that the subposet of the weak order induced by $\Phi$-posets is also a lattice when the root system~$\Phi$ is crystallographic.
For example, the weak orders on $A_2$-, $B_2$- and $G_2$-posets are represented in Figures~\ref{fig:posetsAB} and~\ref{fig:posetsG}.
Surprisingly, this property fails for non-crystallographic root systems, and the proof actually requires to develop delicate properties on subsums of roots in crystallographic~root~systems.

We then switch to our motivation to study the weak order on $\Phi$-posets.
We consider $\Phi$-posets corresponding to the vertices, the intervals and the faces of the permutahedron, the associahedra, and the cube of type~$\Phi$.
Considering the subposets of the weak order induced by these specific families of $\Phi$-posets allows us to recover the classical weak order and the Cambrian lattices, their interval lattices, and their facial lattices.

\section{Root systems}
\label{sec:rootSystems}

This section gathers some notions and properties on finite crystallographic root systems and Weyl groups.
We refer to the textbooks by J. Humphreys~\cite{Humphreys}, N.~Bourbaki~\cite{Bourbaki}, and A. Bj\"orner and F. Brenti~\cite{BjornerBrenti} for further details on basic definitions and classical properties.

\subsection{Root systems}

Let~$V$ be a real Euclidean space with scalar product~$\dotprod{\cdot}{\cdot}$.
For~$\alpha \in V \ssm \{0\}$, we define~$\alpha^\vee \eqdef 2 \alpha / \dotprod{\alpha}{\alpha}$.
We denote by~$s_\alpha$ the reflection orthogonal to a non-zero vector~$\alpha \in V$, defined by~$s_\alpha(v) = v - \dotprod{\alpha^\vee}{v} \, \alpha$.
A \defn{finite root system}~$\Phi$ is a finite set of non-zero vectors in~$V$ such that $\Phi \cap \R\alpha = \{\alpha, -\alpha\}$ and~$s_\alpha \Phi = \Phi$ for all~$\alpha \in \Phi$.
We denote by~$W$ the \defn{Coxeter group} generated by the reflections~$s_\alpha$ for~$\alpha \in \Phi$.
Throughout this paper, we will denote by~$\IRel(\Phi)$ the collection of all subsets of~$\Phi$.

We choose a generic linear functional~$f$ and denote by~$\Phi^+ \eqdef \set{\alpha \in \Phi}{f(\alpha) > 0}$ the set of \defn{positive roots} and by~$\Phi^- \eqdef \set{\alpha \in \Phi}{f(\alpha) < 0}$ the set of \defn{negative roots}.
We denote by~$\Delta$ the \defn{simple roots}.
They are the roots of the rays of the cone~$\R_{\ge 0} \Phi^+$ and form a linear basis, so that any positive root is a positive linear combination of simple roots.
The \defn{height} of a root~$\alpha = \sum_{\delta \in \Delta} \alpha_\delta \delta$ is~$h(\alpha) = \sum_{\delta \in \Delta} \alpha_\delta$.
The \defn{absolute height} of~$\alpha$ is~$|h|(\alpha) = |h(\alpha)|$.

The root system~$\Phi$ is \defn{crystallographic} if~$\dotprod{\alpha^\vee}{\beta} \in \Z$ for any~$\alpha, \beta \in \Phi$.
Equivalently, the Coxeter group~$W$ stabilizes the lattice~$\Z\Phi$, and is called a \defn{Weyl group}.
In most of the paper, we restrict our attention to crystallographic root systems.
Remarks~\ref{rem:failSums}, \ref{rem:failFlag}, \ref{rem:failCaracterizationClosed}, \ref{rem:failLatticeClosed} and~\ref{rem:failLatticePosets} justify this restriction.

\begin{example}[Type~$A$]
Let~$(\b{e}_i)_{i \in [n+1]}$ be the standard basis of~$\R^{n+1}$.
The symmetric group~$\fS_{n+1}$ acts on~$\R^{n+1}$ by permutation of coordinates.
It is the Weyl group of type~$A_n$.
The roots are $\Phi_{A_n} = \set{\b{e}_i - \b{e}_j}{1 \le i \ne j \le n+1}$, the positive roots are~$\Phi_{A_n}^+ = \set{\b{e}_i - \b{e}_j}{1 \le i < j \le n+1}$ and the simple roots are~$\Delta_{A_n} = \set{\b{e}_i - \b{e}_{i+1}}{i \in [n]}$.
A subset of~$\Phi_{A_n}$ can thus be identified with a binary relation on~$[n]$ via the bijection~$(i,j) \in [n]^2 \; \longleftrightarrow \; \b{e}_i-\b{e}_j \in \Phi_A$.
Note that the height of~$\b{e}_i - \b{e}_j$ is~$j-i$.
\end{example}

\subsection{Sums of roots in crystallographic root systems}
\label{subsec:sums}

We now gather statements on sums of roots in crystallographic root systems that are needed throughout the paper and that we consider interesting for their own sake.
We start by a statement from~\cite{Bourbaki} providing sufficient conditions for the sum or difference of two roots to be again a root in a crystallographic root system~$\Phi$.

\begin{theorem}[{\cite[Chap.~6, 1.3, Thm.~1]{Bourbaki}}]
\label{thm:dpsum}
For any~$\alpha, \beta$ in a crystallographic root system~$\Phi$,
\begin{enumerate}[(i)]
\item if~$\dotprod{\alpha}{\beta} > 0$ then~$\alpha - \beta \in \Phi$ or~$\alpha = \beta$,
\item if~$\dotprod{\alpha}{\beta} < 0$ then~$\alpha + \beta \in \Phi$ or~$\alpha = -\beta$.
\end{enumerate}
\end{theorem}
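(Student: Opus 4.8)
The plan is to first reduce statement~(ii) to statement~(i), and then prove~(i) from the integrality of the Cartan integers combined with the Cauchy--Schwarz inequality; this is the classical Bourbaki argument.

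I would begin by recording that $\Phi = -\Phi$, which is immediate since $s_\gamma(\gamma) = -\gamma \in \Phi$ for every~$\gamma \in \Phi$. Granting this, statement~(ii) is precisely statement~(i) applied to the pair~$(\alpha, -\beta)$: we have $\dotprod{\alpha}{-\beta} = -\dotprod{\alpha}{\beta}$, so the hypothesis $\dotprod{\alpha}{\beta} < 0$ becomes a positive scalar product; the conclusion $\alpha - (-\beta) = \alpha + \beta \in \Phi$ is the desired one; and the degenerate cases $\alpha = -\beta$ match up. Hence it suffices to establish~(i).

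For~(i), I would assume $\dotprod{\alpha}{\beta} > 0$ and $\alpha \ne \beta$, and set $m \eqdef \dotprod{\alpha^\vee}{\beta}$ and $n \eqdef \dotprod{\beta^\vee}{\alpha}$. Both are integers by the crystallographic hypothesis, and both are strictly positive because they carry the sign of~$\dotprod{\alpha}{\beta}$. The key inequality is
\[ mn \;=\; \frac{4\,\dotprod{\alpha}{\beta}^2}{\dotprod{\alpha}{\alpha}\,\dotprod{\beta}{\beta}} \;\le\; 4, \]
with equality only when $\alpha$ and~$\beta$ are collinear; but collinearity would put $\beta \in \Phi \cap \R\alpha = \{\alpha, -\alpha\}$, and since $\dotprod{\alpha}{\beta} > 0$ this forces $\beta = \alpha$, which is excluded. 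Therefore $mn \in \{1,2,3\}$, so at least one of $m$, $n$ equals~$1$. If $n = 1$, then $s_\beta(\alpha) = \alpha - \dotprod{\beta^\vee}{\alpha}\,\beta = \alpha - \beta$ lies in~$\Phi$ because $s_\beta\Phi = \Phi$; if $m = 1$, then $s_\alpha(\beta) = \beta - \alpha \in \Phi$ and we conclude $\alpha - \beta \in \Phi$ using $\Phi = -\Phi$ once more. This completes~(i), hence the theorem.

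The argument is short and the only point requiring a little care is the collinearity dichotomy: ruling out the case $\alpha$ parallel to~$\beta$ cleanly by invoking the root system axiom $\Phi \cap \R\alpha = \{\alpha,-\alpha\}$. I would regard that (minor) step as the main obstacle; everything else is a direct computation with Cartan integers and reflections.
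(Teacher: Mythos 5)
Your proof is correct: the paper states this result without proof, citing Bourbaki directly, and your argument (reduce (ii) to (i) via $\Phi = -\Phi$, then use integrality of the Cartan numbers $m = \dotprod{\alpha^\vee}{\beta}$, $n = \dotprod{\beta^\vee}{\alpha}$ together with Cauchy--Schwarz to force $\min(m,n)=1$ and apply the appropriate reflection) is exactly the classical argument from the cited source. No gaps; the collinearity dichotomy is handled correctly via the axiom $\Phi \cap \R\alpha = \{\alpha,-\alpha\}$.
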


We say that a (multi)set~$\rel[X] \subseteq \Phi$ 
\begin{itemize}
\item is \defn{summable} if its sum~$\Sigma{\rel[X]}$ is again a root of~$\Phi$,
\item has \defn{no vanishing subsum} if~$\Sigma{\rel[Y]} \ne 0$ for any~$\varnothing \ne \rel[Y] \subseteq \rel[X]$.
\end{itemize}
Proposition~\ref{prop:twoSubsumsRoots} and Theorems~\ref{thm:filtrationSummableSubsets} and~\ref{thm:manySummableSubsets} ensure that a summable set of roots with no vanishing subsum has many summable subsets.
We start with sums of three roots.

\begin{proposition}
\label{prop:twoSubsumsRoots}
Let~$\Phi$ be a crystallographic root system.
If~$\alpha, \beta, \gamma \in \Phi$ are such that~$\alpha + \beta + \gamma \in \Phi$ has no vanishing subsum, then at least two of the three subsums~$\alpha + \beta$, $\alpha + \gamma$ and~$\beta + \gamma$~are~in~$\Phi$.
\end{proposition}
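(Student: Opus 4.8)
The plan is to use Theorem~\ref{thm:dpsum} together with a careful sign analysis of the pairwise scalar products $\dotprod{\alpha}{\beta}$, $\dotprod{\alpha}{\gamma}$, $\dotprod{\beta}{\gamma}$. Write $\delta \eqdef \alpha + \beta + \gamma \in \Phi$. First I would observe that since $\delta$ is a root, $\dotprod{\delta}{\delta} > 0$, and expanding gives
\[
\dotprod{\alpha}{\alpha} + \dotprod{\beta}{\beta} + \dotprod{\gamma}{\gamma} + 2\dotprod{\alpha}{\beta} + 2\dotprod{\alpha}{\gamma} + 2\dotprod{\beta}{\gamma} > 0,
\]
so the three pairwise products cannot all be very negative; in particular they cannot all be $\le 0$ with two of them strictly negative unless... — rather than push this crude inequality, the cleaner route is: \emph{at least one} of the three pairwise scalar products is $\ge 0$, say $\dotprod{\alpha}{\beta} \ge 0$ (this already follows from the displayed inequality, since if all three were negative the sum would have to be examined, but in fact one gets it directly by noting $\dotprod{\delta}{\alpha} + \dotprod{\delta}{\beta} + \dotprod{\delta}{\gamma} = \dotprod{\delta}{\delta} > 0$ forces some $\dotprod{\delta}{\cdot} > 0$; combining several such relations pins down the signs).

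The key step is the following dichotomy, applied repeatedly. For the pair $\alpha,\beta$: if $\dotprod{\alpha}{\beta} < 0$ then by Theorem~\ref{thm:dpsum}(ii) either $\alpha+\beta \in \Phi$ — which is one of the subsums we want — or $\alpha = -\beta$, which is excluded since $\alpha + \beta = 0$ would be a vanishing subsum. So \emph{whenever a pairwise scalar product is negative, the corresponding subsum is a root.} Dually, if $\dotprod{\alpha}{\beta} > 0$, then $\dotprod{\delta}{\gamma} = \dotprod{\alpha}{\gamma} + \dotprod{\beta}{\gamma} + \dotprod{\gamma}{\gamma}$, and I would instead use $\dotprod{\delta - \gamma}{\gamma} = \dotprod{\alpha + \beta}{\gamma}$ together with Theorem~\ref{thm:dpsum}(i) applied to the pair $\delta, \gamma$: since $\delta \ne \gamma$ (as $\alpha+\beta \ne 0$, again no vanishing subsum), if $\dotprod{\delta}{\gamma} > 0$ then $\delta - \gamma = \alpha + \beta \in \Phi$. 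So it suffices to control the signs of $\dotprod{\delta}{\alpha}$, $\dotprod{\delta}{\beta}$, $\dotprod{\delta}{\gamma}$ and of $\dotprod{\alpha}{\beta}$, $\dotprod{\alpha}{\gamma}$, $\dotprod{\beta}{\gamma}$.

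Carrying this out: from $\dotprod{\delta}{\alpha} + \dotprod{\delta}{\beta} + \dotprod{\delta}{\gamma} = \dotprod{\delta}{\delta} > 0$, at least one, say $\dotprod{\delta}{\gamma} > 0$, giving $\alpha + \beta \in \Phi$ by the argument above. Now I need a \emph{second} subsum. Consider $\dotprod{\delta}{\alpha} = \dotprod{\alpha}{\alpha} + \dotprod{\alpha}{\beta} + \dotprod{\alpha}{\gamma}$ and $\dotprod{\delta}{\beta} = \dotprod{\alpha}{\beta} + \dotprod{\beta}{\beta} + \dotprod{\beta}{\gamma}$. If $\dotprod{\alpha}{\gamma} < 0$ then $\alpha + \gamma \in \Phi$ and we are done; if $\dotprod{\beta}{\gamma} < 0$ then $\beta + \gamma \in \Phi$ and we are done. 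So assume $\dotprod{\alpha}{\gamma} \ge 0$ and $\dotprod{\beta}{\gamma} \ge 0$. Then I would look at $\dotprod{\alpha + \beta}{\gamma} = \dotprod{\alpha}{\gamma} + \dotprod{\beta}{\gamma} \ge 0$; since $\alpha+\beta \in \Phi$, $\gamma \in \Phi$, and $\alpha+\beta \ne \gamma$ (else $\alpha + \beta - \gamma = 0$, not a subsum but... actually one must check: a vanishing \emph{subsum} refers to $\rel[Y] \subseteq \rel[X] = \{\alpha,\beta,\gamma\}$; $\alpha + \beta - \gamma = 0$ is not of that form, so this needs a separate small argument), Theorem~\ref{thm:dpsum}(i) applied to $\alpha+\beta$ and $\gamma$ gives: either $\dotprod{\alpha+\beta}{\gamma} = 0$, or $(\alpha + \beta) - \gamma \in \Phi$. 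In the latter case $\alpha + \beta - \gamma \in \Phi$, and combined with $\alpha + \beta + \gamma \in \Phi$ and $2\gamma \notin \Phi$ one derives information — this is where I expect to need a short extra lemma-style argument. The case $\dotprod{\alpha+\beta}{\gamma} = 0$ forces $\dotprod{\alpha}{\gamma} = \dotprod{\beta}{\gamma} = 0$ (both are $\ge 0$ summing to $0$), whence $\dotprod{\delta}{\alpha} = \dotprod{\alpha}{\alpha} + \dotprod{\alpha}{\beta} > 0$ (as $\dotprod{\alpha}{\beta} \ge 0$ in the remaining case), so $\delta - \alpha = \beta + \gamma \in \Phi$, giving the second subsum.

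The main obstacle is the bookkeeping in the residual case where $\dotprod{\alpha}{\gamma} = \dotprod{\beta}{\gamma} = 0$ but we still need a subsum other than $\alpha+\beta$, and more generally making sure every branch of the sign analysis terminates with two subsums being roots rather than looping. I expect the argument is symmetric enough that once one fixes which $\dotprod{\delta}{\cdot}$ is positive and which pairwise products are $\ge 0$, there are only a handful of cases, each resolved by one application of Theorem~\ref{thm:dpsum}; the delicate point is the excluded-equality clauses ($\alpha = \pm\beta$, $\delta = \gamma$, $\alpha + \beta = \gamma$), which must each be ruled out using the no-vanishing-subsum hypothesis — and the last of these, $\alpha+\beta = \gamma$, is not literally a vanishing subsum and will require observing that then $\delta = 2\gamma \notin \Phi$, a contradiction.
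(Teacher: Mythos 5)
Your first step is fine: some $\dotprod{\alpha+\beta+\gamma}{\cdot}$ is positive, so Theorem~\ref{thm:dpsum}\,(i) together with the no-vanishing-subsum hypothesis yields one subsum, say $\alpha+\beta\in\Phi$; and if $\dotprod{\alpha}{\gamma}<0$ or $\dotprod{\beta}{\gamma}<0$ you immediately get a second one. The gap is in the remaining case $\dotprod{\alpha}{\gamma}\ge 0$ and $\dotprod{\beta}{\gamma}\ge 0$: there you apply Theorem~\ref{thm:dpsum}\,(i) to the pair $(\alpha+\beta,\gamma)$, which can only produce $\alpha+\beta-\gamma\in\Phi$ --- not one of the three target subsums --- and you explicitly leave that branch unresolved (``a short extra lemma-style argument''). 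In addition, in the sub-case $\dotprod{\alpha+\beta}{\gamma}=0$ you invoke $\dotprod{\alpha}{\beta}\ge 0$ ``in the remaining case'', but nothing in your case split guarantees this: a negative $\dotprod{\alpha}{\beta}$ merely re-proves $\alpha+\beta\in\Phi$ and does not place you in a different branch, so $\dotprod{\delta}{\alpha}=\dotprod{\alpha}{\alpha}+\dotprod{\alpha}{\beta}>0$ is unjustified as written.

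Both defects disappear if you difference $\delta\eqdef\alpha+\beta+\gamma$ against $\alpha$ or $\beta$ rather than against $\gamma$. In the case $\dotprod{\alpha}{\gamma}\ge 0$, $\dotprod{\beta}{\gamma}\ge 0$ you have $\dotprod{\delta}{\alpha}+\dotprod{\delta}{\beta}=\dotprod{\delta}{\alpha+\beta}=\dotprod{\gamma}{\alpha}+\dotprod{\gamma}{\beta}+\dotprod{\alpha+\beta}{\alpha+\beta}>0$ since $\alpha+\beta\ne 0$, so $\dotprod{\delta}{\alpha}>0$ or $\dotprod{\delta}{\beta}>0$; Theorem~\ref{thm:dpsum}\,(i) then gives $\delta-\alpha=\beta+\gamma\in\Phi$ or $\delta-\beta=\alpha+\gamma\in\Phi$, the excluded equalities $\delta=\alpha$ and $\delta=\beta$ being exactly vanishing subsums. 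This is the computation in the paper's proof, which runs the same sign analysis contrapositively: assuming $\alpha+\beta\notin\Phi$ and $\alpha+\gamma\notin\Phi$ forces $\dotprod{\alpha}{\beta}\ge0$ and $\dotprod{\alpha}{\gamma}\ge0$, hence $\dotprod{\delta}{\beta+\gamma}>0$, and differencing $\delta$ against $\beta$ or $\gamma$ yields the contradiction in three lines.
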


\begin{proof}
Assume by means of contradiction that~$\alpha + \beta \notin \Phi$ and~$\alpha + \gamma \notin \Phi$.
Since~$\alpha + \beta + \gamma$ has no vanishing subsum, $\alpha \ne -\beta$ and $\alpha \ne -\gamma$.
By contraposition of Theorem~\ref{thm:dpsum}\,(ii), we obtain that~$\dotprod{\alpha}{\beta} \ge 0$ and~$\dotprod{\alpha}{\gamma} \ge 0$.
Therefore, $\dotprod{\alpha + \beta + \gamma}{\beta + \gamma} = \dotprod{\alpha}{\beta} + \dotprod{\alpha}{\gamma} + \dotprod{\beta + \gamma}{\beta + \gamma} > 0$,
since~$\beta + \gamma \ne 0$.
It follows that either~$\dotprod{\alpha + \beta + \gamma}{\beta} > 0$ or $\dotprod{\alpha + \beta + \gamma}{\gamma} > 0$.
Assume for instance~$\dotprod{\alpha + \beta + \gamma}{\beta} > 0$.
Theorem~\ref{thm:dpsum}\,(i) thus implies that either~$\alpha + \gamma \in \Phi$ or~$\alpha + \gamma = 0$ contradicting either of our assumptions on $\alpha + \gamma$.
\end{proof}

It is proved in~\cite[Chap.~6, 1.6, Prop.~19]{Bourbaki} that any summable subset~$\rel[X]$ of positive roots admits a filtration of summable subsets~$\rel[X]_1 \subsetneq \rel[X]_2 \subsetneq \dots \subsetneq \rel[X]_{|{\rel[X]}|-1} \subsetneq \rel[X]_{|{\rel[X]}|} = \rel[X]$.
We now use Proposition~\ref{prop:twoSubsumsRoots} to extend this property in two directions: first we consider subsets of all roots (positive and negative), second we show that we can additionally prescribe the initial set~$\rel[X]_1$ to be a chosen root of~$\Phi$.
This latter improvement will be crucial all throughout the paper.

\begin{theorem}
\label{thm:filtrationSummableSubsets}
Let~$\Phi$ be a crystallographic root system.
Any summable set~$\rel[X] \subseteq \Phi$ with no vanishing subsum admits a filtration of summable subsets $\{\alpha\} = \rel[X]_1 \subsetneq \rel[X]_2 \subsetneq \dots \subsetneq \rel[X]_{|{\rel[X]}|-1} \subsetneq \rel[X]_{|{\rel[X]}|} = \rel[X]$ for any~$\alpha \in \rel[X]$.
\end{theorem}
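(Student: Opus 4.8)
The plan is to argue by induction on $|{\rel[X]}|$, building the filtration from the top down: I will show that if $\rel[X]$ is summable with no vanishing subsum and $|{\rel[X]}| \ge 2$, then for any prescribed $\alpha \in \rel[X]$ there is a summable subset $\rel[X]' \subsetneq \rel[X]$ with $|\rel[X]'| = |{\rel[X]}|-1$ and $\alpha \in \rel[X]'$. Since every subset of a set with no vanishing subsum again has no vanishing subsum, the induction hypothesis applied to $\rel[X]'$ (with the same chosen root $\alpha$) then produces the tail of the filtration $\{\alpha\} = \rel[X]_1 \subsetneq \dots \subsetneq \rel[X]_{|{\rel[X]}|-1} = \rel[X]'$, and appending $\rel[X]$ finishes the job. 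The base case $|{\rel[X]}| = 1$ is trivial. So everything reduces to the following deletion step: find $\gamma \in \rel[X] \ssm \{\alpha\}$ such that $\Sigma(\rel[X] \ssm \{\gamma\}) \in \Phi$.

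To find such a $\gamma$, write $\sigma \eqdef \Sigma{\rel[X]} \in \Phi$ and consider the pairing of $\sigma$ against the elements of $\rel[X] \ssm \{\alpha\}$. Since $\dotprod{\sigma}{\sigma} > 0$ and $\sigma = \sum_{\delta \in \rel[X]} \delta$, we have $\sum_{\delta \in \rel[X] \ssm \{\alpha\}} \dotprod{\sigma}{\delta} = \dotprod{\sigma}{\sigma} - \dotprod{\sigma}{\alpha}$. If this quantity is positive, some $\gamma \in \rel[X] \ssm \{\alpha\}$ has $\dotprod{\sigma}{\gamma} > 0$, and then Theorem~\ref{thm:dpsum}\,(i) applied to $\sigma$ and $\gamma$ gives $\sigma - \gamma \in \Phi$ or $\sigma = \gamma$; the latter would force $\Sigma(\rel[X] \ssm \{\gamma\}) = 0$, a vanishing subsum (here I use $|{\rel[X]}| \ge 2$, so $\rel[X] \ssm \{\gamma\} \ne \varnothing$), contradiction. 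Hence $\sigma - \gamma = \Sigma(\rel[X] \ssm \{\gamma\}) \in \Phi$, as desired. The remaining case is $\dotprod{\sigma}{\sigma} - \dotprod{\sigma}{\alpha} \le 0$, i.e. $\dotprod{\sigma}{\alpha} \ge \dotprod{\sigma}{\sigma} > 0$; then instead apply Theorem~\ref{thm:dpsum}\,(i) to $\sigma$ and $\alpha$ to get $\sigma - \alpha \in \Phi$ (again $\sigma = \alpha$ is excluded since it gives a vanishing subsum of size $|{\rel[X]}| - 1 \ge 1$), so $\rel[X] \ssm \{\alpha\}$ is itself a summable subset of size $|{\rel[X]}| - 1$. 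In this second case I must still restore $\alpha$ to the filtration, which is why I need Proposition~\ref{prop:twoSubsumsRoots} or a small extra argument: applying the induction hypothesis to $\rel[X] \ssm \{\alpha\}$ with an arbitrary chosen element $\beta$ gives a filtration ending at $\rel[X] \ssm \{\alpha\}$, but it starts at $\{\beta\}$, not $\{\alpha\}$.

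This is where the main obstacle lies: the "prescribed initial element" requirement does not compose cleanly when the deletion step is forced to remove something other than $\alpha$. I expect to resolve it as follows. When $\dotprod{\sigma}{\alpha} > 0$, pick any $\gamma \in \rel[X] \ssm \{\alpha\}$ with $\Sigma(\rel[X]\ssm\{\gamma\}) \in \Phi$ (shown to exist above in the first case, and when the first case fails we fall into $\dotprod{\sigma}{\alpha}\ge\dotprod{\sigma}{\sigma}$ so both $\sigma-\alpha\in\Phi$ and, by redoing the pairing argument on $\rel[X]\ssm\{\alpha,\text{something}\}$, a suitable $\gamma$ can still be extracted) — the point is to always delete an element other than $\alpha$ whenever $|{\rel[X]}|\ge 2$. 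Concretely: if $|{\rel[X]}| \ge 3$ then $\rel[X]\ssm\{\alpha\}$ has at least two elements and $\Sigma(\rel[X]\ssm\{\alpha\}) \ne 0$, so running the same pairing argument with $\sigma' \eqdef \Sigma(\rel[X]\ssm\{\alpha\})$ against $\rel[X]\ssm\{\alpha\}$ finds $\gamma \in \rel[X]\ssm\{\alpha\}$ with $\sigma' - \gamma \in \Phi$; combined with $\alpha$ still lying in $\rel[X]\ssm\{\gamma\}$, this keeps $\alpha$ present. If $|{\rel[X]}| = 2$, say $\rel[X] = \{\alpha,\gamma\}$, then $\{\alpha\} \subsetneq \{\alpha,\gamma\}$ is already the required filtration. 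Thus in all cases the deletion removes an element $\ne \alpha$, the induction goes through with $\alpha$ fixed throughout, and Proposition~\ref{prop:twoSubsumsRoots} is the key tool guaranteeing that when one "obvious" subsum fails to be a root, another one works. I would double-check the edge cases $|{\rel[X]}| \in \{2,3\}$ explicitly, since that is where the pairing counts are tightest and where sign conditions in Theorem~\ref{thm:dpsum} could degenerate.
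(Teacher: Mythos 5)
Your induction scheme and your first case are sound, and they coincide with the first half of the paper's own proof: when some $\gamma \in \rel[X] \ssm \{\alpha\}$ satisfies $\dotprod{\Sigma{\rel[X]}}{\gamma} > 0$, Theorem~\ref{thm:dpsum}\,(i) indeed yields the summable subset $\rel[X] \ssm \{\gamma\}$ containing~$\alpha$. The genuine gap is in your second case, where the only root pairing positively with $\sigma = \Sigma{\rel[X]}$ may be $\alpha$ itself. There, your ``concrete'' fix runs the pairing argument for $\sigma' = \Sigma(\rel[X] \ssm \{\alpha\})$ against $\rel[X]\ssm\{\alpha\}$ and produces $\gamma$ with $\sigma' - \gamma \in \Phi$; but $\sigma' - \gamma = \Sigma(\rel[X] \ssm \{\alpha,\gamma\})$, so the set you have actually proved summable is $\rel[X]\ssm\{\alpha,\gamma\}$ --- which does \emph{not} contain $\alpha$ --- and not the set $\rel[X]\ssm\{\gamma\}$ that you need. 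The sentence ``combined with $\alpha$ still lying in $\rel[X]\ssm\{\gamma\}$, this keeps $\alpha$ present'' is a non sequitur. Even invoking Proposition~\ref{prop:twoSubsumsRoots} on the three roots $\alpha$, $\gamma$ and $\Sigma(\rel[X]\ssm\{\alpha,\gamma\})$ only tells you that at least one of $\Sigma(\rel[X]\ssm\{\gamma\})$ and $\alpha+\gamma$ is a root; if it happens to be $\alpha+\gamma$, you are stuck again, now holding a summable pair $\{\alpha,\gamma\}$ with summable complement instead of $\{\alpha\}$, and nothing in your write-up breaks this loop. Note also that the bad case is not vacuous: $\dotprod{\sigma}{\alpha} \ge \dotprod{\sigma}{\sigma}$ with $\alpha \ne \sigma$ can occur as soon as the root system has roots of two lengths.

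The missing idea --- and it is exactly how the paper closes the argument --- is an extremal choice that turns the loop above into a terminating process: take $\rel[Y]$ inclusion-\emph{maximal} with $\alpha \in \rel[Y] \subsetneq \rel[X]$ such that both $\rel[Y]$ and $\rel[X]\ssm\rel[Y]$ are summable (your second case shows such a $\rel[Y]$ exists, namely $\{\alpha\}$). If $|{\rel[X]\ssm\rel[Y]}| \ge 2$, the induction hypothesis applied inside $\rel[X]\ssm\rel[Y]$ gives a summable $\rel[Z] \subset \rel[X]\ssm\rel[Y]$ of size $|{\rel[X]\ssm\rel[Y]}|-1$, and Proposition~\ref{prop:twoSubsumsRoots} applied to the three roots $\gamma$, $\Sigma{\rel[Y]}$, $\Sigma{\rel[Z]}$, where $\{\gamma\} = (\rel[X]\ssm\rel[Y])\ssm\rel[Z]$, shows that $\rel[Y]\cup\{\gamma\}$ or $\rel[Y]\cup\rel[Z]$ is summable with summable complement, contradicting maximality. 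Hence $|{\rel[X]\ssm\rel[Y]}| = 1$ and $\rel[Y]$ is the desired subset of size $|{\rel[X]}|-1$ containing $\alpha$. Without this maximality device (or an equivalent one), your second case does not go through.
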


\begin{proof}
The proof works by induction on~$|{\rel[X]}|$.
It is clear for~$|{\rel[X]}| = 2$, so that we consider~$|{\rel[X]}| > 2$.
By induction, it suffices to find a summable subset~$\rel[X]_{|{\rel[X]}|-1}$ of size~$|{\rel[X]}|-1$ such that~$\alpha \in \rel[X]_{|{\rel[X]}|-1} \subset \rel[X]$.
Since~$\sum_{\beta \in {\rel[X]}} \dotprod{\beta}{\Sigma {\rel[X]}} = \dotprod{\Sigma {\rel[X]}}{\Sigma {\rel[X]}} > 0$, there exists~$\beta \in {\rel[X]}$ such that~$\dotprod{\beta}{\Sigma {\rel[X]}} > 0$.
Since~$\rel[X]$ has no vanishing subsum, $\beta \ne \Sigma{\rel[X]}$.
Theorem~\ref{thm:dpsum}\,(i) thus ensures that~$\rel[X] \ssm \{\beta\}$ is summable.
If~$\alpha \ne \beta$, then we set~$\rel[X]_{|{\rel[X]}|-1} \eqdef \rel[X] \ssm \{\beta\}$ and conclude by induction.
Otherwise, we proved that both~$\{\alpha\}$ and~$\rel[X] \ssm \{\alpha\}$ are summable.
Let~$\rel[Y]$ be inclusion maximal with~$\alpha \in \rel[Y] \subsetneq \rel[X]$ such that both~$\rel[Y]$ and~$\rel[X] \ssm \rel[Y]$ are summable.
Assume that~$|{\rel[X] \ssm \rel[Y]}| \ge 2$.
By induction hypothesis, there exists~$\rel[Z] \subset \rel[X] \ssm \rel[Y]$ summable with~$|{\rel[Z]}| = |{\rel[X] \ssm \rel[Y]}| - 1 \ge 1$.
Let~$\gamma$ be the root in~$(\rel[X] \ssm \rel[Y]) \ssm \rel[Z]$.
Since~$\gamma$, $\Sigma{\rel[Y]}$ and~$\Sigma{\rel[Z]}$ are roots and~$\gamma + \Sigma{\rel[Y]} + \Sigma{\rel[Z]} = \Sigma{\rel[X]} \in \Phi$, Proposition~\ref{prop:twoSubsumsRoots} affirms that either~$\{\gamma\} \cup \rel[Y]$ or~$\rel[Y] \cup \rel[Z]$ is summable, contradicting the maximality of~$\rel[Y]$.
We therefore obtained a summable subset~$\rel[Y]$ with~$\alpha \in \rel[Y] \subseteq \rel[X]$ with~$|{\rel[Y]}| = |{\rel[X]}| - 1$.
We set~$\rel[X]_{|{\rel[X]}|-1} \eqdef \rel[Y]$ and conclude by induction.
\end{proof}

Finally, we obtain the following generalization of Proposition~\ref{prop:twoSubsumsRoots}.

\begin{theorem}
\label{thm:manySummableSubsets}
Let~$\Phi$ be a crystallographic root system.
Any summable set~$\rel[X] \subseteq \Phi$ with no vanishing subsum admits at least~$p$ distinct summable subsets of size~$|{\rel[X]}|-p+1$, for any~$1 \le p \le |{\rel[X]}|$.
\end{theorem}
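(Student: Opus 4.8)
The plan is to induct on $|{\rel[X]}|$, using Theorem~\ref{thm:filtrationSummableSubsets} to get the base mechanism and Proposition~\ref{prop:twoSubsumsRoots} as the engine for producing several subsets at once. The statement for $p=1$ is trivial (take $\rel[X]$ itself) and for $p=|{\rel[X]}|$ it asserts that $\rel[X]$ has at least $|{\rel[X]}|$ summable singletons, which is clear since every root is a root. So fix $2 \le p \le |{\rel[X]}|-1$ and assume the result for all summable sets with no vanishing subsum of size $< |{\rel[X]}|$.

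First I would produce one summable subset $\rel[X]'$ of size $|{\rel[X]}|-1$; as in the proof of Theorem~\ref{thm:filtrationSummableSubsets}, there is $\beta \in \rel[X]$ with $\dotprod{\beta}{\Sigma\rel[X]} > 0$ and $\beta \ne \Sigma\rel[X]$, so $\rel[X]' \eqdef \rel[X] \ssm \{\beta\}$ is summable by Theorem~\ref{thm:dpsum}\,(i). Now $\rel[X]'$ is summable with no vanishing subsum (it inherits this from $\rel[X]$) and $|{\rel[X]'}| = |{\rel[X]}|-1$, so the induction hypothesis applied to $\rel[X]'$ with parameter $p-1$ yields at least $p-1$ distinct summable subsets of $\rel[X]'$ of size $|{\rel[X]'}| - (p-1) + 1 = |{\rel[X]}| - p + 1$. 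These are $p-1$ summable subsets of $\rel[X]$ of the desired size, all of which avoid $\beta$. To finish it suffices to exhibit one more summable subset of $\rel[X]$ of size $|{\rel[X]}|-p+1$ that contains $\beta$, since such a subset is automatically distinct from the previous $p-1$.

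To get that last subset, I would run the filtration argument of Theorem~\ref{thm:filtrationSummableSubsets} but anchored at $\alpha = \beta$: that theorem gives a chain $\{\beta\} = \rel[X]_1 \subsetneq \rel[X]_2 \subsetneq \dots \subsetneq \rel[X]_{|{\rel[X]}|} = \rel[X]$ of summable subsets, and the term $\rel[X]_{|{\rel[X]}|-p+1}$ has size $|{\rel[X]}|-p+1$, is summable, and contains $\beta$. Combining, we obtain $p-1$ summable subsets of size $|{\rel[X]}|-p+1$ avoiding $\beta$ together with one summable subset of that size containing $\beta$, hence $p$ distinct summable subsets of size $|{\rel[X]}|-p+1$, completing the induction.

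The step I expect to be the real content is verifying that the two families genuinely stay disjoint and that the count is exactly right at the boundary values of $p$ — in particular checking that $|{\rel[X]}|-p+1 \ge 1$ so that the induction hypothesis is legitimately applicable to $\rel[X]'$ with parameter $p-1$ (which needs $1 \le p-1 \le |{\rel[X]'}| = |{\rel[X]}|-1$, i.e.\ $2 \le p \le |{\rel[X]}|$, consistent with our case split). One should also double-check the edge case $p = |{\rel[X]}|-1$ where the induction hypothesis on $\rel[X]'$ is applied with $p-1 = |{\rel[X]'}|$, which is the "all singletons" instance handled above. Beyond these bookkeeping checks, no new geometric input is needed: Theorem~\ref{thm:filtrationSummableSubsets} already packages the delicate root-system combinatorics (via Proposition~\ref{prop:twoSubsumsRoots} and Theorem~\ref{thm:dpsum}), so this proof is purely a counting induction layered on top of it.
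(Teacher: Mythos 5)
Your proof is correct and follows essentially the same route as the paper: handle $p=1$ and $p=|{\rel[X]}|$ directly, peel off one root $\beta$ to get a summable subset of size $|{\rel[X]}|-1$, apply the induction hypothesis with parameter $p-1$ to obtain $p-1$ subsets avoiding $\beta$, and use Theorem~\ref{thm:filtrationSummableSubsets} anchored at $\beta$ to produce one more subset containing it. The paper's proof is word-for-word this argument, so no further comparison is needed.
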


\begin{proof}
Note that it holds for~$p = 1$ and~$p = |X|$.
We now proceed by induction on~$|X|$ to prove the result for~${1 < p < |X|}$.
By Theorem~\ref{thm:filtrationSummableSubsets}, $\rel[X]$ admits a summable subset~$\rel[Y]$ of size~$|{\rel[X]}|-1$.
Since~$1 < p$, we can apply the induction hypothesis to find $p-1$ distinct summable subsets~$\rel[Z]_1, \dots, \rel[Z]_{p-1}$ of~$\rel[Y]$ of size~$|{\rel[Y]}|-p+2 = |{\rel[X]}|-p+1$.
Moreover, by Theorem~\ref{thm:filtrationSummableSubsets} there exists at least one summable subset~$\rel[Z]_p$ of~$\rel[X]$ of size~$|{\rel[X]}|-p+1$ containing the root~$\alpha$ in~$\rel[X] \ssm \rel[Y]$.
This subset~$\rel[Z]_p$ is distinct from all the subsets~$\rel[Z]_1, \dots, \rel[Z]_{p-1}$ of~$\rel[Y]$, since it contains~$\alpha$.
This concludes the proof.
\end{proof}

\begin{remark}
\label{rem:failSums}
All results presented in this section fail for non-crystallographic root systems.
For example, consider the Coxeter group of type~$H_3$ with Dynkin diagram \raisebox{-.1cm}{\includegraphics[scale=.8]{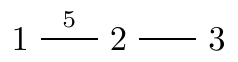}} and the positive roots~$\alpha \eqdef \alpha_1$, $\beta \eqdef \alpha_2$ and~$\gamma \eqdef s_1s_2s_3(\alpha_2) = \psi (\alpha_1 + \alpha_2 + \alpha_3)$, where~$\psi = -2\cos(4\pi/5)$.
Then
\begin{itemize}
\item $\dotprod{\alpha}{\beta} < 0$ while~$\alpha + \beta \notin \Phi$ and~$\alpha \ne -\beta$,
\item $\alpha + \beta + \gamma \in \Phi$ while~$\alpha + \beta \notin \Phi$ and $\beta + \gamma \notin \Phi$ (although $\alpha + \gamma \in \Phi$).
\end{itemize}
\end{remark}

\begin{remark}
\label{rem:failFlag}
For later purposes, we will need an even stronger counter-example to Theorem~\ref{thm:filtrationSummableSubsets} in non-crystallographic root systems.
Consider the Coxeter group of type~$H_2 = I_2(5)$ and the roots $\alpha \eqdef \alpha_1$, $\beta \eqdef \alpha_2$, $\gamma \eqdef \psi \alpha_1 + \psi \alpha_2$ and~$\delta \eqdef - \alpha_1 - \psi \alpha_2$, where~$\psi = -2\cos(4\pi/5)$.
It is not difficult to check that
\(
\Phi \cap \set{a\alpha + b\beta + c\gamma + d\delta}{a,b,c,d \in \N} = \{\alpha, \; \beta, \; \gamma, \; \delta, \; \alpha + \beta + \gamma + \delta\}.
\)
In particular, there is not even a single flag~$X_1 \subsetneq X_2 \subsetneq X_3 \subsetneq \{\alpha, \beta, \gamma, \delta\}$ of summable subsets of~$\{\alpha, \beta, \gamma, \delta\}$, even though $\{\alpha, \beta, \gamma, \delta\}$ is itself summable.
\end{remark}

\subsection{$\Phi$-posets}
\label{subsec:PhiPosets}

In Section~\ref{sec:weakOrderPosets}, we will consider certain specific families of collections of roots.
We start with the simple definition of symmetric and antisymmetric subsets of roots.

\begin{definition}
A subset~$\rel \subseteq \Phi$ is \defn{symmetric} if~$-\rel = \rel$ and \defn{antisymmetric} if~${\rel \cap -\rel = \varnothing}$.
We denote by~$\ISym(\Phi)$ (resp.~$\IAntisym(\Phi)$) the set of symmetric (resp.~antisymmetric) subsets of roots of~$\Phi$.
\end{definition}

We now want to define closed sets of roots.
The next statement is proved by A.~Pilkington~\cite[Sect.~2]{Pilkington} for subsets of positive roots.
We extend it to subsets of all roots using Theorem~\ref{thm:filtrationSummableSubsets}.

\begin{lemma}
\label{lem:closed}
In a crystallographic root system~$\Phi$, the following conditions are equivalent for~$\rel \subseteq \Phi$:
\begin{enumerate}[(i)]
\item $\alpha + \beta \in \rel$ for any~$\alpha, \beta \in \rel$ such that~$\alpha + \beta \in \Phi$,
\item $m\alpha + n\beta \in \rel$ for any~$\alpha, \beta \in \rel$ and~$m,n \in \N$ such that~$m\alpha + n\beta \in \Phi$,
\item $\alpha_1 + \dots + \alpha_p \in \rel$ for any~$\alpha_1, \dots, \alpha_p \in \rel$ such that~$\alpha_1 + \dots + \alpha_p \in \Phi$.
\end{enumerate}
\end{lemma}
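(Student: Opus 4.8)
The plan is to close the cycle of implications (iii)~$\Rightarrow$~(ii)~$\Rightarrow$~(i)~$\Rightarrow$~(iii); the first two steps are immediate, and only the last one draws on Section~\ref{subsec:sums}. For (iii)~$\Rightarrow$~(ii), given $\alpha, \beta \in \rel$ and $m, n \in \N$ with $m\alpha + n\beta \in \Phi$, I would simply write $m\alpha + n\beta$ as the sum of the $m+n$ roots obtained by repeating $\alpha$ exactly $m$ times and $\beta$ exactly $n$ times, all of which lie in $\rel$, and invoke~(iii). The implication (ii)~$\Rightarrow$~(i) is the special case $m = n = 1$.

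The substance is (i)~$\Rightarrow$~(iii), which I would prove by induction on~$p$, the main tool being Theorem~\ref{thm:filtrationSummableSubsets} (which we apply to multisets of roots; its proof only manipulates roots and their sums, never distinctness of summands, so it is unchanged in that setting). Let $\alpha_1, \dots, \alpha_p \in \rel$ with $\Sigma \eqdef \alpha_1 + \dots + \alpha_p \in \Phi$, and let $\rel[X]$ denote the multiset $\{\alpha_1, \dots, \alpha_p\}$. The cases $p = 1$ (trivial) and $p = 2$ (which is precisely hypothesis~(i)) form the base. For $p > 2$, I distinguish two cases. If $\rel[X]$ admits a nonempty vanishing subsum~$\rel[Y]$, then $\rel[Y] \subsetneq \rel[X]$ (else $\Sigma = \Sigma{\rel[X]} = 0 \notin \Phi$), so $\rel[X] \ssm \rel[Y]$ is a nonempty multiset of strictly fewer than $p$ roots, all lying in $\rel$, with $\Sigma{(\rel[X] \ssm \rel[Y])} = \Sigma \in \Phi$, and the induction hypothesis gives $\Sigma \in \rel$. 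If instead $\rel[X]$ has no vanishing subsum, then, being summable, it admits by Theorem~\ref{thm:filtrationSummableSubsets} a filtration $\{\alpha_1\} = \rel[X]_1 \subsetneq \rel[X]_2 \subsetneq \dots \subsetneq \rel[X]_p = \rel[X]$ of summable multisets; since this strict chain runs from cardinality~$1$ to cardinality~$p$, each step adds exactly one root, say $\rel[X]_{i+1} = \rel[X]_i \cup \{\gamma_i\}$ with $\gamma_i \in \rel[X] \subseteq \rel$. A short inner induction on~$i$ then shows $\Sigma{\rel[X]_i} \in \rel$ for all $i$: it holds for $i = 1$ since $\Sigma{\rel[X]_1} = \alpha_1 \in \rel$, and if $\Sigma{\rel[X]_i} \in \rel$ then $\Sigma{\rel[X]_{i+1}} = \Sigma{\rel[X]_i} + \gamma_i$ is a sum of two elements of $\rel$ that lies in $\Phi$ (because $\rel[X]_{i+1}$ is summable), hence lies in $\rel$ by~(i). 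Taking $i = p$ yields $\Sigma = \Sigma{\rel[X]_p} \in \rel$.

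The main obstacle is exactly the hypothesis of Theorem~\ref{thm:filtrationSummableSubsets}: it requires the multiset of summands to have no vanishing subsum, whereas the roots $\alpha_1, \dots, \alpha_p$ in~(iii) may well contain cancelling pairs $\alpha_i = -\alpha_j$, or larger vanishing subsums. The peeling-off step handles this by stripping such a subsum off and shrinking~$p$, which is the only reason the induction on~$p$ is needed at all — the filtration case by itself uses the inner induction and hypothesis~(i) only. A minor point worth a sentence in the write-up is that Theorem~\ref{thm:dpsum}, Proposition~\ref{prop:twoSubsumsRoots}, and Theorem~\ref{thm:filtrationSummableSubsets} are invoked here for multisets rather than sets; as noted above, this is harmless since their proofs never use that the roots involved are pairwise distinct.
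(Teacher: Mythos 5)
Your proof is correct and follows essentially the same route as the paper: the implications (iii)~$\Rightarrow$~(ii)~$\Rightarrow$~(i) are dispatched as immediate, and (i)~$\Rightarrow$~(iii) is reduced to Theorem~\ref{thm:filtrationSummableSubsets} by walking up a filtration of summable subsets and applying~(i) at each step. In fact you are more careful than the paper's one-line argument on exactly the two points it glosses over: the paper invokes Theorem~\ref{thm:filtrationSummableSubsets} directly on $\{\alpha_1,\dots,\alpha_p\}$ without checking the no-vanishing-subsum hypothesis (your outer induction, which peels off a vanishing subsum and shrinks~$p$, is the right fix) and without commenting on the fact that the $\alpha_i$ may repeat, so that the filtration theorem must be read for multisets (your observation that its proof never uses distinctness of the summands is accurate). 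Both additions are worth keeping.
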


\begin{proof}
The proof follows that of~\cite[Sect.~2]{Pilkington}.
The implications~(iii) $\Longrightarrow$ (ii) $\Longrightarrow$ (i) are clear.
Assume now that (i) holds and consider~$\alpha_1, \dots, \alpha_p \in \rel$ such that~$\alpha_1 + \dots + \alpha_p \in \Phi$.
By Theorem~\ref{thm:filtrationSummableSubsets}, there exists a flag~$\rel[X]_1 \subsetneq \rel[X]_2 \subsetneq \dots \subsetneq \rel[X]_p = \{\alpha_1, \dots, \alpha_p\}$ of summable subsets of~$\Phi$.
Applying inductively~(i), we obtain that~$\Sigma\rel[X]_i \in \rel$ for all~$i \in [p]$, and thus that~$\alpha_1 + \dots + \alpha_p \in \rel$.
\end{proof}

\begin{definition}
\label{def:closed}
In a crystallographic root system~$\Phi$, a subset~$\rel \subseteq \Phi$ is \defn{closed} if it satisfies the equivalent conditions of Lemma~\ref{lem:closed}. 
We denote by~$\IClo(\Phi)$ the set of closed subsets of roots of~$\Phi$.
\end{definition}

\begin{definition}
In a crystallographic root system~$\Phi$, the \defn{closure} of~$\rel \subseteq \Phi$ is the set~$\cl{\rel} \eqdef \N{\rel} \cap \Phi$.
\end{definition}

\begin{remark}
\label{rem:closureOperator}
The map~$\rel \mapsto \cl{\rel}$ is a closure operator on~$\Phi$, meaning that
\[
\cl{\varnothing} = \varnothing,
\qquad
\Phi \subseteq \cl{\Phi},
\qquad
\rel \subseteq \rel[S] \Longrightarrow \cl{\rel} \subseteq \cl{\rel[S]}
\qquad\text{and}\qquad
\cl{(\cl{\rel})} = \cl{\rel}
\]
for all~$\rel, \rel[S] \subseteq \Phi$.
Moreover~$\cl{\rel}$ is closed and~$\rel$ is closed if and only if~${\rel} = \cl{\rel}$.
\end{remark}

\begin{remark}
\label{rem:failCaracterizationClosed}
Lemma~\ref{lem:closed} fails for non-crystallographic root systems.
For example, consider the roots~$\alpha, \beta, \gamma, \delta$ of Remark~\ref{rem:failFlag}.
Then the set~$\rel[R] \eqdef \{\alpha, \beta, \gamma, \delta\}$ satisfies (i) but not (iii).
\end{remark}

\begin{remark}
\label{rem:manyNotionsClosed}
As studied in details by A.~Pilkington in~\cite{Pilkington}, even in crystallographic root systems, there are other possible notions of closed sets of roots.
Namely, one says that~$\rel \subseteq \Phi$ is
\begin{itemize}
\item \defn{$\N$-closed} if $m\alpha + n\beta \in \rel$ for any~$\alpha, \beta \in \rel$ and~$m,n \in \N$ such that~$m\alpha + n\beta \in \Phi$,
\item \defn{$\R$-closed} if $x\alpha + y\beta \in \rel$ for any~$\alpha, \beta \in \rel$ and~$x,y \in \R$ such that~$x\alpha + y\beta \in \Phi$,
\item \defn{convex} if $\rel = \Phi \cap C$ for a convex cone~$C$ in~$V$.
\end{itemize}
Note that convex implies $\R$-closed which implies $\N$-closed, but that the converse statements are wrong even for finite root systems~\cite[p.\,3192]{Pilkington}.
In this paper, we will only work with the notion of $\N$-closedness in crystallographic root systems, as it is discussed in~\cite{Bourbaki}.
Remarks~\ref{rem:failLatticeConvex} and~\ref{rem:failLatticeConvexAntisym} justify this restriction.
\end{remark}

\begin{example}[Type~$A$]
Identify subsets of roots with integer binary relations via the bijection~$(i,j) \in [n]^2 \; \longleftrightarrow \; \b{e}_i-\b{e}_j \in \Phi_A$.
A subset of roots is symmetric (resp.~antisymmetric, resp.~closed) if the corresponding integer binary relation is symmetric (resp.~antisymmetric, resp.~transitive).
(Note that here the three notions of closed sets of roots coincide in type~$A$.)
\end{example}

This example motivates the definition of the central object of this paper.

\begin{definition}
\label{def:posets}
In a crystallographic root system~$\Phi$, a \defn{$\Phi$-poset} is an antisymmetric and $\N$-closed subset of roots of~$\Phi$.
We denote by~$\IPos(\Phi)$ the set of all $\Phi$-posets.
\end{definition}

We speak of Weyl posets when we do not want to specify the root system.
We will introduce in Section~\ref{subsec:weakOrderPosets} a natural lattice structure on $\Phi$-posets.
We will see in Section~\ref{sec:examples} various subfamilies of $\Phi$-posets arising from classical Coxeter and Coxeter Catalan combinatorics.

To conclude this preliminary section on $\Phi$-posets, we gather simple observations on their subsums and their extensions.

\begin{lemma}
\label{lem:posetNoVanishingSubsum}
For any $\Phi$-poset~$\rel$ and any~$\alpha_1, \dots, \alpha_p \in \rel$, we have~$\alpha_1 + \dots + \alpha_p \ne 0$.
\end{lemma}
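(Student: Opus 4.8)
The plan is to argue by contradiction: suppose some nonempty multiset $\{\alpha_1, \dots, \alpha_p\}$ of elements of a $\Phi$-poset $\rel$ has vanishing sum, and pick such a counterexample with $p$ minimal. Minimality forces the subsum to be a genuine vanishing \emph{subsum} with no proper vanishing subsum, so the multiset $\rel[X] \eqdef \{\alpha_1, \dots, \alpha_p\}$ has no vanishing subsum other than itself; in particular $p \ge 2$ since $0 \notin \Phi$, and in fact $p \ge 3$ because antisymmetry of $\rel$ forbids $\alpha_i = -\alpha_j$, ruling out $p = 2$.

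The key is to split off a single root. First I would observe that $\rel[X] \ssm \{\alpha_1\}$ sums to $-\alpha_1 \in \Phi$, so $\rel[X] \ssm \{\alpha_1\}$ is summable with no vanishing subsum. Apply Theorem~\ref{thm:filtrationSummableSubsets} to get a filtration of summable subsets of $\rel[X] \ssm \{\alpha_1\}$; the penultimate term gives a summable subset $\rel[Y] \subsetneq \rel[X] \ssm \{\alpha_1\}$ of size $p - 2$. Write $\rel[X] \ssm \{\alpha_1\} \ssm \rel[Y] = \{\alpha_i\}$ for the one remaining root. Now $\alpha_1$, $\alpha_i$ and $\Sigma\rel[Y]$ are three roots of $\Phi$ (the last by summability of $\rel[Y]$) whose sum $\alpha_1 + \alpha_i + \Sigma\rel[Y] = \Sigma\rel[X] = 0$. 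This nearly fits Proposition~\ref{prop:twoSubsumsRoots}, except the hypothesis there requires the total sum to lie in $\Phi$, whereas here it is $0$; so instead I would feed these three roots directly into Theorem~\ref{thm:dpsum}.

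Here is the endgame. Among $\alpha_1$, $\alpha_i$, $\Sigma\rel[Y]$, since their sum is $0$ we have $\dotprod{\alpha_1}{\alpha_i} + \dotprod{\alpha_1}{\Sigma\rel[Y]} = -\dotprod{\alpha_1}{\alpha_1} < 0$, so at least one of $\dotprod{\alpha_1}{\alpha_i}$, $\dotprod{\alpha_1}{\Sigma\rel[Y]}$ is negative. If $\dotprod{\alpha_1}{\alpha_i} < 0$, then by Theorem~\ref{thm:dpsum}\,(ii) either $\alpha_1 + \alpha_i \in \Phi$ or $\alpha_1 = -\alpha_i$; the latter is excluded by antisymmetry of $\rel$, and in the former case $\alpha_1 + \alpha_i \in \rel$ by $\N$-closedness, so $(\rel[X] \ssm \{\alpha_1, \alpha_i\}) \cup \{\alpha_1 + \alpha_i\}$ is a multiset of $p - 1$ elements of $\rel$ summing to $0$, contradicting minimality of $p$. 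If instead $\dotprod{\alpha_1}{\Sigma\rel[Y]} < 0$, then again by Theorem~\ref{thm:dpsum}\,(ii) either $\alpha_1 + \Sigma\rel[Y] \in \Phi$ or $\alpha_1 = -\Sigma\rel[Y]$; in the first case $\{\alpha_1\} \cup \rel[Y]$ is summable, and since its complement in $\rel[X]$ is $\{\alpha_i\}$ we get $\alpha_1 + \Sigma\rel[Y] = -\alpha_i$, contradicting antisymmetry; in the second case $\Sigma\rel[Y] = -\alpha_1$, so $\rel[Y] \cup \{\alpha_1\}$ is a multiset of $p - 1$ elements of $\rel$ summing to $0$, again contradicting minimality. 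Every branch yields a contradiction, so no vanishing subsum exists.

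The main obstacle is the $p = 3$ base case, where the filtration argument degenerates (the set $\rel[Y]$ would be a single root, which is fine, but one should check the bookkeeping): with $\alpha_1 + \alpha_2 + \alpha_3 = 0$ and all three roots in $\rel$, one directly gets $\dotprod{\alpha_1}{\alpha_2} + \dotprod{\alpha_1}{\alpha_3} < 0$, hence $\alpha_1 + \alpha_2 \in \Phi$ or $\alpha_1 + \alpha_3 \in \Phi$ (the alternatives $\alpha_1 = -\alpha_2$, $\alpha_1 = -\alpha_3$ being barred by antisymmetry), hence one of $-\alpha_3, -\alpha_2$ lies in $\rel$ by $\N$-closedness, contradicting antisymmetry. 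I expect the write-up to be short once the contradiction setup and the case split on which inner product is negative are pinned down; the delicate point is simply making sure antisymmetry (not just closedness) is invoked at each place where a root and its negative would both be forced into $\rel$.
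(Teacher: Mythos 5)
Your proof is correct, but it takes a much longer route than the paper, whose entire argument is: if $\alpha_1 + \dots + \alpha_p = 0$ then $\alpha_2 + \dots + \alpha_p = -\alpha_1$ is a root, so Lemma~\ref{lem:closed}\,(iii) (closedness under summing any subfamily to a root) puts $-\alpha_1$ in $\rel$, contradicting antisymmetry. What you do --- minimal counterexample, peeling off one root with Theorem~\ref{thm:filtrationSummableSubsets}, then the sign analysis of $\dotprod{\alpha_1}{\alpha_i}$ and $\dotprod{\alpha_1}{\Sigma\rel[Y]}$ via Theorem~\ref{thm:dpsum} --- essentially re-derives by hand the implication (i)~$\Rightarrow$~(iii) of Lemma~\ref{lem:closed}, which the paper has already proved and is entitled to invoke since a $\Phi$-poset is closed. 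What your version buys is some extra care: by taking a minimal vanishing sum you guarantee that every multiset you feed to Theorem~\ref{thm:filtrationSummableSubsets} genuinely has no vanishing subsum, whereas the quick appeal to Lemma~\ref{lem:closed}\,(iii) leans on a proof that itself invokes the filtration theorem without checking that hypothesis. The one place you are too terse is Case~2a: ``contradicting antisymmetry'' requires first concluding $-\alpha_i \in \rel$, which does follow --- iterate pairwise closedness along the filtration of $\rel[X] \ssm \{\alpha_1\}$ to get $\Sigma\rel[Y] \in \rel$, then apply it once more to $\alpha_1 + \Sigma\rel[Y] = -\alpha_i$ --- but should be said explicitly rather than left implicit. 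Also, the $p=3$ ``base case'' you single out needs no separate treatment; it is subsumed by the general argument with $\rel[Y]$ a singleton.
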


\begin{proof}
Assume that ~$\rel$ is a $\Phi$-poset and that there are~$\alpha_1, \dots, \alpha_p \in \rel$ such that~$\alpha_1 + \dots + \alpha_p = 0$.
Then~$\alpha_2 + \dots + \alpha_p = -\alpha_1$ is a root, so Lemma~\ref{lem:closed}\,(iii) ensures that~$\alpha_2 + \dots + \alpha_p \in \rel$ since~$\rel$ is closed.
We obtain that~$\alpha_1 \in {\rel}$ and~$-\alpha_1 \in {\rel}$, contradicting the antisymmetry of~$\rel$.
\end{proof}

Finally, we need $\Phi$-poset extensions.
The subsets of~$\Phi$ are naturally ordered by inclusion, and we consider the restriction of this inclusion order on $\Phi$-posets.
For~$\rel \in \IPos(\Phi)$, we call \defn{extensions}~of~$\rel$ the $\Phi$-posets~$\rel[S]$ containing~$\rel[R]$, and we let~$\extensions(\rel) \eqdef \set{\rel[S] \in \IPos(\Phi)}{\rel[R] \subseteq \rel[S]}$.
Note that~${\rel \subseteq \bigcap \extensions(\rel)}$ but the reverse inclusion does not always hold (consider for example~$\rel \eqdef \{\alpha_1+\alpha_2, \, \alpha_2\}$ in type~$B_2$).
For later purposes, we are interested in maximal $\Phi$-posets in the extension order.

\begin{proposition}
\label{prop:maxExtension}
For~$\rel \in \IPos(\Phi)$, we have~$\extensions(\rel) = \{\rel\}$ if and only if~$\{\alpha, -\alpha\} \cap {\rel} \ne \varnothing$ for all~$\alpha \in \Phi$.
\end{proposition}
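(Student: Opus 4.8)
The statement is a biconditional, so I would prove the two implications separately, with the contrapositive being the natural form for each direction. The ``easy'' direction is the forward one: suppose there exists $\alpha \in \Phi$ with $\{\alpha,-\alpha\} \cap {\rel} = \varnothing$; I want to produce a $\Phi$-poset strictly containing $\rel$. The obvious candidate is $\cl{(\rel \cup \{\alpha\})}$, or rather one of $\alpha$, $-\alpha$ — I would argue that \emph{at least one} of the two closures $\cl{(\rel \cup \{\alpha\})}$ and $\cl{(\rel \cup \{-\alpha\})}$ is again antisymmetric, hence a $\Phi$-poset, and it properly contains $\rel$ since it contains $\alpha$ (resp.\ $-\alpha$) which was not in $\rel$. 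So $\extensions(\rel) \ne \{\rel\}$.

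For the reverse direction, suppose $\{\alpha,-\alpha\} \cap {\rel} \ne \varnothing$ for every $\alpha \in \Phi$; I must show $\rel$ has no proper $\Phi$-poset extension. Let $\rel[S] \supseteq \rel$ be a $\Phi$-poset and take any $\beta \in \rel[S]$. By hypothesis either $\beta \in \rel \subseteq \rel[S]$ or $-\beta \in \rel \subseteq \rel[S]$; the latter would give $\beta, -\beta \in \rel[S]$, contradicting antisymmetry of $\rel[S]$. Hence $\beta \in \rel$, so $\rel[S] \subseteq \rel$ and therefore $\rel[S] = \rel$. This direction is essentially immediate from antisymmetry and requires no root-system input.

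The real work is the claim in the forward direction that one of $\cl{(\rel \cup \{\alpha\})}$, $\cl{(\rel \cup \{-\alpha\})}$ stays antisymmetric. I would argue by contradiction: if both closures fail to be antisymmetric, then there are roots $\gamma, \delta$ with $\gamma, -\gamma \in \cl{(\rel \cup \{\alpha\})}$ and $\delta, -\delta \in \cl{(\rel \cup \{-\alpha\})}$. Since $\rel$ itself is antisymmetric (being a $\Phi$-poset) and $\rel = \cl{\rel}$, the element $\alpha$ must ``participate'' in each violation: unwinding the definition of closure as iterated $\N$-linear sums landing in $\Phi$ (Lemma~\ref{lem:closed}), each of $\gamma$ and $-\gamma$ is a nonnegative combination of $\alpha$ and elements of $\rel$, with at least one of them genuinely using $\alpha$; similarly for $\delta,-\delta$ with $-\alpha$. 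I would then add these relations to derive that some nonnegative combination of elements of $\rel$ alone equals $0$ — or equivalently that $\rel$ contains both a root and its negative — contradicting Lemma~\ref{lem:posetNoVanishingSubsum} or antisymmetry of $\rel$. Managing the bookkeeping of which combinations use $\alpha$ with positive versus zero coefficient, and extracting an actual vanishing subsum of roots of $\rel$ (so that Lemma~\ref{lem:closed}\,(iii) and Lemma~\ref{lem:posetNoVanishingSubsum} apply), is the delicate point; the filtration result Theorem~\ref{thm:filtrationSummableSubsets} and Lemma~\ref{lem:posetNoVanishingSubsum} are exactly the tools I expect to need to turn ``a nonnegative integer combination vanishes'' into ``a subsum of roots in $\rel$ vanishes.''
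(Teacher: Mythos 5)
Your plan matches the paper's proof essentially step for step: the same immediate implication from antisymmetry when~$\{\alpha,-\alpha\} \cap {\rel} \ne \varnothing$ for all~$\alpha$, and, for the other direction, the same contradiction argument showing that at least one of the two closures~$\cl{({\rel} \cup \{\alpha\})}$ and~$\cl{({\rel} \cup \{-\alpha\})}$ stays antisymmetric, obtained by writing the two violations as non-negative integer combinations, cross-multiplying to eliminate~$\alpha$, and invoking Lemma~\ref{lem:posetNoVanishingSubsum}. The only superfluous ingredient you anticipate is Theorem~\ref{thm:filtrationSummableSubsets}: since Lemma~\ref{lem:posetNoVanishingSubsum} already applies to sums with repetitions, a vanishing non-negative integer combination of roots of~$\rel$ is directly a vanishing subsum, so no filtration argument is needed.
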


\begin{proof}
Clearly if~$\{\alpha, -\alpha\} \cap {\rel} \ne \varnothing$ for all~$\alpha \in \Phi$, then adding any root to~$\rel$ breaks the antisymmetry, so that~$\extensions(\rel) = \{\rel\}$.
Reciprocally, assume that there exists~$\alpha \in \Phi$ such that~$\{\alpha, -\alpha\} \cap \Phi = \varnothing$.
Let~$\rel[S] \eqdef \cl{({\rel} \cup \{\alpha\})}$ and~$\rel[T] \eqdef \cl{({\rel} \cup \{-\alpha\})}$.
By definition, both~$\rel[S]$ and~$\rel[T]$ are closed, and we claim that at least one of them is antisymmetric, thus proving that~$\rel$ admits a non-trivial extension.
Assume by means of contradiction that neither~$\rel[S]$ nor~$\rel[T]$ are antisymmetric.
Let~$\beta \in {\rel[S]} \cap -{\rel[S]}$ and~$\gamma \in {\rel[T]} \cap -{\rel[T]}$.
By definition of the closure, we can write
\[
\beta = \sum_{\delta \in {\rel}} \lambda_\delta \delta + \lambda_\alpha \alpha = - \sum_{\delta \in {\rel}} \kappa_\delta \delta - \kappa_\alpha \alpha
\qquad\text{and}\qquad
\gamma = \sum_{\delta \in {\rel}} \mu_\delta \delta - \mu_\alpha \alpha = - \sum_{\delta \in {\rel}} \nu_\delta \delta + \nu_\alpha \alpha,
\]
where~$\lambda_\delta, \kappa_\delta, \mu_\delta, \nu_\delta$ are non-negative integer coefficients for all~$\delta \in {\rel} \cup \{\alpha\}$. Moreover, we have ${\lambda_\alpha + \kappa_\alpha \ne 0 \ne \mu_\alpha + \nu_\alpha}$ since~$\rel$ is antisymmetric and closed.
This implies that
\[
\sum_{\delta \in {\rel}} \big( (\lambda_\alpha + \kappa_\alpha)(\mu_\delta + \nu_\delta) + (\mu_\alpha + \nu_\alpha)(\lambda_\delta + \kappa_\delta) \big) \, \delta = 0.
\]
Lemma~\ref{lem:posetNoVanishingSubsum} thus ensures that~$(\lambda_\alpha + \kappa_\alpha)(\mu_\delta + \nu_\delta) + (\mu_\alpha + \nu_\alpha)(\lambda_\delta + \kappa_\delta) = 0$ which in turns implies that~$\lambda_\delta = \kappa_\delta = \mu_\delta = \nu_\delta = 0$ for all~$\delta \in {\rel}$, a contradiction.
\end{proof}

\section{Weak order on $\Phi$-posets}
\label{sec:weakOrderPosets}

\subsection{Weak order on all subsets}

Let~$\Phi$ be a finite root system (not necessarily crystallographic for the moment), with positive roots~$\Phi^+$ and negative roots~$\Phi^-$.
We denote by~$\IRel(\Phi)$ the set of all subsets of~$\Phi$.
For~$\rel \in \IRel(\Phi)$, we denote by~$\rel^+ \eqdef {\rel} \cap \Phi^+$ its positive part and~$\rel^- \eqdef {\rel} \cap \Phi^-$ its negative part.
The following order was considered in type~$A$ in~\cite{ChatelPilaudPons}.

\begin{definition}
\label{def:weakOrder}
The \defn{weak order} on~$\IRel(\Phi)$ is defined by~$\rel \wole \rel[S]$ $\iff$ ${\rel^+ \supseteq \rel[S]^+}$ and~${\rel^- \subseteq \rel[S]^-}$.
\end{definition}

\begin{remark}
The name for this order relation will be transparent in Section~\ref{sec:examples}.
Note that there is an arbitrary choice of orientation in Definition~\ref{def:weakOrder}.
The choice we have made here may seem unusual, as the apparent contradiction in Proposition~\ref{prop:weakOrderWOEP} suggests.
However, it is more coherent with the case of type~$A$ as treated in~\cite{ChatelPilaudPons} and it simplifies the presentation of Section~\ref{subsubsec:WOFP}.
\end{remark}

\begin{proposition}
\label{prop:all}
The weak order~$\wole$ on~$\IRel(\Phi)$ is a lattice with meet and join
\[
{\rel[R]} \meetR {\rel[S]} = ( \rel[R]^+ \cup \rel[S]^+ ) \sqcup ( \rel[R]^- \cap \rel[S]^- )
\qquad\text{and}\qquad
{\rel[R]} \joinR {\rel[S]} = ( \rel[R]^+ \cap \rel[S]^+ ) \sqcup ( \rel[R]^- \cup \rel[S]^- ).
\]
Furthermore, it is graded by $\rel[R] \mapsto |{\rel[R]^-}| - |{\rel[R]^+}|$ and its cover relations are given by
\[
\rel[R] \wole \rel[R] \ssm \{\alpha\} \text{ for } \alpha \in \rel[R]^+
\qquad\text{and}\qquad
\rel[R] \ssm \{\beta\} \wole \rel[R] \text{ for } \beta \in \rel[R]^-.
\]
\end{proposition}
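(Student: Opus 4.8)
The statement decomposes into three independent assertions, and the plan is to verify each in turn using only elementary set-theoretic manipulations combined with the defining inequalities of $\wole$. The underlying reason everything works is that the decomposition $\rel[R] = \rel[R]^+ \sqcup \rel[R]^-$ turns $\IRel(\Phi)$ into a product poset: the positive part is ordered by reverse inclusion and the negative part by inclusion, and both $(2^{\Phi^+}, \supseteq)$ and $(2^{\Phi^-}, \subseteq)$ are boolean lattices. So the first move is to observe that $\rel[R] \wole \rel[S]$ is literally the conjunction of $\rel[R]^+ \supseteq \rel[S]^+$ and $\rel[R]^- \subseteq \rel[S]^-$, hence $(\IRel(\Phi), \wole)$ is isomorphic to the product lattice $(2^{\Phi^+}, \supseteq) \times (2^{\Phi^-}, \subseteq)$, which is a lattice because a finite product of lattices is a lattice.

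From that isomorphism the formulas for meet and join are forced: in $(2^{\Phi^+}, \supseteq)$ the meet is union and the join is intersection, while in $(2^{\Phi^-}, \subseteq)$ the meet is intersection and the join is union. Reassembling the two coordinates gives exactly
\[
{\rel[R]} \meetR {\rel[S]} = ( \rel[R]^+ \cup \rel[S]^+ ) \sqcup ( \rel[R]^- \cap \rel[S]^- ),
\qquad
{\rel[R]} \joinR {\rel[S]} = ( \rel[R]^+ \cap \rel[S]^+ ) \sqcup ( \rel[R]^- \cup \rel[S]^- ),
\]
the disjoint unions being legitimate since $\Phi = \Phi^+ \sqcup \Phi^-$ for the fixed generic functional $f$. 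If one prefers not to invoke the product lattice abstractly, one checks directly that the right-hand side of the meet formula is a lower bound of both $\rel[R]$ and $\rel[S]$ and that any common lower bound $\rel[T]$ satisfies $\rel[T] \wole (\rel[R] \meetR \rel[S])$, i.e.\ $\rel[T]^+ \supseteq \rel[R]^+ \cup \rel[S]^+$ and $\rel[T]^- \subseteq \rel[R]^- \cap \rel[S]^-$, both of which are immediate from $\rel[T]^+ \supseteq \rel[R]^+$, $\rel[T]^+ \supseteq \rel[S]^+$, $\rel[T]^- \subseteq \rel[R]^-$, $\rel[T]^- \subseteq \rel[S]^-$; the join is dual.

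For the grading, set $\rho(\rel[R]) \eqdef |\rel[R]^-| - |\rel[R]^+|$. A single cover relation of the product lattice changes exactly one coordinate by one element: either we remove a positive root (so $|\rel[R]^+|$ drops by one and $\rho$ rises by one) or we add a negative root (so $|\rel[R]^-|$ rises by one and $\rho$ rises by one). Thus $\rho$ increases by exactly one along each cover, so it is a grading, and simultaneously this identifies the cover relations as precisely $\rel[R] \wole \rel[R] \ssm \{\alpha\}$ for $\alpha \in \rel[R]^+$ and $\rel[R] \ssm \{\beta\} \wole \rel[R]$ for $\beta \in \rel[R]^-$: any strict order relation $\rel[R] \lessdot \rel[S]$ has $\rel[S]^+ \subsetneq \rel[R]^+$ or $\rel[S]^- \supsetneq \rel[R]^-$, and if the symmetric difference had size $\ge 2$ one could insert an intermediate subset, contradicting covering. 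None of this is hard; the only thing to be careful about is the bookkeeping of which inclusion goes which way in the positive versus negative part, since the orientation in Definition~\ref{def:weakOrder} deliberately reverses inclusion on $\Phi^+$. There is no real obstacle here — the whole proposition is a direct consequence of $\IRel(\Phi)$ being a product of two boolean lattices — and the write-up can be kept to a few lines.
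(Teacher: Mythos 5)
Your proposal is correct and uses exactly the argument the paper gives: $(\IRel(\Phi),\wole)$ is the Cartesian product of the boolean lattice $(2^{\Phi^+},\supseteq)$ with the boolean lattice $(2^{\Phi^-},\subseteq)$, from which the meet/join formulas, the grading and the cover relations all follow. The paper states this in one sentence; you simply spell out the routine verifications.
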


\begin{proof}
It is the Cartesian product of two boolean lattices (the reverse inclusion poset on the positive roots and the inclusion poset on the negative roots).
\end{proof}

This section is devoted to show that the restriction of the weak order to certain families of subsets of roots (antisymmetric, closed and $\Phi$-posets) still defines a lattice structure when~$\Phi$ is crystallographic and to express its meet and join operations.
For example, the weak orders on $A_2$-, $B_2$- and $G_2$-posets are represented in Figures~\ref{fig:posetsAB} and~\ref{fig:posetsG}.

\subsection{Weak order on antisymmetric subsets}
\label{subsec:weakOrderAntisymmetric}

\enlargethispage{.4cm}
We start with the antisymmetry condition.

\begin{proposition}
\label{prop:antisymmetric}
The meet~$\meetR$ and the join~$\joinR$ both preserve antisymmetry. Thus, the set~$\IAntisym(\Phi)$ of antisymmetric subsets of~$\Phi$ induces a sublattice of the weak order on~$\IRel(\Phi)$.
\end{proposition}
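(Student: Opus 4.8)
The plan is to check directly, from the explicit meet and join formulas of Proposition~\ref{prop:all}, that antisymmetry is preserved; closedness of $\IAntisym(\Phi)$ under both operations then immediately makes it a sublattice of $(\IRel(\Phi), \wole)$, with no appeal to crystallographicity.

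First I would treat the meet. Given antisymmetric $\rel[R], \rel[S] \subseteq \Phi$, suppose for contradiction that some root $\alpha$ has $\{\alpha, -\alpha\} \subseteq {\rel[R]} \meetR {\rel[S]} = (\rel[R]^+ \cup \rel[S]^+) \sqcup (\rel[R]^- \cap \rel[S]^-)$. After possibly swapping $\alpha$ and $-\alpha$, assume $\alpha \in \Phi^+$; then $\alpha$ lies in the positive block, so $\alpha \in \rel[R]$ or $\alpha \in \rel[S]$, while $-\alpha$ lies in the negative block, so $-\alpha \in \rel[R]$ \emph{and} $-\alpha \in \rel[S]$. In either case one of $\rel[R], \rel[S]$ contains both $\alpha$ and $-\alpha$, contradicting its antisymmetry. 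Hence ${\rel[R]} \meetR {\rel[S]} \in \IAntisym(\Phi)$.

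Next I would run the symmetric argument for the join: if $\{\alpha, -\alpha\} \subseteq {\rel[R]} \joinR {\rel[S]} = (\rel[R]^+ \cap \rel[S]^+) \sqcup (\rel[R]^- \cup \rel[S]^-)$ with $\alpha \in \Phi^+$, then $\alpha \in \rel[R]$ \emph{and} $\alpha \in \rel[S]$, whereas $-\alpha \in \rel[R]$ or $-\alpha \in \rel[S]$; whichever of the two sets contains $-\alpha$ also contains $\alpha$, again a contradiction. So ${\rel[R]} \joinR {\rel[S]} \in \IAntisym(\Phi)$ as well.

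Finally, since $\IAntisym(\Phi)$ (which contains $\varnothing$) is stable under the lattice operations $\meetR$ and $\joinR$ of the weak order on $\IRel(\Phi)$, it forms a sublattice, so the weak order restricted to $\IAntisym(\Phi)$ is a lattice with the same meet and join. There is no real obstacle here: the whole argument is a short case analysis on the positive and negative parts of the root $\alpha$, and the only point requiring care is tracking which of $\rel[R]$, $\rel[S]$ witnesses the failure of antisymmetry in each branch.
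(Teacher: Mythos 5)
Your proposal is correct and follows essentially the same argument as the paper: a direct case analysis on the positive and negative parts of the meet and join formulas from Proposition~\ref{prop:all}, using antisymmetry of whichever of $\rel[R]$, $\rel[S]$ witnesses the offending root. The only cosmetic difference is that you phrase it as a proof by contradiction where the paper argues directly that $\alpha \in (\rel[R] \meetR \rel[S])^+$ forces $-\alpha \notin (\rel[R] \meetR \rel[S])^-$.
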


\begin{proof}
Consider two antisymmetric subsets~$\rel[R], \rel[S] \in \IRel(\Phi)$ and let~$\alpha \in (\rel[R] \meetR \rel[S])^+ = \rel[R]^+ \cup \rel[S]^+$.
Assume for instance~$\alpha \in \rel[R]^+$.
Since~$\rel[R]$ is antisymmetric, $-\alpha \notin \rel[R]^-$, so that~$-\alpha \notin \rel[R]^- \cap \rel[S]^- = (\rel[R] \meetR \rel[S])^-$.
We conclude that~$\rel[R] \meetR \rel[S]$ is antisymmetric.
The proof for~$\rel[R] \joinR \rel[S]$ is similar.
\end{proof}

\begin{proposition}
\label{prop:coverRelationsAntisymmetric}
All cover relations in the weak order on~$\IAntisym(\Phi)$ are cover relations in the weak order on~$\IRel(\Phi)$. In particular, the weak order on~$\IAntisym(\Phi)$ is still graded by~$\rel \mapsto |{\rel^+}|-|{\rel^-}|$.
\end{proposition}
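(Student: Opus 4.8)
I want to show that every cover relation $\rel[R] \lessdot \rel[S]$ in the weak order on $\IAntisym(\Phi)$ is already a cover relation in the weak order on $\IRel(\Phi)$. The natural approach is to first observe, from the description of the weak order on all subsets in Proposition~\ref{prop:all}, that $\rel[R] \wole \rel[S]$ in $\IRel(\Phi)$ is equivalent to saying that the pair $(\rel[R], \rel[S])$ can be obtained by removing finitely many positive roots from $\rel[R]$ and adding finitely many negative roots. So if $\rel[R] \wole \rel[S]$ with $\rel[R], \rel[S] \in \IAntisym(\Phi)$, then $\rel[R]^+ \supseteq \rel[S]^+$ and $\rel[R]^- \subseteq \rel[S]^-$, and the grading of Proposition~\ref{prop:all} shows that the difference $(|{\rel[S]^-}| - |{\rel[S]^+}|) - (|{\rel[R]^-}| - |{\rel[R]^+}|) = |{\rel[S]^- \ssm \rel[R]^-}| + |{\rel[R]^+ \ssm \rel[S]^+}|$ is the ``length'' of any saturated chain between them in $\IRel(\Phi)$.

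The key step is then: if this quantity is at least $2$, I must exhibit an intermediate antisymmetric subset strictly between $\rel[R]$ and $\rel[S]$, contradicting that $\rel[R] \lessdot \rel[S]$ is a cover in $\IAntisym(\Phi)$. There are two cases. If there exist two distinct roots $\alpha \ne \beta$ in $\rel[R]^+ \ssm \rel[S]^+$, set $\rel[T] \eqdef \rel[R] \ssm \{\alpha\}$; similarly if there are two distinct roots in $\rel[S]^- \ssm \rel[R]^-$, remove... add one of them. In the mixed case, there is $\alpha \in \rel[R]^+ \ssm \rel[S]^+$ and $\beta \in \rel[S]^- \ssm \rel[R]^-$ with $\alpha \ne -\beta$ possible or not; if $\alpha \ne -\beta$ one can safely take $\rel[T] \eqdef \rel[R] \ssm \{\alpha\}$ (still antisymmetric as a subset of the antisymmetric set $\rel[R]$), which sits strictly between $\rel[R]$ and $\rel[S]$ in the weak order. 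The only delicate subcase is when $\rel[R]^+ \ssm \rel[S]^+ = \{\alpha\}$ is a singleton, $\rel[S]^- \ssm \rel[R]^- = \{\beta\}$ is a singleton, \emph{and} $\beta = -\alpha$: then removing $\alpha$ or adding $\beta$ separately both land on sets that may fail antisymmetry precisely because $\rel[R] \cup \{\beta\}$ contains both $\alpha$ and $-\alpha$. But in that situation the total chain length is exactly $2$, so I should check directly whether removing $\alpha$ from $\rel[R]$ (giving $\rel[R] \ssm \{\alpha\}$, antisymmetric, and strictly above $\rel[R]$, strictly below $\rel[S]$ since it still misses $\beta = -\alpha$) works — it does, because $\rel[R] \ssm \{\alpha\}$ is a subset of the antisymmetric set $\rel[R]$, hence antisymmetric, and $(\rel[R] \ssm \{\alpha\})^+ = \rel[R]^+ \ssm \{\alpha\} = \rel[S]^+$ while $(\rel[R] \ssm \{\alpha\})^- = \rel[R]^- \subsetneq \rel[S]^-$, so it is a proper intermediate.

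So in fact in every case one gets an intermediate \emph{antisymmetric} set, simply because any subset of an antisymmetric set is antisymmetric, which is the free direction: to produce something strictly above $\rel[R]$ and weakly below $\rel[S]$, just delete one positive root of $\rel[R]$ not in $\rel[S]^+$ (if any exists), and that subset is automatically antisymmetric; symmetrically, to produce something strictly below $\rel[S]$ and weakly above $\rel[R]$, delete one negative root from... rather, any subset of the antisymmetric set $\rel[S]$ obtained by removing a negative root is antisymmetric, namely $\rel[S] \ssm \{\beta\}$ for $\beta \in \rel[S]^- \ssm \rel[R]^-$. One of these two moves is always available when the chain length exceeds~$1$, and each produces a proper intermediate antisymmetric subset. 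This contradicts covering, so the chain length is exactly $1$, i.e.\ $\rel[R] \lessdot \rel[S]$ is a cover relation in $\IRel(\Phi)$ — which by Proposition~\ref{prop:all} is one of the two stated types. Gradedness of $\IAntisym(\Phi)$ by $\rel \mapsto |{\rel^+}| - |{\rel^-}|$ then follows immediately, since this is (up to sign) the grading of $\IRel(\Phi)$ restricted to a subposet all of whose cover relations are cover relations of the ambient graded poset.

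\textbf{Main obstacle.} There is essentially no real obstacle here: the only thing to be careful about is the bookkeeping when deciding \emph{which} of the two deletion moves to perform, since one must pick a move that keeps the result strictly below $\rel[S]$ (or strictly above $\rel[R]$) and not accidentally equal to an endpoint. The point that makes it all go through painlessly is that antisymmetry is inherited by subsets, so unlike the closed or $\Phi$-poset cases (treated later in the paper, where one must take closures and the combinatorics genuinely bites), here no closure operation is needed and the intermediate set is produced for free.
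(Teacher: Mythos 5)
Your proposal is correct and follows essentially the same route as the paper: given a cover $\rel[R] \wole \rel[S]$ in $\IAntisym(\Phi)$, one deletes a root $\alpha \in \rel[R]^+ \ssm \rel[S]^+$ from $\rel[R]$ (or a root $\beta \in \rel[S]^- \ssm \rel[R]^-$ from $\rel[S]$), uses the fact that subsets of antisymmetric sets are antisymmetric to get an intermediate element, and concludes that the cover is a single-step move of $\IRel(\Phi)$. Your detour through the subcase $\beta = -\alpha$ is unnecessary (only the \emph{addition} move could break antisymmetry, and neither of your final moves is an addition), but your concluding paragraph already observes this and matches the paper's argument.
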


\begin{proof}
Consider a cover relation~$\rel[R] \wole \rel[S]$ in the weak order on~$\IAntisym(\Phi)$. We have~$\rel[R]^+ \supseteq \rel[S]^+$ and~$\rel[R]^- \subseteq \rel[S]^-$ where at least one of the inclusions is strict. Suppose first that~$\rel[R]^+ \ne \rel[S]^+$. Let~${\alpha \in \rel[R]^+ \ssm \rel[S]^+}$ and~$\rel[T] \eqdef \rel[R] \ssm \{\alpha\}$. Note that~$\rel[T] \in \IAntisym(\Phi)$ and~$\rel[R] \prec \rel[T] \wole \rel[S]$. Since~$\rel[S]$ covers~$\rel$, we get~${\rel[S] = \rel[T] = \rel[R] \ssm \{\alpha\}}$. Similarly if $\rel[S]^- \ne \rel[R]^-$ let $\alpha \in \rel[S]^- \ssm \rel[R]^-$ and $\rel[T] \eqdef \rel[S]^- \ssm \{\alpha\}$. Then~$\rel[T] \in \IAntisym(\Phi)$ and $\rel[R] \wole \rel[T] \prec \rel[S]$ implies that~$\rel[T] = \rel[R] = \rel[S] \ssm \{\alpha\}$. 
In both cases, $\rel[R] \wole \rel[S]$ is a cover relation of the weak order on~$\IRel(\Phi)$.
\end{proof}

\begin{corollary}
\label{coro:coverRelationsAntisymmetric}
In the weak order on~$\IAntisym(\Phi)$, the antisymmetric subsets that cover a given antisymmetric subset~$\rel \in \IAntisym(\Phi)$ are precisely the relations
\begin{itemize}
\item $\rel \ssm \{\alpha\}$ for any~$\alpha \in \rel^+$,
\item $\rel \cup \{\beta\}$ for any~$\beta \in \Phi^- \ssm \rel^-$ such that~$-\beta \notin \rel^+$.
\end{itemize}
\end{corollary}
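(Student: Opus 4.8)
The plan is to combine the two immediately preceding results: Proposition~\ref{prop:coverRelationsAntisymmetric} shows that every cover relation of the weak order on~$\IAntisym(\Phi)$ is already a cover relation of the weak order on~$\IRel(\Phi)$, and Proposition~\ref{prop:all} lists the latter explicitly. Hence the antisymmetric subsets covering a given~$\rel \in \IAntisym(\Phi)$ form a subset of the subsets covering~$\rel$ in~$\IRel(\Phi)$, and it only remains to sort out which of those are antisymmetric, and to check the converse, that each surviving candidate genuinely covers~$\rel$.

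First I would read off Proposition~\ref{prop:all}: the subsets covering a given~$\rel \in \IRel(\Phi)$ in the weak order are exactly the sets~$\rel \ssm \{\alpha\}$ for~$\alpha \in \rel^+$ (deleting a positive root increases for~$\wole$, taking~$\rel[R] = \rel$ in the first family of cover relations) together with the sets~$\rel \cup \{\beta\}$ for~$\beta \in \Phi^- \ssm \rel^-$ (adding a negative root increases for~$\wole$, taking~$\rel[R] = \rel \cup \{\beta\}$ in the second family). Now I would intersect this list with~$\IAntisym(\Phi)$. A set of the first form~$\rel \ssm \{\alpha\}$ is automatically antisymmetric, since~$\rel$ is and antisymmetry passes to subsets. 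A set of the second form~$\rel \cup \{\beta\}$ is antisymmetric if and only if~$-\beta \notin \rel \cup \{\beta\}$; since~$\beta \in \Phi^-$ we have~$-\beta \in \Phi^+$ and in particular~$-\beta \neq \beta$, so the condition reduces to~$-\beta \notin \rel^+$, which is precisely the extra requirement in the statement.

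It then remains to see that each admissible candidate~$\rel[S]$ is a cover of~$\rel$ in~$\IAntisym(\Phi)$, not merely an element above it. If~$\rel \prec \rel[T] \prec \rel[S]$ for some~$\rel[T] \in \IAntisym(\Phi)$, then~$\rel[T]$ also lies in~$\IRel(\Phi)$, contradicting the fact that~$\rel \wole \rel[S]$ is a cover relation of the weak order on~$\IRel(\Phi)$. Hence every antisymmetric subset appearing in the two families above covers~$\rel$, and by Proposition~\ref{prop:coverRelationsAntisymmetric} there are no others.

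I do not anticipate any serious obstacle: the statement is essentially a bookkeeping consequence of Propositions~\ref{prop:all} and~\ref{prop:coverRelationsAntisymmetric}. The only step requiring a moment of attention is the translation of ``$\rel \cup \{\beta\}$ is antisymmetric'' into the condition~$-\beta \notin \rel^+$, which relies on~$\beta$ being a negative root so that its opposite~$-\beta$ is positive.
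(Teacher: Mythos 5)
Your proposal is correct and follows exactly the intended route: the paper states this corollary without proof precisely because it is the immediate combination of Proposition~\ref{prop:coverRelationsAntisymmetric} (covers in $\IAntisym(\Phi)$ are covers in $\IRel(\Phi)$) with the explicit cover relations of Proposition~\ref{prop:all}, filtered by antisymmetry. Your verification that each surviving candidate is genuinely a cover in the subposet, and your translation of the antisymmetry of $\rel \cup \{\beta\}$ into the condition $-\beta \notin \rel^+$, are both accurate and complete.
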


\subsection{Weak order on closed subsets}
\label{subsec:weakOrderClosed}

We want to prove that the weak order on closed subsets of~$\Phi$ is also a lattice.
Contrarily to Propositions~\ref{prop:all} and \ref{prop:antisymmetric}, we now need to assume that the root system~$\Phi$ is crystallographic (see Remarks~\ref{rem:failCaracterizationClosed}, \ref{rem:failLatticeClosed} and~\ref{rem:failLatticePosets}).
Unfortunately, as $\IClo(\Phi)$ is stable by intersection but not by union, it is not preserved by the meet~$\meetR$ and the join~$\joinR$, so that it does not induce a sublattice of the weak order on~$\IRel(\Phi)$.
Proving that it is still a lattice requires more work.
Following~\cite{ChatelPilaudPons}, we start with a weaker notion of closedness.
We say that a subset $\rel = \rel^+ \sqcup \rel^-$ is \defn{semiclosed} if both~$\rel^+$ and~$\rel^-$ are closed.
We denote by~$\ISemiClo(\Phi)$ the set of semiclosed subsets of~$\Phi$.
Note that $\IClo(\Phi) \subseteq \ISemiClo(\Phi)$ but that the reverse inclusion does not hold~in~general.

\begin{proposition}
\label{prop:semiclosed}
The weak order~$\wole$ on~$\ISemiClo(\Phi)$ is a lattice with meet and join
\[
{\rel[R]} \meetSC {\rel[S]} = \cl{( \rel^+ \cup \rel[S]^+ )} \sqcup ( \rel^- \cap \rel[S]^- )
\quad\text{and}\quad
{\rel[R]} \joinSC {\rel[S]} = ( \rel^+ \cap \rel[S]^+ ) \sqcup \cl{( \rel^- \cup \rel[S]^- )}.
\]
\end{proposition}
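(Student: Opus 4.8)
The plan is to reduce everything to the fact that the weak order on $\IRel(\Phi)$ is a product of two inclusion orders (Proposition~\ref{prop:all}) together with the fact that $\cl{\cdot}$ is a closure operator (Remark~\ref{rem:closureOperator}). Since $\ISemiClo(\Phi)$ is finite and nonempty (it contains $\varnothing$), it suffices to check that, for any $\rel[R], \rel[S] \in \ISemiClo(\Phi)$, the two subsets displayed in the statement belong to $\ISemiClo(\Phi)$ and are respectively the greatest lower bound and the least upper bound of $\rel[R]$ and $\rel[S]$ for $\wole$ inside $\ISemiClo(\Phi)$. I would first verify that $\cl{(\rel[R]^+ \cup \rel[S]^+)} \sqcup (\rel[R]^- \cap \rel[S]^-)$ is semiclosed: its negative part $\rel[R]^- \cap \rel[S]^-$ is an intersection of closed sets, hence closed, and its positive part is closed by Remark~\ref{rem:closureOperator}; moreover $\cl{(\rel[R]^+ \cup \rel[S]^+)} \subseteq \Phi^+$, because the generic functional $f$ is strictly positive on every positive root, so any nonzero nonnegative integer combination of positive roots has positive $f$-value and thus lies in $\Phi^+$ whenever it is a root. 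Hence the displayed set is a genuine disjoint union of a closed subset of $\Phi^+$ and a closed subset of $\Phi^-$, so it lies in $\ISemiClo(\Phi)$. The case of the join is dual, using $\cl{(\rel[R]^- \cup \rel[S]^-)} \subseteq \Phi^-$.

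Next I would establish the universal property of the meet. It is a lower bound: since $\cl{\cdot}$ is extensive, $(\rel[R] \meetSC \rel[S])^+ = \cl{(\rel[R]^+ \cup \rel[S]^+)} \supseteq \rel[R]^+$, and $(\rel[R] \meetSC \rel[S])^- = \rel[R]^- \cap \rel[S]^- \subseteq \rel[R]^-$, so $\rel[R] \meetSC \rel[S] \wole \rel[R]$, and symmetrically $\rel[R] \meetSC \rel[S] \wole \rel[S]$. It is the greatest such: if $\rel[T] \in \ISemiClo(\Phi)$ satisfies $\rel[T] \wole \rel[R]$ and $\rel[T] \wole \rel[S]$, then $\rel[T]^+ \supseteq \rel[R]^+ \cup \rel[S]^+$, and as $\rel[T]^+$ is closed, monotonicity and idempotency of $\cl{\cdot}$ give $\rel[T]^+ \supseteq \cl{(\rel[R]^+ \cup \rel[S]^+)}$; likewise $\rel[T]^- \subseteq \rel[R]^- \cap \rel[S]^-$. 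Therefore $\rel[T] \wole \rel[R] \meetSC \rel[S]$, which proves that $\rel[R] \meetSC \rel[S]$ is the meet. The proof that $\rel[R] \joinSC \rel[S]$ is the join is obtained from this one by exchanging the positive and negative parts (and, correspondingly, $\cup$ with $\cap$, $\supseteq$ with $\subseteq$, and applying $\cl{\cdot}$ to the negative side), so I would just record this symmetry rather than repeat the argument.

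The only step that is not purely formal is the one already isolated above, namely that applying the closure operator to a set of positive (respectively negative) roots never produces a negative (respectively positive) root; this is exactly what guarantees that the two candidate formulas respect the splitting $\rel[R] = \rel[R]^+ \sqcup \rel[R]^-$, and it is also where crystallographicity is used, since the closure operator and the notion of closed subset are only available in that setting (Lemma~\ref{lem:closed}). Everything else follows from $\cl{\cdot}$ being a closure operator and from the componentwise description of $\wole$, and having all binary meets and joins in the finite nonempty poset $\ISemiClo(\Phi)$ yields the lattice structure.
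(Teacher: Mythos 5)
Your proof is correct and follows essentially the same route as the paper's: first check that the candidate meet is semiclosed, then verify the universal property using that a closed set containing $\rel[R]^+ \cup \rel[S]^+$ must contain its closure, and dualize for the join. The only difference is that you explicitly verify $\cl{(\rel[R]^+ \cup \rel[S]^+)} \subseteq \Phi^+$ via the defining functional $f$ (so that the displayed formula really is a disjoint union of a positive and a negative part), a point the paper leaves implicit.
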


\begin{proof}
Observe first that ${\rel[R]} \meetSC {\rel[S]}$ is indeed semiclosed ($\cl{\rel[T]}$ is always closed and $\IClo(\Phi)$ is stable by intersection).
Moreover, ${\rel[R]} \meetSC {\rel[S]} \wole \rel[R]$ and~${\rel[R]} \meetSC {\rel[S]} \wole \rel[S]$.
Assume now that~$\rel[T] \subseteq \Phi$ is semiclosed such that~$\rel[T] \wole \rel[R]$ and~$\rel[T] \wole \rel[S]$.
Then~$\rel[T]^+ \supseteq \rel[R]^+ \cup \rel[S]^+$ and $\rel[T]^- \subseteq \rel[R]^- \cap \rel[S]^-$.
Moreover, since~$\rel[T]^+$ is closed, we get that~$\rel[T]^+ \supseteq \cl{( \rel[R]^+ \cup \rel[S]^+ )}$ so that~$\rel[T] \wole {\rel[R]} \meetSC {\rel[S]}$.
We conclude that~${\rel[R]} \meetSC {\rel[S]}$ is indeed the meet of~$\rel[R]$ and~$\rel[S]$.
The proof is similar for the join.
\end{proof}

\begin{proposition}
\label{prop:coverRelationsSemiClosed}
All cover relations in the weak order on~$\ISemiClo(\Phi)$ are cover relations in the weak order on~$\IRel(\Phi)$. In particular, the weak order on~$\ISemiClo(\Phi)$ is still graded by $\rel[R] \mapsto |{\rel[R]^-}| - |{\rel[R]^+}|$.
\end{proposition}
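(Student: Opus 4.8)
The plan is to follow the strategy used in the proof of Proposition~\ref{prop:coverRelationsAntisymmetric}, but with one twist: given a cover relation $\rel[R] \wole \rel[S]$ in the weak order on $\ISemiClo(\Phi)$, we cannot in general delete an arbitrary element of the symmetric difference of $\rel[R]$ and $\rel[S]$ and stay semiclosed, so instead we delete an element of \emph{extremal (absolute) height}. Recall that $\rel[R] \wole \rel[S]$ means $\rel[R]^+ \supseteq \rel[S]^+$ and $\rel[R]^- \subseteq \rel[S]^-$, with at least one inclusion strict, and we treat the two possibilities separately.

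Suppose first that $\rel[R]^+ \supsetneq \rel[S]^+$, and let $\alpha$ be a root of maximal height in the nonempty set $\rel[R]^+ \ssm \rel[S]^+$. I would check that $\rel[T] \eqdef \rel[R] \ssm \{\alpha\}$ is semiclosed. Its negative part is $\rel[R]^-$, which is closed; for the positive part, suppose $\beta, \gamma \in \rel[R]^+ \ssm \{\alpha\}$ with $\beta + \gamma \in \Phi$. Then $\beta + \gamma \in \rel[R]^+$ since $\rel[R]^+$ is closed, and it only remains to rule out $\beta + \gamma = \alpha$. If $\beta$ and $\gamma$ both lay in $\rel[S]^+$, then $\alpha = \beta + \gamma \in \rel[S]^+$ because $\rel[S]^+$ is closed, contradicting $\alpha \notin \rel[S]^+$; hence one of them, say $\beta$, lies in $\rel[R]^+ \ssm \rel[S]^+$, and since positive roots have positive height we get $h(\beta) < h(\beta) + h(\gamma) = h(\alpha)$, contradicting the maximality of $h(\alpha)$. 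Thus $\rel[T] \in \ISemiClo(\Phi)$, and since $\alpha \notin \rel[S]^+$ we have $\rel[R] \prec \rel[T] \wole \rel[S]$; as $\rel[S]$ covers $\rel[R]$, this forces $\rel[S] = \rel[T] = \rel[R] \ssm \{\alpha\}$.

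The case $\rel[R]^- \subsetneq \rel[S]^-$ is symmetric: choosing $\beta$ of maximal absolute height in $\rel[S]^- \ssm \rel[R]^-$, the same computation (now with $\rel[R]^-$ playing the role previously played by $\rel[S]^+$) shows that $\rel[S] \ssm \{\beta\}$ is semiclosed, so that $\rel[R] \wole \rel[S] \ssm \{\beta\} \prec \rel[S]$ yields $\rel[R] = \rel[S] \ssm \{\beta\}$. In either case $\rel[R] \wole \rel[S]$ is one of the cover relations of the weak order on $\IRel(\Phi)$ described in Proposition~\ref{prop:all}. The grading statement then follows at once: every maximal chain of $\ISemiClo(\Phi)$ is a chain of $\IRel(\Phi)$ all of whose steps are covers of $\IRel(\Phi)$, and along each such step the quantity $\rel[R] \mapsto |{\rel[R]^-}| - |{\rel[R]^+}|$ changes by exactly $1$.

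The only genuinely delicate point is the closedness verification for $\rel[T]^+$ (and, symmetrically, for the negative part in the second case): this is exactly where the height function prevents the deleted root from being recreated as a sum, and where we crucially use that $\rel[S]^+$ (resp.\ $\rel[R]^-$) is itself closed, so that the two summands cannot both remain inside it. Everything else is a routine transcription of the argument for $\IAntisym(\Phi)$.
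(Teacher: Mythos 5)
Your overall strategy is the right one and is essentially the paper's, but both of your extremality choices point the wrong way, and this breaks the key verification. In the first case you take $\alpha$ of \emph{maximal} height in $\rel[R]^+ \ssm \rel[S]^+$ and, when ruling out a decomposition $\alpha = \beta + \gamma$ with (say) $\beta \in \rel[R]^+ \ssm \rel[S]^+$, you derive $h(\beta) < h(\alpha)$ and declare this to contradict the maximality of $h(\alpha)$. It does not: finding an element of $\rel[R]^+ \ssm \rel[S]^+$ of \emph{smaller} height than $\alpha$ is perfectly compatible with $\alpha$ having maximal height there. The inequality you obtain yields a contradiction only if $\alpha$ is chosen of \emph{minimal} height in $\rel[R]^+ \ssm \rel[S]^+$ --- which is what the paper does: then $h(\beta), h(\gamma) < h(\alpha)$ forces $\beta, \gamma \in \rel[S]^+$ by minimality, and closedness of $\rel[S]^+$ gives $\alpha \in \rel[S]^+$, the desired contradiction. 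With your choice the set $\rel[T]$ need not be semiclosed: in type $A_2$, take $\rel[R]^+ = \{\alpha_1, \alpha_2, \alpha_1+\alpha_2\}$ and $\rel[S]^+ = \{\alpha_2\}$; deleting the maximal-height element $\alpha_1+\alpha_2$ of $\rel[R]^+ \ssm \rel[S]^+$ destroys closedness, while deleting the minimal-height element $\alpha_1$ does not. (You cannot rescue the argument by saying that in an actual cover relation the difference is a singleton so max and min coincide --- that the difference is a singleton is precisely what is being proved.)

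The same inversion occurs in the second case. Your variant of removing $\beta$ from $\rel[S]$, rather than adding it to $\rel[R]$ as the paper does, is a legitimate alternative structure, but for a deletion the correct choice is $\beta$ of \emph{minimal} absolute height in $\rel[S]^- \ssm \rel[R]^-$: then any decomposition $\beta = \gamma + \delta$ with $\gamma, \delta \in \rel[S]^-$ has both summands of strictly smaller absolute height, hence both in $\rel[R]^-$ by minimality, hence $\beta \in \rel[R]^-$ by closedness of $\rel[R]^-$, a contradiction. Maximal absolute height is the right choice only for the paper's operation of \emph{adding} $\beta$ to $\rel[R]$, where one must instead show that any sum $\gamma + \beta$ lying in $\Phi$ already belongs to $\rel[R]^-$. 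Once the extremality choices are corrected in both cases, the remainder of your argument, including the deduction of gradedness, is fine.
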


\begin{proof}
Consider a cover relation $\rel[R] \wole \rel[S]$ in the weak order on~$\ISemiClo(\Phi)$.
We have $\rel[R]^+ \supseteq \rel[S]^+$ and $\rel[R]^- \subseteq \rel[S]^-$ where at least one of the inclusions is strict.
We distinguish two cases.

Suppose first that $\rel[R]^+ \ne \rel[S]^+$, and consider~$\alpha \in \rel[R]^+ \ssm \rel[S]^+$ of minimal height in $\rel[R]^+ \ssm \rel[S]^+$.
Observe that $\alpha$ cannot be decomposed in $\rel[R]^+$: if~$\alpha = \gamma + \delta$ with $\gamma, \delta \in \rel[R]^+$, then $h(\gamma), h(\delta) < h(\alpha)$, so $\gamma, \delta \in \rel[S]^+$ by minimality of~$h(\alpha)$, which contradicts the closedness of~$\rel[S]^+$.
Consider now~$\rel[T] \eqdef \rel[R] \ssm \{\alpha\}$.
Let~$\gamma, \delta \in \rel[T]^+$ with~$\gamma + \delta \in \Phi$.
Then~$\gamma, \delta \in \rel[R]^+$ so that~$\gamma + \delta \in \rel[R]^+$ since~$\rel[R]^+$ is closed.
Since~$\gamma + \delta \ne \alpha$, this implies that~$\gamma + \delta \in \rel[T]^+$.
This shows that~$\rel[T]^+$ is closed.
Since~$\rel[T]^- = \rel[R]^-$ is also closed, we obtain that~$\rel[T]$ is semiclosed.
Since~$\rel[R] \neq \rel[T]$ and $\rel[R] \wole \rel[T] \wole \rel[S]$, this proves that $\rel[T] = \rel[S] = \rel[R] \ssm \{\alpha\}$.

Assume now that~$\rel[R]^- \ne \rel[S]^-$, and let~$\beta \in \rel[S]^- \ssm \rel[R]^-$ of minimal height (or equivalently maximal absolute height).
Consider~$\rel[T] \eqdef \rel[R] \cup \{\beta\}$.
Let~$\gamma, \delta \in \rel[T]^-$ with~$\gamma + \delta \in \Phi$.
If~$\gamma, \delta \in \rel[R]^-$, then~$\gamma + \delta \in \rel[R]^-$ since $\rel[R]^-$ is closed.
Assume now that~$\delta = \beta$.
Then~$\gamma, \beta \in \rel[S]^-$ and~$\rel[S]^-$ is closed, we have~$\gamma + \beta \in \rel[S]^-$ and $h(\gamma + \beta) < h(\beta)$, which ensures that~$\gamma + \beta \in \rel[R]^-$ by minimality of~$h(\beta)$.
This shows that~$\rel[T]^-$ is closed.
Since~$\rel[T]^+ = \rel[R]^+$ is also closed, we obtain that~$\rel[T]$ is semiclosed.
Since~$\rel[R] \neq \rel[T]$ and $\rel[R] \wole \rel[T] \wole \rel[S]$, this proves that $\rel[T] = \rel[S] = \rel[R] \cup \{\beta\}$.
\end{proof}

\begin{corollary}
\label{coro:coverRelationsSemiClosed}
In the weak order on~$\ISemiClo(\Phi)$, the semiclosed subsets of~$\Phi$ that cover a given semiclosed subset~$\rel \in \ISemiClo(\Phi)$ are precisely the relations:
\begin{itemize}
\item $\rel \ssm \{\alpha\}$ for any $\alpha \in \rel^+$ such that there is no~$\gamma, \delta \in \rel[R]^+$ with~$\alpha = \gamma + \delta$,
\item $\rel \cup \{\beta\}$ for any~$\beta \in \Phi^- \ssm \rel^-$ such that $\beta + \gamma \in \Phi \implies \beta + \gamma \in \rel$ for all $\gamma \in \rel^-$.
\end{itemize}
\end{corollary}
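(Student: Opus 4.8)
The plan is to derive this immediately from Proposition~\ref{prop:coverRelationsSemiClosed} together with the explicit cover relations of the full weak order recorded in Proposition~\ref{prop:all}. Since every cover relation $\rel[R] \wole \rel[S]$ in the weak order on~$\ISemiClo(\Phi)$ is a cover relation in the weak order on~$\IRel(\Phi)$, Proposition~\ref{prop:all} forces either~$\rel[S] = \rel[R] \ssm \{\alpha\}$ for some~$\alpha \in \rel[R]^+$, or~$\rel[S] = \rel[R] \cup \{\beta\}$ for some~$\beta \in \Phi^- \ssm \rel[R]^-$. Conversely, as~$\ISemiClo(\Phi)$ is a subposet of~$\IRel(\Phi)$, any such~$\rel[S]$ that is semiclosed automatically covers~$\rel[R]$ in~$\ISemiClo(\Phi)$. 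So the entire content of the corollary reduces to deciding exactly when~$\rel[R] \ssm \{\alpha\}$ (resp.~$\rel[R] \cup \{\beta\}$) is semiclosed.

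For the first move, removing~$\alpha \in \rel[R]^+$ leaves~$\rel[R]^-$ untouched, hence still closed, so semiclosedness amounts to closedness of~$\rel[R]^+ \ssm \{\alpha\}$. Given~$\gamma, \delta \in \rel[R]^+ \ssm \{\alpha\}$ with~$\gamma + \delta \in \Phi$, closedness of~$\rel[R]^+$ already gives~$\gamma + \delta \in \rel[R]^+$, and the only obstruction is~$\gamma + \delta = \alpha$. Since a decomposition~$\alpha = \gamma + \delta$ with~$\gamma, \delta \in \Phi^+$ necessarily has~$\gamma, \delta \ne \alpha$, we conclude that~$\rel[R]^+ \ssm \{\alpha\}$ is closed if and only if~$\alpha$ admits no decomposition~$\alpha = \gamma + \delta$ with~$\gamma, \delta \in \rel[R]^+$, which is the stated condition.

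For the second move, adding~$\beta \in \Phi^- \ssm \rel[R]^-$ leaves~$\rel[R]^+$ untouched, so we only need~$\rel[R]^- \cup \{\beta\}$ to be closed. The only new sums to inspect involve~$\beta$: for~$\gamma \in \rel[R]^-$ with~$\gamma + \beta \in \Phi$ we need~$\gamma + \beta \in \rel[R]^- \cup \{\beta\}$, and since~$\gamma + \beta$ is a negative root distinct from~$\beta$ this means~$\gamma + \beta \in \rel[R]^- \subseteq \rel[R]$; the sum~$\beta + \beta = 2\beta$ is never a root and needs no attention. This yields precisely the condition that~$\beta + \gamma \in \Phi \implies \beta + \gamma \in \rel[R]$ for all~$\gamma \in \rel[R]^-$.

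There is essentially no obstacle beyond keeping this case analysis honest, since the substantive work was already carried out in Proposition~\ref{prop:coverRelationsSemiClosed}; the only points worth stating explicitly are that a decomposition~$\alpha = \gamma + \delta$ inside~$\rel[R]^+$ automatically uses roots other than~$\alpha$, and dually that~$2\beta \notin \Phi$, so that the conditions in the corollary are literally equivalent to the (non-)closedness statements above.
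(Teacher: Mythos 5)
Your proposal is correct and follows essentially the same route the paper intends: the corollary is stated as an immediate consequence of Proposition~\ref{prop:coverRelationsSemiClosed} (which reduces covers in~$\ISemiClo(\Phi)$ to single-element additions or removals), and the remaining case analysis of when $\rel \ssm \{\alpha\}$ or $\rel \cup \{\beta\}$ stays semiclosed is exactly the computation already embedded in the proof of that proposition. Your two explicit sanity checks (that a decomposition $\alpha = \gamma + \delta$ in~$\rel^+$ forces $\gamma, \delta \ne \alpha$, and that $2\beta \notin \Phi$ since the root system is reduced) are the right points to make the equivalence literal.
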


We now come back to closed subsets of~$\Phi$ introduced in Definition~\ref{def:posets}.
Unfortunately, $\IClo(\Phi)$ still does not induce a sublattice of~$\ISemiClo(\Phi)$.
We thus need a transformation similar to the closure~$\rel \mapsto \cl{\rel}$ to transform a semiclosed subset of~$\Phi$ into a closed one.
For~$\rel \in \IRel(\Phi)$, we define the \defn{negative closure deletion}~$\ncd{\rel}$ and the \defn{positive closure deletion}~$\pcd{\rel}$~by
\begin{align*}
\ncd{\rel} & \eqdef {\rel} \ssm \bigset{\alpha \in \rel^-}{\exists \, \rel[X] \subseteq \rel^+ \text{such that } \alpha + \Sigma \rel[X] \in \Phi \ssm \rel}, \\
\pcd{\rel} & \eqdef {\rel} \ssm \bigset{\alpha \in \rel^+}{\exists \, \rel[X] \subseteq \rel^- \text{such that } \alpha + \Sigma \rel[X] \in \Phi \ssm \rel}.
\end{align*}
As in Section~\ref{subsec:sums}, the notation $\Sigma{\rel[X]}$ in these formulas denotes the sum of all roots in~$\rel[X]$.

\begin{remark}
\label{rem:noVanishingSubsum}
In the case that~$\rel$ is semiclosed, we can assume that the set~$\rel[X]$ in the definitions of~$\ncd{\rel}$ and~$\pcd{\rel}$ is such that the~$\alpha + \Sigma{\rel[X]}$ has no vanishing subsum.
Observe first that no vanishing subsum can contain~$\alpha$.
Indeed, if~$\rel[Y] \subseteq \rel[X]$ is such that~$\alpha + \Sigma{\rel[Y]} = 0$, then $\rel[X] \ssm \rel[Y] \subseteq \rel^-$ and~$\rel^-$ closed implies that~$\alpha + \Sigma{\rel[X]} = \Sigma(\rel[X] \ssm \rel[Y]) \in \rel$.
Now if~$\rel[Y] \subseteq \rel[X]$ is such that~$\Sigma{\rel[Y]} = 0$, then $\alpha + \Sigma(\rel[X] \ssm \rel[Y]) = \alpha + \Sigma{\rel[X]} \notin \rel$, so that we can replace~$\rel[X]$ by~$\rel[X] \ssm \rel[Y]$.
\end{remark}

\begin{lemma}
\label{lem:orderNcdPcd}
For any~$\rel \in \IRel(\Phi)$, we have $\ncd{\rel} \wole \rel \wole \pcd{\rel}$.
\end{lemma}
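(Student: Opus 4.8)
The plan is simply to unwind the definitions, since the statement is immediate once one observes that $\ncd{\rel}$ (respectively $\pcd{\rel}$) is obtained from $\rel$ by deleting only \emph{negative} (respectively only \emph{positive}) roots.

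First I would record that, directly from the definition, $\ncd{\rel} \subseteq \rel$ and the set of roots deleted to pass from~$\rel$ to~$\ncd{\rel}$, namely $\bigset{\alpha \in \rel^-}{\exists \, \rel[X] \subseteq \rel^+ \text{ such that } \alpha + \Sigma\rel[X] \in \Phi \ssm \rel}$, is contained in~$\rel^-$. Hence $\ncd{\rel}^+ = \rel^+$ while $\ncd{\rel}^- \subseteq \rel^-$. By Definition~\ref{def:weakOrder}, the conditions $\ncd{\rel}^+ \supseteq \rel^+$ and $\ncd{\rel}^- \subseteq \rel^-$ are exactly what is needed to conclude $\ncd{\rel} \wole \rel$.

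Symmetrically, $\pcd{\rel} \subseteq \rel$ and the deleted set $\bigset{\alpha \in \rel^+}{\exists \, \rel[X] \subseteq \rel^- \text{ such that } \alpha + \Sigma\rel[X] \in \Phi \ssm \rel}$ is contained in~$\rel^+$, so $\pcd{\rel}^- = \rel^-$ and $\pcd{\rel}^+ \subseteq \rel^+$; this is precisely $\rel \wole \pcd{\rel}$. There is no real obstacle here: the only point to check is that the deletions in the two operators affect roots of a single, fixed sign, after which the claim is a one-line consequence of the definition of the weak order. (One could also note that $\ncd{\rel} \wole \rel \wole \pcd{\rel}$ will in fact be strengthened later, but for this lemma the above suffices.)
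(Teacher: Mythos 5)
Your proof is correct and follows exactly the paper's own argument: both simply observe that $\ncd{\rel}$ (resp.\ $\pcd{\rel}$) deletes only negative (resp.\ positive) roots, so $(\ncd{\rel})^+ = \rel^+$, $(\ncd{\rel})^- \subseteq \rel^-$, $(\pcd{\rel})^- = \rel^-$, $(\pcd{\rel})^+ \subseteq \rel^+$, and the conclusion is immediate from Definition~\ref{def:weakOrder}. Nothing to add.
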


\begin{proof}
Since~$\ncd{\rel}$ (resp.~$\pcd{\rel}$) is obtained from~$\rel$ by deleting negative (resp.~positive) roots, we have~$(\ncd{\rel})^+ = \rel^+ \supseteq (\pcd{\rel})^+$ and~$(\ncd{\rel})^- \subseteq \rel^- = (\pcd{\rel})^-$, so that~$\ncd{\rel} \wole \rel \wole \pcd{\rel}$.
\end{proof}

\begin{lemma}
\label{lem:semiclosedImpliesNcdPcdClosed}
If~$\Phi$ is crystallographic and~${\rel} \subseteq \Phi$ is semiclosed, then both~$\ncd{\rel}$ and~$\pcd{\rel}$ are closed.
\end{lemma}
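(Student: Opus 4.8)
My plan is to prove that $\ncd{\rel}$ is closed; the statement for $\pcd{\rel}$ then follows automatically, since $-\rel$ is again semiclosed, $\ncd{(-\rel)} = -\pcd{\rel}$, and the map $\gamma \mapsto -\gamma$ preserves closedness. So let $\rel$ be semiclosed. Because $\ncd{\rel}$ only deletes negative roots of $\rel$, its positive part is $(\ncd{\rel})^+ = \rel^+$, which is closed, and by definition $(\ncd{\rel})^-$ is the set of $\alpha \in \rel^-$ such that $\alpha + \Sigma\rel[X] \in \rel$ for every multiset $\rel[X] \subseteq \rel^+$ with $\alpha + \Sigma\rel[X] \in \Phi$. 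By Lemma~\ref{lem:closed} it suffices to show that $\mu + \nu \in \ncd{\rel}$ whenever $\mu, \nu \in \ncd{\rel}$ and $\mu + \nu \in \Phi$. When $\mu, \nu \in \rel^+$ this is immediate from the closedness of $\rel^+$, so the two cases in which $\mu$ or $\nu$ is negative need all the work.

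The main tool is a \emph{propagation} principle: if $\alpha \in (\ncd{\rel})^-$ and $\rel[X] \subseteq \rel^+$ is a multiset with $\alpha + \Sigma\rel[X] \in \Phi$, then in fact $\alpha + \Sigma\rel[X] \in \ncd{\rel}$. The argument is short: $\alpha + \Sigma\rel[X] \in \rel$ by the description of $(\ncd{\rel})^-$; if it is a positive root it lies in $\rel^+ = (\ncd{\rel})^+$; and if it is a negative root, then any multiset $\rel[Y] \subseteq \rel^+$ with $(\alpha + \Sigma\rel[X]) + \Sigma\rel[Y] \in \Phi \ssm \rel$ would contradict $\alpha \in (\ncd{\rel})^-$, since $\rel[X]$ and $\rel[Y]$ merge into one multiset $\rel[Z] \subseteq \rel^+$ with $\alpha + \Sigma\rel[Z] \in \Phi \ssm \rel$. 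Taking $\rel[X]$ a single root settles the mixed case: if $\mu \in \rel^+$ and $\nu \in (\ncd{\rel})^-$, then $\mu + \nu \in \ncd{\rel}$.

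It remains to treat $\mu, \nu \in (\ncd{\rel})^-$, and for this I would prove, by induction on $|\rel[X]|$, the auxiliary claim: for all $\alpha, \alpha' \in (\ncd{\rel})^-$ with $\alpha + \alpha' \in \Phi$ and every multiset $\rel[X] \subseteq \rel^+$ with $(\alpha + \alpha') + \Sigma\rel[X] \in \Phi$, one has $(\alpha + \alpha') + \Sigma\rel[X] \in \rel$. Applied with $\alpha = \mu$, $\alpha' = \nu$ and all admissible $\rel[X]$, the claim gives $\mu + \nu \in (\ncd{\rel})^- \subseteq \ncd{\rel}$. The base case $\rel[X] = \varnothing$ is the closedness of $\rel^-$. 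For the inductive step, I would first clear vanishing subsums of the multiset consisting of $\alpha$, $\alpha'$ and the roots of $\rel[X]$, much as in Remark~\ref{rem:noVanishingSubsum}: a vanishing subsum contained in $\rel[X]$ can be deleted, shortening $\rel[X]$; a subsum $\alpha + \Sigma\rel[Y] = 0$, respectively $\alpha' + \Sigma\rel[Y] = 0$, rewrites the target root as $\alpha' + \Sigma(\rel[X] \ssm \rel[Y])$, respectively $\alpha + \Sigma(\rel[X] \ssm \rel[Y])$, which lies in $\rel$ by propagation; and a subsum $(\alpha + \alpha') + \Sigma\rel[Y] = 0$ rewrites it as a sum of roots of $\rel^+$, hence in $\rel^+$ by Lemma~\ref{lem:closed}. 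So I may assume this multiset has no vanishing subsum. Then Theorem~\ref{thm:filtrationSummableSubsets}, applied to the summable multiset formed by $\alpha + \alpha'$ and the roots of $\rel[X]$ with distinguished element $\alpha + \alpha'$, produces a root $\xi$ of $\rel[X]$ with $\alpha + \alpha' + \xi \in \Phi$. As $\{\alpha, \alpha', \xi\}$ has no vanishing subsum, Proposition~\ref{prop:twoSubsumsRoots} gives $\alpha + \xi \in \Phi$ or $\alpha' + \xi \in \Phi$; by the symmetry of the claim in $\alpha$ and $\alpha'$ I may assume $\alpha + \xi \in \Phi$, so $\alpha + \xi \in \ncd{\rel}$ by propagation. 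If $\alpha + \xi \in \Phi^-$, then $\alpha + \xi \in (\ncd{\rel})^-$ and the induction hypothesis for the pair $(\alpha + \xi, \alpha')$ and the shorter multiset $\rel[X] \ssm \{\xi\}$ yields $((\alpha + \xi) + \alpha') + \Sigma(\rel[X] \ssm \{\xi\}) = (\alpha + \alpha') + \Sigma\rel[X] \in \rel$. If $\alpha + \xi \in \Phi^+$, then $\alpha + \xi \in (\ncd{\rel})^+ = \rel^+$, so replacing $\xi$ by $\alpha + \xi$ turns $\rel[X]$ into a multiset of roots of $\rel^+$ whose sum with $\alpha'$ is again $(\alpha + \alpha') + \Sigma\rel[X]$, and propagation applied to $\alpha'$ gives the same conclusion. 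This closes the induction and the proof.

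The step I expect to be the real obstacle is precisely this inductive step. The point is that, to bring the hypotheses on $\alpha$ and $\alpha'$ individually into play, one cannot keep working with the single root $\alpha + \alpha'$: one must peel off one more root $\xi$ of $\rel[X]$, and the behaviour of the new root $\alpha + \xi$ forces a genuine case split --- recursion on a shorter multiset when $\alpha + \xi$ stays negative, the propagation principle when it crosses over to positive. The surrounding bookkeeping (clearing vanishing subsums so that Theorem~\ref{thm:filtrationSummableSubsets} and Proposition~\ref{prop:twoSubsumsRoots} apply, and handling the $\rel[X]$ as multisets) is routine, but it is exactly where the crystallographic hypothesis and the delicate subsum results of Section~\ref{subsec:sums} are used.
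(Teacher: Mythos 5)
Your proof is correct, and it reaches the same destination as the paper's by a recognizably parallel but differently organized route. The shared core is identical: in the only hard case (two negative roots of~$\ncd{\rel}$ whose sum is a root), both arguments use Theorem~\ref{thm:filtrationSummableSubsets} to peel one root~$\xi$ off the witnessing multiset so that $\alpha+\alpha'+\xi\in\Phi$, then Proposition~\ref{prop:twoSubsumsRoots} to obtain a two-term subsum in~$\Phi$, and then split on the sign of the resulting intermediate root. The differences are organizational. The paper argues by contradiction on a pair of roots of~$\ncd{\rel}$ violating closedness whose sum has minimal absolute height, combines the peeled root with the \emph{other} negative root (forming $\beta+\gamma$ where you form $\alpha+\xi$), and closes the loop with a four-way sub-case analysis that re-invokes the height minimality; you instead isolate the ``propagation'' principle as a reusable lemma and run a direct induction on the size of the witness~$\rel[X]$, which compresses the paper's last two sub-cases into the two clean alternatives ``$\alpha+\xi$ negative: recurse on a shorter witness'' and ``$\alpha+\xi$ positive: fold it into the witness and propagate from~$\alpha'$''. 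This buys a tidier write-up (no global contradiction, no height bookkeeping) at the price of stating and verifying the auxiliary claim together with its vanishing-subsum preprocessing. Two minor remarks: the sub-case of a vanishing subsum entirely contained in~$\rel[X]$ is vacuous, since a nonempty sum of positive roots is never zero; and your systematic use of multisets for the witnesses~$\rel[X]$ (needed when you merge $\rel[X]$ and~$\rel[Y]$ in the propagation step) is consistent with the paper's own implicit convention, which likewise forms $\{\beta+\gamma\}\cup(\rel[X]\ssm\{\gamma\})$ without worrying about multiplicities.
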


\begin{proof}
Assume by means of contradiction that~$\rel$ is semiclosed and~$\ncd{\rel}$ is not closed.
Then there are roots~$\alpha, \beta \in \ncd{\rel}$ such that~$\alpha + \beta \in \Phi \ssm \ncd{\rel}$.
Consider two such roots such that~$\alpha + \beta$ has minimal absolute height.
We distinguish four cases:
\begin{itemize}
\item If~$\alpha, \beta \in \Phi^+$, then~$\alpha, \beta \in (\ncd{\rel})^+ = \rel^+$, which is closed, so that~${\alpha + \beta \in \rel^+ = (\ncd{\rel})^+}$. Contradiction.
\item If~$\alpha \in \Phi^-$ and~$\beta \in \Phi^+$, we distinguish again two cases:
	\begin{itemize}
	\item If~$\alpha + \beta \notin \rel$, then the set~$\{\beta\}$ ensures~$\alpha \notin \ncd{\rel}$. Contradiction.
	\item If~$\alpha + \beta \in \rel$, then since~$\alpha + \beta \in \rel \ssm \ncd{\rel}$, there exists~$\rel[X] \subseteq \rel^+$ such that ${\alpha + \beta + \Sigma{\rel[X]} \in \Phi \ssm \rel}$. Since~$\beta \in \rel^+$, the set~$\{\beta\} \cup \rel[X]$ ensures~$\alpha \notin \ncd{\rel}$. Contradiction.
	\end{itemize}
\item If~$\alpha \in \Phi^+$ and~$\beta \in \Phi^-$, the argument is symmetric.
\item If~$\alpha, \beta \in \Phi^-$, then~$\alpha + \beta \in \rel^-$ since~$\rel^-$ is closed. Since~$\alpha + \beta \in \rel \ssm \ncd{\rel}$, there exists~${\rel[X] \subseteq \rel^+}$ such that~$(\alpha + \beta) + \Sigma{\rel[X]} \in \Phi \ssm \rel$. By Remark~\ref{rem:noVanishingSubsum}, we can assume that~${(\alpha + \beta) + \Sigma{\rel[X]}}$ has no vanishing subsum. By Theorem~\ref{thm:filtrationSummableSubsets}, there exists~$\gamma \in \rel[X]$ such that~$\alpha + \beta + \gamma \in \Phi$. By Proposition~\ref{prop:twoSubsumsRoots}, we can assume without loss of generality that~$\beta + \gamma \in \Phi$. We now distinguish four cases:
	\begin{itemize}
	\item If~$\beta + \gamma \notin \rel$, then the set~$\{\gamma\}$ ensures~$\beta \notin \ncd{\rel}$. Contradiction.
	\item If~$\beta + \gamma \in \rel^+$, then~$\rel[T] = \{\beta + \gamma\} \cup (\rel[X] \ssm \{\gamma\}) \subseteq \rel^+$ and~$\alpha + \Sigma{\rel[T]} = \alpha + \beta + \Sigma{\rel[X]} \in \Phi \ssm \rel$ so that~$\alpha \notin \ncd{\rel}$. Contradiction.
	\item If~$\beta + \gamma \in \rel^- \ssm \ncd{\rel}$, then there exists~$\rel[T] \subseteq \rel^+$ such that~$\beta + \gamma + \Sigma{\rel[T]} \in \Phi \ssm \rel$. Since~$\gamma \in \rel^+$, the set~$\{\gamma\} \cup \rel[T]$ ensures that~$\beta \notin \ncd{\rel}$. Contradiction.
	\item If~$\beta + \gamma \in (\ncd{\rel})^-$, then we have~$\alpha \in \ncd{\rel}$ and~$\beta + \gamma \in \ncd{\rel}$ with~$\alpha + \beta + \gamma \in \Phi$. Moreover, $h(\alpha + \beta + \gamma) < h(\alpha + \beta)$ since~$\alpha + \beta \in \Phi^-$ while~$\gamma \in \Phi^+$ and~$\beta + \gamma \in \Phi^-$. By minimality in the choice of~$\alpha + \beta$, we obtain that~$\alpha + \beta + \gamma \in \ncd{\rel}$. Observe now that~$\rel[X] \ssm \{\gamma\} \subseteq \rel^+$ and~$\alpha + \beta + \gamma + \Sigma(\rel[X] \ssm \{\gamma\}) = \alpha + \beta + \Sigma{\rel[X]} \in \Phi \ssm \rel$. Therefore:
		\begin{itemize}
		\item If~$\alpha + \beta + \gamma$ is negative, the set~$\rel[X] \ssm \{\gamma\}$ ensures~$\alpha + \beta + \gamma \notin \ncd{\rel}$. Contradiction.
		\item If~$\alpha + \beta + \gamma$ is positive, then~$\rel^+$ is not closed. Contradiction.
		\end{itemize}
	\end{itemize}
\end{itemize}
In all cases, we have reached a contradiction.
We conclude that if~$\rel$ is semiclosed, then~$\ncd{\rel}$ is closed.
The proof is symmetric for~$\pcd{\rel}$.
\end{proof}

\begin{proposition}
\label{prop:closed}
When~$\Phi$ is crystallographic, the weak order on~$\IClo(\Phi)$ is a lattice with meet~and~join
\[
{\rel[R]} \meetC {\rel[S]} = \ncd{\big( \cl{(\rel[R]^+ \cup \rel[S]^+)} \sqcup (\rel[R]^- \cap \rel[S]^-) \big)}
\quad\text{and}\quad
{\rel[R]} \joinC {\rel[S]} = \pcd{\big( (\rel[R]^+ \cap \rel[S]^+) \sqcup \cl{(\rel[R]^- \cup \rel[S]^-)} \big)}.
\]
\end{proposition}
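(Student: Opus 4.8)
The plan is to realise $\IClo(\Phi)$ inside the lattice $\ISemiClo(\Phi)$ of semiclosed subsets, which is already known to be a lattice by Proposition~\ref{prop:semiclosed}, using the two deletion maps $\ncd{(\cdot)}$ and $\pcd{(\cdot)}$ as ``downward'' and ``upward'' projections. Precisely, I will prove that $\ncd{(\rel[R] \meetSC \rel[S])}$ is the meet of $\rel[R]$ and $\rel[S]$ in the weak order on $\IClo(\Phi)$ and, dually, that $\pcd{(\rel[R] \joinSC \rel[S])}$ is their join; substituting the formulas for $\meetSC$ and $\joinSC$ from Proposition~\ref{prop:semiclosed} then gives the stated expressions. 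For the meet there are three things to check. It is closed, since $\rel[R] \meetSC \rel[S]$ is semiclosed and Lemma~\ref{lem:semiclosedImpliesNcdPcdClosed} applies. It is a lower bound of $\rel[R]$ and $\rel[S]$, since $\ncd{\rel[U]} \wole \rel[U]$ for every $\rel[U]$ by Lemma~\ref{lem:orderNcdPcd}, and $\rel[R] \meetSC \rel[S] \wole \rel[R], \rel[S]$. The only delicate point is that it dominates every closed lower bound: if $\rel[T] \in \IClo(\Phi)$ satisfies $\rel[T] \wole \rel[R]$ and $\rel[T] \wole \rel[S]$, then $\rel[T] \wole \ncd{(\rel[R] \meetSC \rel[S])}$.

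To prove this last point, set $\rel[U] \eqdef \rel[R] \meetSC \rel[S]$, so that $\rel[U]^+ = \cl{(\rel[R]^+ \cup \rel[S]^+)}$ and $\rel[U]^- = \rel[R]^- \cap \rel[S]^-$. As $\rel[T]$ is semiclosed and $\rel[U]$ is the meet in $\ISemiClo(\Phi)$, we have $\rel[T] \wole \rel[U]$, hence $\rel[T]^+ \supseteq \rel[U]^+$ and $\rel[T]^- \subseteq \rel[U]^-$; since $\ncd{(\cdot)}$ only removes negative roots, $(\ncd{\rel[U]})^+ = \rel[U]^+$, so it suffices to show that no $\alpha \in \rel[T]^-$ is deleted from $\rel[U]$. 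Suppose one is, and choose $\alpha$ with $|h|(\alpha)$ minimal among the deleted roots of $\rel[T]^-$. Because $\rel[U]^+$ is the \emph{closure} of $\rel[R]^+ \cup \rel[S]^+$, we may replace the witness of the deletion by a multiset $\rel[X]$ of roots of $\rel[R]^+ \cup \rel[S]^+$ with $\zeta \eqdef \alpha + \Sigma\rel[X] \in \Phi \ssm \rel[U]$. Now $\rel[X] \subseteq \rel[U]^+ \subseteq \rel[T]^+$ and $\alpha \in \rel[T]^-$, so closedness of $\rel[T]$ (Lemma~\ref{lem:closed}) forces $\zeta \in \rel[T]$; were $\zeta$ negative it would lie in $\rel[T]^- \subseteq \rel[U]^- \subseteq \rel[U]$, contradicting $\zeta \notin \rel[U]$, so $\zeta$ is positive and $\zeta \notin \rel[U]^+$. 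This last fact forces $\{\alpha\} \cup \rel[X]$ to have no vanishing subsum (a vanishing subsum through $\alpha$ would exhibit $\zeta$ as a nonempty sum of roots of $\rel[R]^+ \cup \rel[S]^+$, hence in $\rel[U]^+$; a vanishing subsum avoiding $\alpha$ is a nonempty sum of positive roots). Theorem~\ref{thm:filtrationSummableSubsets} then yields $\delta \in \rel[X]$ with $\alpha + \delta \in \Phi$; assuming $\delta \in \rel[R]^+$ (the case $\delta \in \rel[S]^+$ being symmetric), closedness of $\rel[R]$ together with $\alpha \in \rel[T]^- \subseteq \rel[R]^-$ give $\alpha + \delta \in \rel[R]$. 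If $\alpha + \delta$ is positive then $\alpha + \delta \in \rel[R]^+ \subseteq \rel[U]^+$, and $\zeta = (\alpha + \delta) + \Sigma(\rel[X] \ssm \{\delta\})$ writes $\zeta$ as a sum of elements of the closed set $\rel[U]^+$, so $\zeta \in \rel[U]^+$ by Lemma~\ref{lem:closed} --- absurd. If $\alpha + \delta$ is negative then, as $\delta \in \rel[U]^+ \subseteq \rel[T]^+$, closedness of $\rel[T]$ gives $\alpha + \delta \in \rel[T]^- \subseteq \rel[U]^-$; moreover $(\alpha + \delta) + \Sigma(\rel[X] \ssm \{\delta\}) = \zeta \in \Phi \ssm \rel[U]$ with $\rel[X] \ssm \{\delta\} \subseteq \rel[U]^+$, so $\alpha + \delta \in \rel[T]^-$ is again deleted from $\rel[U]$, yet $|h|(\alpha + \delta) = |h|(\alpha) - h(\delta) < |h|(\alpha)$, contradicting minimality. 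Both cases being impossible, no deleted $\alpha \in \rel[T]^-$ exists.

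The join statement is established by the mirror argument, with $\pcd{(\cdot)}$ in place of $\ncd{(\cdot)}$, $\joinSC$ in place of $\meetSC$, and using that the negative part of $\rel[R] \joinSC \rel[S]$ is $\cl{(\rel[R]^- \cup \rel[S]^-)}$; one again minimizes $|h|$, now among deleted positive roots. I expect the positive sub-case of the dominance step to be the real obstacle: the superficially plausible claim ``a closed set below a semiclosed set $\rel[U]$ lies below $\ncd{\rel[U]}$'' is simply false in general (small counterexamples already occur in type $B_2$), and the argument only works because the positive part of $\rel[R] \meetSC \rel[S]$ is the closure of the concretely available roots $\rel[R]^+ \cup \rel[S]^+$, which is exactly what permits splitting off a single root $\delta$ lying in $\rel[R]^+$ or $\rel[S]^+$ and then either reabsorbing $\zeta$ into $\rel[U]^+$ or strictly descending in absolute height.
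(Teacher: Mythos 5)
Your proof is correct and follows essentially the same route as the paper's: realise the meet as $\ncd{(\rel[R]\meetSC\rel[S])}$, get closedness from Lemma~\ref{lem:semiclosedImpliesNcdPcdClosed} and the lower-bound property from Lemma~\ref{lem:orderNcdPcd}, then establish dominance by taking a deleted root of $\rel[T]^-$ of minimal absolute height, developing the witness set into $\rel[R]^+\cup\rel[S]^+$, extracting a single root $\delta$ via Theorem~\ref{thm:filtrationSummableSubsets}, and splitting on the sign of $\alpha+\delta$. The only cosmetic differences are that you verify the no-vanishing-subsum condition in place rather than citing Remark~\ref{rem:noVanishingSubsum}, and you phrase the height-minimality contradiction contrapositively.
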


\begin{proof}
First, the weak order~$\wole$ on~$\IClo(\Phi)$ is a subposet of the weak order~$\wole$ on~$\IRel(\Phi)$, and it is bounded below by~$\Phi^+$ and above by~$\Phi^-$.
We therefore just need to show that there is a meet and a join and that they are given by the above formulas.

Let~$\rel[R], \rel[S] \in \IClo(\Phi)$ and~$\rel[M] = \rel[R] \meetSC \rel[S]$ so that~$\ncd{\rel[M]} = {\rel[R]} \meetC {\rel[S]}$.
Observe that we have~$\ncd{\rel[M]} \wole \rel[M] \wole \rel[R]$ and~$\ncd{\rel[M]} \wole \rel[M] \wole \rel[S]$ by Lemma~\ref{lem:orderNcdPcd}.
Moreover, since~$\rel[M]$ is semiclosed, $\ncd{\rel[M]}$ is closed by Lemma~\ref{lem:semiclosedImpliesNcdPcdClosed}.
Therefore, $\ncd{\rel[M]}$ is closed and below both~$\rel[R]$ and~$\rel[S]$.

Consider now~$\rel[T] \in \IClo(\Phi)$ such that~$\rel[T] \wole \rel[R]$ and~$\rel[T] \wole \rel[S]$.
Since~$\rel[T] \in \ISemiClo(\Phi)$ and~${\rel[M] = \rel[R] \meetSC \rel[S]}$, we have~$\rel[T] \wole \rel[M]$.
Therefore, $\rel[T]^+ \supseteq \rel[M]^+ = (\ncd{\rel[M]})^+$ and~$\rel[T]^- \subseteq \rel[M]^-$.
Assume by means of contradiction that~$\rel[T] \not\wole \ncd{\rel[M]}$.
Then we have~$\rel[T]^- \not\subseteq (\ncd{\rel[M]})^-$.
Consider~$\alpha \in \rel[T]^- \ssm (\ncd{\rel[M]})^-$ of minimal absolute height.
By definition of~$\ncd{\rel[M]}$, there exists~$\rel[X] \subseteq \rel[M]^+$ such that~${\alpha + \Sigma{\rel[X]} \in \Phi \ssm \rel[M]}$.
Since~${\rel[M]^+ = (\rel[R] \meetSC \rel[S])^+ = \cl{(\rel[R]^+ \cup \rel[S]^+)}}$, we can assume without loss of generality (up to developing each root of~$\rel[X]$) that~${\rel[X] \subseteq (\rel[R]^+ \cup \rel[S]^+)}$.
By Remark~\ref{rem:noVanishingSubsum}, we can moreover assume that~$\alpha + \Sigma{\rel[X]}$ has no vanishing subsum.
By Theorem~\ref{thm:filtrationSummableSubsets}, there exists~$\beta \in \rel[X]$ such that~$\alpha + \beta \in \Phi$.

Since~$\beta \in \rel[X] \subseteq (\rel[R]^+ \cup \rel[S]^+)$, we can assume that~$\beta \in \rel[R]^+$.
Since~${\alpha \in \rel[T]^- \subseteq \rel[R]^-}$, $\beta \in \rel[R]^+ \subseteq \rel[T]^+$ and both~$\rel[R]$ and~$\rel[T]$ are closed, we obtain that~$\alpha + \beta \in {\rel[R]} \cap {\rel[T]}$.
We now distinguish two cases:
\begin{itemize}
\item If~$\alpha + \beta$ is positive, then~$\alpha + \beta \in \rel[R]^+ \subseteq \rel[M]^+$. Since~$\rel[X] \ssm \{\beta\} \subseteq \rel[M]^+$ and~$\rel[M]^+$ is closed, we obtain that~$\alpha + \Sigma{\rel[X]} = (\alpha + \beta) + \Sigma(\rel[X] \ssm \{\beta\}) \in \rel[M]^+$. Contradicion.
\item If~$\alpha + \beta$ is negative, we have~$\alpha + \beta \in \rel[T]^-$. Moreover, $\alpha + \beta$ has smaller absolute height than~$\alpha$ since~$\alpha \in \Phi^-$, $\beta \in \Phi^+$ and~$\alpha + \beta \in \Phi^-$. By minimality in the choice of~$\alpha$, we obtain that~$\alpha + \beta \in \ncd{\rel[M]}$. Since~$\rel[X] \ssm \{\beta\} \subseteq \rel[M]^+$ this implies that~$\alpha + \Sigma{\rel[X]} = (\alpha + \beta) + \Sigma(\rel[X] \ssm \{\beta\}) \in \rel[M]$. Contradiction.
\end{itemize}
Since we reached a contradiction in both cases, we obtain that~$\rel[T] \wole \ncd{\rel[M]}$.
Hence, $\ncd{\rel[M]}$ is indeed the meet of~$\rel[R]$ and~$\rel[S]$ for the weak order on~$\IClo(\Phi)$.
The proof is similar for the join.
\end{proof}

\begin{remark}
In contrast to Propositions~\ref{prop:coverRelationsAntisymmetric} and~\ref{prop:coverRelationsSemiClosed} and Corollaries~\ref{coro:coverRelationsAntisymmetric} and~\ref{coro:coverRelationsSemiClosed}, the cover relations in the weak order on~$\IClo(\Phi)$ are more intricate and the weak order on~$\IClo(\Phi)$ is not graded in general.
\end{remark}

\begin{remark}
\label{rem:failLatticeClosed}
All results presented in this section fail for non-crystallographic root systems.
In view of Remark~\ref{rem:failCaracterizationClosed}, it might a priori depend of the notion of $\N$-closed subsets considered.
However, the following example works for either of the notions (i), (ii) and (iii) of Lemma~\ref{lem:closed}. \\[-.1cm] \indent
Consider the Coxeter group of type~$H_3$ with Dynkin diagram \raisebox{-.1cm}{\includegraphics[scale=.8]{DynkinH3}}.
Consider the roots~$\alpha \eqdef \alpha_1 \in \Phi^+$, $\beta \eqdef - \alpha_1 - \psi \alpha_2 \in \Phi^-$ and~$\gamma \eqdef - \psi \alpha_1 - \alpha_2 - \alpha_3 \in \Phi^-$, where~$\psi = -2\cos(4\pi/5)$.
Note that~$\beta + \gamma \in \Phi^-$ and $\alpha + \beta + \gamma \in \Phi^-$, while~$\alpha + \beta \notin \Phi$ and~$\alpha + \gamma \notin \Phi$.
Consider the sets~$\rel[R] \eqdef \{\alpha, \; \beta, \; \gamma, \; \beta + \gamma, \; \alpha + \beta + \gamma\}$, $\rel[S] \eqdef \{\beta, \; \gamma, \; \beta + \gamma\}$, $\rel[U] \eqdef \{\alpha, \, \beta\}$ and~$\rel[V] \eqdef \{\alpha, \, \gamma\}$.
Note that~$\rel[R], \rel[S], \rel[U]$ and~$\rel[V]$ are closed, and that both~$\rel[U]$ and~$\rel[V]$ are weak order smaller than both~$\rel[R]$ and~$\rel[S]$.
Moreover, we claim that there is no closed subset~$\rel[T]$ which is weak order larger than both~$\rel[U]$ and~$\rel[V]$ and weak order smaller than both~$\rel[R]$ and~$\rel[S]$.
Indeed, such a set~$\rel[T]$ should contain~$\alpha, \beta, \gamma$ and thus~$\beta + \gamma$ and $\alpha + \beta + \gamma$ by closedness, which would contradict~$\rel[T] \wole \rel[S]$.
This implies that~$\rel[R]$ and~$\rel[S]$ have no meet and that~$\rel[U]$ and~$\rel[V]$ have no join in the weak order on closed subsets of~$\Phi$, thus contradicting the result of Proposition~\ref{prop:closed} in the non-crystallographic type~$H_3$.
In fact, even Lemma~\ref{lem:semiclosedImpliesNcdPcdClosed} fails in type~$H_3$ since~$\ncd{\{\alpha, \beta, \gamma, \beta + \gamma\}} = \{\alpha, \beta, \gamma\}$ is not closed.
\end{remark}

\begin{remark}
\label{rem:failLatticeConvex}
As mentioned in Remark~\ref{rem:manyNotionsClosed}, even for crystallographic root systems, there are different possible notions of closed subsets (which all coincide in type~$A$).
Unfortunately, it turns out that Proposition~\ref{prop:closed} fails for the other notions of closed sets.
The smallest counter-example is in type~$B_3$.
Consider the sets of roots
$\rel[R] \eqdef \{-\alpha_1, \; -\alpha_1 - \alpha_2, \; -\alpha_1 - \alpha_2 - \alpha_3, \; -\alpha_1 - 2\alpha_2 - 2\alpha_3, \; \alpha_3\}$,
$\rel[S] \eqdef \{-\alpha_1, \; -\alpha_1 - \alpha_2 - \alpha_3, \; -\alpha_1 - 2\alpha_2 - 2\alpha_3\}$,
$\rel[U] \eqdef \{-\alpha_1, \; \alpha_3\}$
and $\rel[V] \eqdef \{-\alpha_1 - 2\alpha_2 - 2\alpha_3, \; \alpha_3\}$.
Note that~$\rel[R], \rel[S], \rel[U]$ and~$\rel[V]$ are convex, and that both~$\rel[U]$ and~$\rel[V]$ are weak order smaller than both~$\rel[R]$ and~$\rel[S]$.
We have~$\rel[U] \joinC \rel[V] = \rel[R] \meetC \rel[S] = \{-\alpha_1, \; -\alpha_1 - 2\alpha_2 - 2\alpha_3, \; \alpha_3\}$ but this set is not convex.
In fact, we claim that there is no convex subset~$\rel[T]$ which is weak order larger than both~$\rel[U]$ and~$\rel[V]$ and weak order smaller than both~$\rel[R]$ and~$\rel[S]$.
Indeed, such a set~$\rel[T]$ should contain~$\{-\alpha_1, \; -\alpha_1 - 2\alpha_2 - 2\alpha_3, \; \alpha_3\}$ and thus also~$-\alpha_1 - \alpha_2 = (-\alpha_1)/2 + (-\alpha_1 - 2\alpha_2 - 2\alpha_3)/2 + \alpha_3$, contradicting~$\rel[T] \wole \rel[S]$.
This implies that~$\rel[R]$ and~$\rel[S]$ have no meet and that~$\rel[U]$ and~$\rel[V]$ have no join in the weak order on convex subsets of~$\Phi$.
\end{remark}

\subsection{Weak order on $\Phi$-posets}
\label{subsec:weakOrderPosets}
Recall from Definition~\ref{def:posets} that~$\IPos(\Phi)$ denotes the set of $\Phi$-posets, \ie of antisymmetric closed subsets of~$\Phi$.
We finally show that the restriction of the weak order to the $\Phi$-posets still defines a lattice structure.
The weak orders on $A_2$-, $B_2$- and $G_2$-posets are represented in Figures~\ref{fig:posetsAB} and~\ref{fig:posetsG}.
\hvFloat[floatPos=p, capWidth=h, capPos=r, capAngle=90, objectAngle=90, capVPos=c, objectPos=c]{figure}
{\begin{minipage}{23cm}\vspace*{-1.5cm} \includegraphics[scale=.9]{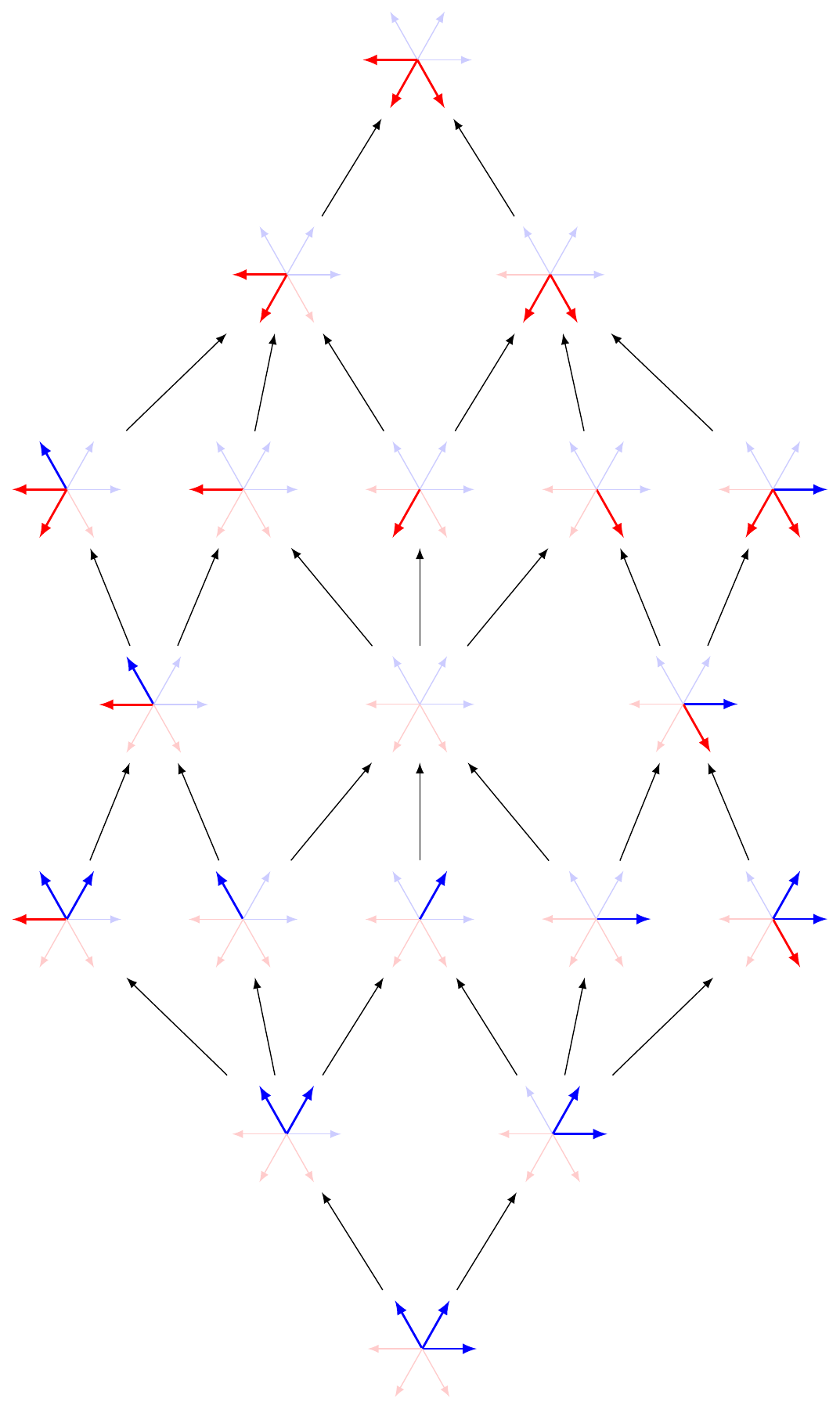} \hspace{2cm} \includegraphics[scale=.65]{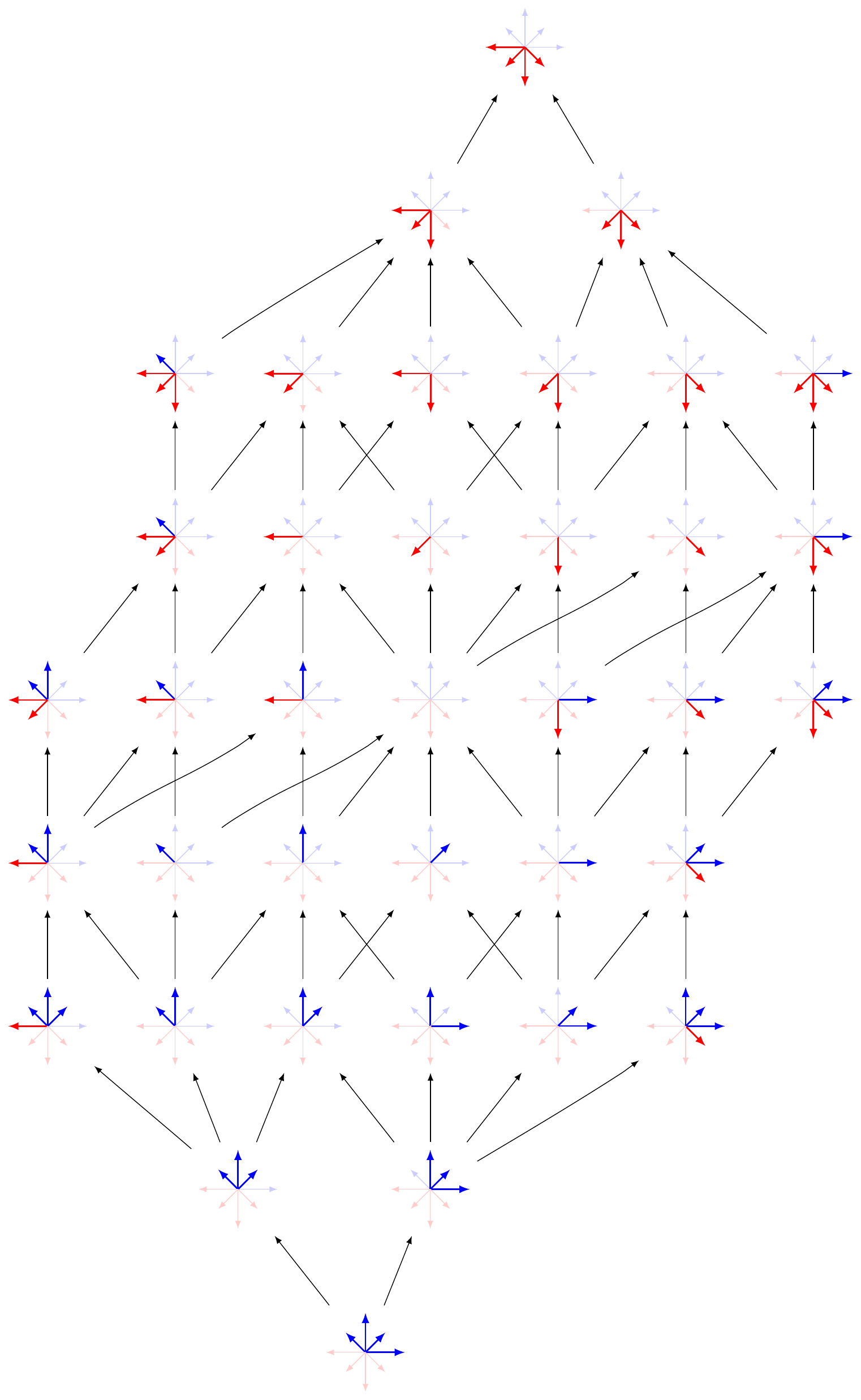} \vspace*{-.3cm}\end{minipage}}
{The weak order on $A_2$-posets (left) and on $B_2$-posets (right).}
{fig:posetsAB}
\hvFloat[floatPos=p, capWidth=h, capPos=r, capAngle=90, objectAngle=90, capVPos=c, objectPos=c]{figure}
{\begin{minipage}{23cm}\vspace*{-2.3cm}\includegraphics[scale=.4]{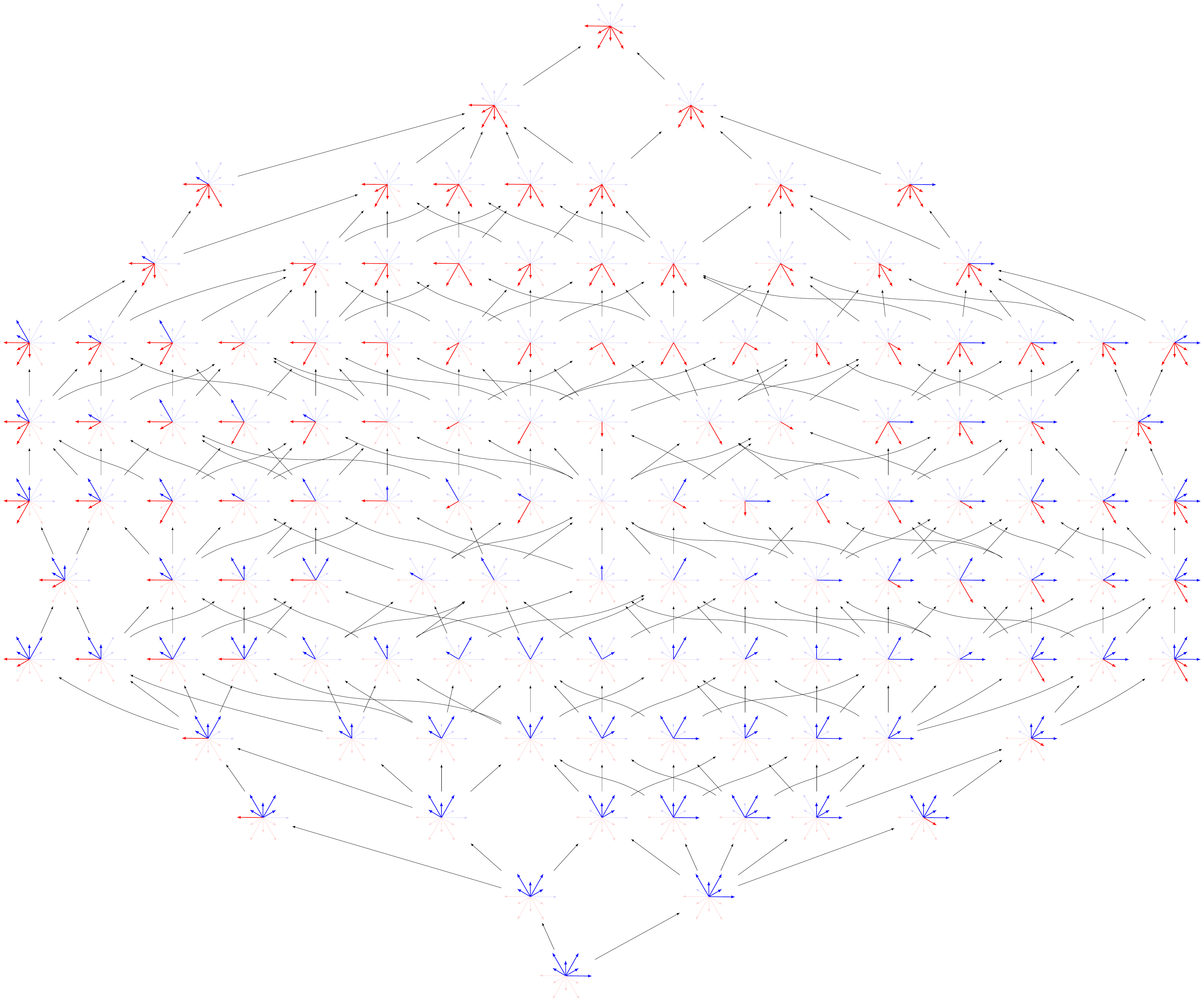}\vspace*{-.3cm}\end{minipage}}
{The weak order on $G_2$-posets.}
{fig:posetsG}

\begin{theorem}
\label{thm:posets}
The meet~$\meetC$ and the join~$\joinC$ both preserve antisymmetry.
Thus, when~$\Phi$ is crystallographic, the set~$\IPos(\Phi)$ of $\Phi$-posets induces a sublattice of the weak order on~$\IClo(\Phi)$.
\end{theorem}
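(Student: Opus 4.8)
The content of the statement is entirely in its first sentence; the sublattice claim will then follow by a soft argument. Here is the plan.

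Fix $\rel[R], \rel[S] \in \IPos(\Phi)$ and write $\rel[M] \eqdef \rel[R] \meetSC \rel[S]$, so that $\rel[R] \meetC \rel[S] = \ncd{\rel[M]}$ by Proposition~\ref{prop:closed}, with $\rel[M]^+ = \cl{(\rel[R]^+ \cup \rel[S]^+)}$ and $\rel[M]^- = \rel[R]^- \cap \rel[S]^-$. Proposition~\ref{prop:closed} (resting on Lemma~\ref{lem:semiclosedImpliesNcdPcdClosed}) already tells us that $\ncd{\rel[M]}$ is closed, so the only thing to establish is that it is antisymmetric. The first observation is that $\ncd{}$ removes only negative roots, so $(\ncd{\rel[M]})^+ = \rel[M]^+$ while $(\ncd{\rel[M]})^-$ is merely contained in $\rel[M]^-$. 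Assume, for contradiction, that some positive root $\alpha$ satisfies $\alpha \in (\ncd{\rel[M]})^+$ and $-\alpha \in (\ncd{\rel[M]})^-$.

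The core of the argument is then a short dichotomy. If $\alpha$ already lies in $\rel[R]^+ \cup \rel[S]^+$, say $\alpha \in \rel[R]^+$, then antisymmetry of $\rel[R]$ gives $-\alpha \notin \rel[R]^-$, hence $-\alpha \notin \rel[R]^- \cap \rel[S]^- = \rel[M]^-$, which already contradicts $-\alpha \in (\ncd{\rel[M]})^- \subseteq \rel[M]^-$. Otherwise $\alpha$ lies in $\cl{(\rel[R]^+ \cup \rel[S]^+)} \ssm (\rel[R]^+ \cup \rel[S]^+)$, so by definition of the closure I can write $\alpha = \Sigma \rel[Y]$ for a nonempty (multi)set $\rel[Y] \subseteq \rel[R]^+ \cup \rel[S]^+$, necessarily with at least two elements. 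I would then pick any $\beta \in \rel[Y]$ and set $\rel[X] \eqdef \rel[Y] \ssm \{\beta\}$, a nonempty (multi)subset of $\rel[R]^+ \cup \rel[S]^+ \subseteq \rel[M]^+$ (multisets being handled exactly as in the proof of Proposition~\ref{prop:closed}; one could instead invoke Theorem~\ref{thm:filtrationSummableSubsets} to reduce to a single-root step, as in Lemma~\ref{lem:semiclosedImpliesNcdPcdClosed}). Now $-\alpha + \Sigma \rel[X] = -\beta \in \Phi^-$, and since $\beta$ lies in $\rel[R]^+$ or in $\rel[S]^+$, antisymmetry of the corresponding $\Phi$-poset yields $-\beta \notin \rel[R]^- \cap \rel[S]^- = \rel[M]^-$, that is $-\beta \in \Phi \ssm \rel[M]$. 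Thus $\rel[X]$ is a witness, in the very definition of $\ncd{\rel[M]}$, that $-\alpha$ gets deleted, so $-\alpha \notin \ncd{\rel[M]}$ --- a contradiction. This shows $\rel[R] \meetC \rel[S]$ is antisymmetric; the argument for $\rel[R] \joinC \rel[S] = \pcd{(\rel[R] \joinSC \rel[S])}$ is strictly dual, exchanging positive and negative roots and $\ncd{}$ and $\pcd{}$.

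Granting this, the sublattice statement is formal: for $\rel[R], \rel[S] \in \IPos(\Phi)$, both $\rel[R] \meetC \rel[S]$ and $\rel[R] \joinC \rel[S]$ are closed (Proposition~\ref{prop:closed}) and antisymmetric (just shown), hence lie in $\IPos(\Phi)$; being the meet and join of $\rel[R], \rel[S]$ in the ambient lattice $\IClo(\Phi)$ and lying in the subset $\IPos(\Phi) \subseteq \IClo(\Phi)$ (which also contains $\Phi^+$ and $\Phi^-$), they are a fortiori the meet and join of $\rel[R], \rel[S]$ inside $\IPos(\Phi)$, so $\IPos(\Phi)$ is a sublattice. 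As for where the real work sits: all of it is already behind us --- closedness of $\ncd{}$ and $\pcd{}$ of semiclosed sets (Lemma~\ref{lem:semiclosedImpliesNcdPcdClosed}) and the lattice structure on $\IClo(\Phi)$ (Proposition~\ref{prop:closed}) are where the results of Section~\ref{subsec:sums} are consumed. The only thing demanding a little care in the present step is the sign bookkeeping (that $(\ncd{\rel[M]})^+ = \rel[M]^+$ but $(\ncd{\rel[M]})^- \subseteq \rel[M]^-$), together with the simple but decisive remark that removing one root from a closure-decomposition of $\alpha$ produces exactly the kind of set the definition of $\ncd{}$ tests against; I do not anticipate any genuine obstacle beyond that.
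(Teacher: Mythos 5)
Your proof is correct, and its skeleton coincides with the paper's: assume $\alpha \in (\ncd{\rel[M]})^+$ and $-\alpha \in (\ncd{\rel[M]})^-$, dispose of the case $\alpha \in \rel[R]^+ \cup \rel[S]^+$ by antisymmetry, and otherwise decompose $\alpha = \Sigma\rel[Y]$ with $\rel[Y] \subseteq \rel[R]^+ \cup \rel[S]^+$ of size at least two and peel off a root $\beta$. Where you diverge is in how the contradiction is extracted. The paper chooses $\beta$ via Theorem~\ref{thm:filtrationSummableSubsets} so that $\Sigma(\rel[Y] \ssm \{\beta\})$ is itself a root, then invokes the closedness of $\ncd{\rel[M]}$ (Lemma~\ref{lem:semiclosedImpliesNcdPcdClosed}) to push $-\beta = \Sigma(\rel[Y]\ssm\{\beta\}) + (-\alpha)$ into $(\ncd{\rel[M]})^- \subseteq \rel[R]^- \cap \rel[S]^-$, contradicting antisymmetry of $\rel[R]$ or $\rel[S]$. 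You instead take $\beta$ arbitrary, use antisymmetry of $\rel[R]$ or $\rel[S]$ to conclude $-\beta \in \Phi \ssm \rel[M]$, and read off from the bare definition of $\ncd{}$ that $\rel[Y] \ssm \{\beta\}$ is a witness expelling $-\alpha$ from $\ncd{\rel[M]}$. The two arguments are near-contrapositives of one another, but yours is slightly more economical at this step: it needs neither the filtration theorem nor the closedness of $\ncd{\rel[M]}$ here (both are of course still consumed upstream in Proposition~\ref{prop:closed}, on which either version rests). The only point to keep an eye on is that $\rel[Y]$, hence $\rel[Y] \ssm \{\beta\}$, may be a multiset; you flag this explicitly, and it matches the convention the paper itself uses in the definition of $\ncd{}$ and in its own proof. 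Your closing soft argument for the sublattice claim is also the intended one.
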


\begin{proof}
Let~$\rel[R], \rel[S] \in \IPos(\Phi)$ and~$\rel[M] = \rel[R] \meetSC \rel[S]$ so that~$\ncd{\rel[M]} = {\rel[R]} \meetC {\rel[S]}$.
Assume that~$\ncd{\rel[M]}$ is not antisymmetric, and let~$\alpha \in (\ncd{\rel[M]})^+$ such that~$-\alpha \in (\ncd{\rel[M]})^-$.
Since~$(\ncd{\rel[M]})^- \subseteq \rel[M]^- = \rel[R]^- \cap \rel[S]^-$ and both~$\rel[R]$ and~$\rel[S]$ are antisymmetric, we obtain that~${\alpha \notin \rel[R]^+ \cup {\rel[S]}^+}$.
Since~$\alpha \in (\ncd{\rel[M]})^+ = \cl{(\rel[R]^+ \cup \rel[S]^+)}$, there exists~$\rel[X] \subseteq \rel[R]^+ \cup \rel[S]^+$ such that~$|{\rel[X]}| \ge 2$ and~$\alpha = \Sigma{\rel[X]}$.
By Theorem~\ref{thm:filtrationSummableSubsets}, there exists~$\beta \in \rel[X]$ such that~$\Sigma(\rel[X] \ssm \{\beta\}) \in \Phi$.
Since~$\rel[X] \ssm \{\beta\} \subseteq \rel[M]^+ \subseteq \ncd{\rel[M]}$, $-\alpha \in \ncd{\rel[M]}$ and~$\ncd{\rel[M]}$ is closed, we obtain that~$\Sigma(\rel[X] \ssm \{\beta\}) + (-\alpha) = -\beta \in (\ncd{\rel[M]})^- \subseteq \rel[R]^- \cap \rel[S]^-$.
As~$\beta \in \rel[R]^+ \cup \rel[S]^+$, this contradicts the antisymmetry of either~$\rel[R]$ or~$\rel[S]$.
\end{proof}

\begin{remark}
\label{rem:failLatticePosets}
Theorem~\ref{thm:posets} fails for non-crystallographic types.
An example in type~$H_3$ is given in Remark~\ref{rem:failLatticeClosed} (since the sets~$\rel[R], \rel[S], \rel[U]$ and~$\rel[V]$ are all antisymmetric and thus $\Phi$-posets).
\end{remark}

\begin{remark}
\label{rem:failLatticeConvexAntisym}
Even for crystallographic root systems, Theorem~\ref{thm:posets} fails for the other notions of closed sets.
An example in type~$B_3$ is given in Remark~\ref{rem:failLatticeConvex} (since the sets~$\rel[R], \rel[S], \rel[U]$ and~$\rel[V]$ are all antisymmetric and thus $\Phi$-posets).
\end{remark}

\begin{proposition}
\label{prop:coverRelationsPoset}
All cover relations in the weak order on~$\IPos(\Phi)$ are cover relations in the weak order on~$\IRel(\Phi)$.
In particular, the weak order on~$\IPos(\Phi)$ is still graded by $\rel[R] \mapsto |{\rel[R]^-}| - |{\rel[R]^+}|$. 
\end{proposition}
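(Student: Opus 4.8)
The plan is to show that every cover relation $\rel[R] \wole \rel[S]$ in the weak order on $\IPos(\Phi)$ either removes a single positive root from $\rel[R]$ or adds a single negative root to $\rel[R]$; the grading claim then follows at once, since each such move changes $\rel[R] \mapsto |{\rel[R]^-}| - |{\rel[R]^+}|$ by exactly $1$, and this statistic is weakly monotone along $\wole$ because $\wole$ compares positive parts by reverse inclusion and negative parts by inclusion. One checks readily that $\rel[R] \mapsto -\rel[R]$ is an anti-automorphism of the weak order on $\IPos(\Phi)$ (it preserves antisymmetry and closedness, and it exchanges $\Phi^+$ with $\Phi^-$ hence swaps the two defining conditions of $\wole$), so it suffices to treat a cover $\rel[R] \wole \rel[S]$ with $\rel[R]^+ \neq \rel[S]^+$, the case $\rel[R]^+ = \rel[S]^+$ following by applying the result to the cover $-\rel[S] \wole -\rel[R]$, whose positive parts then differ. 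For such a cover it is enough to produce a root $\alpha \in \rel[R]^+ \ssm \rel[S]^+$ with $\rel[R] \ssm \{\alpha\}$ a $\Phi$-poset: indeed $\rel[R] \prec \rel[R] \ssm \{\alpha\} \wole \rel[S]$ then forces $\rel[S] = \rel[R] \ssm \{\alpha\}$, which is a cover relation of $\IRel(\Phi)$ by Proposition~\ref{prop:all}.

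Set $P \eqdef \rel[R]^+ \ssm \rel[S]^+ \neq \varnothing$, and call $\alpha \in \rel[R]$ \emph{decomposable in $\rel[R]$} if $\alpha = \gamma + \delta$ for some $\gamma, \delta \in \rel[R]$. Since $\rel[R]$ is closed, $\rel[R] \ssm \{\alpha\}$ fails to be closed precisely when $\alpha$ is decomposable in $\rel[R]$, and $\rel[R] \ssm \{\alpha\}$ is automatically antisymmetric, so $\rel[R] \ssm \{\alpha\}$ is a $\Phi$-poset if and only if $\alpha$ is not decomposable in $\rel[R]$. The key point will be that some $\alpha \in P$ is not decomposable in $\rel[R]$. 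I first record an observation: in any decomposition $\alpha = \gamma + \delta$ of a root $\alpha \in P$ with $\gamma, \delta \in \rel[R]$, at least one of $\gamma, \delta$ lies in $P$. If both were in $\rel[S]$, closedness of $\rel[S]$ would give $\alpha = \gamma + \delta \in \rel[S] \cap \Phi^+ = \rel[S]^+$, contradicting $\alpha \in P$; and a summand not in $\rel[S]$ lies in $\rel[R] \ssm \rel[S]$, hence cannot be negative (because $\rel[R]^- \subseteq \rel[S]^-$), hence lies in $\rel[R]^+ \ssm \rel[S]^+ = P$.

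Now suppose for contradiction that every $\alpha \in P$ is decomposable in $\rel[R]$. For each $\alpha \in P$ fix such a decomposition and let $\mu(\alpha) \in P$ denote one of its two summands lying in $P$ (possible by the observation); the complementary summand is $\delta_\alpha \eqdef \alpha - \mu(\alpha) \in \rel[R]$. Iterating the map $\mu \colon P \to P$ on the finite set $P$ yields a cycle $\beta_1, \dots, \beta_k \in P$ with $\mu(\beta_i) = \beta_{i+1}$ (indices modulo $k$); summing the corresponding roots $\delta_{\beta_i} = \beta_i - \beta_{i+1} \in \rel[R]$ telescopes to $\sum_{i=1}^{k} \delta_{\beta_i} = 0$, contradicting Lemma~\ref{lem:posetNoVanishingSubsum}. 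Hence some $\alpha \in P$ is not decomposable in $\rel[R]$, so $\rel[R] \ssm \{\alpha\}$ is a $\Phi$-poset with $\rel[R] \prec \rel[R] \ssm \{\alpha\} \wole \rel[S]$ (the inequality with $\rel[S]$ holding because $\alpha \notin \rel[S]^+$ and $\rel[R]^- = (\rel[R] \ssm \{\alpha\})^-$), and we conclude as above. The main obstacle is exactly this last claim: in contrast with the antisymmetric case (Proposition~\ref{prop:coverRelationsAntisymmetric}) and the semiclosed case (Proposition~\ref{prop:coverRelationsSemiClosed}), a positive root of $\rel[R]$ can be forced as a sum $\gamma + \delta$ with $\gamma$ positive and $\delta$ negative, so the minimal-height deletion argument used there breaks down; the cyclic telescoping above is what lets one sidestep this difficulty by invoking the global no-vanishing-subsum property of $\Phi$-posets.
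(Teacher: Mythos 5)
Your proof is correct, and its key step takes a genuinely different route from the paper's. Both arguments reduce to producing, for a cover $\rel[R] \wole \rel[S]$ with $\rel[R]^+ \neq \rel[S]^+$, a root $\alpha \in \rel[R]^+ \ssm \rel[S]^+$ whose deletion keeps $\rel[R]$ closed, the danger being a mixed decomposition $\alpha = \beta + \gamma$ with $\beta \in \rel[R]^-$ and $\gamma \in \rel[R]^+$. The paper makes a double extremal choice --- it first restricts to the roots of $\rel[R]^+ \ssm \rel[S]^+$ admitting no decomposition inside $\rel[R]^+$ (nonempty by a minimal-height argument), then picks one of maximal absolute height --- and rules out mixed decompositions of that root by an analysis using Proposition~\ref{prop:twoSubsumsRoots} together with the closedness of $\rel[S]$. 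Your cycle-and-telescope argument replaces all of this: assuming every root of $P = \rel[R]^+ \ssm \rel[S]^+$ decomposes in $\rel[R]$, iterating the ``summand in $P$'' map around a cycle produces a nonempty vanishing sum of roots of $\rel[R]$, contradicting Lemma~\ref{lem:posetNoVanishingSubsum}; this is shorter, avoids height considerations and Proposition~\ref{prop:twoSubsumsRoots} entirely, and yields the sharper conclusion that the deleted root is indecomposable in all of $\rel[R]$, not merely in $\rel[R]^+$. Your reduction of the negative case to the positive one via the anti-automorphism $\rel[R] \mapsto -\rel[R]$ is likewise a clean substitute for the paper's ``we prove similarly''. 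Two small points worth making explicit if you write this up: the cycle has length at least $2$ because $\delta_\alpha = \alpha - \mu(\alpha)$ is a nonzero root, so $\mu$ has no fixed points; and Lemma~\ref{lem:posetNoVanishingSubsum} is being applied to a multiset (the $\delta_{\beta_i}$ need not be pairwise distinct as roots), which its statement and proof do support.
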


\begin{proof}
Consider a cover relation~$\rel \wole \rel[S]$ in the weak order on~$\IPos(\Phi)$.
We have~$\rel^+ \supseteq \rel[S]^+$ and~$\rel^- \subseteq \rel[S]^-$ where at least one of the inclusions is strict.
Suppose first that~$\rel^+ \supseteq \rel[S]^+$ and consider the set $\rel[X] \eqdef \set{\alpha \in \rel^+ \ssm \rel[S]^+}{\not\!\exists \; \beta, \gamma \in \rel^+ \text{ with } \alpha = \beta + \gamma}$.
This set~$\rel[X]$ is nonempty as it contains any~$\alpha$ in~$\rel^+ \ssm \rel[S]^+$ with~$\vert h \vert(\alpha)$ minimal. Consider now~$\alpha \in \rel[X]$ with~$\vert h\vert(\alpha)$ maximal and let~$\rel[T] \eqdef \rel \ssm \{\alpha\}$. We claim that~$\rel[T]$ is still a $\Phi$-poset. It is clearly still antisymmetric. For closedness, assume by means of contradiction that there is~$\beta, \gamma \in \rel[T]$ such that~$\alpha = \beta + \gamma$. Since~$\alpha \in \rel[X]\subseteq \Phi^+$, we can assume that~$\beta \in \rel^-$ and $\gamma \in \rel^+$, and we choose~$\beta$ so that~$\vert h \vert (\beta)$ is minimal. We claim that there is no~$\delta, \varepsilon \in \rel^+$ such that~$\gamma = \delta + \varepsilon$. Otherwise, since~$\alpha = \beta+ \gamma = \beta + \delta + \varepsilon \in \Phi$, we can assume by  Proposition~\ref{prop:twoSubsumsRoots} that $\beta + \delta \in \Phi \cup \{0\}$. 
If~$\beta + \delta \in \Phi^-$, then~$\beta + \delta \in \rel^-$ (since~$\rel$ is closed) which contradicts the minimality of $\beta$.
If~$\beta + \delta \in \Phi^+$, then~$\beta + \delta \in \rel^+$ (since~$\rel$ is closed), which together with~$\gamma \in \rel^+$ and~$(\beta + \delta) + \gamma = \alpha$ contradicts~$\alpha \in X$.
Finally, if~$\beta + \delta = 0$, then~$\beta = -\delta$ which contradicts the antisymmetry of $\rel$. 
This proves that there is no~$\delta, \varepsilon \in \rel^+$ such that~$\gamma = \delta + \varepsilon$. By maximality of~$h(\vert \alpha\vert)$ in our choice of~$\alpha$ this implies that~$\gamma \in \rel[S]$.
Since~$\beta \in \rel^- \subseteq \rel[S]^-$, we therefore obtain that~$\beta + \gamma = \alpha \notin \rel[S]$ while~$\beta, \gamma \in \rel[S]$, contradicting the closedness of~$\rel[S]$.
This proves that~$\rel[T]$ is closed and thus it is a $\Phi$-poset.
Moreover, we have~$\rel \ne \rel[T]$ and~$\rel \wole \rel[T] \wole \rel[S]$ where~$\rel[S]$ covers~$\rel$, which implies that~$\rel[S] = \rel[T] = \rel \ssm \{\alpha\}$.
We prove similarly that if~$\rel^- \ne \rel[S]^-$, there exists~$\alpha \in \Phi^-$ such that~$\rel[S] = {\rel} \cup \{\alpha\}$.
In both cases, $\rel \wole \rel[S]$ is a cover relation in the weak order on~$\IRel(\Phi)$.
\end{proof}

\begin{corollary}
\label{coro:coverRelationsPoset}
In the weak order on~$\IPos(\Phi)$, the $\Phi$-posets that cover a given $\Phi$-poset~$\rel \in \ISemiClo(\Phi)$ are precisely the relations:
\begin{itemize}
\item $\rel \ssm \{\alpha\}$ for any $\alpha \in \rel^+$ so that there is no~$\gamma, \delta \in \rel[R]^+$ with~$\alpha = \gamma + \delta$,
\item $\rel \cup \{\beta\}$, for any~$\beta \in \Phi^- \ssm \rel^-$ such that~$-\beta \notin \rel^+$ and $\beta + \gamma \in \Phi \implies \beta + \gamma \in \rel$ for all $\gamma \in \rel$.
\end{itemize}
\end{corollary}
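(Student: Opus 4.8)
The plan is to derive the statement directly from Proposition~\ref{prop:coverRelationsPoset}. That proposition asserts that every cover relation $\rel \wole \rel[S]$ of the weak order on $\IPos(\Phi)$ is already a cover relation of the weak order on $\IRel(\Phi)$; together with the explicit description of the latter cover relations in Proposition~\ref{prop:all}, this forces $\rel[S]$ to be of one of the two shapes $\rel \ssm \{\alpha\}$ with $\alpha \in \rel^+$, or $\rel \cup \{\beta\}$ with $\beta \in \Phi^- \ssm \rel^-$. Conversely, any set of one of these two shapes lies immediately above $\rel$ in $\IRel(\Phi)$, so, since $\IPos(\Phi)$ is a subposet of $\IRel(\Phi)$, such a set is a cover of $\rel$ in $\IPos(\Phi)$ as soon as it belongs to $\IPos(\Phi)$. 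The whole argument thus reduces to characterising, for each shape, when the candidate set is antisymmetric and $\N$-closed.

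For the shape $\rel \ssm \{\alpha\}$ with $\alpha \in \rel^+$, antisymmetry is automatic since $\rel \ssm \{\alpha\} \subseteq \rel$, so I would only need to control closedness. The key observation is that removing $\alpha$ can destroy closedness only through a pair $\gamma, \delta \in \rel \ssm \{\alpha\}$ with $\gamma + \delta = \alpha$, and that such $\gamma, \delta$ in fact already lie in $\rel$ and are distinct from $\alpha$ (as $0$ is not a root); conversely, if $\alpha$ admits no such decomposition, then any pair of roots of $\rel \ssm \{\alpha\}$ whose sum is a root has that sum in $\rel$ (by closedness of $\rel$) and different from $\alpha$, hence in $\rel \ssm \{\alpha\}$. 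This shows $\rel \ssm \{\alpha\}$ is a $\Phi$-poset exactly when there are no $\gamma, \delta \in \rel$ with $\alpha = \gamma + \delta$. For the shape $\rel \cup \{\beta\}$ with $\beta \in \Phi^- \ssm \rel^-$, the only pairs not already contained in $\rel$ are those involving $\beta$; since $2\beta$ is never a root and $\beta + \gamma \neq \beta$ for $\gamma \neq 0$, the set $\rel \cup \{\beta\}$ is closed precisely when $\beta + \gamma \in \rel$ for every $\gamma \in \rel$ with $\beta + \gamma \in \Phi$, and it is antisymmetric precisely when $-\beta \notin \rel$, equivalently $-\beta \notin \rel^+$ since $-\beta \in \Phi^+$. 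Collecting the two cases yields the two families in the statement.

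I do not anticipate a real obstacle: the substantive content — that cover relations of $\IPos(\Phi)$ are cover relations of $\IRel(\Phi)$, which rests on the subsum results of Section~\ref{subsec:sums} — has already been established in Proposition~\ref{prop:coverRelationsPoset}, and what remains is an elementary case analysis on antisymmetry and closedness. The one point that demands attention is the closedness criterion for $\rel \ssm \{\alpha\}$: it must range over all roots of $\rel$ rather than over $\rel^+$ alone, since a sum $\gamma + \delta$ of one positive and one negative root of $\rel$ can very well equal the positive root $\alpha$ (such configurations already occur in type $B_2$), so a condition phrased only over $\rel^+$ would not by itself guarantee that $\rel \ssm \{\alpha\}$ is closed.
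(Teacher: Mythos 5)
Your derivation is exactly the one the paper intends: Corollary~\ref{coro:coverRelationsPoset} is stated without proof and is meant to follow from Proposition~\ref{prop:coverRelationsPoset} (covers in $\IPos(\Phi)$ are covers in $\IRel(\Phi)$, hence one-element modifications) combined with a direct check of which modifications $\rel \ssm \{\alpha\}$ and $\rel \cup \{\beta\}$ remain antisymmetric and closed. Your treatment of the second bullet and of antisymmetry is complete and correct, and the small points you verify (that a decomposition $\alpha = \gamma + \delta$ forces $\gamma, \delta \ne \alpha$ since $0$ is not a root, that $2\beta \notin \Phi$, and that a cover in the ambient poset between two elements of a subposet is a cover in the subposet) are precisely the ones that need checking.

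The caveat you raise about the first bullet is not a defect of your argument but a genuine discrepancy with the printed statement, and you are right about it. The condition ``no $\gamma, \delta \in \rel^+$ with $\alpha = \gamma + \delta$'' is necessary but not sufficient for $\rel \ssm \{\alpha\}$ to be closed: already in type~$A_2$, the $\Phi$-poset $\rel = \rel(s_2) = \{\alpha_1, \; \alpha_1 + \alpha_2, \; -\alpha_2\}$ has $\alpha_1$ with no decomposition into two elements of $\rel^+$, yet $\rel \ssm \{\alpha_1\} = \{\alpha_1+\alpha_2, \; -\alpha_2\}$ is not closed because $(\alpha_1+\alpha_2) + (-\alpha_2) = \alpha_1$. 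So the quantification in the first bullet must range over all of $\rel$, as you prove, in parallel with the second bullet where the paper does quantify over all $\gamma \in \rel$ (unlike in Corollary~\ref{coro:coverRelationsSemiClosed}, where restricting to $\rel^+$ is legitimate because only semiclosedness is required). Note that the proof of Proposition~\ref{prop:coverRelationsPoset} is consistent with your correction: there the removed root $\alpha$ is chosen with $|h|(\alpha)$ maximal among the non-decomposable ones precisely so that mixed decompositions can be excluded, which would be unnecessary if the printed condition of the corollary were already sufficient. Your proof, with the condition over all of $\rel$, is correct and is the statement that should be recorded.
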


\begin{remark}
We have gathered in Table~\ref{table:numerology} the number of $\Phi$-posets for the root systems of type~$A_n$, $B_n$, $C_n$ and~$D_n$ for small values of~$n$ (the other lines of the table will be explained in the next section).
Note that the number of semiclosed, closed and posets differ in type~$B_4$ and~$C_4$.
This should not come as a surprise since the notion of closed sets used in this paper (Definition~\ref{def:closed}) is not preserved when passing from roots to coroots.
\enlargethispage{1cm}

\renewcommand{\arraystretch}{1.1}
\begin{table}[h]
    \centerline{
    \begin{tabular}{c||l|l|l}
    type & $A$ & $B / C$ & $D$ ($n \ge 4$)\\
    \hline
    \# antisym. &
    	$3^1, 3^3, 3^6, 3^{10}$ [\href{https://oeis.org/A047656}{A047656}] &
		$3^1, 3^4, 3^9$ [\href{https://oeis.org/A060722}{A060722}] &
		$3^{12}$ [\href{https://oeis.org/A053764}{A053764}]
    	\\
    \# semiclosed &
    	$2^2, 7^2, 40^2, 357^2$ [\href{https://oeis.org/A006455}{A006455}] &
		$2^2, 12^2, 172^2, 5310^2  \, / \, 5318^2$ &
		$888^2$
    	\\
	\# closed &
		$4, 29, 355, 6942$ [\href{https://oeis.org/A000798}{A000798}] &
		$4, 55, 1785 \, / \, 1803$ &
		$18291$
		\\
    \# $\Phi$-posets &
    	$3, 19, 219, 4231$ [\href{https://oeis.org/A001035}{A001035}] &
    	$3, 37, 1235 \, / \, 1225$ &
    	$219$
    	\\
    \hline
    \# $\WOEP$ &
    	$2, 6, 24, 120$ [\href{https://oeis.org/A000142}{A000142}] &
    	$2, 8, 48, 384$ [\href{https://oeis.org/A000165}{A000165}] &
    	$192$ [\href{https://oeis.org/A002866}{A002866}]
    	\\
    \# $\WOIP$ &
    	$3, 17, 151, 1899$ [\href{https://oeis.org/A007767}{A007767}] &
    	$3, 27, 457$ &
    	$3959$
    	\\
    \# $\WOFP$ &
    	$3, 13, 75, 541$ [\href{https://oeis.org/A000670}{A000670}] &
    	$3, 17, 147, 1697$ [\href{https://oeis.org/A080253}{A080253}] &
    	$865$ [\href{https://oeis.org/A080254}{A080254}]
    	\\
    \hline
    \# $\COEP$ &
    	$2, 5, 14, 42$ [\href{https://oeis.org/A000108}{A000108}] &
    	$2, 6, 20, 70$ [\href{https://oeis.org/A000984}{A000984}] &
    	$50$ [\href{https://oeis.org/A051924}{A051924}]
    	\\
    \# $\COIP(\rm{bip})$ &
    	$3, 13, 70, 433$ &
    	$3, 18, 138, 1185$ &
    	$622$
		\\
    \# $\COIP(\rm{lin})$ &
    	$3, 13, 68, 399$ [\href{https://oeis.org/A000260}{A000260}] &
    	$3, 18, 132, 1069$ &
    	$578$
		\\
    \# $\COFP$ &
    	$3, 11, 45, 197$ [\href{https://oeis.org/A001003}{A001003}] &
    	$3, 13, 63, 321$ [\href{https://oeis.org/A001850}{A001850}] &
		$233$
    	\\
    \hline
    \# $\BOEP$ &
    	$2, 4, 8, 16, 32$ [\href{https://oeis.org/A000079}{A000079}] &
    	$2, 4, 8, 16, 32$ [\href{https://oeis.org/A000079}{A000079}] &
    	$16$ [\href{https://oeis.org/A000079}{A000079}]
    	\\
    \# $\BOIP$ &
    	$3, 9, 27, 81$ [\href{https://oeis.org/A000244}{A000244}] &
    	$3, 9, 27, 81$ [\href{https://oeis.org/A000244}{A000244}] &
    	$81$ [\href{https://oeis.org/A000244}{A000244}]
    	\\
    \# $\BOFP$ &
    	$3, 9, 27, 81$ [\href{https://oeis.org/A000244}{A000244}] &
    	$3, 9, 27, 81$ [\href{https://oeis.org/A000244}{A000244}] &
    	$81$ [\href{https://oeis.org/A000244}{A000244}]
    \end{tabular}
    }
    \caption{Numerology in types~$A_n$, $B_n$, $C_n$ and~$D_n$ for small values of~$n$. Further values can be found using the given references to~\cite{OEIS}.}
    \label{table:numerology}
\end{table}
\end{remark}
\renewcommand{\arraystretch}{1}

\section{Some relevant subposets}
\label{sec:examples}

In this section, we consider certain specific families of $\Phi$-posets corresponding to the vertices, the intervals and the faces in the permutahedron (Section~\ref{subsec:permutahedron}), the generalized associahedra (Section~\ref{subsec:associahedra}), and the cube (Section~\ref{subsec:cube}).

\subsection{Permutahedron}
\label{subsec:permutahedron}

The \defn{$W$-permutahedron}~$\Perm^p(W)$ is the convex hull of the orbit under~$W$ of a point~$p$ in the interior of the fundamental chamber of~$W$.
It has one vertex~$w(p)$ for each element~$w \in W$ and its graph is the Cayley graph of the set~$S$ of simple reflections of~$W$.
Moreover, when oriented in the linear direction~$w_\circ(p)-p$, its graph is the Hasse diagram of the \defn{weak order} on~$W$.
Recall that the weak order is defined equivalently for any~$v,w \in W$ by~$v \wole w$ if and only~if
\begin{itemize}
\item $\ell(v) + \ell(v^{-1}w) = \ell(w)$, where~$\ell(w)$ is the \defn{length} of~$w$, \ie the minimal length of an expression of the form~$\ell = s_1 \cdots s_k$ with~$s_1, \dots, s_k \in S$,
\item $v$ is a prefix of~$w$, \ie there exists~$u \in W$ such that~$w = vu$ and~$\ell(w) = \ell(v) + \ell(u)$,
\item $\inv(v) \subseteq \inv(w)$, where~$\inv$ denotes the \defn{inversion set}~$\inv(w) \eqdef \Phi^+ \cap w(\Phi^-)$,
\item there is an oriented path from~$v(p)$ to~$w(p)$ in the graph of the permutahedron oriented in the linear direction~$w_\circ(p)-p$.
\end{itemize}
In the sequel, we will often drop~$p$ from the notation~$\Perm^p(W)$ as the combinatorics of~$\Perm^p(W)$ is independent of~$p$ as long as this point is generic.

\subsubsection{Elements}

For an element~$w \in W$, we consider the $\Phi$-poset
\[
\rel(w) \eqdef w(\Phi^+).
\]
We say that~$\rel(w)$ is a \defn{weak order element poset} and let~$\WOEP(\Phi) \eqdef \set{\rel(w)}{w \in W}$ denote the collection of all such $\Phi$-posets.

\begin{remark}
Table~\ref{table:numerology} reports the cardinality of~$\WOEP(\Phi)$ in type~$A_n$, $B_n$, $C_n$ and~$D_n$ for small values of~$n$.
It is just the order of~$W$, which is known as the product formula
\[
|\WOEP(\Phi)| = |W| = \prod_{i \in [n]} d_i,
\]
where~$(d_1, \dots, d_n)$ are the degrees of~$W$.
\end{remark}

\begin{remark}
Geometrically, $\rel(w)$ is the set of roots of~$\Phi$ not contained in the cone of~$\Perm^p(W)$ at the vertex~$w(p)$, \ie $\rel(w) = \Phi \ssm \cone \set{w'(p) - w(p)}{w' \in W}$. See \fref{fig:facesPermutahedron}.
\end{remark}

We now characterize the $\Phi$-posets of~$\WOEP(\Phi)$.

\begin{proposition}
\label{prop:characterizationWOEP}
A $\Phi$-poset~$\rel \in \IPos(\Phi)$ is in~$\WOEP(\Phi)$ if and only if~$\alpha \in \rel$ or~$-\alpha \in \rel$~for~all~${\alpha \in \Phi}$.
\end{proposition}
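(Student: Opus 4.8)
The plan is to prove both implications directly, using the facts already established about extensions and maximal elements.

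\emph{Forward direction.} Suppose $\rel = \rel(w) = w(\Phi^+)$ for some $w \in W$. For any $\alpha \in \Phi$, write $\alpha = w(\beta)$ for the unique $\beta \in \Phi$ with $w^{-1}(\alpha) = \beta$. Since $\Phi = \Phi^+ \sqcup \Phi^-$, either $\beta \in \Phi^+$, in which case $\alpha = w(\beta) \in w(\Phi^+) = \rel$, or $\beta \in \Phi^-$, in which case $-\beta \in \Phi^+$ and $-\alpha = w(-\beta) \in w(\Phi^+) = \rel$. Either way, $\{\alpha, -\alpha\} \cap \rel \ne \varnothing$, as desired. (One should also recall at this point why $w(\Phi^+)$ is genuinely a $\Phi$-poset: it is antisymmetric because $\Phi^+$ is and $w$ is a linear automorphism of $\Phi$ sending $\Phi^-$ to $-w(\Phi^+)$; it is closed because $w$ preserves sums of roots. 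This is presumably already noted when $\rel(w)$ is introduced, so I would just cite it.)

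\emph{Converse direction.} Suppose $\rel \in \IPos(\Phi)$ satisfies $\{\alpha, -\alpha\} \cap \rel \ne \varnothing$ for all $\alpha \in \Phi$; combined with antisymmetry this means that for each unordered pair $\{\alpha, -\alpha\}$ exactly one of the two lies in $\rel$. By Proposition~\ref{prop:maxExtension}, such a $\rel$ is exactly a maximal element of the extension order, i.e. $\extensions(\rel) = \{\rel\}$. The key claim is that the set $C \eqdef \set{v \in V}{\dotprod{v}{\alpha} > 0 \text{ for all } \alpha \in \rel}$ — the intersection of the open half-spaces determined by the roots of $\rel$ — is nonempty, in which case any $p \in C$ lies in the open chamber of $W$ cut out by the hyperplanes $\alpha^\perp$, and there is a unique $w \in W$ with $p$ in $w$ applied to the fundamental chamber; then $\rel(w) = w(\Phi^+)$ is precisely the set of roots positive on $p$, which is $\rel$ by construction.

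\emph{The main obstacle} is establishing that $C \ne \varnothing$, equivalently that the ``$\Phi$-poset'' $\rel$ — which a priori is merely an antisymmetric closed choice of one root from each antipodal pair — is in fact realizable by a linear functional, i.e. is a chamber. I would argue as follows: since $\rel$ is maximal, Proposition~\ref{prop:maxExtension} is already in hand; what remains is a Farkas-type / hyperplane-arrangement argument. Consider the cone $\cone(\rel) = \cone\set{\alpha}{\alpha \in \rel}$. If $C = \varnothing$ then by Farkas' lemma $0$ is a positive combination of roots of $\rel$, contradicting Lemma~\ref{lem:posetNoVanishingSubsum} (no vanishing subsum — here one needs the version allowing real, not just unit, coefficients, which follows by clearing denominators and then grouping, or one invokes that $\rel$ being closed and antisymmetric forces $\cone(\rel)$ to be pointed). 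Hence $C \ne \varnothing$. Picking $p \in C$ generic, $p$ avoids every reflection hyperplane (any $\alpha \in \Phi$ has $\dotprod{p}{\alpha} \ne 0$ since one of $\pm\alpha$ is in $\rel$), so $p$ lies in a unique chamber $w(\text{fundamental chamber})$, and then $\rel(w) = w(\Phi^+) = \set{\alpha \in \Phi}{\dotprod{p}{\alpha} > 0} = \rel$. I would double-check the one subtle point — that the linear functional defining $\Phi^+$ in the paper (the generic $f$) and the geometric chamber decomposition are compatible, so that ``$\alpha$ positive on $p$'' for $p$ in the fundamental chamber means $\alpha \in \Phi^+$ — but this is built into the setup of Section~\ref{subsec:permutahedron}.
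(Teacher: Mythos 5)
Your argument is correct, but it is worth noting that the paper does not actually prove this statement: its ``proof'' is the single line ``This is folklore'' with a citation to Bourbaki (Chap.~6, 1.7, Cor.~1), which is precisely the assertion that a closed subset meeting each antipodal pair $\{\alpha,-\alpha\}$ in exactly one root is the positive system of some chamber. So you have supplied a genuine self-contained proof where the paper defers to a reference. Your forward direction is immediate and fine. In the converse, the skeleton (Gordan/Farkas to get a point $p$ with $\dotprod{p}{\alpha}>0$ for all $\alpha\in\rel$, then simple transitivity of $W$ on chambers to identify $\set{\alpha\in\Phi}{\dotprod{p}{\alpha}>0}$ with some $w(\Phi^+)$, then equality by counting one root per antipodal pair) is sound and is essentially the classical argument. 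Two points deserve care. First, Gordan's theorem only gives a nonzero \emph{real} nonnegative combination $\sum\lambda_\alpha\alpha=0$; to contradict Lemma~\ref{lem:posetNoVanishingSubsum} you must pass to a nonnegative \emph{integer} solution, which works because in the crystallographic case the roots have integer coordinates in the basis $\Delta$, so the solution cone $\set{y\ge 0}{Ay=0}$ is rational and, being nonzero, contains a nonzero integer point; you flag this (``clearing denominators'') but should say explicitly that rationality of the system is what licenses it. Second, your alternative justification ``one invokes that $\cone(\rel)$ is pointed'' is circular: pointedness of $\cone(\rel)$ is equivalent to the existence of the functional you are trying to produce, so it cannot be invoked as a substitute for the Farkas-plus-no-vanishing-subsum argument. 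Finally, the appeal to Proposition~\ref{prop:maxExtension} is true but plays no role in the rest of your proof and could be dropped.
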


\begin{proof}
This is folklore. See for instance~\cite[Chap.~6, 1.7, Coro.~1]{Bourbaki}.
\end{proof}

\begin{remark}
\label{rem:linearExtensions}
We have already encountered these $\Phi$-posets in Proposition~\ref{prop:maxExtension}: a poset is in~$\WOEP(\Phi)$ if and only if it is its unique extension.
In other words, the maximal extensions of a $\Phi$-poset~$\rel$ are all in~$\WOEP(\Phi)$, and it is thus natural to consider~$\linearExtensions(\rel) \eqdef \set{w \in W}{\rel \subseteq \rel(w)}$.
For example, in type~$A$, the set~$\linearExtensions(\rel)$ is the set of linear extensions of the poset~$\rel$.
\end{remark}

%
%

The following statement connects the subposet of the weak order induced by $\WOEP(\Phi)$ with the classical weak order on~$W$, and thus justifies the name in Definition~\ref{def:weakOrder}.

\begin{proposition}
\label{prop:weakOrderWOEP}
For~$w \in W$, we have~$\inv(w) = \Phi^+ \cap -\rel(w)$ and~${\rel(w) = \big( \Phi^+ \ssm \inv(w) \big) \sqcup -\inv(w)}$.
In particular, for~$v,w \in W$, we have~$\rel(v) \wole \rel(w)$ in the weak order on~$\WOEP(\Phi)$ if and only if~$v \wole w$ in the weak order on~$W$.
\end{proposition}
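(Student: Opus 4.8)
The plan is to unwind the definition of $\rel(w) = w(\Phi^+)$ into its positive and negative parts, identify these in terms of the inversion set $\inv(w) = \Phi^+ \cap w(\Phi^-)$, and then compare the weak order on $\WOEP(\Phi)$ (a subposet of the weak order on $\IRel(\Phi)$, where $\rel \wole \rel[S] \iff \rel^+ \supseteq \rel[S]^+$ and $\rel^- \subseteq \rel[S]^-$) with the inclusion order on inversion sets, which is one of the classical descriptions of the weak order on $W$.

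First I would compute $\rel(w)^+ = w(\Phi^+) \cap \Phi^+$. Since $\Phi = \Phi^+ \sqcup \Phi^-$ and $w$ is a bijection of $\Phi$, we have $\Phi^+ = \big( w(\Phi^+) \cap \Phi^+ \big) \sqcup \big( w(\Phi^-) \cap \Phi^+ \big)$, and the second term is exactly $\inv(w)$; hence $\rel(w)^+ = \Phi^+ \ssm \inv(w)$. Next, for the negative part, $\rel(w)^- = w(\Phi^+) \cap \Phi^- = -\big( w(\Phi^-) \cap \Phi^+ \big) = -\inv(w)$, using that $w(\Phi^+) = -w(\Phi^-)$ and $\Phi^- = -\Phi^+$. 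This gives $\rel(w) = \big( \Phi^+ \ssm \inv(w) \big) \sqcup -\inv(w)$ and also $\inv(w) = \Phi^+ \ssm \rel(w)^+ = \Phi^+ \cap -\rel(w)$ (the latter since $\Phi^+ \cap -\rel(w) = -\big( \Phi^- \cap \rel(w) \big) = -\rel(w)^- = \inv(w)$), establishing the first two claimed identities.

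For the order statement, I would show that $\rel(v) \wole \rel(w)$ is equivalent to $\inv(v) \subseteq \inv(w)$, which by the classical characterization recalled in Section~\ref{subsec:permutahedron} is equivalent to $v \wole w$ in the weak order on $W$. Indeed $\rel(v) \wole \rel(w)$ means $\rel(v)^+ \supseteq \rel(w)^+$ and $\rel(v)^- \subseteq \rel(w)^-$. Using the formulas just derived, $\rel(v)^+ \supseteq \rel(w)^+$ reads $\Phi^+ \ssm \inv(v) \supseteq \Phi^+ \ssm \inv(w)$, i.e.\ $\inv(v) \subseteq \inv(w)$; and $\rel(v)^- \subseteq \rel(w)^-$ reads $-\inv(v) \subseteq -\inv(w)$, i.e.\ again $\inv(v) \subseteq \inv(w)$. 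So both conditions say the same thing, and we conclude $\rel(v) \wole \rel(w) \iff \inv(v) \subseteq \inv(w) \iff v \wole w$.

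There is essentially no hard part here: the proof is a direct bookkeeping computation once one notices that $w$ permutes $\Phi$ and swaps $\Phi^+$ with $\Phi^-$ precisely on the complement of where it preserves signs. The only point requiring a little care is the bookkeeping of signs in passing between $w(\Phi^+)$, $w(\Phi^-)$, $-\Phi^+$, and $\inv(w)$, and making sure the disjoint-union decompositions are genuinely disjoint (which follows from $\Phi^+ \cap \Phi^- = \varnothing$ and $w$ being injective). The apparent ``contradiction in orientation'' alluded to in the remark after Definition~\ref{def:weakOrder} is exactly the fact that $\rel(v)^+ \supseteq \rel(w)^+$ corresponds to $\inv(v) \subseteq \inv(w)$ rather than $\supseteq$; this is harmless and is precisely what makes the two order conditions coincide rather than oppose each other.
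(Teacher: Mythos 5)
Your proof is correct and follows essentially the same route as the paper: compute $\rel(w)^+ = \Phi^+ \ssm \inv(w)$ and $\rel(w)^- = -\inv(w)$ from $\rel(w) = w(\Phi^+)$, then observe that both halves of the condition $\rel(v) \wole \rel(w)$ reduce to $\inv(v) \subseteq \inv(w)$, which is the classical characterization of the weak order. The only cosmetic difference is that you derive the sign bookkeeping from $w$ being a bijection of $\Phi$ exchanging $w(\Phi^+)$ and $w(\Phi^-)$, whereas the paper phrases the same fact as $|\{\alpha,-\alpha\} \cap \rel(w)| = 1$.
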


\begin{proof}
The first equality is just the definition of~$\inv(w)$ and the second comes from the fact that~$|\{\alpha, -\alpha\} \cap \rel(w)| = 1$, so that~$\rel(w)^- = \Phi^- \ssm -{\rel(w)}^+ = \Phi^- \ssm -\inv(w)$.
Finally, $v \wole w$ in the weak order on~$W$ $\iff$ $\inv(v) \subseteq \inv(w)$ $\iff$ $\Phi^+ \ssm \inv(v) \supseteq \Phi^+ \ssm \inv(w)$.
This shows the equivalence with~$\rel(v) \wole \rel(w)$.
\end{proof}

\begin{remark}
\label{rem:weakOrderWOEP}
In fact, $\rel(v) \wole \rel(w) \iff \rel(v)^+ \supseteq \rel(w)^+ \iff \rel(v)^- \subseteq \rel(w)^- \iff v \wole w$.
\end{remark}

\begin{corollary}
\label{coro:latticeWOEP}
The weak order on~$\WOEP(\Phi)$ is a lattice with meet and join
\[
\rel(v) \meetWOEP \rel(w) = \rel(v \meetWO w)
\qquad\text{and}\qquad
\rel(v) \joinWOEP \rel(w) = \rel(v \joinWO w).
\]
\end{corollary}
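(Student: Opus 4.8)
The plan is to obtain Corollary~\ref{coro:latticeWOEP} as a direct consequence of Theorem~\ref{thm:posets}, Proposition~\ref{prop:weakOrderWOEP}, and the fact that the classical weak order on~$W$ is a lattice~\cite{Bjorner}. By Proposition~\ref{prop:weakOrderWOEP}, the map~$w \mapsto \rel(w)$ is an order isomorphism from the weak order on~$W$ onto the subposet of the weak order on~$\IPos(\Phi)$ induced by~$\WOEP(\Phi)$. Since the weak order on~$W$ is a lattice, this subposet is abstractly a lattice, with meet and join transported along the isomorphism; this already gives the displayed formulas. What remains, and what I would actually spell out, is why these coincide with~$\meetC$ and~$\joinC$ — \ie why~$\WOEP(\Phi)$ is not merely abstractly a lattice but a \emph{sublattice} of the weak order on~$\IClo(\Phi)$ (equivalently of~$\IPos(\Phi)$, by Theorem~\ref{thm:posets}).

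The key step is therefore: for~$v,w \in W$, show~$\rel(v) \meetC \rel(w) = \rel(v \meetWO w)$ and dually for the join. First I would invoke Proposition~\ref{prop:characterizationWOEP}: every element of~$\WOEP(\Phi)$ contains~$\alpha$ or~$-\alpha$ for each~$\alpha \in \Phi$, \ie it is a maximal $\Phi$-poset in the extension order (Remark~\ref{rem:linearExtensions}, Proposition~\ref{prop:maxExtension}). Now take any lower bound~$\rel[T] \in \IPos(\Phi)$ of~$\rel(v)$ and~$\rel(w)$ in the weak order. By Theorem~\ref{thm:posets} the meet~$\rel(v) \meetC \rel(w)$ exists in~$\IPos(\Phi)$ and is the greatest such lower bound; it suffices to check it lies in~$\WOEP(\Phi)$, for then by the order isomorphism of Proposition~\ref{prop:weakOrderWOEP} it must equal~$\rel(v \meetWO w)$. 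To see membership in~$\WOEP(\Phi)$, I would argue that~$\rel(v \meetWO w)$ is itself a lower bound of~$\rel(v)$ and~$\rel(w)$ in~$\IPos(\Phi)$ (immediate from Proposition~\ref{prop:weakOrderWOEP}, since~$v\meetWO w \wole v$ and~$v \meetWO w \wole w$), hence~$\rel(v\meetWO w) \wole \rel(v)\meetC\rel(w)$; conversely~$\rel(v)\meetC\rel(w)$ is a lower bound of~$\rel(v),\rel(w)$, so its extensions~$\extensions\big(\rel(v)\meetC\rel(w)\big)$ all contain it and each lies in~$\WOEP(\Phi)$, and any such extension~$\rel(u)$ satisfies~$u \wole v$ and~$u \wole w$, whence~$u \wole v \meetWO w$ and~$\rel(v)\meetC\rel(w) \wole \rel(u) \wole \rel(v \meetWO w)$. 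Combining the two inequalities gives equality, and in particular~$\rel(v)\meetC\rel(w) = \rel(v\meetWO w) \in \WOEP(\Phi)$. The join case is symmetric, using~$\pcd{\cdot}$, $\joinC$, and~$v \joinWO w$.

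I expect the only genuine subtlety to be the bookkeeping around extensions: one must be careful that~$\extensions\big(\rel(v)\meetC\rel(w)\big)$ is nonempty and that each maximal element of it is indeed of the form~$\rel(u)$ with~$u \in \linearExtensions(\rel(v)\meetC\rel(w))$ — but this is exactly the content of Proposition~\ref{prop:maxExtension} together with Proposition~\ref{prop:characterizationWOEP}, and the inclusion~$\linearExtensions(\rel(v)\meetC\rel(w)) \subseteq \linearExtensions(\rel(v)) \cap \linearExtensions(\rel(w)) = \{u : u \wole v\} \cap \{u : u \wole w\}$ follows formally from~$\rel(v)\meetC\rel(w) \wole \rel(v), \rel(w)$ and the order isomorphism of Proposition~\ref{prop:weakOrderWOEP}. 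Everything else is a transport of structure along that isomorphism, so the proof should be short.
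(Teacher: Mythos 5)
The corollary itself needs only the first two sentences of your proposal: by Proposition~\ref{prop:weakOrderWOEP} the map $w \mapsto \rel(w)$ is an order isomorphism from the weak order on~$W$ onto the subposet induced by~$\WOEP(\Phi)$, and since the weak order on~$W$ is a lattice, the lattice property and the two displayed formulas transport directly. That is exactly the paper's (implicit) argument, and that part of your proposal is correct. Everything after it, however, is aimed at a strictly stronger statement. The symbols $\meetWOEP$ and $\joinWOEP$ denote the meet and join \emph{inside the induced subposet} $\WOEP(\Phi)$; the corollary does not assert that they coincide with~$\meetC$ and~$\joinC$, \ie that $\WOEP(\Phi)$ is a sublattice of~$\IPos(\Phi)$. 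That is Proposition~\ref{prop:sublatticeWOEP}, stated and proved separately in the paper by a much more delicate argument (a minimal-absolute-height induction using Theorem~\ref{thm:manySummableSubsets}). Being a lattice in the induced order is genuinely weaker than being a sublattice: $\WOIP(\Phi)$ and $\WOFP(\Phi)$ are lattices under the induced weak order but not sublattices of~$\IPos(\Phi)$ (see Remark~\ref{rem:notSublatticeWOFP}), so no purely formal bookkeeping can bridge the two.

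Your argument for that extra step accordingly has a genuine gap. Writing $\rel[M] \eqdef \rel(v) \meetC \rel(w)$, you pass from ``$\rel(u)$ is a maximal extension of~$\rel[M]$'', \ie $\rel[M] \subseteq \rel(u)$, to ``$u \wole v$ and $u \wole w$''. But the extension order is containment, whereas the weak order reverses containment on positive roots: from $\rel[M] \subseteq \rel(u)$ one only gets $\rel[M]^+ \subseteq \rel(u)^+$ and $\rel[M]^- \subseteq \rel(u)^-$, which gives no weak order comparison between $\rel[M]$ and $\rel(u)$, let alone between $u$ and~$v$. The identity $\linearExtensions(\rel(v)) = \set{u \in W}{u \wole v}$ that you invoke is also false: since $\rel(v)$ is maximal in the extension order by Propositions~\ref{prop:maxExtension} and~\ref{prop:characterizationWOEP}, one has $\linearExtensions(\rel(v)) = \{v\}$. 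For the same reason, the inclusion $\linearExtensions(\rel[M]) \subseteq \linearExtensions(\rel(v))$ does not ``follow formally'' from $\rel[M] \wole \rel(v)$, because $\wole$ and $\subseteq$ are different orders on~$\IPos(\Phi)$. So while the conclusion $\rel(v) \meetC \rel(w) = \rel(v \meetWO w)$ is true, your route to it does not work; it requires the root-theoretic argument of Proposition~\ref{prop:sublatticeWOEP}. For the corollary as stated, simply stop after the order-isomorphism observation.
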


The following statement connects this lattice structure on~$\WOEP(\Phi)$ with that on~$\IPos(\Phi)$, and is our original motivation to study the weak order on~$\IPos(\Phi)$.

\begin{proposition}
\label{prop:sublatticeWOEP}
The set~$\WOEP(\Phi)$ induces a sublattice of the weak order on~$\IPos(\Phi)$.
\end{proposition}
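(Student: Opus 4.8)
The plan is to show that $\WOEP(\Phi)$ is closed under the meet $\meetC$ and join $\joinC$ of the lattice $\IPos(\Phi)$ furnished by Theorem~\ref{thm:posets}, so that it is automatically a sublattice. By Corollary~\ref{coro:latticeWOEP} we already know $\WOEP(\Phi)$ is a lattice in its own right with operations $\rel(v \meetWO w)$ and $\rel(v \joinWO w)$; the content of the proposition is that these coincide with the ambient operations $\meetC$ and $\joinC$, or equivalently that $\rel(v) \meetC \rel(w) \in \WOEP(\Phi)$ and $\rel(v) \joinC \rel(w) \in \WOEP(\Phi)$. By the characterization in Proposition~\ref{prop:characterizationWOEP}, it suffices to prove: if $\rel \in \IPos(\Phi)$ satisfies $\{\alpha,-\alpha\} \cap \rel \ne \varnothing$ for all $\alpha \in \Phi$ (i.e.\ $\rel \in \WOEP(\Phi)$), then so does $\rel[R] \meetC \rel[S]$ whenever $\rel[R], \rel[S] \in \WOEP(\Phi)$, and dually for $\joinC$.

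First I would recall the meet formula from Proposition~\ref{prop:closed}: with $\rel[M] = \rel[R] \meetSC \rel[S] = \cl{(\rel[R]^+ \cup \rel[S]^+)} \sqcup (\rel[R]^- \cap \rel[S]^-)$, we have $\rel[R] \meetC \rel[S] = \ncd{\rel[M]}$. Theorem~\ref{thm:posets} already guarantees $\ncd{\rel[M]}$ is a $\Phi$-poset, so the only thing to check is the "spanning" property: for every $\alpha \in \Phi$, at least one of $\alpha, -\alpha$ lies in $\ncd{\rel[M]}$. The danger is that a root could be lost twice over — dropped neither into $\rel[M]^+$ nor $\rel[M]^-$, or present in $\rel[M]$ but deleted by the $\ncd{}$ operation. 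The cleanest route is probably to argue directly with group elements. Let $v, w \in W$ with $\rel[R] = \rel(v) = v(\Phi^+)$ and $\rel[S] = \rel(w) = w(\Phi^+)$, and let $m = v \meetWO w$ in the weak order on $W$. I want to show $\ncd{\rel[M]} = \rel(m) = m(\Phi^+)$. Since both sides are $\Phi$-posets and $\rel(m)$ is spanning, it is enough to show $\ncd{\rel[M]} \wole \rel(m)$ and $\rel(m) \wole \ncd{\rel[M]}$ in the ambient weak order, i.e.\ that $\rel(m)$ is the meet of $\rel(v)$ and $\rel(w)$ in $\IPos(\Phi)$. One inequality is easy: $m \wole v$ and $m \wole w$ give $\rel(m) \wole \rel(v)$ and $\rel(m) \wole \rel(w)$ by Proposition~\ref{prop:weakOrderWOEP}, so $\rel(m) \wole \rel[R] \meetC \rel[S] = \ncd{\rel[M]}$ since $\rel(m) \in \IPos(\Phi)$ and $\ncd{\rel[M]}$ is the meet in $\IPos(\Phi)$.

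For the reverse inequality I would use that $\ncd{\rel[M]}$ is a $\Phi$-poset together with Remark~\ref{rem:linearExtensions}: every $\Phi$-poset is contained in some element of $\WOEP(\Phi)$, so pick $u \in W$ with $\ncd{\rel[M]} \subseteq \rel(u)$, i.e.\ $\ncd{\rel[M]} \wole \rel(u)$. Since $\ncd{\rel[M]} \wole \rel(v)$ and $\ncd{\rel[M]} \wole \rel(w)$ and the weak order on $W$ is a lattice, I need to deduce $u \wole$ something forcing $\rel(u) = \rel(m)$; here I should instead choose $u$ to be a \emph{maximal} extension and use the fact that $\ncd{\rel[M]}$, being the meet of $\rel(v)$ and $\rel(w)$ in $\IPos(\Phi)$, must be weak-order below $\rel(v) \meetWOEP \rel(w) = \rel(m)$ — because $\rel(m)$ is also a lower bound of $\rel(v), \rel(w)$ in $\IPos(\Phi)$ and... wait, that gives the inequality the wrong way. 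The correct closing argument: $\ncd{\rel[M]}$ is the \emph{greatest} lower bound of $\rel(v), \rel(w)$ in $\IPos(\Phi)$, while $\rel(m)$ is \emph{a} lower bound, so $\rel(m) \wole \ncd{\rel[M]}$; combined with $\ncd{\rel[M]} \subseteq \rel(u)$ for some $u \in W$ and the observation that any $\Phi$-poset sandwiched as $\rel(m) \wole \rel[T] \wole \rel(u)$ with $\rel[T] \in \WOEP(\Phi)$ forces, via Proposition~\ref{prop:weakOrderWOEP}, $m \wole u$ — I then need $\ncd{\rel[M]}$ itself to be in $\WOEP(\Phi)$. The cleanest way to get that is: $\ncd{\rel[M]} \supseteq \rel[M]^+ \cup (\rel[R]^- \cap \rel[S]^-)$ where $\rel[R]^- \cap \rel[S]^- = v(\Phi^-) \cap w(\Phi^-)$, and $\rel[M]^+ = \cl{(v(\Phi^+) \cup w(\Phi^+))}$; for a root $\alpha$, if $\alpha \notin \ncd{\rel[M]}$ and $-\alpha \notin \ncd{\rel[M]}$ we derive — using the defining property of $\ncd{}$ in Lemma~\ref{lem:semiclosedImpliesNcdPcdClosed} and Theorem~\ref{thm:filtrationSummableSubsets} applied to a witnessing subset, exactly as in the proof of Theorem~\ref{thm:posets} — a contradiction with $\{\alpha,-\alpha\}$ meeting $\rel[R]$ and meeting $\rel[S]$. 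This last implication — that the $\ncd{}$ deletion cannot destroy the spanning property when both inputs are spanning — is the main obstacle, and I expect it to run along the same four-case analysis as Lemma~\ref{lem:semiclosedImpliesNcdPcdClosed}, tracking a minimal-absolute-height counterexample $\alpha$ and peeling off one summand via Proposition~\ref{prop:twoSubsumsRoots}. The join statement follows by the symmetric argument (or by applying the meet case to $-\rel[R], -\rel[S]$, using that $\rel \mapsto -\rel$ is an anti-automorphism of the weak order sending $\WOEP(\Phi)$ to itself via $w \mapsto w w_\circ$).
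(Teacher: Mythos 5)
Your overall strategy is the same as the paper's: reduce, via Proposition~\ref{prop:characterizationWOEP}, to showing that $\ncd{\rel[M]}$ (where $\rel[M] = \rel[R] \meetSC \rel[S]$) still meets $\{\alpha,-\alpha\}$ for every $\alpha \in \Phi$, the point being that once the ambient meet $\meetC$ lands in $\WOEP(\Phi)$ it is automatically the meet there too. The long middle detour through $m = v \meetWO w$ and maximal extensions is a dead end you yourself recognize --- it circles back to needing exactly the spanning property --- so it adds nothing; the easy half of your setup (that $\alpha \notin (\ncd{\rel[M]})^+ = \cl{(\rel[R]^+ \cup \rel[S]^+)}$ forces $-\alpha \in \rel[R]^- \cap \rel[S]^- = \rel[M]^-$, so the failure can only come from the $\ncd{}$ deletion) is correct and is how the paper starts.

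The genuine gap is that you never carry out the one step that constitutes the actual content of the proof: showing that $-\alpha$ cannot be deleted by $\ncd{}$. You declare this "the main obstacle" and only predict that it runs "along the same four-case analysis as Lemma~\ref{lem:semiclosedImpliesNcdPcdClosed}", but that prediction is off. The paper's argument takes $\alpha \in \Phi^+$ with $|h|(\alpha)$ minimal among counterexamples, then an \emph{inclusion-minimal} witnessing set $\rel[X] \subseteq \rel[R]^+ \cup \rel[S]^+$ with $\Sigma\rel[X] - \alpha \in \Phi \ssm \rel[M]$, and splits on $|\rel[X]| = 1$ versus $|\rel[X]| \ge 2$. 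The singleton case uses closedness of $\rel[R]$ (or $\rel[S]$) and minimality of $|h|(\alpha)$. The case $|\rel[X]| \ge 2$ crucially invokes Theorem~\ref{thm:manySummableSubsets} --- not Proposition~\ref{prop:twoSubsumsRoots} --- to produce a second proper summable subset $\rel[Y] \subsetneq \rel[X]$ with $\Sigma\rel[Y] - \alpha \in \Phi$; minimality of $\rel[X]$ then forces $\Sigma\rel[Y] - \alpha \in \rel[M]$, and a final sign split (positive case contradicts closedness of $\rel[M]^+$, negative case contradicts minimality of $|h|(\alpha)$) finishes the proof. This interplay between the two minimality choices and the "at least two summable subsets" theorem is not present in Lemma~\ref{lem:semiclosedImpliesNcdPcdClosed} and cannot be read off from it, so the heart of the proposition remains unproved in your proposal.
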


\begin{proof}
Let~$\rel[R], \rel[S] \in \WOEP(\Phi)$ and~$\rel[M] = \rel[R] \meetSC \rel[S] = \cl{( \rel^+ \cup \rel[S]^+ )} \sqcup ( \rel^- \cap \rel[S]^- )$ so that~$\ncd{\rel[M]} = \rel[R] \meetC \rel[S]$.
Assume by means of contradiction that~$\ncd{\rel[M]}$ is not in~$\WOEP(\Phi)$, and consider~${\alpha \in \Phi^+}$ with~$|h|(\alpha)$ minimal such that~$\{\alpha, -\alpha\} \cap \ncd{\rel[M]} = \varnothing$.

Since~$(\ncd{\rel[M]})^+ = \rel[M]^+ = \cl{( \rel^+ \cup \rel[S]^+ )}$, we have~$\alpha \notin \rel[R]^+$ and~${\alpha \notin \rel[S]^+}$.
Since~$\rel[R], \rel[S] \in \WOEP(\Phi)$, we get~$-\alpha \in \rel[R]^-$ and~$-\alpha \in \rel[S]^-$, so that~${-\alpha \in \rel[M]^-}$.
Therefore ${-\alpha \in \rel[M] \ssm \ncd{\rel[M]}}$, so that there exists~$\rel[X] \subseteq \rel[M]^+$ such that~$\Sigma{\rel[X]}-\alpha \in \Phi \ssm \rel[M]$.
As usual, we assume that $\rel[X]$ has no vanishing subsum.
Since~${\rel[M]^+ = \cl{( \rel^+ \cup \rel[S]^+ )}}$, we can moreover assume that~$\rel[X] \subseteq \rel^+ \cup \rel[S]^+$ (up to developing each root of~$\rel[X]$).
We finally choose an inclusion minimal such subset~$\rel[X]$ of~$\rel^+ \cup \rel[S]^+$.

Assume first that~$\rel[X] = \{\beta\}$.
We have~$\beta \in \rel^+ \cup \rel[S]^+$, say~$\beta \in \rel^+$.
Since~${-\alpha \in \rel[M]^- = \rel^- \cap \rel[S]^-}$, $\beta \in \rel$ and~$\rel$ is closed, we have~$\beta-\alpha \in \rel$.
Since~$\beta-\alpha \notin \rel[M]^+$, we obtain that~$\beta-\alpha \in \Phi^-$.
Therefore, as~$\beta \in \Phi^+$, we have $|h|(\beta-\alpha) < |h|(\alpha)$.
By minimality of~$|h|(\alpha)$, we obtain that~$\alpha-\beta \in \ncd{\rel[M]}$.
We conclude that~$\alpha-\beta \in \ncd{\rel[M]}$ and $\beta \in \rel^+ \subseteq \ncd{\rel[M]}$ while~$\alpha \notin \ncd{\rel[M]}$, contradicting the closedness of~$\ncd{\rel[M]}$.

Assume now that~$|{\rel[X]}| \ge 2$.
Since ${\alpha \notin \rel[M]^+ = \cl{( \rel^+ \cup \rel[S]^+ )}}$ and~$\rel[X] \subseteq \rel^+ \cup \rel[S]^+$, we obtain that~${\rel[X] \cup \{-\alpha\}}$ has no vanishing subsums.
Therefore, Proposition~\ref{thm:manySummableSubsets} ensures that~$\rel[X] \cup \{-\alpha\}$ has at least two strict summable subsets.
In particular, there is~${\rel[Y]} \subsetneq {\rel[X]}$ such that~$\Sigma{\rel[Y]}-\alpha \in \Phi$.
By minimality of~$\rel[X]$, we obtain that~$\Sigma{\rel[Y]}-\alpha \in \rel[M]$.
We distinguish two cases:
\begin{itemize}
\item If~$\Sigma{\rel[Y]}-\alpha \in \rel[M]^+$, then~$\Sigma{\rel[X]}-\alpha \notin \rel[M]^+$ while~$\Sigma{\rel[Y]}-\alpha \in \rel[M]^+$ and~$\rel[X] \ssm \rel[Y] \subseteq \rel[M]^+$ contradicts the closedness of~$\rel[M]^+$.
\item If~$\Sigma{\rel[Y]}-\alpha \in \rel[M]^-$, then~$|h|(\Sigma{\rel[Y]}-\alpha) < |h|(\alpha)$. By minimality of~$|h|(\alpha)$, we obtain that 
	\begin{itemize}
	\item either~$\Sigma{\rel[Y]}-\alpha \in \ncd{\rel[M]}$, which implies that~$\Sigma{\rel[X]}-\alpha = (\Sigma{\rel[Y]}-\alpha) + \big( \Sigma(\rel[X]\ssm\rel[Y]) \big) \in \rel[M]$, a contradiction.
	\item or~$\alpha-\Sigma{\rel[Y]} \in \ncd{\rel[M]}$, which implies that~$\alpha = (\alpha-\Sigma{\rel[Y]}) + \Sigma{\rel[Y]} \in \ncd{\rel[M]}$, which contradict our assumption on~$\alpha$.
	\end{itemize}
\end{itemize}
As we reached a contradiction in all cases, we conclude that~$\ncd{\rel[M]} \in \WOEP(\Phi)$.
The proof is similar for the join.
\end{proof}

\subsubsection{Intervals}

For~$w,w' \in W$ with~$w \wole w'$, we denote by~$[w,w'] \eqdef \set{v \in W}{w \wole v \wole w'}$ the \defn{weak order interval} between~$w$ and~$w'$.
We associate to each weak order interval~$[w,w']$ the $\Phi$-poset
\[
\rel(w,w') \eqdef \bigcap_{v \in [w,w']} \rel(v) = \rel(w) \cap \rel(w') = \rel(w)^- \sqcup \rel(w')^+.
\]
Say that~$\rel(w,w')$ is a \defn{weak order interval poset} and let~$\WOIP(\Phi) \eqdef \set{\rel(w,w')}{w,w' \in W, \, w \wole w'}$ denote the collection of all such $\Phi$-posets.
Table~\ref{table:numerology} reports the cardinality of~$\WOIP(\Phi)$ in type~$A_n$, $B_n$, $C_n$ and~$D_n$ for small values of~$n$.

Recall from Remark~\ref{rem:linearExtensions} that we denote by~$\linearExtensions(\rel) \eqdef \set{w \in W}{\rel \subseteq \rel(w)}$ the set of maximal extensions of a $\Phi$-poset~$\rel$.
We will use the following observation to characterize these $\Phi$-posets.

\begin{lemma}
\label{lem:characterizationWOIP}
A $\Phi$-poset~$\rel \in \IPos(\Phi)$ is in~$\WOIP(\Phi)$ if and only if $\linearExtensions(\rel)$ has a unique weak order minimum~$w$ (resp.~maximum~$w'$) that moreover satisfies~$\rel(w)^- = \rel^-$ (resp.~$\rel(w')^+ = \rel^+$).
\end{lemma}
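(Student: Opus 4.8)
The plan is to unwind the definitions, relying on two facts already available: the identity $\rel(w,w') = \rel(w) \cap \rel(w') = \rel(w)^- \sqcup \rel(w')^+$ recorded in the definition of $\rel(w,w')$ above, and Remark~\ref{rem:weakOrderWOEP}, which asserts that for $v,w \in W$ one has $v \wole w \iff \rel(v)^+ \supseteq \rel(w)^+ \iff \rel(v)^- \subseteq \rel(w)^-$. Since a $\Phi$-poset $\rel$ is the disjoint union $\rel^+ \sqcup \rel^-$, membership $v \in \linearExtensions(\rel)$, \ie $\rel \subseteq \rel(v)$, simply amounts to $\rel^+ \subseteq \rel(v)^+$ together with $\rel^- \subseteq \rel(v)^-$. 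All the work is in juggling these equivalences.

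For the forward implication, I would start from $\rel = \rel(w,w')$ with $w \wole w'$, so that $\rel^- = \rel(w)^-$ and $\rel^+ = \rel(w')^+$ by the displayed identity. Using $w \wole w'$ (hence $\rel(w)^+ \supseteq \rel(w')^+$ and $\rel(w)^- \subseteq \rel(w')^-$), one checks at once that $\rel \subseteq \rel(w)$ and $\rel \subseteq \rel(w')$, so $w,w' \in \linearExtensions(\rel)$. Then, for an arbitrary $v \in \linearExtensions(\rel)$, the inclusion $\rel(w)^- = \rel^- \subseteq \rel(v)^-$ yields $w \wole v$ and the inclusion $\rel(w')^+ = \rel^+ \subseteq \rel(v)^+$ yields $v \wole w'$, both by Remark~\ref{rem:weakOrderWOEP}. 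Hence $w$ is the (necessarily unique) weak order minimum of $\linearExtensions(\rel)$ and $w'$ its weak order maximum, and they satisfy $\rel(w)^- = \rel^-$ and $\rel(w')^+ = \rel^+$.

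For the converse, suppose $\linearExtensions(\rel)$ has a weak order minimum $w$ with $\rel(w)^- = \rel^-$ and a weak order maximum $w'$ with $\rel(w')^+ = \rel^+$. Since $w' \in \linearExtensions(\rel)$ and $w$ is the minimum, $w \wole w'$, so $\rel(w,w')$ is well-defined and equals $\rel(w)^- \sqcup \rel(w')^+ = \rel^- \sqcup \rel^+ = \rel$, proving $\rel \in \WOIP(\Phi)$.

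I do not expect a serious obstacle: once the identity for $\rel(w,w')$ and the order-theoretic dictionary of Remark~\ref{rem:weakOrderWOEP} are in hand, the argument is bookkeeping. The one point that deserves care is the consistent passage between set-inclusion of $\Phi$-posets and the weak order on $W$, and the remark that in both directions $\linearExtensions(\rel)$ is automatically nonempty (it contains $w$ and $w'$), so that the clauses about a minimum and a maximum are meaningful.
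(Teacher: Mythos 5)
Your proposal is correct and follows essentially the same route as the paper's proof: both directions reduce to the identity $\rel(w,w') = \rel(w)^- \sqcup \rel(w')^+$ together with the dictionary of Remark~\ref{rem:weakOrderWOEP} translating inclusions of positive/negative parts into the weak order on~$W$. Your write-up is slightly more explicit about checking $w,w' \in \linearExtensions(\rel)$ and about why $w \wole w'$ in the converse, but the argument is the same.
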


\begin{proof}
Observe first that Remark~\ref{rem:weakOrderWOEP} implies that~$\rel(w,w') \subseteq \rel(v) \iff \rel(w)^- \subseteq \rel(v)^-$ and ${\rel(w')^+ \subseteq \rel(v)^+ \iff v \in [w,w']}$. Therefore, $\linearExtensions \big( \rel(w,w') \big)$ has a unique weak order minimum~$w$ and a unique weak order maximum~$w'$ and~$\rel(w)^- = \rel(w,w')^-$ while~$\rel(w')^+ = \rel(w,w')^+$.

Conversely, if~$\linearExtensions(\rel)$ has a unique weak order minimum~$w$ and a unique weak order maximum~$w'$ and~$\rel(w)^- = \rel^-$ while~$\rel(w')^+ = \rel^+$, then~$\rel = \rel(w)^- \sqcup \rel(w')^+ = \rel(w,w')$ by definition.
\end{proof}

\begin{remark}
In Lemma~\ref{lem:characterizationWOIP}, the final hypothesis is crucial as it may happen that~$\rel \ne \bigcap \extensions(\rel)$ (consider for example~$\rel = \{\alpha_1+\alpha_2, \alpha_2\}$ in type~$B_2$).
\end{remark}

We can now characterize the $\Phi$-posets of~$\WOIP(\Phi)$.

\begin{proposition}
\label{prop:characterizationWOIP}
A $\Phi$-poset~$\rel \in \IPos(\Phi)$ is in~$\WOIP(\Phi)$ if and only if $\alpha + \beta \in \rel$ implies $\alpha \in \rel$ or $\beta \in \rel$ for all~$\alpha, \beta \in \Phi^-$ and all~$\alpha, \beta \in \Phi^+$.
\end{proposition}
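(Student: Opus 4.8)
The plan is to reformulate the disjunctive condition of the statement as the assertion that the positive part~$\rel^+$ and the reflected negative part~$-\rel^-$ are \emph{biclosed} subsets of~$\Phi^+$ (meaning that both these sets and their complements in~$\Phi^+$ are closed), and then to invoke the classical identification of the biclosed subsets of~$\Phi^+$ with the inversion sets~$\set{\inv(w)}{w \in W}$ (see, e.g.,~\cite[Chap.~6]{Bourbaki}).

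First I would establish the reformulation. Since~$\rel$ is a $\Phi$-poset, both~$\rel^+$ and~$\rel^-$ are closed: if~$\alpha, \beta \in \rel^+$ and~$\alpha + \beta \in \Phi$, then~$\alpha + \beta \in \rel$ by closedness of~$\rel$ and~$\alpha + \beta \in \Phi^+$ since~$\alpha, \beta \in \Phi^+$, hence~$\alpha + \beta \in \rel^+$; likewise for~$\rel^-$, so~$-\rel^-$ is closed in~$\Phi^+$. Now fix~$\alpha, \beta \in \Phi^+$ with~$\alpha + \beta \in \Phi$, so that~$\alpha + \beta \in \Phi^+$, and therefore~$\alpha + \beta \in \rel \iff \alpha + \beta \in \rel^+$ and~$\alpha \in \rel \iff \alpha \in \rel^+$. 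The implication ``$\alpha + \beta \in \rel \implies \alpha \in \rel$ or~$\beta \in \rel$'' is then exactly the contrapositive of ``$\alpha, \beta \in \Phi^+ \ssm \rel^+ \implies \alpha + \beta \in \Phi^+ \ssm \rel^+$'', i.e.~of the closedness of~$\Phi^+ \ssm \rel^+$. Substituting~$\alpha = -a$, $\beta = -b$ with~$a, b \in \Phi^+$, the condition for negative roots translates in the same way into the closedness of~$\Phi^+ \ssm (-\rel^-)$. Hence a $\Phi$-poset~$\rel$ satisfies the condition of the proposition if and only if~$\rel^+$ and~$-\rel^-$ are both biclosed in~$\Phi^+$.

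With the reformulation in hand, I would conclude using that~$N \subseteq \Phi^+$ equals~$\inv(w)$ for a unique~$w \in W$ precisely when~$N$ is biclosed. For the direct implication, if~$\rel = \rel(w,w')$ with~$w \wole w'$, then Proposition~\ref{prop:weakOrderWOEP} gives~$\rel^+ = \rel(w')^+ = \Phi^+ \ssm \inv(w')$ and~$-\rel^- = -\rel(w)^- = \inv(w)$; since~$\inv(w)$ and~$\inv(w')$ are biclosed, so are~$\inv(w)$ and~$\Phi^+ \ssm \inv(w')$, and the reformulation yields the condition. Conversely, assume the condition; then~$-\rel^- = \inv(w)$ and~$\Phi^+ \ssm \rel^+ = \inv(w')$ for unique~$w, w' \in W$. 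Antisymmetry of~$\rel$ means~$\rel^+ \cap (-\rel^-) = \varnothing$, i.e.~$\inv(w) = -\rel^- \subseteq \Phi^+ \ssm \rel^+ = \inv(w')$, which is exactly~$w \wole w'$; and then~$\rel(w,w') = \rel(w)^- \sqcup \rel(w')^+ = (-\inv(w)) \sqcup (\Phi^+ \ssm \inv(w')) = \rel^- \sqcup \rel^+ = \rel$, so~$\rel \in \WOIP(\Phi)$. (Equivalently, one checks that~$w$ and~$w'$ are the weak order minimum and maximum of~$\linearExtensions(\rel)$ and that~$\rel(w)^- = \rel^-$ and~$\rel(w')^+ = \rel^+$, and concludes by Lemma~\ref{lem:characterizationWOIP}.)

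The only substantial input is the biclosed~$\Leftrightarrow$~inversion-set characterization of subsets of~$\Phi^+$; the rest is the sign bookkeeping in the reformulation and the observation that antisymmetry of~$\rel$ is precisely the condition~$w \wole w'$. I expect the reformulation step to demand the most care — getting the contrapositives and the~$\alpha \mapsto -\alpha$ substitution exactly right on both the positive and the negative side — while the genuine mathematical content is concentrated in that classical fact about biclosed sets.
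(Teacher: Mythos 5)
Your proof is correct, but it takes a genuinely different route from the paper's. You repackage the condition as the statement that~$\rel^+$ and~$-\rel^-$ are biclosed in~$\Phi^+$ (the sign bookkeeping and contrapositives in your reformulation check out), and then outsource the main content to the classical fact that biclosed subsets of~$\Phi^+$ are exactly the inversion sets~$\inv(w)$; antisymmetry of~$\rel$ then translates precisely into~$\inv(w) \subseteq \inv(w')$, \ie $w \wole w'$, and~$\rel = \rel(w)^- \sqcup \rel(w')^+ = \rel(w,w')$. The paper instead goes through Lemma~\ref{lem:characterizationWOIP}: it constructs the candidate maximal extension~$\rel[S] \eqdef \rel^+ \cup (\Phi^- \ssm -\rel^+)$ explicitly, verifies its closedness by a four-case analysis, and deduces via Propositions~\ref{prop:maxExtension} and~\ref{prop:characterizationWOEP} that~$\extensions(\rel)$ has a unique maximum~$\rel(w')$ with~$\rel(w')^+ = \rel^+$ (and dually for the minimum). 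The two arguments are essentially the same computation in different clothing: the paper's four-case closedness check is exactly what one does to derive ``biclosed $\Rightarrow$ inversion set'' from Proposition~\ref{prop:characterizationWOEP}, so your citation is legitimate (the fact is indeed in~\cite[Chap.~6, 1.7]{Bourbaki} for finite crystallographic root systems) but hides rather than avoids that work. What the paper's version buys in exchange for its length is the extra structural information that~$\extensions(\rel)$ has a unique weak order maximum and minimum whose positive and negative parts agree with those of~$\rel$, which is the content of Lemma~\ref{lem:characterizationWOIP} and is reused implicitly elsewhere; your version is shorter and more conceptual but delivers only the characterization itself. One small presentational point: the uniqueness of~$w$ and~$w'$ in your converse is not needed, only existence.
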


\begin{proof}
By Lemma~\ref{lem:characterizationWOIP}, this boils down to show that the following assertions are equivalent:
\begin{enumerate}[(i)]
\item $\linearExtensions(\rel)$ has a unique weak order minimum~$w$ (resp.~maximum~$w'$) that moreover satisfies $\rel(w)^- = \rel^-$ (resp.~$\rel(w')^+ = \rel^+$),
\item $\alpha + \beta \in \rel$ implies $\alpha \in \rel$ or $\beta \in \rel$ for all~$\alpha, \beta \in \Phi^-$ (resp.~for all~$\alpha, \beta \in \Phi^+$).
\end{enumerate}
We prove the result for the maximum and~$\alpha, \beta \in \Phi^+$.
The result for the minimum and~$\alpha, \beta \in \Phi^-$ follows by symmetry.

Assume first that~(ii) holds.
Consider the subset of roots~$\rel[S] \eqdef \rel^+ \cup (\Phi^- \ssm -\rel^+)$.
Note that~$\rel[R] \subseteq \rel[S]$ (since~$\rel[R]$ is antisymmetric), that~$\rel[S]$ is antisymmetric, and that~$\rel[T] \wole \rel[S]$ for any antisymmetric~$\rel[T]$ such that~$\rel[R] \subseteq \rel[T]$ (as~$\rel[R]$ has been completed with all possible negative roots to obtain~$\rel[S]$).
We moreover claim that~$\rel[S]$ is closed.
Indeed, consider~$\alpha, \beta \in \rel[S]$ such that~$\alpha + \beta \in \Phi$. We distinguish four cases:
\begin{itemize}
\item If~$\alpha \in \rel$ and~$\beta \in \rel$, then~$\alpha + \beta \in \rel \subseteq \rel[S]$.
\item If~$\alpha \notin \rel$ and~$\beta \in \rel$, then~$\alpha \in \rel[S] \ssm \rel[R] \subseteq \Phi^-$ so that~$-\alpha \in \Phi^+ \ssm \rel^+$. Then,
	\begin{itemize}
	\item if~$\alpha + \beta \in \Phi^+$, then we have~$-\alpha \in \Phi^+ \ssm \rel^+$ and $\alpha + \beta \in \Phi^+$ with~$-\alpha + (\alpha + \beta) = \beta \in \rel$ so that Condition~(ii) ensures that~$\alpha + \beta \in \rel$,
	\item if~$\alpha + \beta \in \Phi^-$, then $-(\alpha+\beta) \notin \rel$ (as otherwise we would have~$-\alpha = -(\alpha+\beta) + \beta \in \rel$, a contradiction). Therefore, $\alpha + \beta \in \Phi^- \ssm -\rel[R]^+ \subseteq \rel[S]$.
	\end{itemize}
\item If~$\alpha \in \rel$ and~$\beta \notin \rel$, the argument is symmetric.
\item If~$\alpha \notin \rel$ and~$\beta \notin \rel$, then~$\alpha, \beta \in \rel[S] \ssm \rel[R] \subseteq \Phi^-$ and~$-\alpha, -\beta \in \Phi^+ \ssm \rel$. By condition~(ii), this implies that~$-\alpha-\beta \in \Phi^+ \ssm \rel^+$. Therefore, $\alpha + \beta \in \Phi^- \ssm -\rel^+ \subseteq \rel[S]$.
\end{itemize}
We thus obtained in all cases that~$\alpha + \beta \in \rel[S]$ so that~$\rel[S]$ is closed.
We conclude that $\rel[S]$ is a $\Phi$-poset and that~$\rel[T] \wole \rel[S]$ for any antisymmetric~$\rel[T]$ such that~$\rel[R] \subseteq \rel[T]$.
In particular, $\rel[S]$ is the unique maximum of the set~$\extensions(\rel)$ of extensions of~$\rel$.
Moreover, $\rel[S]^+ = \rel[R]^+$.
Using Propositions~\ref{prop:maxExtension} and~\ref{prop:characterizationWOEP}, we obtain that there exist~$w' \in W$ such that~$\rel[S] = \rel(w')$.
This concludes the proof that~(ii)$\implies$(i).

Conversely, assume by means of contradiction that~(i) holds but not~(ii).
Let~$w'$ denote the weak order maximal element of~$\linearExtensions(\rel)$, and let~$\alpha, \beta \in \Phi^+ \ssm \rel$ be such that~$\alpha + \beta \in \rel$.
We then distinguish two cases:
\begin{itemize}
\item If~$\alpha \in \rel(v)$ for all~$v \in \linearExtensions(\rel)$, then~$\alpha \in \rel(w')^+ = \rel^+$. Contradiction.
\item Otherwise, there exists~$v \in \linearExtensions(\rel)$ such that~$-\alpha \in \rel(v)$. Since~$v \wole w'$, this gives~${-\alpha \in \rel(w')}$. Since~$\alpha + \beta \in \rel \subseteq \rel(w')$ and~$\rel(w')$ is closed, we get~${\beta \in \rel(w')^+ = \rel^+}$. Contradiction.
\qedhere
\end{itemize}
\end{proof}

We now describe the weak order on~$\WOIP(\Phi)$.
It corresponds to the Cartesian product order on intervals of the weak order.

\begin{proposition}
\label{prop:weakOrderWOIP}
For any two weak order intervals~$v \wole v'$ and~$w \wole w'$, we have~$\rel(v,v') \wole \rel(w,w')$ in the weak order on~$\WOIP(\Phi)$ if and only if~$v \wole w$ and~$v' \wole w'$.
\end{proposition}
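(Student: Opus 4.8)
The plan is to unfold everything to the level of positive and negative parts and then reduce to the equivalence already recorded in Remark~\ref{rem:weakOrderWOEP}. First I would recall that, by definition, $\rel(w,w') = \rel(w)^- \sqcup \rel(w')^+$, so that $\rel(w,w')^+ = \rel(w')^+$ and $\rel(w,w')^- = \rel(w)^-$, and likewise $\rel(v,v')^+ = \rel(v')^+$ and $\rel(v,v')^- = \rel(v)^-$. Since $\WOIP(\Phi)$ is by construction an induced subposet of the weak order on $\IRel(\Phi)$, the relation $\rel(v,v') \wole \rel(w,w')$ unwinds (via Definition~\ref{def:weakOrder}) to the pair of conditions $\rel(v')^+ \supseteq \rel(w')^+$ and $\rel(v)^- \subseteq \rel(w)^-$.

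The second step is to translate each of these two conditions back into the weak order on $W$ using Remark~\ref{rem:weakOrderWOEP} (equivalently Proposition~\ref{prop:weakOrderWOEP}), which states that for any $a,b \in W$ one has $\rel(a)^+ \supseteq \rel(b)^+ \iff a \wole b$ and $\rel(a)^- \subseteq \rel(b)^- \iff a \wole b$. Applying the first equivalence to $a = v'$, $b = w'$ shows $\rel(v')^+ \supseteq \rel(w')^+ \iff v' \wole w'$, and applying the second to $a = v$, $b = w$ shows $\rel(v)^- \subseteq \rel(w)^- \iff v \wole w$. Combining, $\rel(v,v') \wole \rel(w,w')$ holds if and only if $v \wole w$ and $v' \wole w'$, which is exactly the claim, and both directions come out simultaneously.

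The thing to be careful about is, rather than a genuine obstacle, merely to observe that the order on $\WOIP(\Phi)$ is the order induced from $\IRel(\Phi)$ (so that one may legitimately argue with the $\pm$-parts of the posets), and that the interval poset $\rel(w,w')$ literally has $\rel(w)^-$ as its negative part and $\rel(w')^+$ as its positive part; once both are noted the proposition is an immediate consequence of the interval formula and Remark~\ref{rem:weakOrderWOEP}. In particular, no properties of crystallographic root systems, of closedness, or of the meet/join formulas are needed for this statement.
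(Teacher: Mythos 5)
Your proposal is correct and follows exactly the paper's own argument: unfold $\rel(v,v') \wole \rel(w,w')$ into the conditions $\rel(v')^+ \supseteq \rel(w')^+$ and $\rel(v)^- \subseteq \rel(w)^-$ using the decomposition $\rel(w,w') = \rel(w)^- \sqcup \rel(w')^+$, then apply Remark~\ref{rem:weakOrderWOEP} to each part. The paper presents the same chain of equivalences in a single displayed array, so there is nothing to add.
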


\begin{proof}
From the definition of~$\rel(w,w')$ and Remark~\ref{rem:weakOrderWOEP}, we have
\[
\begin{array}[b]{c@{\quad\iff\quad}c@{\quad\text{and}\quad}c}
\rel(v,v') \wole \rel(w,w')
& \rel(v,v')^+ \supseteq \rel(w,w')^+ & \rel(v,v')^- \subseteq \rel(w,w')^- \\
& \rel(v')^+ \supseteq \rel(w')^+ & \rel(v)^- \subseteq \rel(w)^- \\
& v' \wole w' & v \wole w.
\end{array}
\qedhere
\]
\end{proof}

\begin{corollary}
\label{coro:latticeWOIP}
The weak order on~$\WOIP(\Phi)$ is a lattice with meet and join
\[
\rel(v,v') \meetWOIP \rel(w,w') = \rel(v \meetWO w, v' \meetWO w')
\quad\text{and}\quad
\rel(v,v') \joinWOIP \rel(w,w') = \rel(v \joinWO w, v' \joinWO w').
\]
\end{corollary}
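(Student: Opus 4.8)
The plan is to reduce this to the classical fact that the weak order on a finite Coxeter group is a lattice, transported along the poset isomorphism provided by Proposition~\ref{prop:weakOrderWOIP}.

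First I would record that the assignment $\rel(w,w') \mapsto (w,w')$ is a bijection from $\WOIP(\Phi)$ onto the set $\I \eqdef \set{(w,w') \in W^2}{w \wole w'}$ of weak order intervals of~$W$. It is well defined since $\rel(w,w') = \rel(w)^- \sqcup \rel(w')^+$ depends only on the pair~$(w,w')$, it is surjective by definition of~$\WOIP(\Phi)$, and it is injective because, by Lemma~\ref{lem:characterizationWOIP}, the set~$\linearExtensions(\rel(w,w'))$ admits~$w$ as its unique weak order minimum and~$w'$ as its unique weak order maximum, so that both~$w$ and~$w'$ are recovered from~$\rel(w,w')$. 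Proposition~\ref{prop:weakOrderWOIP} then states precisely that this bijection is an isomorphism from the weak order on~$\WOIP(\Phi)$ onto~$\I$ equipped with the componentwise order inherited from the product poset~$W \times W$.

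Next I would invoke that the weak order on the finite Coxeter group~$W$ is a lattice~\cite{Bjorner}, so that~$W \times W$ with the componentwise order is a lattice, with meet $(v,v') \wedge (w,w') = (v \meetWO w, \, v' \meetWO w')$ and join $(v,v') \vee (w,w') = (v \joinWO w, \, v' \joinWO w')$. Since~$\meetWO$ and~$\joinWO$ are monotone in each argument, $v \wole v'$ and $w \wole w'$ force $v \meetWO w \wole v' \meetWO w'$ and $v \joinWO w \wole v' \joinWO w'$; hence the subset~$\I$ of~$W \times W$ is closed under both componentwise operations, i.e.\ it is a sublattice of~$W \times W$, and in particular a lattice with the very same meet and join. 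Pulling these operations back through the isomorphism of the previous paragraph yields the announced formulas for~$\meetWOIP$ and~$\joinWOIP$.

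I do not expect a genuine obstacle here: the combinatorial content of the statement is entirely carried by Proposition~\ref{prop:weakOrderWOIP}, and the remainder is a routine transfer along a poset isomorphism. The only two points that deserve a moment's care are the injectivity of $\rel(w,w') \mapsto (w,w')$, which uses the full force of Lemma~\ref{lem:characterizationWOIP} (recall that $\rel \subseteq \bigcap \extensions(\rel)$ may be strict, so one really needs the minimum and maximum of~$\linearExtensions$, not of~$\extensions$), and the closure of~$\I$ under componentwise meet and join, which is immediate from the monotonicity of the lattice operations on~$W$.
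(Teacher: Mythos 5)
Your proof is correct and follows exactly the route the paper intends: the corollary is stated without proof precisely because it is the transfer of the lattice structure on weak order intervals of~$W$ along the order isomorphism given by Proposition~\ref{prop:weakOrderWOIP}, together with the observation that intervals are closed under componentwise meet and join by monotonicity. The only small remark is that injectivity of $\rel(w,w') \mapsto (w,w')$ already follows from Proposition~\ref{prop:weakOrderWOIP} by antisymmetry of the weak order, so the appeal to Lemma~\ref{lem:characterizationWOIP} is not strictly needed.
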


\begin{remark}
It follows from the expressions of~$\meetWOIP$ and~$\joinWOIP$ that~$\WOEP(\Phi)$ also induces a sublattice of~$\WOIP(\Phi)$.
\end{remark}

\begin{remark}
To conclude on intervals, we however observe that the weak order on~$\WOIP(\Phi)$ is not a sublattice of the weak order on $\Phi$-posets.
For example, in type~$A_2$ we have
\[
\{\alpha_1, \alpha_1+\alpha_2\} \joinC \{\alpha_2, \alpha_1+\alpha_2\} = \{\alpha_1+\alpha_2\}
\quad\text{while}\quad
\{\alpha_1, \alpha_1+\alpha_2\} \joinWOIP \{\alpha_2, \alpha_1+\alpha_2\} = \varnothing.
\]
\end{remark}

\subsubsection{Faces}
\label{subsubsec:WOFP}

The faces of the permutohedron~$\Perm^p(W)$ correspond to the cosets of the standard parabolic subgroups of~$W$.
Recall that a \defn{standard parabolic subgroup} of~$W$ is a subgroup~$W_I$ generated by a subset~$I$ of the simple reflections of~$W$.
Its simple roots are the simple roots~$\Delta_I$ of~$\Delta$ corresponding to~$I$, its root system is~$\Phi_I = W_I(\Delta_I) = \Phi \cap \R\Delta_I$ and its longest element is denoted by~$w_{\circ,I}$.
A \defn{standard parabolic coset} is a coset under the action of a standard parabolic subgroup~$W_I$.
Such a standard parabolic coset can be written as~$xW_I$ where~$x$ is its minimal length coset representative (thus $x$ has no descent in~$I$, see Section~\ref{subsec:cube}).
Each standard parabolic coset~$xW_I$ (with~$I \subseteq S$ disjoint from the descent set~$\des(x)$ of~$x$) corresponds to a face
\[
\face(xW_I) = x \big( \Perm^p(W_I) \big) = \Perm^{x(p)} \big( xW_Ix^{-1} \big).
\]
See \fref{fig:facesPermutahedron} for an illustration in type~$A_2$ and~$B_2$.

In~\cite{DermenjianHohlwegPilaud}, A.~Dermenjian, C.~Hohlweg and V.~Pilaud also associated to each standard parabolic coset~$xW_I$ the set of roots~$\overline{\rel}(xW_I) \eqdef x(\Phi^- \cup \Phi_I^+)$.
These $\Phi$-posets were characterized in~\cite{DermenjianHohlwegPilaud} as follows.

\begin{proposition}[{\cite[Coro.~3.9]{DermenjianHohlwegPilaud}}]
\label{prop:characterizationBarWOPF}
The following assertions are equivalent for a subset of roots~$\rel \in \IRel(\Phi)$:
\begin{enumerate}[(i)]
\item $\rel = \overline{\rel}(xW_I)$ for some parabolic coset~$xW_I$ of~$W$,
\item $\rel = \set{\alpha \in \Phi}{\psi(\alpha) \ge 0}$ for some linear function~$\psi : V \to \R$,
\item $\rel = \Phi \cap \cone(\rel)$ is convex closed and $|{\rel} \cap \{\alpha, -\alpha\}| \ge 1$ for all~$\alpha \in \Phi$.
\end{enumerate}
\end{proposition}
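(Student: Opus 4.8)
The plan is to establish the cycle of implications $(i) \Rightarrow (ii) \Rightarrow (iii) \Rightarrow (i)$, treating the linear-functional description $(ii)$ as the pivot, since it is the most flexible of the three; the final implication will in fact be factored as $(iii) \Rightarrow (ii) \Rightarrow (i)$.

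For $(i) \Rightarrow (ii)$, write $\rel = \overline{\rel}(xW_I) = x(\Phi^- \cup \Phi_I^+)$ and choose $v \in V$ with $\dotprod{v}{\delta^\vee} = 0$ for $s_\delta \in I$ and $\dotprod{v}{\delta^\vee} > 0$ for $\delta \in \Delta$ with $s_\delta \notin I$. Expanding any positive root in the basis $\Delta$ shows at once that $\Phi^- \cup \Phi_I^+ = \set{\beta \in \Phi}{\dotprod{v}{\beta} \le 0}$ (a positive root has $\dotprod{v}{\cdot} \ge 0$, with equality exactly on $\Phi_I^+$). Applying $x$ and setting $\psi \eqdef -\dotprod{x(v)}{\cdot}$ then gives $\rel = \set{\alpha \in \Phi}{\psi(\alpha) \ge 0}$. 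The implication $(ii) \Rightarrow (iii)$ is routine: if $\rel = \set{\alpha \in \Phi}{\psi(\alpha) \ge 0}$ then $\psi(\alpha) + \psi(-\alpha) = 0$ yields $|{\rel} \cap \{\alpha, -\alpha\}| \ge 1$, additivity of $\psi$ yields closedness, and $\rel \subseteq \cone(\rel) \subseteq \set{w \in V}{\psi(w) \ge 0}$ together with $\rel \subseteq \Phi$ yields $\rel = \Phi \cap \cone(\rel)$, which is convex.

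For $(iii) \Rightarrow (ii)$, set $C \eqdef \cone(\rel)$, a polyhedral cone since $\rel$ is finite, and let $L \eqdef C \cap (-C)$ be its lineality space. The quotient $C/L$ is a pointed polyhedral cone in $V/L$, hence admits a linear functional strictly positive away from the origin; pulling it back produces a linear functional $\psi$ on $V$ that is non-negative on $C$ and strictly positive on $C \ssm L$. For $\alpha \in \rel \subseteq C$ we get $\psi(\alpha) \ge 0$; for $\alpha \in \Phi \ssm \rel$, the hypothesis $|{\rel} \cap \{\alpha, -\alpha\}| \ge 1$ forces $-\alpha \in \rel \subseteq C$, while $\alpha \notin \Phi \cap C = \rel$ forces $\alpha \notin C \supseteq L$, so $-\alpha \in C \ssm L$ and $\psi(\alpha) = -\psi(-\alpha) < 0$. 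Hence $\rel = \set{\alpha \in \Phi}{\psi(\alpha) \ge 0}$, which is $(ii)$. To deduce $(i)$ from $(ii)$, write $\psi = \dotprod{u}{\cdot}$, pick $x \in W$ with $x^{-1}(-u)$ dominant, and let $I \eqdef \set{s_\delta}{\delta \in \Delta, \; \dotprod{x^{-1}(-u)}{\delta^\vee} = 0}$; running the computation of $(i) \Rightarrow (ii)$ backwards gives $\rel = x(\Phi^- \cup \Phi_I^+) = \overline{\rel}(xW_I)$, and it is harmless that $x$ need not be the minimal-length coset representative, since $\overline{\rel}(xW_I)$ depends only on the coset $xW_I$.

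The main obstacle is the passage $(iii) \Rightarrow (ii)$: one must handle the lineality space $L$ of $\cone(\rel)$ with care and verify that every root lying outside $\rel$ also lies outside $L$ — this is precisely where the exact hypothesis $\rel = \Phi \cap \cone(\rel)$ (rather than merely $\rel = \Phi \cap C$ for some convex cone $C$, which a priori would be weaker) is used. The remaining ingredients are standard: a pointed polyhedral cone carries a linear functional strictly positive off the origin, every $W$-orbit meets the closed fundamental chamber, and the vanishing simple coroots on a dominant vector cut out exactly the parabolic subsystem of its stabilizer.
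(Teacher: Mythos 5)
Your argument is correct, but note that the paper does not prove this statement at all: it is quoted verbatim from \cite[Coro.~3.9]{DermenjianHohlwegPilaud} and used as a black box, so there is no internal proof to compare against. What you have written is essentially a self-contained reconstruction of the argument from that reference. The cycle $(i)\Rightarrow(ii)\Rightarrow(iii)\Rightarrow(ii)\Rightarrow(i)$ is sound: the choice of a vector $v$ in the closed fundamental chamber with stabilizer exactly $W_I$ correctly identifies $\Phi^-\cup\Phi_I^+$ with $\set{\beta\in\Phi}{\dotprod{v}{\beta}\le 0}$ (your switching between $\delta$ and $\delta^\vee$ is harmless since they are positive multiples of one another); the handling of the lineality space $L$ in $(iii)\Rightarrow(ii)$ is the right way to extract a functional that is strictly negative on every root of $\Phi\ssm\rel$, and you correctly isolate where the hypotheses $\rel=\Phi\cap\cone(\rel)$ and $|{\rel}\cap\{\alpha,-\alpha\}|\ge 1$ enter; and the return to $(i)$ via a dominant representative of $-u$ is standard. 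Two small points you assert without proof and should at least reference: that $W_I$ stabilizes $\Phi^-\cup\Phi_I^+$ (which follows from $\inv(w)\subseteq\Phi_I^+$ for $w\in W_I$ and is what makes $\overline{\rel}(xW_I)$ coset-independent), and the existence of a linear functional strictly positive on a pointed polyhedral cone minus the origin (an interior point of the dual cone). Both are classical, so the proof stands.
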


Moreover, they used this definition to recover the following order on faces of the permutahedron, defined initially in type~$A$ in~\cite{KrobLatapyNovelliPhanSchwer} and latter for arbitrary finite Coxeter groups in~\cite{PalaciosRonco}.

\begin{proposition}[\cite{DermenjianHohlwegPilaud}]
\label{prop:facialWeakOrder}
The following assertions are equivalent for two standard parabolic cosets~$xW_I = [x, xw_{\circ,I}]$ and~$yW_J = [y, yw_{\circ,J}]$ of~$W$:
\begin{itemize}
\item $x \wole y$ and~$x w_{\circ,I} \wole y w_{\circ,J}$,
\item $\overline{\rel}(xW_I)^+ \subseteq \overline{\rel}(yW_J)^+$ and $\overline{\rel}(xW_I)^- \supseteq \overline{\rel}(yW_J)^-$,
\item $xW_I \wole yW_J$ for the transitive closure~$\wole$ of the two cover relations~$xW_I \prec xW_{I \cup \{s\}}$ for~$s \notin I \cup \des(x)$ and $xW_I \prec (xw_{\circ, I}w_{\circ, I \ssm \{s\}})W_{I \ssm \{s\}}$ for $s \in I$.
\end{itemize}
The resulting order on standard parabolic cosets is the \defn{facial weak order} defined in~\cite{KrobLatapyNovelliPhanSchwer, PalaciosRonco, DermenjianHohlwegPilaud}.
This order extends the weak order on~$W$ since~$xW_\varnothing \wole yW_\varnothing \iff x \wole y$ for any~${x,y \in W}$.
Moreover, it defines a lattice on standard parabolic cosets of~$W$ with meet and join
\begin{gather*}
xW_I \meetFWO yW_J = \zm W_\Km \quad\text{where}\quad \zm = x \meetWO y \quad\text{and}\quad \Km = \des \big( \zm^{-1} (xw_{\circ,I} \meetWO yw_{\circ,J}) \big), \\
xW_I \joinFWO yW_J = \zj  W_\Kj \quad\text{where}\quad \zj = xw_{\circ,I} \joinWO yw_{\circ,J} \quad\text{and}\quad \Kj = \des \big( \zj^{-1} (x \joinWO y) \big).
\end{gather*}
\end{proposition}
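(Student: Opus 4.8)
The plan is to reprove the three components of this statement (which together are the main results of~\cite{DermenjianHohlwegPilaud}) in turn: the equivalence of the three descriptions of the order, the observation that it extends the weak order on~$W$, and the lattice property with the formulas for~$\meetFWO$ and~$\joinFWO$. \emph{Equivalence of~(i) and~(ii).} First I would record that the positive part of~$\overline{\rel}(xW_I)$ depends only on~$xw_{\circ,I}$ and its negative part only on~$x$, each through an inversion set. Indeed, since~$w_{\circ,I}$ reverses~$\Phi_I^+$ onto~$\Phi_I^-$ while preserving~$\Phi^+ \ssm \Phi_I^+$ setwise, and since~$\des(x) \cap I = \varnothing$, straightforward inversion-set bookkeeping gives
\[
\overline{\rel}(xW_I)^+ = \inv(x) \sqcup x\big( \inv(w_{\circ,I}) \big) = \inv(xw_{\circ,I})
\qquad\text{and}\qquad
\overline{\rel}(xW_I)^- = \Phi^- \ssm \big( {-\inv(x)} \big)
\]
(the complementation reflecting the orientation choice noted after Definition~\ref{def:weakOrder}). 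Substituting these into the two inclusions of~(ii) and using~$\inv(u) \subseteq \inv(v) \iff u \wole v$ turns~(ii) into exactly ``$xw_{\circ,I} \wole yw_{\circ,J}$ and~$x \wole y$'', which is~(i).

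\emph{Equivalence of~(i) and~(iii); extension of the weak order.} Next I would check that each listed cover relation respects~(i): the cover~$xW_I \prec xW_{I \cup \{s\}}$ keeps the minimal representative~$x$ and moves the maximal one strictly up, from~$xw_{\circ,I}$ to~$xw_{\circ,I \cup \{s\}}$; the cover~$xW_I \prec (xw_{\circ,I}w_{\circ,I \ssm \{s\}})W_{I \ssm \{s\}}$ keeps the maximal representative~$xw_{\circ,I}$ and moves the minimal one strictly up, from~$x$ to~$xw_{\circ,I}w_{\circ,I \ssm \{s\}}$. This shows the transitive closure in~(iii) sits inside the order of~(i)--(ii). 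For the reverse inclusion I would induct on the nonnegative integer~$\ell(y) + \ell(yw_{\circ,J}) - \ell(x) - \ell(xw_{\circ,I})$, which is positive whenever~$xW_I$ is strictly below~$yW_J$, the key point being that at any such pair one can apply one of the two listed covers at~$xW_I$ and stay weakly below~$yW_J$ --- raising the minimal representative via a suitable~$s \in I$ if~$x \ne y$, and raising the maximal representative otherwise. The extension statement is then immediate: taking~$I = J = \varnothing$ gives~$w_{\circ,\varnothing} = e$, so~(i) degenerates to~$x \wole y$.

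\emph{Lattice structure.} Using~(i)$\iff$(ii), the map~$xW_I \mapsto (x, xw_{\circ,I})$ is an order embedding of the facial weak order into~$(W, \wole) \times (W, \wole)$, whose image is the set of pairs~$(u,v)$ with~$u \wole v$ for which~$uW_{\des(u^{-1}v)}$ is a standard parabolic coset with minimal representative~$u$ and maximal representative~$v$. This image is not a sublattice, so I would obtain the meet and join by ``correcting'' the componentwise operations. For the meet of~$xW_I$ and~$yW_J$, set~$\zm = x \meetWO y$ and~$z' = xw_{\circ,I} \meetWO yw_{\circ,J}$ (weak-order meets on~$W$, which exist by Corollary~\ref{coro:latticeWOEP}), let~$\Km = \des(\zm^{-1}z')$, and claim that~$\zm W_\Km$ is the meet. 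The verifications are: $\des(\zm) \cap \Km = \varnothing$, so that~$\zm W_\Km$ is a bona fide standard parabolic coset with minimal representative~$\zm$; that~$\zm \wole x$ and~$\zm \wole y$ while~$\zm w_{\circ,\Km} \wole xw_{\circ,I}$ and~$\zm w_{\circ,\Km} \wole yw_{\circ,J}$, so that~$\zm W_\Km$ is a common lower bound by~(i); and that any common lower bound~$uW_K$ satisfies~$u \wole \zm$ and~$uw_{\circ,K} \wole \zm w_{\circ,\Km}$, so that~$\zm W_\Km$ is the greatest one. The join is treated symmetrically, interchanging the roles of the minimal and maximal representatives and replacing~$\meetWO$ by~$\joinWO$, which produces the stated formula with~$\zj$ and~$\Kj$.

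\emph{Main obstacle.} I expect the lattice computation above to be the delicate part --- in particular, showing that the correction~$z' \rightsquigarrow \zm w_{\circ,\Km}$ is well defined and yields the \emph{greatest} common lower bound, not merely a lower bound. This rests on a careful analysis of how the weak-order meet and join on~$W$ interact with standard parabolic subgroups and descent sets (for instance, that~$u \meetWO w_{\circ,K}$ is the element of~$W_K$ with inversion set~$\inv(u) \cap \Phi_K^+$, and that meets are compatible with the decomposition of~$W$ into cosets of~$W_K$), which is where most of the work in~\cite{DermenjianHohlwegPilaud} lies.
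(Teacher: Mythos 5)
This proposition is imported verbatim from~\cite{DermenjianHohlwegPilaud}; the paper offers no proof of it (the bracketed citation in the proposition header is the ``proof''), so there is no internal argument to compare yours against, and I can only assess your reconstruction on its own terms. Your treatment of (i)$\iff$(ii) is correct and complete: the identities $\overline{\rel}(xW_I)^+ = \inv(x) \sqcup x(\Phi_I^+) = \inv(xw_{\circ,I})$ and $\overline{\rel}(xW_I)^- = \Phi^- \ssm (-\inv(x))$ do follow from $x(\Phi_I^+) \subseteq \Phi^+$ (minimality of the coset representative) together with $\inv(w_{\circ,I}) = \Phi_I^+$, and they translate the two inclusions of (ii) directly into $xw_{\circ,I} \wole yw_{\circ,J}$ and $x \wole y$. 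This matches how the cited paper argues.

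The remaining two thirds of the statement are only outlined, and the outline stops exactly where the difficulty begins. For (i)$\implies$(iii) you assert that at any strictly smaller pair ``one can apply one of the two listed covers and stay weakly below $yW_J$'', but exhibiting the right $s$ and proving that the resulting coset is still below $yW_J$ is a genuine argument, not a remark. More seriously, for the lattice property you list the three required verifications without carrying out any of them: that $\Km = \des\big(\zm^{-1}(xw_{\circ,I} \meetWO yw_{\circ,J})\big)$ is disjoint from $\des(\zm)$, so that $\zm W_\Km$ is a legitimate standard parabolic coset with minimal representative $\zm$; that $\zm w_{\circ,\Km} \wole xw_{\circ,I}$ and $\zm w_{\circ,\Km} \wole yw_{\circ,J}$ --- which is not automatic, since $\zm w_{\circ,\Km}$ is in general a different element from $xw_{\circ,I} \meetWO yw_{\circ,J}$; and that every common lower bound $uW_K$ satisfies $u \wole \zm$ and $uw_{\circ,K} \wole \zm w_{\circ,\Km}$. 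These steps rest on nontrivial compatibilities between $\meetWO$, $\joinWO$, parabolic subgroups and descent sets, and they constitute the technical core of~\cite{DermenjianHohlwegPilaud}; your own ``main obstacle'' paragraph concedes they are missing. As it stands, the proposal proves the first equivalence but leaves the cover-relation characterization and the lattice structure unestablished.
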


Note that~$\overline{\rel}(xW_I)$ is not a $\Phi$-poset as it is not antisymmetric when~$I \ne \varnothing$.
Here, we will therefore associate to~$xW_I$ the set of roots
\[
\rel(xW_I) \eqdef \Phi \ssm \overline{\rel}(xW_I) = x(\Phi^+ \ssm \Phi_I^+).
\]
Note that $\rel(xW_I)$ coincides with the weak order interval poset~$\rel(x, xw_{\circ,I})$.
We say that~$\rel(xW_I)$ is a \defn{weak order face poset} and we let~$\WOFP(\Phi) \eqdef \set{\rel(xW_I)}{xW_I \text{ standard parabolic coset of } W}$ denote the collection of all such $\Phi$-posets.
Table~\ref{table:numerology} reports the cardinality of~$\WOFP(\Phi)$ in type~$A_n$, $B_n$, $C_n$ and~$D_n$ for small values of~$n$.

\begin{remark}
Geometrically, $\rel(xW_I)$ is the set of roots of~$\Phi$ not contained in the cone of~$\Perm^p(W)$ at the face~$\face(xW_I)$, \ie $\rel(xW_I) = \Phi \ssm \cone \set{w'(p) - w(p)}{w \in xW_I, w' \in W}$. See \fref{fig:facesPermutahedron}.

\begin{figure}[t]
\DeclareDocumentCommand{\rs}{ O{1.1cm} O{->} m m O{0}} {
	\def \radius {#1}
	\def \inputPoints{#3}
	\def \includeRoots{#4}
	\def \style {#2}
	\def \initialRotation {#5}
	
	\pgfmathtruncatemacro{\points}{\inputPoints * 2}
	\pgfmathsetmacro{\degrees}{360 / \points}
	
	\coordinate (0) at (0,0);
	
	\foreach \x in {1,...,\points}{%
		\pgfmathsetmacro{\location}{(\points+(\x-1))*\degrees + \initialRotation}
		\ifthenelse{\equal{\inputPoints}{3}}{
			\coordinate (\x) at (\location:\radius);
		}{
			\coordinate (\x) at (\location:1.5*\radius*(1 + 0.41*Mod(\x,2);));		
		}
	}

	\ifthenelse{\equal{\includeRoots}{}}{
	}{
		\foreach \x in {1,...,\points}{%
			\edef \showPoint {0};

			\foreach \y in \includeRoots {
				\ifthenelse{\equal{\x}{\y}}{
					\xdef \showPoint {1};
				}{}
			}
			
			\ifthenelse{\equal{\showPoint}{1}}{
				\draw[->, ultra thick] (0) -- (\x);
			}{}
		}
	}  
}

\centerline{
	\begin{tikzpicture}
		[scale=2.5,
		bface/.style={color=blue},
		rface/.style={color=red},
		gface/.style={color=black}
		]
		%
		\coordinate (e) at (0,0.42);
		\coordinate (s) at (-1,1);
		\coordinate (t) at (1,1);
		\coordinate (st) at (-1,2);
		\coordinate (ts) at (1,2);
		\coordinate (sts) at (0,2.58);
		%
		\coordinate (Ws) at (-0.47,0.69);
		\coordinate (Wt) at (0.47,0.69);
		\coordinate (tWs) at (1,1.5);
		\coordinate (sWt) at (-1,1.5);
		\coordinate (stWs) at (-0.47,2.31);
		\coordinate (tsWt) at (0.47,2.31);
		%
		\coordinate (W) at (0,1.5);
		%
		\draw (e) -- (s);
		\draw (e) -- (t);
		\draw (s) -- (st);
		\draw (t) -- (ts);
		\draw (st) -- (sts);
		\draw (ts) -- (sts);
		%
		\begin{scope}[shift={(e)}, scale=0.25, bface]
			\node[bface, above] {$e$};
			\begin{scope}[scale=0.8]
				\rs{3}{1,2,3}[210]
			\end{scope}
		\end{scope}
		\begin{scope}[shift={(s)}, scale=0.25, bface]
			\node[bface, above right] {$s$};
			\begin{scope}[scale=0.8]
				\rs{3}{1,2,6}[210]
			\end{scope}
		\end{scope}
		\begin{scope}[shift={(t)}, scale=0.25, bface]
			\node[bface, above left] {$t$};
			\begin{scope}[scale=0.8]
				\rs{3}{2,3,4}[210]
			\end{scope}
		\end{scope}
		\begin{scope}[shift={(st)}, scale=0.25, bface]
			\node[bface, below right] {$st$};
			\begin{scope}[scale=0.8]
				\rs{3}{1,5,6}[210]
			\end{scope}
		\end{scope}
		\begin{scope}[shift={(ts)}, scale=0.25, bface]
			\node[bface, below left] {$ts$};
			\begin{scope}[scale=0.8]
				\rs{3}{3,4,5}[210]
			\end{scope}
		\end{scope}
		\begin{scope}[shift={(sts)}, scale=0.25, bface]
			\node[bface, below] {$sts$};
			\begin{scope}[scale=0.8]
				\rs{3}{4,5,6}[210]
			\end{scope}
		\end{scope}
		%
		\begin{scope}[shift={(Ws)}, scale=0.25,rface]
			\node[rface, above] {$W_{s}$};
			\begin{scope}[scale=0.8]
				\rs{3}{1,2}[210]
			\end{scope}
		\end{scope}
		\begin{scope}[shift={(Wt)}, scale=0.25, rface]
			\node[rface, above] {$W_{t}$};
			\begin{scope}[scale=0.8]
				\rs{3}{2,3}[210]
			\end{scope}
		\end{scope}
		\begin{scope}[shift={(sWt)}, scale=0.25, rface]
			\node[rface, right] {$sW_{t}$};
			\begin{scope}[scale=0.8]
				\rs{3}{1,6}[210]
			\end{scope}
		\end{scope}
		\begin{scope}[shift={(tWs)}, scale=0.25, rface]
			\node[rface, left] {$tW_{s}$};
			\begin{scope}[scale=0.8]
				\rs{3}{3,4}[210]
			\end{scope}
		\end{scope}
		\begin{scope}[shift={(stWs)}, scale=0.25, rface]
			\node[rface, below] {$stW_{s}$};
			\begin{scope}[scale=0.8]
				\rs{3}{5,6}[210]
			\end{scope}
		\end{scope}
		\begin{scope}[shift={(tsWt)}, scale=0.25, rface]
			\node[rface, below] {$tsW_{t}$};
			\begin{scope}[scale=0.8]
				\rs{3}{4,5}[210]
			\end{scope}
		\end{scope}
		%
		\begin{scope}[shift={(W)}, scale=0.25, gface]
			\node[gface] {$W$};
			\begin{scope}[scale=0.8]
				\rs{3}{}[210]
			\end{scope}
		\end{scope}
	\end{tikzpicture}
	\hspace{.5cm}
	\begin{tikzpicture}
		[scale=1.3,
		bface/.style={color=blue},
		rface/.style={color=red},
		gface/.style={color=black}
		]
		%
		\coordinate (e) at (0,0);
		\coordinate (s) at (-1.4,0.6);
		\coordinate (t) at (1.4,0.6);
		\coordinate (st) at (-2,2);
		\coordinate (ts) at (2,2);
		\coordinate (sts) at (-1.4,3.4);
		\coordinate (tst) at (1.4,3.4);
		\coordinate (stst) at (0,4);
		%
		\coordinate (Ws) at (-0.7,0.3);
		\coordinate (Wt) at (0.7,0.3);
		\coordinate (tWs) at (1.7,1.3);
		\coordinate (sWt) at (-1.7,1.3);
		\coordinate (stWs) at (-1.7,2.7);
		\coordinate (tsWt) at (1.7,2.7);
		\coordinate (tstWs) at (0.7,3.7);
		\coordinate (stsWt) at (-0.7,3.7);
		%
		\coordinate (W) at (0,2);
		%
		\draw (e) -- (s);
		\draw (e) -- (t);
		\draw (s) -- (st);
		\draw (t) -- (ts);
		\draw (st) -- (sts);
		\draw (ts) -- (tst);
		\draw (tst) -- (stst);
		\draw (sts) -- (stst);
		%
		\begin{scope}[shift={(e)}, scale=0.25, bface]
			\node[bface, above] {$e$};
			\begin{scope}[scale=0.8]
				\rs{4}{1,2,3,4}[202]
			\end{scope}
		\end{scope}
		\begin{scope}[shift={(s)}, scale=0.25, bface]
			\node[bface, above right] {$s$};
			\begin{scope}[scale=0.8]
				\rs{4}{1,2,3,8}[202]
			\end{scope}
		\end{scope}
		\begin{scope}[shift={(st)}, scale=0.25, bface]
			\node[bface, right] {$st$};
			\begin{scope}[scale=0.8]
				\rs{4}{1,2,7,8}[202]
			\end{scope}
		\end{scope}
		\begin{scope}[shift={(sts)}, scale=0.25, bface]
			\node[bface, below right] {$sts$};
			\begin{scope}[scale=0.8]
				\rs{4}{1,6,7,8}[202]
			\end{scope}
		\end{scope}
		\begin{scope}[shift={(stst)}, scale=0.25, bface]
			\node[bface, below] {$stst$};
			\begin{scope}[scale=0.8]
				\rs{4}{5,6,7,8}[202]
			\end{scope}
		\end{scope}
		\begin{scope}[shift={(tst)}, scale=0.25, bface]
			\node[bface, below left] {$tst$};
			\begin{scope}[scale=0.8]
				\rs{4}{4,5,6,7}[202]
			\end{scope}
		\end{scope}
		\begin{scope}[shift={(ts)}, scale=0.25, bface]
			\node[bface, left] {$ts$};
			\begin{scope}[scale=0.8]
				\rs{4}{3,4,5,6}[202]
			\end{scope}
		\end{scope}
		\begin{scope}[shift={(t)}, scale=0.25, bface]
			\node[bface, above left] {$t$};
			\begin{scope}[scale=0.8]
				\rs{4}{2,3,4,5}[202]
			\end{scope}
		\end{scope}
		%
		\begin{scope}[shift={(Ws)}, scale=0.25, rface]
			\node[rface, above] {$W_{s}$};
			\begin{scope}[scale=0.8]
				\rs{4}{1,2,3}[202]
			\end{scope}
		\end{scope}
		\begin{scope}[shift={(sWt)}, scale=0.25, rface]
			\node[rface, right] {$sW_t$};
			\begin{scope}[scale=0.8]
				\rs{4}{1,2,8}[202]
			\end{scope}
		\end{scope}
		\begin{scope}[shift={(stWs)}, scale=0.25, rface]
			\node[rface, right] {$stW_s$};
			\begin{scope}[scale=0.8]
				\rs{4}{1,7,8}[202]
			\end{scope}
		\end{scope}
		\begin{scope}[shift={(stsWt)}, scale=0.25, rface]
			\node[rface, below] {$stsW_t$};
			\begin{scope}[scale=0.8]
				\rs{4}{6,7,8}[202]
			\end{scope}
		\end{scope}
		\begin{scope}[shift={(tstWs)}, scale=0.25, rface]
			\node[rface, below] {$tstW_s$};
			\begin{scope}[scale=0.8]
				\rs{4}{5,6,7}[202]
			\end{scope}
		\end{scope}
		\begin{scope}[shift={(tsWt)}, scale=0.25, rface]
			\node[rface, left] {$tsW_t$};
			\begin{scope}[scale=0.8]
				\rs{4}{4,5,6}[202]
			\end{scope}
		\end{scope}
		\begin{scope}[shift={(tWs)}, scale=0.25, rface]
			\node[rface, left] {$tW_s$};
			\begin{scope}[scale=0.8]
				\rs{4}{3,4,5}[202]
			\end{scope}
		\end{scope}
		\begin{scope}[shift={(Wt)}, scale=0.25, rface]
			\node[rface, above] {$W_t$};
			\begin{scope}[scale=0.8]
				\rs{4}{2,3,4}[202]
			\end{scope}
		\end{scope}
		\begin{scope}[shift={(W)}, scale=0.25, gface]
			\node[gface] {$W$};
			\begin{scope}[scale=0.8]
				\rs{4}{}[202]
			\end{scope}
		\end{scope}
	\end{tikzpicture}
}
	\caption{The sets~$\rel(xW_I)$ of the standard parabolic cosets~$xW_I$ in type~$A_2$~(left) and~$B_2$ (right). Note that positive roots point downwards.}
	\label{fig:facesPermutahedron}
\end{figure}
\end{remark}

Proposition~\ref{prop:characterizationBarWOPF} yields the following characterization of the $\Phi$-posets in~$\WOFP(\Phi)$.

\begin{proposition}
\label{prop:characterizationWOFP}
The following assertions are equivalent for a subset of roots~$\rel \in \IRel(\Phi)$:
\begin{enumerate}[(i)]
\item $\rel$ is a weak order face poset of~$\WOFP(\Phi)$,
\item $\rel = \set{\alpha \in \Phi}{\psi(\alpha) < 0}$ for some linear function~$\psi : V \to \R$,
\item $\rel = \Phi \cap \cone(\rel)$ is convex closed and $|{\rel} \cap \{\alpha, -\alpha\}| \le 1$ for all~$\alpha \in \Phi$.
\end{enumerate}
\end{proposition}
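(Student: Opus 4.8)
The plan is to deduce this from Proposition~\ref{prop:characterizationBarWOPF} by passing to the complementary set of roots $\overline{\rel} \eqdef \Phi \ssm \rel$, using that $\rel(xW_I) = \Phi \ssm \overline{\rel}(xW_I)$ by definition. Two of the three equivalences are then purely formal. For~(i): $\rel$ is a weak order face poset if and only if $\rel = \rel(xW_I) = \Phi \ssm \overline{\rel}(xW_I)$ for some parabolic coset~$xW_I$, \ie if and only if $\overline{\rel} = \overline{\rel}(xW_I)$, which is condition~(i) of Proposition~\ref{prop:characterizationBarWOPF} for~$\overline{\rel}$. For~(ii): since $\set{\alpha \in \Phi}{\psi(\alpha) < 0} = \Phi \ssm \set{\alpha \in \Phi}{\psi(\alpha) \ge 0}$, the set $\rel$ equals $\set{\alpha \in \Phi}{\psi(\alpha) < 0}$ if and only if $\overline{\rel}$ equals $\set{\alpha \in \Phi}{\psi(\alpha) \ge 0}$ with the same linear functional~$\psi$, which is condition~(ii) of Proposition~\ref{prop:characterizationBarWOPF} for~$\overline{\rel}$. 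Combining these with the equivalence (i)$\iff$(ii) of Proposition~\ref{prop:characterizationBarWOPF} gives (i)$\iff$(ii) here at once.

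Next I would establish (ii)$\iff$(iii). The implication (ii)$\implies$(iii) is direct: if $\rel = \set{\alpha \in \Phi}{\psi(\alpha) < 0}$, then $\rel = \Phi \cap C$ with $C \eqdef \set{v \in V}{\psi(v) < 0}$ a convex cone, so $\cone(\rel) \subseteq C$ forces $\rel = \Phi \cap \cone(\rel)$; it is closed since $\psi(m\alpha + n\beta) = m\psi(\alpha) + n\psi(\beta) < 0$ for $\alpha,\beta \in \rel$ and $m,n \in \N$ not both zero; and $|\rel \cap \{\alpha,-\alpha\}| \le 1$ because $\psi(-\alpha) = -\psi(\alpha)$. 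For the converse (iii)$\implies$(ii) I would again use the complement: since $|\rel \cap \{\alpha,-\alpha\}| + |\overline{\rel} \cap \{\alpha,-\alpha\}| = 2$ for every $\alpha \in \Phi$, the hypothesis $|\rel \cap \{\alpha,-\alpha\}| \le 1$ means exactly that $\overline{\rel}$ is cofull; and if one can show $\overline{\rel}$ is convex, then $\overline{\rel}$ is also closed by Remark~\ref{rem:manyNotionsClosed}, so $\overline{\rel}$ satisfies condition~(iii) of Proposition~\ref{prop:characterizationBarWOPF}, which yields $\overline{\rel} = \set{\alpha \in \Phi}{\psi(\alpha) \ge 0}$ and hence $\rel = \set{\alpha \in \Phi}{\psi(\alpha) < 0}$.

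Thus everything reduces to the following point: if $\rel$ is convex, closed and antisymmetric, then $\overline{\rel}$ is convex, that is, $\rel \cap \cone(\overline{\rel}) = \varnothing$ — no root of $\rel$ is a nonnegative combination of roots of $\overline{\rel}$. The base case is clean: if $\gamma \in \rel$ were a nonnegative combination of roots of $-\rel$ only, then $-\gamma$ would be a nonnegative combination of roots of $\rel$, so $-\gamma \in \Phi \cap \cone(\rel) = \rel$, contradicting antisymmetry. The general case, in which roots of $\rel_0 \eqdef \Phi \ssm (\rel \cup -\rel)$ also occur, I expect to reduce to this base case by splitting off the $-\rel$-part and the $\rel_0$-part, so as to write $\gamma + v = w$ with $v$ a nonnegative combination of roots of $\rel$ and $w$ a nonnegative combination of roots of $\rel_0$, and then using Theorem~\ref{thm:filtrationSummableSubsets} and Proposition~\ref{prop:twoSubsumsRoots} to peel off summable subsums from $w$ and produce a strictly smaller witness that still lies inside the closed set~$\rel$ (Lemma~\ref{lem:posetNoVanishingSubsum} ensuring that no intermediate sum vanishes). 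I expect this reduction to be the main obstacle, and precisely the place where the crystallographic hypothesis and the subsum results of Section~\ref{subsec:sums} are genuinely used; the remaining content is bookkeeping with complements.
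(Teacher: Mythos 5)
Your treatment of (i)$\iff$(ii) by complementation is exactly the paper's argument (the paper's entire proof is that one-line reduction to Proposition~\ref{prop:characterizationBarWOPF}), and your direct verification of (ii)$\implies$(iii) is fine. The problem is the step you yourself flag as the main obstacle: the lemma ``if $\rel$ is convex, closed and antisymmetric then $\overline{\rel} \eqdef \Phi \ssm \rel$ is convex'' is false, so no amount of peeling off summable subsums will prove it. Take $\Phi$ of type~$A_2$ and $\rel \eqdef \{\alpha_1\}$. This set is convex ($\Phi \cap \R_{\ge 0}\alpha_1 = \{\alpha_1\}$), closed and antisymmetric, yet $\alpha_1 = (\alpha_1+\alpha_2) + (-\alpha_2)$ with both summands in $\overline{\rel}$, so $\alpha_1 \in \big(\Phi \cap \cone(\overline{\rel})\big) \ssm \overline{\rel}$. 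Note that both summands lie in your set $\rel_0 = \Phi \ssm (\rel \cup -\rel)$, so the proposed reduction to the base case (the only part of that argument you actually carry out) never gets started.

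The same example shows that the difficulty is not merely with your proof: $\{\alpha_1\}$ satisfies condition~(iii) as literally stated, but it is not in $\WOFP(\Phi)$ (every weak order face poset of type~$A_2$ has $0$, $2$ or $3$ elements), and it is not of the form $\set{\alpha \in \Phi}{\psi(\alpha)<0}$ (one would need $\psi(\alpha_2)=0$, whence $\psi(\alpha_1+\alpha_2)=\psi(\alpha_1)<0$). So (iii)$\implies$(ii) fails for condition~(iii) as written: convexity and closedness of a set do not pass to its complement, and the correct complementary form of Proposition~\ref{prop:characterizationBarWOPF}\,(iii) is that $\overline{\rel} = \Phi \cap \cone(\overline{\rel})$ be convex closed while $\rel$ is antisymmetric. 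With (iii) read that way, the whole proposition really is formal complementation, which is what the paper's one-line proof implicitly does. Your instinct that something nontrivial was being glossed over at this exact point was correct, but what is missing is a strengthening of condition~(iii), not a subsum argument.
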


\begin{proof}
This immediately follows from the characterization of~$\overline{\rel}(xW_I)$ in Proposition~\ref{prop:characterizationBarWOPF} and the definition~$\rel(xW_I) \eqdef \Phi \ssm \overline{\rel}(xW_I)$.
\end{proof}

We now observe that the weak order induced by~$\WOFP(\Phi)$ corresponds to the facial weak order of~\cite{PalaciosRonco, DermenjianHohlwegPilaud}.

\begin{proposition}
\label{prop:weakOrderWOFP}
For any standard parabolic cosets~$xW_I$ and~$yW_J$, we have~$\rel(xW_I) \wole \rel(yW_J)$ in the weak order on~$\WOFP(\Phi)$ if and only if~$xW_I \wole yW_J$ in facial weak order.
\end{proposition}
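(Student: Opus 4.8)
The plan is to reduce the statement to a pure complementation argument and then invoke the order-theoretic characterization of the facial weak order already recalled in Proposition~\ref{prop:facialWeakOrder}. The key point is the defining identity $\rel(xW_I) = \Phi \ssm \overline{\rel}(xW_I)$.

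First I would record the elementary observation that intersecting this identity with $\Phi^+$ and with $\Phi^-$ yields $\rel(xW_I)^+ = \Phi^+ \ssm \overline{\rel}(xW_I)^+$ and $\rel(xW_I)^- = \Phi^- \ssm \overline{\rel}(xW_I)^-$, and symmetrically for $yW_J$. Next I would unfold Definition~\ref{def:weakOrder}: $\rel(xW_I) \wole \rel(yW_J)$ means $\rel(xW_I)^+ \supseteq \rel(yW_J)^+$ and $\rel(xW_I)^- \subseteq \rel(yW_J)^-$. Substituting the two complementation identities and using that, for subsets $B, C$ of a fixed set $A$, one has $A \ssm B \supseteq A \ssm C \iff B \subseteq C$ (applied with $A = \Phi^+$ for the positive parts and $A = \Phi^-$ for the negative parts), this becomes equivalent to $\overline{\rel}(xW_I)^+ \subseteq \overline{\rel}(yW_J)^+$ and $\overline{\rel}(xW_I)^- \supseteq \overline{\rel}(yW_J)^-$.

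Finally, by the second bullet of Proposition~\ref{prop:facialWeakOrder} these two inclusions are exactly the defining condition for $xW_I \wole yW_J$ in the facial weak order, which concludes. No real obstacle arises here: the statement is a bookkeeping consequence of $\rel(xW_I) = \Phi \ssm \overline{\rel}(xW_I)$ together with the already-established description of the facial weak order. The only subtlety worth flagging in the write-up is the orientation reversal between Definition~\ref{def:weakOrder} (which compares positive parts by $\supseteq$) and Proposition~\ref{prop:facialWeakOrder} (which compares positive parts by $\subseteq$); this reversal is precisely what the passage to complements accounts for, and it is consistent with the remark following Definition~\ref{def:weakOrder} about the choice of orientation. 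One could also note in passing that, combined with Proposition~\ref{prop:facialWeakOrder}, this shows the weak order on $\WOFP(\Phi)$ is a lattice isomorphic to the facial weak order, but that is not part of what must be proved.
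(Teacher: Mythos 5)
Your proposal is correct and follows exactly the paper's own argument: unfold Definition~\ref{def:weakOrder}, pass to complements via $\rel(xW_I) = \Phi \ssm \overline{\rel}(xW_I)$ to reverse the inclusions on positive and negative parts, and conclude by the second bullet of Proposition~\ref{prop:facialWeakOrder}. The remark about the orientation reversal is a fair observation but does not change the substance.
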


\begin{proof}
By definition of~$\rel(xW_I)$ and Proposition~\ref{prop:facialWeakOrder}, we have
\begin{align*}
\rel(xW_I) \wole \rel(yW_J)
& \quad\iff\quad \rel(xW_I)^+ \supseteq \rel(yW_J)^+ \quad\text{and}\quad \rel(xW_I)^- \subseteq \rel(yW_J)^- \\
& \quad\iff\quad \overline{\rel}(xW_I)^+ \subseteq \overline{\rel}(yW_J)^+ \quad\text{and}\quad \overline{\rel}(xW_I)^- \supseteq \overline{\rel}(yW_J)^- \\
& \quad\iff\quad xW_I \wole yW_J.
\qedhere
\end{align*}
\end{proof}

\begin{corollary}
\label{coro:latticeWOFP}
The weak order on~$\WOFP(\Phi)$ is a lattice with meet and join
\[
\rel(xW_I) \meetWOFP \rel(yW_J) = \rel(xW_I \meetFWO yW_J)
\quad\text{and}\quad
\rel(xW_I) \joinWOFP \rel(yW_J) = \rel(xW_I \joinFWO yW_J).
\]
\end{corollary}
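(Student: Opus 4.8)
The plan is to realize the assignment $\rel \colon xW_I \mapsto \rel(xW_I)$ as an isomorphism of posets between the facial weak order on standard parabolic cosets of $W$ and the weak order on $\WOFP(\Phi)$, and then transport through this isomorphism the lattice structure of the facial weak order, which was established in Proposition~\ref{prop:facialWeakOrder}.

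First I would record that $xW_I \mapsto \rel(xW_I)$ is a bijection onto $\WOFP(\Phi)$. Surjectivity holds by the very definition $\WOFP(\Phi) \eqdef \set{\rel(xW_I)}{xW_I \text{ a standard parabolic coset of } W}$. For injectivity, if $\rel(xW_I) = \rel(yW_J)$ then $\rel(xW_I) \wole \rel(yW_J)$ and $\rel(yW_J) \wole \rel(xW_I)$ both hold trivially, so Proposition~\ref{prop:weakOrderWOFP} gives $xW_I \wole yW_J$ and $yW_J \wole xW_I$ in the facial weak order, whence $xW_I = yW_J$ since the facial weak order is a partial order (Proposition~\ref{prop:facialWeakOrder}).

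Next, Proposition~\ref{prop:weakOrderWOFP} states exactly that this bijection is order-preserving and order-reflecting, \ie that $\rel(xW_I) \wole \rel(yW_J) \iff xW_I \wole yW_J$; hence $\rel$ is an isomorphism from the facial weak order onto $(\WOFP(\Phi), \wole)$. Since the facial weak order is a lattice by Proposition~\ref{prop:facialWeakOrder}, so is its isomorphic image $(\WOFP(\Phi), \wole)$, and meets and joins are carried over by $\rel$: for any standard parabolic cosets $xW_I$ and $yW_J$,
\[
\rel(xW_I) \meetWOFP \rel(yW_J) = \rel(xW_I \meetFWO yW_J)
\quad\text{and}\quad
\rel(xW_I) \joinWOFP \rel(yW_J) = \rel(xW_I \joinFWO yW_J),
\]
which are precisely the announced formulas.

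There is no genuinely hard step here: the combinatorial content has already been isolated in Propositions~\ref{prop:weakOrderWOFP} and~\ref{prop:facialWeakOrder}, and the present statement is a formal consequence. The only point deserving a line of care is the injectivity of $\rel$ (needed to upgrade ``order-preserving and order-reflecting'' to ``order isomorphism''), which we settled above via antisymmetry of the facial weak order; alternatively one could recover $xW_I$ from $\rel(xW_I)$ using the characterization of $\overline{\rel}(xW_I) = \Phi \ssm \rel(xW_I)$ in Proposition~\ref{prop:characterizationBarWOPF}.
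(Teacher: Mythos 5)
Your proposal is correct and follows exactly the route the paper intends: the corollary is a formal consequence of Proposition~\ref{prop:weakOrderWOFP} (which shows $xW_I \mapsto \rel(xW_I)$ is an order isomorphism onto $\WOFP(\Phi)$) together with the lattice property of the facial weak order from Proposition~\ref{prop:facialWeakOrder}. Your extra care about injectivity is a reasonable detail to make explicit, but it does not change the argument.
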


\begin{remark}
\label{rem:notSublatticeWOFP}
To conclude, note that the weak order on~$\WOFP(\Phi)$ is a lattice but not a sublattice of the weak order on~$\IPos(\Phi)$, nor on~$\WOIP(\Phi)$.
This was already observed in~\cite[Rem.~31]{ChatelPilaudPons} in type~$A$.
For example, in type~$A_2$ we have
\[
\{-\alpha_1, \alpha_2\} \joinC \varnothing = \{-\alpha_1, \alpha_2\} \joinWOIP \varnothing = \{\alpha_2\}
\quad\text{while}\quad
\{-\alpha_1, \alpha_2\} \joinWOIP \varnothing = \{\alpha_2, \alpha_1+\alpha_2\}.
\]
\end{remark}

\subsection{Generalized associahedra}
\label{subsec:associahedra}

We now consider $\Phi$-posets corresponding to the vertices, the intervals and the faces of the generalized associahedra of type~$\Phi$.
These polytopes provide geometric realizations of the type~$\Phi$ cluster complex, in connection to the type~$\Phi$ cluster algebra of S.~Fomin and A.~Zelevinsky~\cite{FominZelevinsky-ClusterAlgebrasI, FominZelevinsky-ClusterAlgebrasII}.
A first realization was constructed by F.~Chapoton, S.~Fomin and A.~Zelevinsky in~\cite{ChapotonFominZelevinsky} based on the compatibility fan of~\cite{FominZelevinsky-YSystems, FominZelevinsky-ClusterAlgebrasII}. 
An alternative realization was constructed later by C.~Hohlweg, C.~Lange and H.~Thomas in~\cite{HohlwegLangeThomas} based on the Cambrian fan of N.~Reading and D.~Speyer~\cite{ReadingSpeyer}.

Although the sets of roots that we consider in this section have a strong connection to these geometric realizations (see Remarks~\ref{rem:verticesAssociahedron} and~\ref{rem:facesAssociahedron}), we do not really need for our purposes the precise definition of the geometry of these associahedra or of these Cambrian fans.
We rather need a combinatorial description of their vertices and faces.
The combinatorial model behind these constructions is the Cambrian lattice on sortable elements as developed by N.~Reading~\cite{Reading-cambrianLattices, Reading-CoxeterSortable, Reading-sortableElements}, which we briefly recall now.

Let~$c$ be a Coxeter element, \ie the product of the simple reflections of~$W$ in an arbitrary order.
The \defn{$c$-sorting word} of an element~$w \in W$ is the lexicographically smallest reduced expression for~$w$ in the word~$c^\infty \eqdef ccccc\cdots$.
We write this word as~$w = c_{I_1} \dots c_{I_k}$ where~$c_I$ is the subword of~$c$ consisting only of the simple reflections in~$I$.
An element~$w \in W$ is \defn{$c$-sortable} when these subsets are nested:~$I_1 \supseteq I_2 \supseteq \cdots \supseteq I_k$.
An element~$w \in W$ is \defn{$c$-antisortable} when~$w w_\circ$ is $(c^{-1})$-sortable.
See~\cite{Reading-CoxeterSortable} for details on Coxeter sortable elements and their connections to other Coxeter-Catalan families.

For an element~$w \in W$, we denote by~$\projDown^c(w)$ the maximal $c$-sortable element below~$w$ in weak order and by~$\projUp_c(w)$ the minimal $c$-antisortable element above~$w$ in weak order.
The projection maps~$\projDown^c$ and~$\projUp_c$ can also be defined inductively, see~\cite{Reading-sortableElements}.
Here, we only need that these maps are order preserving projections from~$W$ to sortable (resp.~antisortable) elements, and that their fibers are intervals of the weak order of the form~$[\projDown^c(w), \projUp_c(w)]$.
Therefore, they define a lattice congruence~$\equiv_c$ of the weak order, called the \defn{$c$-Cambrian congruence}.
The quotient of the weak order by this congruence~$\equiv_c$ is the \defn{$c$-Cambrian lattice}.
It is isomorphic to the sublattice of the weak order induced by $c$-sortable (or $c$-antisortable) elements.
In particular, for two $c$-Cambrian classes~$X,Y$, we have~$X \wole Y$ in the $c$-Cambrian lattice $\iff$ there exists~$x \in X$ and~$y \in Y$ such that~$x \wole y$ in the weak order on~$W$ $\iff$ $\projDown^c(X) \wole \projDown^c(Y)$ $\iff$ $\projUp_c(X) \wole \projUp_c(Y)$.
We denote by~$X \meet_c Y$ and~$X \join_c Y$ the meet and join of the two $c$-Cambrian classes~$X,Y$.

Let~$w_\circ(c) = q_1 \dots q_N$ denote the $c$-sorting word for the longest element~$w_\circ$.
It defines an order on~$\Phi^+$ by~$\alpha_{q_1} <_c q_1 (\alpha_{q_2}) <_c q_1 q_2 (\alpha_{q_3}) <_c \dots <_c q_1 \dots q_{N-1} (\alpha_{q_N})$.
A subset~$\rel$ of positive roots is called \defn{$c$-aligned} if for any~$\alpha <_c \beta$ such that~$\alpha + \beta \in {\rel}$, we have~$\alpha \in {\rel}$.
It is known that~$w \in W$ is $c$-sortable if and only if its inversion set~$\inv(w)$ is $c$-aligned~\cite{Reading-sortableElements}.

\subsubsection{Elements}
\label{subsubsec:COEP}

For a $c$-Cambrian class~$X$, we consider the $\Phi$-poset
\[
\rel(X) \eqdef \bigcap_{w \in X} \rel(w) = \rel\big(\projDown^c(X)\big) \cap \rel\big(\projUp_c(X)\big) = \rel\big(\projDown^c(X)\big)^- \sqcup \rel\big(\projUp_c(X)\big)^+.
\]
Note that by definition, $\rel(X)$ coincides with the weak order interval poset~$\rel \big( \projDown^c(X), \projUp_c(X) \big)$.
We say that~$\rel(X)$ is a \defn{$c$-Cambrian order element poset} and we denote the collection of all such $\Phi$-posets by~$\COEP(c) \eqdef \set{\rel(X)}{X \text{ $c$-Cambrian class}}$.

\begin{remark}
Table~\ref{table:numerology} reports the cardinality of~$\COEP(c)$ in type~$A_n$, $B_n$, $C_n$ and~$D_n$ for small values of~$n$.
Observe that this cardinality is independent of the choice of the Coxeter element~$c$, and is the Coxeter-Catalan number (counting many related objects from clusters of type~$\Phi$ to non-crossing partitions of~$W$):
\[
|\COEP(c)| = \mathrm{Cat}(W) = \prod_{i \in [n]} \frac{1+d_i}{d_i},
\]
where~$(d_1, \dots, d_n)$ still denote the degrees of~$W$.
\end{remark}

\begin{remark}
\label{rem:verticesAssociahedron}
Geometrically, $\rel(X)$ is the set of roots of~$\Phi$ not contained in the cone of the vertex corresponding to~$X$ in the generalized associahedron~$\Asso(c)$ of C.~Hohlweg, C.~Lange and H.~Thomas in~\cite{HohlwegLangeThomas}.
See \fref{fig:facesAssociahedron}.
\end{remark}

Let us now take a little detour to comment on a conjectured characterization of these $\Phi$-posets, inspired from a similar characterization in type~$A$ proved in~\cite[Prop.~60]{ChatelPilaudPons}.
Note that it uses the $c$-Cambrian order interval posets formally defined in the next section and characterized in Proposition~\ref{prop:characterizationCOIP}.
It also requires the notion of $c$-snakes.
A \defn{$c$-snake} in a $\Phi$-poset~$\rel$ is a sequence of roots~$\alpha_1, \dots, \alpha_p \in \rel$ such that
\begin{itemize}
\item either~$\alpha_{2i} \in \Phi^-$, $\alpha_{2i+1} \in \Phi^+$ and $\alpha_1 <_c -\alpha_2 >_c \alpha_3 <_c -\alpha_4 >_c \dots$
\item or~$\alpha_{2i} \in \Phi^+$, $\alpha_{2i+1} \in \Phi^-$ and $-\alpha_1 >_c \alpha_2 <_c -\alpha_3 >_c \alpha_4 <_c \dots$
\end{itemize}
A \defn{$c$-snake decomposition} of a root~$\alpha$ in~$\rel$ is a decomposition~$\alpha = \sum_{i \in [p]} \lambda_i \alpha_i$, where~$\lambda_i \in \N$ and~$\alpha_1, \dots, \alpha_p$ is a $c$-snake of~$\rel$.
The following conjectural characterization of $c$-Cambrian order element posets was proved in type~$A$ in~\cite[Prop.~60]{ChatelPilaudPons} and has been checked computationally for small Coxeter types using~\cite{Sage}.

\begin{conjecture}
\label{conj:characterizationCOEP}
A $\Phi$-poset~$\rel \in \IPos(\Phi)$ is in~$\COEP(c)$ if and only if it is in~$\COIP(c)$ (characterized in Proposition~\ref{prop:characterizationCOIP}) and any root~$\alpha \in \Phi$ admits a $c$-snake decomposition in~$\rel$.
\end{conjecture}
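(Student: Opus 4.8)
The plan is to prove the two implications of the equivalence separately, using throughout the identity $\rel(X) = \rel\big(\projDown^c(X), \projUp_c(X)\big)$, which exhibits every $c$-Cambrian order element poset as the weak order interval poset of a Cambrian interval.

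\emph{From $\COEP(c)$ to the characterization.} Let $\rel = \rel(X)$ for a $c$-Cambrian class $X$, and set $w \eqdef \projDown^c(X)$ and $w' \eqdef \projUp_c(X)$, so that $w$ is $c$-sortable, $w'$ is $c$-antisortable, and $\rel = \rel(w,w')$. Being the interval poset of a Cambrian interval, $\rel$ lies in $\COIP(c)$ by Proposition~\ref{prop:characterizationCOIP}. It remains to produce a $c$-snake decomposition of every root $\alpha$. If $\alpha \in \Phi^+ \ssm \inv(w')$ then $\alpha \in \rel(w')^+ = \rel^+$ and the length-one sequence $\alpha$ is already a $c$-snake decomposition; the case $\alpha \in \Phi^- \ssm (-\inv(w))$ is symmetric. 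The genuine case is that of a ``sometimes inversion'', say $\alpha \in \Phi^+$ with $\alpha \in \inv(w') \ssm \inv(w)$. I would handle it by induction on the rank of $\Phi$, peeling off the initial letter $s$ of $c$: the recursive descriptions of $c$-sortable and $c$-antisortable elements and of the order $<_c$ (Reading~\cite{Reading-sortableElements, Reading-CoxeterSortable}) relate $\rel$ to a $c'$-Cambrian order element poset in the parabolic subsystem obtained by deleting $s$, and a $c'$-snake decomposition there should lift to a $c$-snake decomposition of $\alpha$, lengthened by one root whose sign and $<_c$-position are dictated by the position of $s$ in $c$. The same zigzag should moreover be readable off the $c$-Cambrian fan, from how its walls separate $\alpha$ from the cone of $\Asso(c)$ at the vertex $X$ (Remark~\ref{rem:verticesAssociahedron}).

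\emph{From the characterization to $\COEP(c)$.} Conversely, suppose $\rel \in \COIP(c)$ and that every root of $\Phi$ admits a $c$-snake decomposition in $\rel$. By Proposition~\ref{prop:characterizationCOIP} and Lemma~\ref{lem:characterizationWOIP}, $\rel = \rel(w,w')$ with $w = \projDown^c(X)$ and $w' = \projUp_c(Y)$ for some $c$-Cambrian classes $X \wole Y$, and it suffices to prove $X = Y$. Suppose not; since $\projUp_c$ is injective on Cambrian classes, $\projUp_c(X) \prec w'$, so there is a positive root $\alpha \in \inv(w') \ssm \inv(\projUp_c(X))$ (or a symmetric negative one). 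Then $\alpha \notin \Phi^+ \ssm \inv(w') = \rel^+$, and, since $\inv(w) \subseteq \inv(\projUp_c(X))$, also $\alpha \notin \inv(w)$, whence $-\alpha \notin -\inv(w) = \rel^-$; thus neither $\alpha$ nor $-\alpha$ belongs to $\rel$. The crux is to show that such an $\alpha$ has no $c$-snake decomposition in $\rel$. Starting from a hypothetical decomposition $\alpha = \sum_i \lambda_i \alpha_i$ along a $c$-snake $\alpha_1, \dots, \alpha_p$ of $\rel$, one uses that $\rel^- = -\inv(w)$ with $\inv(w)$ $c$-aligned, that $\rel^+ = \Phi^+ \ssm \inv(w')$ is the complement of the inversion set of a $c$-antisortable element, and that $\rel$ is closed and antisymmetric, to collapse the zigzag one step at a time (each step combining two consecutive snake roots via Theorem~\ref{thm:dpsum} and staying inside $\rel$ thanks to the alignment conditions), until $\alpha$ itself — or an intermediate partial sum — is forced into $\rel$, a contradiction. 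Once $X = Y$ is established, $[w,w']$ is exactly the $c$-Cambrian class $X$, so $\rel = \rel(w,w') = \rel(X) \in \COEP(c)$.

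\emph{Main obstacle.} The difficult half is the converse, and within it the ``collapsing'' argument: the hypothesis is a single statement quantified over all roots, whereas the desired conclusion $\projUp_c(X) = w'$ is a delicate local fact about one pair of sortable/antisortable elements. Turning one into the other calls for a uniform analysis of the interplay between the sign-alternation and $<_c$-monotonicity of a snake on the one hand and the $c$-alignment of inversion sets and sums of roots on the other — precisely the bookkeeping that is transparent in type~$A$ through binary trees and nested forests, as in~\cite[Prop.~60]{ChatelPilaudPons}, but for which no equally explicit combinatorial model is available in general type. A secondary difficulty is the induction in the first half: one must check that the root appended when reinstating the deleted generator $s$ always falls on the correct side of the zigzag, which again depends on the fine recursive behaviour of $<_c$ and of $\projDown^c$, $\projUp_c$.
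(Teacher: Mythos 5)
The statement you are addressing is Conjecture~\ref{conj:characterizationCOEP}; the paper offers no proof of it. The authors state explicitly that it is known only in type~$A$ (via \cite[Prop.~60]{ChatelPilaudPons}) and has otherwise been checked computationally for small Coxeter types, so there is no argument of theirs to compare yours against. Judged on its own, your text is a programme rather than a proof: in both directions the decisive steps are flagged with ``should'' and are not carried out, as you yourself acknowledge.

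Two of these gaps are essential rather than cosmetic. In the forward direction, the rank induction that deletes the initial letter $s$ of $c$ only sees the roots of the parabolic subsystem $\Phi \cap \R(\Delta \ssm \{\alpha_s\})$; a root with nonzero coefficient on $\alpha_s$ has no counterpart there, so the claim that its $c$-snake decomposition arises from a $c'$-snake decomposition ``lengthened by one root'' is precisely the content to be proved, and it interacts with the case distinction (is $s$ a descent of $\projDown^c(X)$ or not?) in Reading's recursions for $\projDown^c$ and $\projUp_c$. In the converse, the ``collapsing'' step is the whole difficulty and the tools you invoke do not reach it: the $c$-alignment conditions of Proposition~\ref{prop:characterizationCOIP} only constrain sums $\alpha+\beta\in\rel$ with $\alpha,\beta$ of the \emph{same} sign, whereas consecutive roots of a $c$-snake have \emph{opposite} signs, so ``combining two consecutive snake roots via Theorem~\ref{thm:dpsum} and staying inside $\rel$ thanks to the alignment conditions'' appeals to hypotheses that do not apply to the pairs being combined; moreover Theorem~\ref{thm:dpsum} requires a sign on $\dotprod{\alpha_i}{\alpha_{i+1}}$ that nothing in your setup supplies. (It is also not argued that the particular witness $\alpha \in \inv(w') \ssm \inv\big(\projUp_c(X)\big)$ you select is one whose snake decomposition must fail, rather than some other root.) Until these points are settled, the equivalence remains, as in the paper, a conjecture.
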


Even without this characterization, we can at least describe the weak order on these posets.

\begin{proposition}
\label{prop:weakOrderCOEP}
For any two $c$-Cambrian classes~$X$ and~$Y$, we have~$\rel(X) \wole \rel(Y)$ in the weak order on~$\COEP(c)$ if and only if~$X \wole Y$ in the $c$-Cambrian lattice.
\end{proposition}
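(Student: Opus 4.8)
The plan is to recognize each $c$-Cambrian order element poset as a weak order interval poset and then read off the statement from Proposition~\ref{prop:weakOrderWOIP} together with the standard order-theoretic properties of the Cambrian congruence. First I would record the key identification: since the fibers of the projections~$\projDown^c$ and~$\projUp_c$ are exactly the weak order intervals~$[\projDown^c(X), \projUp_c(X)]$, the defining formula $\rel(X) = \bigcap_{w \in X} \rel(w)$ simplifies to $\rel(X) = \rel\big(\projDown^c(X)\big) \cap \rel\big(\projUp_c(X)\big) = \rel\big(\projDown^c(X), \projUp_c(X)\big)$, as already noted right after the definition of~$\rel(X)$. In particular $\COEP(c) \subseteq \WOIP(\Phi)$, and the weak order on~$\COEP(c)$ is the subposet of the weak order on~$\WOIP(\Phi)$ induced by these posets.

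Next I would apply Proposition~\ref{prop:weakOrderWOIP} to the two intervals $\big[\projDown^c(X), \projUp_c(X)\big]$ and $\big[\projDown^c(Y), \projUp_c(Y)\big]$, obtaining that $\rel(X) \wole \rel(Y)$ if and only if $\projDown^c(X) \wole \projDown^c(Y)$ and $\projUp_c(X) \wole \projUp_c(Y)$. It then remains to invoke the description of the $c$-Cambrian lattice recalled in Section~\ref{subsec:associahedra}: because $\projDown^c$ and~$\projUp_c$ are order preserving projections onto the $c$-sortable (resp.\ $c$-antisortable) elements whose fibers are the Cambrian classes, one has $X \wole Y$ in the $c$-Cambrian lattice $\iff \projDown^c(X) \wole \projDown^c(Y) \iff \projUp_c(X) \wole \projUp_c(Y)$. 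In particular the two conditions appearing in the conjunction above are equivalent to one another, so the conjunction collapses to the single condition $X \wole Y$, which finishes the argument.

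There is essentially no hard step here: the statement is a direct consequence of the already established Proposition~\ref{prop:weakOrderWOIP} and of the well-known fact that passing to the Cambrian quotient is compatible with applying~$\projDown^c$ or~$\projUp_c$. The only point that requires a moment of care is the very first identification $\rel(X) = \rel\big(\projDown^c(X)\big) \cap \rel\big(\projUp_c(X)\big)$, which relies on the Cambrian classes being precisely the weak order intervals~$[\projDown^c(w), \projUp_c(w)]$; once this is in place, everything else is purely formal.
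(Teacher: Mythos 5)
Your proposal is correct and follows essentially the same route as the paper: identify $\rel(X)$ with the weak order interval poset $\rel\big(\projDown^c(X), \projUp_c(X)\big)$, apply Proposition~\ref{prop:weakOrderWOIP}, and conclude via the equivalence $X \wole Y \iff \projDown^c(X) \wole \projDown^c(Y) \iff \projUp_c(X) \wole \projUp_c(Y)$ recalled earlier in Section~\ref{subsec:associahedra}. Nothing is missing.
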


\begin{proof}
By definition, a $c$-Cambrian class~$X$ admits both a minimal element~$\projDown^c(X)$ and a maximal element~$\projUp_c(X)$.
Therefore,~$\rel(X) = \rel \big( \projDown^c(X), \projUp_c(X) \big) \in \WOIP(\Phi)$.
Moreover, for two $c$-Cambrian classes~$X,Y$, Proposition~\ref{prop:weakOrderWOIP} implies that~$\rel(X) \wole \rel(Y)$ in the weak order on~$\WOIP(\Phi)$ if and only if~$\projDown^c(X) \wole \projDown^c(Y)$ and~$\projUp_c(X) \wole \projUp_c(Y)$ in weak order on~$W$.
But this is equivalent to~$X \wole Y$ in the $c$-Cambrian lattice as mentioned above.
\end{proof}

\begin{remark}
\label{rem:weakOrderCOEP}
In fact, $\rel(X) \wole \rel(Y) \iff \rel(X)^+ \supseteq \rel(Y)^+ \iff \rel(X)^- \subseteq \rel(Y)^- \iff X \wole Y$.
\end{remark}

\begin{corollary}
\label{coro:latticeCOEP}
For any Coxeter element~$c$, the weak order on~$\COEP(c)$ is a lattice with meet~and~join
\[
\rel(X) \meetCOEP \rel(Y) = \rel(X \meet_c Y)
\qquad\text{and}\qquad
{\rel}(X) \joinCOEP \rel(Y) = \rel(X \join_c Y).
\]
\end{corollary}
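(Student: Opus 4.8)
The plan is to deduce Corollary~\ref{coro:latticeCOEP} formally from Proposition~\ref{prop:weakOrderCOEP}: that proposition already identifies the weak order on~$\COEP(c)$ with the $c$-Cambrian lattice as \emph{posets}, and any lattice structure is transported along a poset isomorphism.

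First I would record that the map~$X \mapsto \rel(X)$ is a bijection from the set of $c$-Cambrian classes onto~$\COEP(c)$. Surjectivity is just the definition of~$\COEP(c)$. For injectivity, if~$\rel(X) = \rel(Y)$ then trivially~$\rel(X) \wole \rel(Y)$ and~$\rel(Y) \wole \rel(X)$, so Proposition~\ref{prop:weakOrderCOEP} yields~$X \wole Y$ and~$Y \wole X$ in the $c$-Cambrian lattice, whence~$X = Y$ by antisymmetry. Combined with the equivalence ``$\rel(X) \wole \rel(Y) \iff X \wole Y$'' of Proposition~\ref{prop:weakOrderCOEP}, this shows that~$X \mapsto \rel(X)$ and its inverse are both order preserving, \ie it is an isomorphism of posets between the $c$-Cambrian lattice and the weak order on~$\COEP(c)$.

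Next I would invoke the fact, recalled in Section~\ref{subsec:associahedra}, that the $c$-Cambrian lattice is indeed a lattice: it is the quotient of the weak order on~$W$ by the lattice congruence~$\equiv_c$ (equivalently the sublattice of the weak order induced by $c$-sortable elements), with meet~$\meet_c$ and join~$\join_c$. Since an order isomorphism carries meets to meets and joins to joins, it follows at once that the weak order on~$\COEP(c)$ is a lattice and that, for any two $c$-Cambrian classes~$X$ and~$Y$,
\[
\rel(X) \meetCOEP \rel(Y) = \rel(X \meet_c Y)
\qquad\text{and}\qquad
\rel(X) \joinCOEP \rel(Y) = \rel(X \join_c Y),
\]
as claimed.

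There is no genuine obstacle here: the substantive work was already done in Proposition~\ref{prop:weakOrderCOEP} (which itself rests on Proposition~\ref{prop:weakOrderWOIP} and on Reading's theory of Cambrian lattices). The only point requiring even a line of argument is the injectivity of~$X \mapsto \rel(X)$, and that is immediate from antisymmetry of the weak order together with Proposition~\ref{prop:weakOrderCOEP}.
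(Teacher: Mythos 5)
Your argument is correct and matches the paper's intent: the corollary is stated without proof precisely because it follows immediately from Proposition~\ref{prop:weakOrderCOEP} by transporting the $c$-Cambrian lattice structure along the order isomorphism~$X \mapsto \rel(X)$, exactly as you do. Your explicit check of injectivity is a reasonable (and correct) extra line that the paper leaves implicit.
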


Although it anticipates on the $c$-Cambrian order interval posets studied in the next section, let us state the following result that will be a direct consequence of Corollary~\ref{coro:latticeCOIP} and Proposition~\ref{prop:sublatticeCOIP}.

\begin{proposition}
For any Coxeter element~$c$, the set~$\COEP(c)$ induces a sublattice of the weak order on~$\COIP(c)$ and thus also a sublattice of the weak order on~$\WOIP(\Phi)$.
\end{proposition}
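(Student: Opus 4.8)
The plan is to derive this from the structure of the weak order on $\COIP(c)$ to be established in the next section, namely the lattice formulas of Corollary~\ref{coro:latticeCOIP} and the sublattice property of Proposition~\ref{prop:sublatticeCOIP}, together with the elementary fact that ``being a sublattice'' is a transitive relation.

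First I would record that $\COEP(c) \subseteq \COIP(c)$. Indeed, for a $c$-Cambrian class $X$, the $\Phi$-poset $\rel(X) = \rel\big(\projDown^c(X), \projUp_c(X)\big)$ is by definition the $c$-Cambrian order interval poset $\rel(X,X)$ attached to the trivial $c$-Cambrian interval $[X,X]$; hence $\COEP(c)$ is exactly the ``diagonal'' $\set{\rel(X,X)}{X \text{ a } c\text{-Cambrian class}}$ sitting inside $\COIP(c)$. Moreover, by Proposition~\ref{prop:weakOrderCOEP} the weak order on $\COEP(c)$ is just the restriction of the weak order on $\COIP(c)$.

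Next I would check that $\COEP(c)$ is stable under the meet $\meetCOIP$ and the join $\joinCOIP$ of the lattice $\COIP(c)$. This is immediate from the explicit formulas of Corollary~\ref{coro:latticeCOIP}, which compute meets and joins of $c$-Cambrian order interval posets componentwise in the $c$-Cambrian lattice, exactly as Corollary~\ref{coro:latticeWOIP} does for $\WOIP(\Phi)$: for $c$-Cambrian classes $X$ and $Y$,
\[
\rel(X) \meetCOIP \rel(Y) = \rel(X,X) \meetCOIP \rel(Y,Y) = \rel(X \meet_c Y, \, X \meet_c Y) = \rel(X \meet_c Y) \in \COEP(c),
\]
and symmetrically $\rel(X) \joinCOIP \rel(Y) = \rel(X \join_c Y) \in \COEP(c)$; in particular these coincide with $\meetCOEP$ and $\joinCOEP$ of Corollary~\ref{coro:latticeCOEP}. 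Thus $\COEP(c)$ induces a sublattice of the weak order on $\COIP(c)$. Finally, since $\COIP(c)$ itself induces a sublattice of the weak order on $\WOIP(\Phi)$ by Proposition~\ref{prop:sublatticeCOIP}, and a sublattice of a sublattice is again a sublattice, $\COEP(c)$ induces a sublattice of the weak order on $\WOIP(\Phi)$.

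There is essentially no obstacle in this argument: all the substance is hidden in Corollary~\ref{coro:latticeCOIP} and Proposition~\ref{prop:sublatticeCOIP}. The only point requiring a moment of care is the identification of $\COEP(c)$ with the diagonal of $\COIP(c)$ --- that is, the equality $\rel(X) = \rel(X,X)$ --- which is precisely the content of the defining identity $\rel(X) = \rel\big(\projDown^c(X), \projUp_c(X)\big)$; once this is in place, the componentwise meet/join formula on $\COIP(c)$ collapses to a meet/join of single $c$-Cambrian classes, and stability of $\COEP(c)$ follows at once.
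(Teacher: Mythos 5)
Your argument is exactly the one the paper intends: it states this proposition as a direct consequence of Corollary~\ref{coro:latticeCOIP} and Proposition~\ref{prop:sublatticeCOIP}, and you correctly fill in the two steps, namely that $\COEP(c)$ is the diagonal $\rel(X)=\rel(X,X)$ of $\COIP(c)$, which is stable under the componentwise meet and join of Corollary~\ref{coro:latticeCOIP}, and that the sublattice property then transfers to $\WOIP(\Phi)$ via Proposition~\ref{prop:sublatticeCOIP}. The proof is correct and matches the paper's approach.
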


We conclude our discussion on~$\COEP(c)$ with one more conjecture, which was proved in type~$A$ in~\cite[Coro.~88]{ChatelPilaudPons} and checked computationally for small Coxeter types using~\cite{Sage}.
Note that there is little hope to attack this conjecture before proving either Conjecture~\ref{conj:characterizationCOEP} or Conjecture~\ref{conj:sublatticeCOIP}.

\begin{conjecture}
\label{conj:sublatticeCOEP}
For any Coxeter element~$c$, the set~$\COEP(c)$ induces a sublattice of the weak order on~$\IPos(\Phi)$.
\end{conjecture}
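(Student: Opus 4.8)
The plan is to route the argument through the conjectural characterisation of $\COEP(c)$ (Conjecture~\ref{conj:characterizationCOEP}) and to show that the operations $\meetC$ and $\joinC$ of the lattice $\IPos(\Phi)$ preserve it; this mirrors the type~$A$ argument of~\cite{ChatelPilaudPons}. The first move is a reduction: it suffices to prove that $\COEP(c)$ is \emph{stable} under $\meetC$ and $\joinC$. Indeed, if $\rel(X),\rel(Y)\in\COEP(c)$ and $\rel(X)\meetC\rel(Y)$ lies again in $\COEP(c)$, then it is a common lower bound of $\rel(X)$ and $\rel(Y)$ inside $\COEP(c)$, and any other such lower bound, being also a lower bound in $\IPos(\Phi)$, is $\wole \rel(X)\meetC\rel(Y)$; hence the meet in the induced subposet coincides with $\meetC$ (and by Proposition~\ref{prop:weakOrderCOEP} it is $\rel(X\meet_c Y)$, the $c$-Cambrian meet). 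The join is symmetric, so everything reduces to the stability of $\COEP(c)$.

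Next I would establish Conjecture~\ref{conj:characterizationCOEP}, i.e.\ that a $\Phi$-poset belongs to $\COEP(c)$ exactly when it lies in $\COIP(c)$ (Proposition~\ref{prop:characterizationCOIP}) and every root $\alpha\in\Phi$ admits a $c$-snake decomposition in it. Granting this, stability of $\COEP(c)$ under $\meetC$ splits into two independent verifications. First, that $\rel(X)\meetC\rel(Y)\in\COIP(c)$: the natural way is to prove first Conjecture~\ref{conj:sublatticeCOIP}, that $\COIP(c)$ induces a sublattice of $\IPos(\Phi)$, so that $\rel(X)\meetC\rel(Y)$ equals the $\COIP(c)$-meet and lies in $\COIP(c)$ by construction. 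Second, and this is the delicate point, that if some $\alpha\in\Phi$ has a $c$-snake decomposition in $\rel(X)$ and in $\rel(Y)$, then it has one in $\rel(X)\meetC\rel(Y)=\ncd{(\cl{(\rel(X)^+\cup\rel(Y)^+)}\sqcup(\rel(X)^-\cap\rel(Y)^-))}$. Here the subsum technology of Section~\ref{subsec:sums} --- Proposition~\ref{prop:twoSubsumsRoots} together with Theorems~\ref{thm:filtrationSummableSubsets} and~\ref{thm:manySummableSubsets} --- should be the workhorse: a $c$-snake of the meet involving roots of $\cl{(\rel(X)^+\cup\rel(Y)^+)}$ must be produced by developing summable subsets back into $\rel(X)^+\cup\rel(Y)^+$ and then reassembling them along $<_c$ into a genuine $c$-snake, while simultaneously controlling which negative roots the operator $\ncd{(\cdot)}$ deletes so that no snake supporting a given $\alpha$ is destroyed. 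The join case is treated symmetrically with $\pcd{(\cdot)}$.

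The step I expect to be the main obstacle is exactly this interaction between $c$-snake decompositions and the closure-deletion operators $\ncd{(\cdot)}$, $\pcd{(\cdot)}$: a $c$-snake zig-zags between positive and negative roots along the linear order $<_c$, and neither the alternation pattern nor the order $<_c$ transforms transparently under $\alpha\mapsto\cl{\alpha}$ or under removing the offending negative roots, so the purely combinatorial bookkeeping available in type~$A$ (where $c$-snakes are literally zig-zag paths in planar forests) must be replaced by a root-theoretic argument built entirely on the subsum lemmas. A cleaner but no less demanding alternative would be to exhibit a lattice congruence $\equiv_c$ on the whole lattice $\IPos(\Phi)$ whose restriction to $\WOEP(\Phi)$ is the $c$-Cambrian congruence and whose class-minimal elements are precisely the members of $\COEP(c)$; since the minimal elements of the classes of a lattice congruence always induce a sublattice, this would yield the statement at once --- but proving compatibility of $\equiv_c$ with $\meetC$ and $\joinC$ is itself the hard analytic core and again rests on the delicate properties of subsums of roots in crystallographic root systems.
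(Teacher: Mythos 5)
The statement you are addressing is stated in the paper as a \emph{conjecture}, and the paper offers no proof of it: the authors explicitly remark that ``there is little hope to attack this conjecture before proving either Conjecture~\ref{conj:characterizationCOEP} or Conjecture~\ref{conj:sublatticeCOIP}'', both of which are themselves open beyond type~$A$. Your proposal is consistent with that assessment, but it is a research plan rather than a proof. The only step you actually carry out is the initial (correct, but routine) observation that stability of $\COEP(c)$ under $\meetC$ and $\joinC$ is exactly what ``induces a sublattice'' means, together with the identification of the resulting meet with $\rel(X \meet_c Y)$ via Proposition~\ref{prop:weakOrderCOEP}. Everything that carries mathematical content is deferred: you assume Conjecture~\ref{conj:characterizationCOEP} without proving it, you propose to prove Conjecture~\ref{conj:sublatticeCOIP} as a lemma without doing so, and you explicitly flag the interaction between $c$-snake decompositions and the operators $\ncd{(\cdot)}$, $\pcd{(\cdot)}$ as ``the main obstacle'' without resolving it. The alternative route you sketch at the end (extending the $c$-Cambrian congruence to a lattice congruence of all of $\IPos(\Phi)$) is likewise only named, and you acknowledge that its compatibility with $\meetC$ and $\joinC$ is ``the hard analytic core.''

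Concretely, the gap is total: no argument is given that $\rel(X) \meetC \rel(Y) = \ncd{\big(\cl{(\rel(X)^+ \cup \rel(Y)^+)} \sqcup (\rel(X)^- \cap \rel(Y)^-)\big)}$ again lies in $\COEP(c)$, which is the entire content of the conjecture. Your structural framing does match what a proof would presumably look like (and matches the type~$A$ proof in \cite[Coro.~88]{ChatelPilaudPons} that the paper cites), so the plan is reasonable; but as it stands it proves nothing that the paper does not already observe, namely that the conjecture would follow from Conjecture~\ref{conj:sublatticeCOIP} together with a compatible treatment of the element-level characterization. If you want to make progress, the place to start is the second bullet of your own outline: show directly, using Proposition~\ref{prop:twoSubsumsRoots} and Theorem~\ref{thm:filtrationSummableSubsets}, that the roots deleted by $\ncd{(\cdot)}$ from the semiclosed meet of two $\COEP(c)$ posets can never disconnect a $c$-snake supporting a given root. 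That is where the real difficulty lives, and nothing in the proposal touches it.
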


\subsubsection{Intervals}

For two $c$-Cambrian classes~$X,X'$ with~$X \wole X'$ in the $c$-Cambrian order, we denote by~$[X,X'] \eqdef \set{Y \text{ $c$-Cambrian class}}{X \wole Y \wole X'}$ the \defn{$c$-Cambrian order interval} between~$X$ and~$X'$.
We associate to each $c$-Cambrian order interval~$[X,X']$ the $\Phi$-poset
\[
\rel(X,X') \eqdef \bigcap_{Y \in [X,X']} \rel(Y) = \rel(X) \cap \rel(X') = \rel(X)^- \cup \rel(X')^+.
\]
Note that by definition, $\rel(X,X')$ coincides with the weak order interval poset~$\rel \big( \projDown^c(X), \projUp_c(X') \big)$.
We say that~$\rel(X,X')$ is a \defn{$c$-Cambrian order interval poset} and we denote the collection of all such $\Phi$-posets by~$\COIP(c) \eqdef \set{\rel(X,X')}{X,X' \text{ $c$-Cambrian classes}, \, X \wole X'}$.

\begin{remark}
Table~\ref{table:numerology} reports the cardinality of~$\COIP(c)$ in type~$A_n$, $B_n$, $C_n$ and~$D_n$ for small values of~$n$ and different choices of the Coxeter element~$c$.
We have denoted by bip the bipartite Coxeter element, and by lin the linear one (with the special vertex first in type~$B/C$ and the two special vertices first in type~$D$).
Note that in contrast to $\COEP(c)$, the cardinality of~$\COIP(c)$ depends on the choice of the Coxeter element~$c$ (this comes from the fact that the $c$-Cambrian lattices for different choices of Coxeter element~$c$ are not isomorphic and have distinct intervals, although they have the same number of elements).
\end{remark}

We now characterize the $\Phi$-posets in~$\COIP(c)$.

\begin{proposition}
\label{prop:characterizationCOIP}
A $\Phi$-poset~$\rel \in \IPos(\Phi)$ is in~$\COIP(c)$ if and only if $\alpha + \beta \in \rel$ and~$\alpha <_c \beta$ implies $\beta \in \rel$ for all~$\alpha, \beta \in \Phi^+$ (resp.~$\alpha \in \rel$ for all~$\alpha, \beta \in \Phi^-$).
\end{proposition}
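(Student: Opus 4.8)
The plan is to first locate $\COIP(c)$ inside $\WOIP(\Phi)$, and then rewrite the defining conditions of a Cambrian interval as conditions on the increasing and decreasing parts of the corresponding $\Phi$-poset. First I would record that the condition in the statement implies the condition of Proposition~\ref{prop:characterizationWOIP}: for $\alpha, \beta$ both positive or both negative with $\alpha + \beta \in \rel$ one has $\alpha \neq \beta$ (as $2\alpha \notin \Phi$), hence $\alpha$ and $\beta$ are $<_c$-comparable and one of them lies in $\rel$. Thus any $\rel$ satisfying the stated condition lies in $\WOIP(\Phi)$, so by Lemma~\ref{lem:characterizationWOIP} the set $\linearExtensions(\rel)$ has a unique weak order minimum $w$ and maximum $w'$, with $\rel(w)^- = \rel^-$, $\rel(w')^+ = \rel^+$ and $\rel = \rel(w)^- \sqcup \rel(w')^+ = \rel(w,w')$. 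Conversely every $\rel(X,X') \in \COIP(c)$ equals $\rel(\projDown^c(X), \projUp_c(X'))$, hence lies in $\WOIP(\Phi)$ with extremal linear extensions $w = \projDown^c(X)$ (which is $c$-sortable) and $w' = \projUp_c(X')$ (which is $c$-antisortable).

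The core step is the claim that, for $\rel \in \WOIP(\Phi)$ with extremal linear extensions $w$ and $w'$, one has $\rel \in \COIP(c)$ if and only if $w$ is $c$-sortable and $w'$ is $c$-antisortable. The forward direction is immediate from the previous paragraph. For the converse, I would let $X$ and $X'$ be the $c$-Cambrian classes of $w$ and $w'$: since $w$ is $c$-sortable it is the minimal element $\projDown^c(X)$ of $X$, and since $w'$ is $c$-antisortable it is the maximal element $\projUp_c(X')$ of $X'$; since $\projDown^c$ is order preserving and fixes $w$, the relation $w \wole w'$ gives $w \wole \projDown^c(w') = \projDown^c(X')$, so $X \wole X'$ in the $c$-Cambrian lattice, and then $\rel(X,X') = \rel(\projDown^c(X), \projUp_c(X')) = \rel(w,w') = \rel$, whence $\rel \in \COIP(c)$.

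It then remains to translate the two conditions combinatorially. By Proposition~\ref{prop:weakOrderWOEP}, $\inv(w) = -\rel(w)^- = -\rel^-$ and $\Phi^+ \ssm \inv(w') = \rel(w')^+ = \rel^+$. Now $w$ is $c$-sortable if and only if $\inv(w)$ is $c$-aligned; substituting $\inv(w) = -\rel^-$ and negating all roots (so that, for $\alpha, \beta \in \Phi^-$, the relation $\alpha <_c \beta$ is read as $-\alpha <_c -\beta$) turns $c$-alignment of $\inv(w)$ into precisely the clause ``$\alpha + \beta \in \rel$ and $\alpha <_c \beta$ imply $\alpha \in \rel$, for all $\alpha, \beta \in \Phi^-$''. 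Symmetrically, $w'$ is $c$-antisortable if and only if $w'w_\circ$ is $c^{-1}$-sortable, i.e.\ $\inv(w'w_\circ) = \Phi^+ \ssm \inv(w') = \rel^+$ is $c^{-1}$-aligned; using the standard fact (see~\cite{Reading-sortableElements}) that $<_{c^{-1}}$ is the reverse of $<_c$ on $\Phi^+$, this becomes exactly ``$\alpha + \beta \in \rel$ and $\alpha <_c \beta$ imply $\beta \in \rel$, for all $\alpha, \beta \in \Phi^+$''. Combining the three steps yields the equivalence.

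I expect the main obstacle to be the orientation bookkeeping in the last step: the decreasing part is controlled by $c$-alignment of $\inv(w)$ whereas the increasing part is controlled by $c^{-1}$-alignment of the \emph{complement} $\inv(w'w_\circ)$, which is exactly why the two clauses of the statement single out the $<_c$-smaller root on $\Phi^-$ but the $<_c$-larger root on $\Phi^+$. This forces one to be careful both with the duality between the $c$- and $c^{-1}$-orders (which I would simply cite) and with the identification $\rel(X,X') = \rel(w,w')$, which rests on $\projDown^c$ being compatible with the weak order and on $w'$, being $c$-antisortable, coinciding with the top $\projUp_c(X')$ of its own Cambrian class.
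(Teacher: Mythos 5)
Your proposal is correct and follows essentially the same route as the paper's proof: reduce to the fact that $\rel \in \COIP(c)$ if and only if $\rel = \rel(w,w')$ is in $\WOIP(\Phi)$ with $w$ $c$-sortable and $w'$ $c$-antisortable, then translate sortability via $c$-alignment of $\inv(w) = -\rel^-$ and antisortability via the symmetric condition on $\rel^+$. The only difference is that you spell out the details the paper compresses into ``by definition'' and ``similarly'' (the reduction to $\WOIP(\Phi)$, the identification of the extremal linear extensions with $\projDown^c(X)$ and $\projUp_c(X')$, and the $c^{-1}$-alignment bookkeeping for the antisortable clause), all of which check out.
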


\begin{proof}
Consider a $\Phi$-poset~$\rel \in \IPos(\Phi)$.
By definition, $\rel$ is in~$\COIP(c)$ if and only if $\rel = \rel(w,w')$ is in $\WOIP(\Phi)$ where~$w$ is $c$-sortable while~$w'$ is $c$-antisortable.
However, $w$ is $c$-sortable if and only if~$\inv(w) = \Phi^+ \cap -{\rel(w)} = -{\rel(w)}^- = -{\rel(w,w')}^- = -\rel^-$ is $c$-aligned, \ie if and only if~$\alpha + \beta \in \rel^- \implies \alpha \in \rel^-$ for any~$\alpha <_c \beta$.
Similarly, $w'$ is $c$-antisortable if and only if~${\alpha + \beta \in \rel^+ \implies \beta \in \rel^+}$ for any~$\alpha <_c \beta$.
\end{proof}

\begin{proposition}
\label{prop:weakOrderCOIP}
For two $c$-Cambrian intervals~$X \wole X'$ and~$Y \wole Y'$, we have~${\rel(X,X') \wole \rel(Y,Y')}$ in the weak order on~$\COIP(c)$ if and only if~$X \wole Y$ and~$X' \wole Y'$ in the $c$-Cambrian order.
\end{proposition}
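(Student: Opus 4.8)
The plan is to follow the proof of Proposition~\ref{prop:weakOrderWOIP} essentially line by line, replacing weak order intervals of~$W$ by $c$-Cambrian intervals and the assignment $w \mapsto \rel(w)$ by $X \mapsto \rel(X)$. The key structural input is the identity $\rel(X,X') = \rel\big(\projDown^c(X), \projUp_c(X')\big)$ recorded just above the statement, which gives $\rel(X,X')^+ = \rel\big(\projUp_c(X')\big)^+$ and $\rel(X,X')^- = \rel\big(\projDown^c(X)\big)^-$. Thus the positive part of~$\rel(X,X')$ is governed only by~$X'$ (through~$\projUp_c$) and the negative part only by~$X$ (through~$\projDown^c$); this decoupling is exactly what produces the two independent conditions $X \wole Y$ and $X' \wole Y'$.

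First I would unfold the definition of the weak order: since~$\COIP(c)$ carries the restriction of the weak order on~$\IRel(\Phi)$, we have $\rel(X,X') \wole \rel(Y,Y')$ if and only if $\rel(X,X')^+ \supseteq \rel(Y,Y')^+$ and $\rel(X,X')^- \subseteq \rel(Y,Y')^-$, that is, by the previous paragraph, if and only if $\rel\big(\projUp_c(X')\big)^+ \supseteq \rel\big(\projUp_c(Y')\big)^+$ and $\rel\big(\projDown^c(X)\big)^- \subseteq \rel\big(\projDown^c(Y)\big)^-$. Then I would translate each inclusion back to the weak order on~$W$ using Remark~\ref{rem:weakOrderWOEP} (the sharpened form of Proposition~\ref{prop:weakOrderWOEP}), which gives $\rel(v)^+ \supseteq \rel(w)^+ \iff v \wole w \iff \rel(v)^- \subseteq \rel(w)^-$ for $v,w \in W$; applied to the pairs $\big(\projUp_c(X'),\projUp_c(Y')\big)$ and $\big(\projDown^c(X),\projDown^c(Y)\big)$ this yields $\projUp_c(X') \wole \projUp_c(Y')$ and $\projDown^c(X) \wole \projDown^c(Y)$. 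Finally, the characterization of the $c$-Cambrian order recalled in Section~\ref{subsec:associahedra}, namely $A \wole B \iff \projDown^c(A) \wole \projDown^c(B) \iff \projUp_c(A) \wole \projUp_c(B)$ for Cambrian classes $A,B$, rewrites these as $X' \wole Y'$ and $X \wole Y$. In the write-up I would compress the whole argument into a single display mirroring the proof of Proposition~\ref{prop:weakOrderWOIP}:
\[
\begin{array}[b]{c@{\quad\iff\quad}c@{\quad\text{and}\quad}c}
\rel(X,X') \wole \rel(Y,Y')
& \rel(X,X')^+ \supseteq \rel(Y,Y')^+ & \rel(X,X')^- \subseteq \rel(Y,Y')^- \\
& \rel(X')^+ \supseteq \rel(Y')^+ & \rel(X)^- \subseteq \rel(Y)^- \\
& X' \wole Y' & X \wole Y.
\end{array}
\]

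I do not expect a genuine obstacle: the statement is bookkeeping once the identity $\rel(X,X') = \rel\big(\projDown^c(X), \projUp_c(X')\big)$ and the two order-preserving-projection facts are in hand. The only points that deserve a second look are that one must invoke both the positive-part and the negative-part halves of Remark~\ref{rem:weakOrderWOEP} and of the Cambrian order characterization, since the $+$ part and the $-$ part of~$\rel(X,X')$ are controlled by different endpoints of the interval, and that one relies implicitly on the fact (recalled in the excerpt) that the fibers of~$\projDown^c$ and~$\projUp_c$ are weak order intervals of the form $[\projDown^c(w),\projUp_c(w)]$, which is precisely what makes $\rel(X)$ a weak order interval poset and $\rel(X,X')$ well defined as stated.
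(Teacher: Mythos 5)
Your proposal is correct and follows essentially the same route as the paper: the paper's proof is exactly your final display, justified by the decomposition $\rel(X,X')^+ = \rel(X')^+$, $\rel(X,X')^- = \rel(X)^-$ together with Remark~\ref{rem:weakOrderCOEP} (which already packages the translation through $\projDown^c$ and $\projUp_c$ that you spell out). The only cosmetic difference is that you unfold Remark~\ref{rem:weakOrderCOEP} into its ingredients (Remark~\ref{rem:weakOrderWOEP} plus the characterization of the Cambrian order via the projections), whereas the paper cites it directly.
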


\begin{proof}
By definition of~$\rel(X,X')$ and Remark~\ref{rem:weakOrderCOEP}, we obtain
\[
\begin{array}[b]{c@{\quad\iff\quad}c@{\quad\text{and}\quad}c}
\rel(X,X') \wole \rel(Y,Y')
& \rel(X,X')^+ \supseteq \rel(Y,Y')^+ & \rel(X,X')^- \subseteq \rel(Y,Y')^- \\
& \rel(X')^+ \supseteq \rel(Y')^+ & \rel(X)^- \subseteq \rel(Y)^- \\
& X' \wole Y' & X \wole Y.
\end{array}
\qedhere
\]
\end{proof}

\begin{corollary}
\label{coro:latticeCOIP}
For any Coxeter element~$c$, the weak order on~$\COIP(c)$ is a lattice with meet~and~join
\[
\rel(X,X') \meetCOIP \rel(Y,Y') = \rel(X \meet_c Y, X' \meet_c Y')
\;\;\text{and}\;\;
\rel(X,X') \joinCOIP \rel(Y,Y') = \rel(X \join_c Y, X' \join_c Y').
\]
\end{corollary}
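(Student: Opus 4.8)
The plan is to mimic the proof of Corollary~\ref{coro:latticeWOIP} and to deduce everything from Proposition~\ref{prop:weakOrderCOIP} together with the fact that the $c$-Cambrian lattice is indeed a lattice. First I would record the elementary observation that in any lattice $L$, the set of intervals $\set{[a,b]}{a,b \in L, \; a \wole b}$, ordered componentwise (that is, $[a,b] \wole [a',b']$ if and only if $a \wole a'$ and $b \wole b'$), is again a lattice, with meet $[a,b] \wedge [a',b'] = [a \wedge a', \, b \wedge b']$ and join $[a,b] \vee [a',b'] = [a \vee a', \, b \vee b']$. The only point to check is that these pairs are genuine intervals: from $a \wole b$ and $a' \wole b'$ one gets $a \wedge a' \wole b$ and $a \wedge a' \wole b'$, hence $a \wedge a' \wole b \wedge b'$, and symmetrically $a \vee a' \wole b \vee b'$; that they are the componentwise meet and join then follows immediately from the universal properties of $\wedge$ and $\vee$ in $L$.

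Next I would apply this observation to $L$ the $c$-Cambrian lattice, which is a lattice by construction (it is the lattice quotient of the weak order on $W$ by the Cambrian congruence $\equiv_c$, equivalently the sublattice of the weak order induced by the $c$-sortable elements). This yields a lattice structure on the set of $c$-Cambrian order intervals $[X,X']$, with meet $[X \meet_c Y, \, X' \meet_c Y']$ and join $[X \join_c Y, \, X' \join_c Y']$.

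Then I would invoke Proposition~\ref{prop:weakOrderCOIP}: the map $[X,X'] \mapsto \rel(X,X')$ is an order isomorphism from the poset of $c$-Cambrian order intervals (with the componentwise order above) onto $\COIP(c)$ (with the weak order $\wole$). Surjectivity holds by the very definition of $\COIP(c)$, and injectivity holds because $\rel(X,X')$ recovers its endpoints: $\rel(X,X')^- = \rel \big( \projDown^c(X) \big)^-$ determines $\projDown^c(X)$ and hence the class $X$, while $\rel(X,X')^+ = \rel \big( \projUp_c(X') \big)^+$ determines $\projUp_c(X')$ and hence the class $X'$. Proposition~\ref{prop:weakOrderCOIP} gives that this bijection and its inverse are order preserving. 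An order isomorphism transports meets and joins, so the weak order on $\COIP(c)$ is a lattice and its operations are the images under this map of the interval operations of the previous paragraph, namely $\rel(X \meet_c Y, \, X' \meet_c Y')$ and $\rel(X \join_c Y, \, X' \join_c Y')$, as claimed.

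No serious obstacle is expected: the substantive content has already been isolated in Proposition~\ref{prop:weakOrderCOIP} and in the lattice property of the Cambrian lattice, and the remaining ingredients — the ``intervals of a lattice form a lattice'' lemma and the bijectivity of $[X,X'] \mapsto \rel(X,X')$ — are routine. If anything, the only mild subtlety is making sure the componentwise order on Cambrian intervals really is the one appearing in Proposition~\ref{prop:weakOrderCOIP}, which is immediate from its statement. This is the exact same argument that proves Corollary~\ref{coro:latticeWOIP} from Proposition~\ref{prop:weakOrderWOIP} and the lattice property of the weak order on~$W$.
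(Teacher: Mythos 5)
Your argument is correct and is exactly the one the paper intends: the corollary is stated without proof precisely because it follows from Proposition~\ref{prop:weakOrderCOIP} by the same routine ``intervals of a lattice form a lattice under the componentwise order'' argument that derives Corollary~\ref{coro:latticeWOIP} from Proposition~\ref{prop:weakOrderWOIP}. Your additional care about injectivity of $[X,X'] \mapsto \rel(X,X')$ is a sensible detail and is consistent with the paper.
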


The following statement connects this lattice structure on~$\COIP(c)$ with that on~$\WOIP(\Phi)$.

\begin{proposition}
\label{prop:sublatticeCOIP}
For any Coxeter element~$c$, the set~$\COIP(c)$ induces a sublattice of the weak order on~$\WOIP(\Phi)$.
\end{proposition}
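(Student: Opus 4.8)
The plan is to compare directly the meet and join formulas of Corollaries~\ref{coro:latticeWOIP} and~\ref{coro:latticeCOIP}, exploiting that the $c$-sortable elements and the $c$-antisortable elements each form a sublattice of the weak order on~$W$, isomorphic to the $c$-Cambrian lattice via~$\projDown^c$ and~$\projUp_c$ respectively (as recalled in Section~\ref{subsec:associahedra}). Since $\COIP(c)$ is a subset of the lattice~$\WOIP(\Phi)$, it suffices to show that $\COIP(c)$ is closed under the meet~$\meetWOIP$ and the join~$\joinWOIP$ of~$\WOIP(\Phi)$; this will simultaneously identify these operations with~$\meetCOIP$ and~$\joinCOIP$, proving the sublattice claim.

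So fix two $c$-Cambrian intervals~$X \wole X'$ and~$Y \wole Y'$, and recall from Section~\ref{subsubsec:COEP} that $\rel(X,X') = \rel\big(\projDown^c(X), \projUp_c(X')\big)$, and likewise for~$(Y,Y')$. By Corollary~\ref{coro:latticeWOIP},
\[
\rel(X,X') \meetWOIP \rel(Y,Y') = \rel\big( \projDown^c(X) \meetWO \projDown^c(Y), \; \projUp_c(X') \meetWO \projUp_c(Y') \big).
\]
Now I would invoke two facts. First, sending a $c$-Cambrian class to its minimal element is a lattice isomorphism from the $c$-Cambrian lattice onto the sublattice of $c$-sortable elements of~$W$; hence $\projDown^c(X) \meetWO \projDown^c(Y)$ is again $c$-sortable and equals $\projDown^c(X \meet_c Y)$. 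Symmetrically, sending a $c$-Cambrian class to its maximal element is a lattice isomorphism onto the sublattice of $c$-antisortable elements, so $\projUp_c(X') \meetWO \projUp_c(Y')$ is $c$-antisortable and equals $\projUp_c(X' \meet_c Y')$. Since meet is monotone, $X \meet_c Y \wole X' \meet_c Y'$, so $[X \meet_c Y,\, X' \meet_c Y']$ is a genuine $c$-Cambrian interval, and by the definition of a $c$-Cambrian order interval poset together with Corollary~\ref{coro:latticeCOIP},
\[
\rel\big( \projDown^c(X \meet_c Y), \; \projUp_c(X' \meet_c Y') \big) = \rel(X \meet_c Y, X' \meet_c Y') = \rel(X,X') \meetCOIP \rel(Y,Y').
\]
This shows $\rel(X,X') \meetWOIP \rel(Y,Y') = \rel(X,X') \meetCOIP \rel(Y,Y') \in \COIP(c)$. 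The argument for the join is completely symmetric: replace $\meetWO$ by $\joinWO$ and $\meet_c$ by $\join_c$, and use that the same two isomorphisms preserve joins.

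I do not expect a serious obstacle here. Once the meet and join on both~$\WOIP(\Phi)$ and~$\COIP(c)$ are known to act ``coordinatewise'' on the minimal and maximal coset representatives, the statement is a formal consequence of the fact that $c$-sortable and $c$-antisortable elements form sublattices of the weak order on~$W$ realizing the $c$-Cambrian lattice, which is part of the standard theory. The only point requiring a little care is to route the lower endpoint through~$\projDown^c$ and the upper endpoint through~$\projUp_c$ (the two different isomorphisms), and to check that the resulting pair of classes is still comparable, so that the output genuinely lies in~$\COIP(c)$.
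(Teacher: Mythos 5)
Your proposal is correct and takes essentially the same route as the paper: both write $\rel(X,X')$ as the weak order interval poset $\rel\big(\projDown^c(X),\projUp_c(X')\big)$, apply the coordinatewise meet/join formula of Corollary~\ref{coro:latticeWOIP}, and then use that the $c$-sortable (resp.\ $c$-antisortable) elements form a sublattice of the weak order isomorphic to the $c$-Cambrian lattice to identify the result with $\rel(X \meet_c Y, X' \meet_c Y')$. Your version is in fact slightly more careful, correctly routing the upper endpoint through $\projUp_c$ and checking that the resulting pair of classes is still comparable.
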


\begin{proof}
Consider two $c$-Cambrian intervals~$X \wole X'$ and~$Y \wole Y'$.
By Corollary~\ref{coro:latticeWOIP}, we have
\begin{align*}
\rel(X,X') \meetWOIP \rel(Y,Y') 
& = \rel \big( \projDown^c(X), \projUp_c(X') \big) \meetWOIP \rel \big( \projDown^c(Y), \projUp_c(Y') \big) \\
& = \rel \big( \projDown^c(X) \meetWO \projDown^c(Y), \projDown^c(X') \meetWO \projDown^c(Y') \big) \\
& = \rel \big( \projDown^c(X \meet_c Y), \projDown^c(X' \meet_c Y') \big),
\end{align*}
where the last equality follows from the fact that~$c$-sortable elements (resp.~$c$-antisortable elements) induce a sublattice of the weak order.
\end{proof}

The following conjecture indicates that~$\COIP(c)$ behaves much better than~$\WOIP(\Phi)$ as subposet of~$\IPos(\Phi)$.
This conjecture unfortunately remains open for now but was proved in type~$A$ in~\cite[Coro.~82]{ChatelPilaudPons} and verified for small Coxeter types using~\cite{Sage}.
Note that it is not implied by Proposition~\ref{prop:sublatticeCOIP} since~$\WOIP(\Phi)$ is not a sublattice of~$\IPos(\Phi)$.
Observe also that it would imply Conjecture~\ref{conj:sublatticeCOEP}.

\begin{conjecture}
\label{conj:sublatticeCOIP}
For any Coxeter element~$c$, the set~$\COIP(c)$ induces a sublattice of the weak order on~$\IPos(\Phi)$.
\end{conjecture}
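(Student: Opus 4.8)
The plan is to build on the fact, already proved in Proposition~\ref{prop:sublatticeCOIP}, that $\COIP(c)$ is a sublattice of $\WOIP(\Phi)$, and to reduce the conjecture to a single assertion about the meet~$\meetC$ of $\IPos(\Phi)$. First, the involution $\rel \mapsto -\rel$ reverses the weak order, exchanges $\meetC$ with $\joinC$, and satisfies $-\COIP(c) = \COIP(c^{-1})$: indeed $-\rel(w,w') = \rel(w'w_\circ, ww_\circ)$, and $w \mapsto ww_\circ$ swaps $c$-sortable with $c^{-1}$-antisortable and $c$-antisortable with $c^{-1}$-sortable. So it suffices to treat $\rel[R] \meetC \rel[S]$ for all Coxeter elements $c$. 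Second, since $\COIP(c)$ is an induced subposet of $\IPos(\Phi)$, it is enough to prove that for $\rel[R], \rel[S] \in \COIP(c)$ the $\Phi$-poset $\rel[M] \eqdef \rel[R] \meetC \rel[S]$ lies in $\WOIP(\Phi)$: for then $\rel[M]$ is a lower bound of $\rel[R]$ and $\rel[S]$ inside $\WOIP(\Phi)$, hence $\rel[M] \wole \rel[R] \meetWOIP \rel[S]$, while $\rel[R] \meetWOIP \rel[S] \wole \rel[M]$ always holds, so $\rel[M] = \rel[R] \meetWOIP \rel[S] \in \COIP(c)$ by Proposition~\ref{prop:sublatticeCOIP}.

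By Proposition~\ref{prop:characterizationWOIP}, showing $\rel[M] \in \WOIP(\Phi)$ amounts to checking that $\alpha + \beta \in \rel[M]$ forces $\alpha \in \rel[M]$ or $\beta \in \rel[M]$, both when $\alpha, \beta \in \Phi^+$ (the positive condition) and when $\alpha, \beta \in \Phi^-$ (the negative condition). The positive condition should be the easy half. Here $\alpha + \beta \in \rel[M]^+ = \cl{(\rel[R]^+ \cup \rel[S]^+)}$, where $\rel[R]^+ = \rel(w_1')^+$ and $\rel[S]^+ = \rel(w_2')^+$ are the non-inversion sets of the unique weak order maxima $w_1', w_2'$ of $\linearExtensions(\rel[R])$ and $\linearExtensions(\rel[S])$ (which are $c$-antisortable since $\rel[R], \rel[S] \in \COIP(c)$). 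The key point to establish is the identity $\cl{(\rel(u)^+ \cup \rel(v)^+)} = \rel(u \meetWO v)^+$ for all $u, v \in W$, a statement purely about biclosed subsets of $\Phi^+$: one inclusion is immediate from closedness together with Remark~\ref{rem:weakOrderWOEP}, and the other is the assertion that the closure of a union of two biclosed subsets of $\Phi^+$ is again biclosed, a statement about the weak order alone that I expect to be routine (one direction by peeling off roots via Theorem~\ref{thm:filtrationSummableSubsets}). Granting it, $\rel[M]^+ = \rel(w_1' \meetWO w_2')^+$ is a non-inversion set, hence biclosed, so its complement in $\Phi^+$ is closed: this is exactly the positive condition, and it is type-independent.

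The negative condition is the crux, and the step I expect to be the real obstacle. Here $\alpha + \beta \in \rel[M]^- \subseteq (\rel[R] \meetSC \rel[S])^- = \rel[R]^- \cap \rel[S]^-$, and $-\rel[R]^- = \inv(w_1)$, $-\rel[S]^- = \inv(w_2)$ are the $c$-aligned inversion sets of the $c$-sortable weak order minima $w_1, w_2$ of $\linearExtensions(\rel[R])$ and $\linearExtensions(\rel[S])$. Applying $c$-alignment to $\inv(w_1)$ and to $\inv(w_2)$ shows that whichever of $\alpha, \beta$ has the $<_c$-smaller negative --- say $\alpha$ --- already belongs to $\rel[R]^- \cap \rel[S]^-$. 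It remains to show that $\alpha$ is not erased by the negative-closure-deletion, \ie that there is no $\rel[X] \subseteq \rel[M]^+$ with $\alpha + \Sigma \rel[X] \in \Phi \ssm \rel[M]$, knowing that no such set exists for $\alpha + \beta$. I would run this along the lines of the proofs of Lemma~\ref{lem:semiclosedImpliesNcdPcdClosed}, Proposition~\ref{prop:closed} and Proposition~\ref{prop:sublatticeWOEP}: pick a minimal offending $\alpha$ for the absolute-height order, an inclusion-minimal witness $\rel[X]$ with no vanishing subsum, extract by Theorem~\ref{thm:filtrationSummableSubsets} a root $\gamma \in \rel[X]$ with $\alpha + \gamma \in \Phi$, and play $\alpha + \gamma$ against $(\alpha + \beta) + \Sigma(\rel[X] \ssm \{\gamma\})$ using closedness and minimality to reach a contradiction.

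The difficulty --- and the reason this remains a conjecture --- is that the whole negative-condition argument must keep control of the $<_c$-order of the roots it produces, while the toolkit of Section~\ref{subsec:sums} (Proposition~\ref{prop:twoSubsumsRoots} and Theorems~\ref{thm:filtrationSummableSubsets} and~\ref{thm:manySummableSubsets}) is purely metric and is silent about~$<_c$. What is missing is a uniform \emph{$c$-ordered} refinement of Proposition~\ref{prop:twoSubsumsRoots} --- roughly, a statement constraining how the $<_c$-order of the two parts of a decomposition $\mu = \mu_1 + \mu_2 \in \Phi$ must behave --- valid in all crystallographic types. No such statement is presently available: in type~$A$ the corresponding result is a theorem, proved in~\cite[Coro.~82]{ChatelPilaudPons} using the concrete model of intervals in the Cambrian lattices (Cambrian trees and their forcing orders), and it is exactly the lack of a type-free analogue of that model that blocks the general argument. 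Once the meet statement is in hand, the join statement follows by the negation duality above, and Conjecture~\ref{conj:sublatticeCOEP} follows from $\COEP(c) \subseteq \COIP(c)$ together with the fact that $\COEP(c)$ is a sublattice of $\COIP(c)$.
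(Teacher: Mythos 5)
This statement is left as an open conjecture in the paper --- the authors explicitly say it ``unfortunately remains open for now,'' offer no proof, and only record that it holds in type~$A$ by~\cite[Coro.~82]{ChatelPilaudPons} and has been checked computationally for small types. Your proposal is therefore not being measured against a proof in the paper, and, as you yourself say in your last paragraph, it is not a proof either. To give credit where due: your reductions are sound and would be a reasonable first section of an eventual proof. The duality $\rel \mapsto -\rel$ does reverse the weak order, exchanges $\meetC$ and $\joinC$, and sends $\COIP(c)$ to $\COIP(c^{-1})$, so treating the meet for all $c$ suffices. The reduction of the meet statement to ``$\rel[M] \eqdef \rel[R] \meetC \rel[S] \in \WOIP(\Phi)$'' via Proposition~\ref{prop:sublatticeCOIP} is correct, since $\rel[R] \meetWOIP \rel[S] \wole \rel[M]$ always holds and the reverse inequality follows once $\rel[M]$ is known to lie in $\WOIP(\Phi)$. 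The positive half of the $\WOIP$ criterion is indeed the easy half, and you could make it fully rigorous without any new lemma: your key identity $\cl{(\rel(u)^+ \cup \rel(v)^+)} = \rel(u \meetWO v)^+$ is already a consequence of Proposition~\ref{prop:sublatticeWOEP} together with Corollary~\ref{coro:latticeWOEP} (take positive parts of $\rel(u) \meetC \rel(v) = \rel(u \meetWO v)$, noting that $\ncd{}$ does not alter positive parts), rather than something you need to re-derive from biclosedness.

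The genuine gap is exactly where you locate it, and it is the entire substance of the conjecture. For the negative condition, $c$-alignment of $\inv(w_1)$ and $\inv(w_2)$ only places the $<_c$-smaller of $-\alpha, -\beta$ in $(\rel[R] \meetSC \rel[S])^- = \rel[R]^- \cap \rel[S]^-$; you must then show that this root survives the negative closure deletion $\ncd{}$, i.e.\ that no $\rel[X] \subseteq \rel[M]^+$ satisfies $\alpha + \Sigma\rel[X] \in \Phi \ssm \rel[M]$. Your proposed minimality argument in the style of Lemma~\ref{lem:semiclosedImpliesNcdPcdClosed} and Proposition~\ref{prop:closed} produces decompositions via Proposition~\ref{prop:twoSubsumsRoots} and Theorem~\ref{thm:filtrationSummableSubsets} with no control over how the resulting roots sit in the $<_c$-order, so the $c$-alignment hypothesis cannot be re-applied to the intermediate roots and the induction does not close. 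No ``$c$-ordered'' refinement of Proposition~\ref{prop:twoSubsumsRoots} is available in the paper or in the literature it cites, and the type-$A$ proof of~\cite[Coro.~82]{ChatelPilaudPons} substitutes for it a concrete combinatorial model of Cambrian intervals with no known uniform analogue. So the proposal should be read as a (correct and useful) reduction of the conjecture to a single open lemma, not as a proof.
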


\subsubsection{Faces}

To remain at a combinatorial level and avoid geometric descriptions (see Remark~\ref{rem:facesAssociahedron}), we consider a combinatorial model for the faces of the associahedron~$\Asso(c)$ that rely on results of~\cite[Sec.~4]{DermenjianHohlwegPilaud}.
The $c$-Cambrian congruence~$\equiv_c$ extends to the \defn{$c$-Cambrian facial congruence} on all faces of the permutahedron~$\Perm(W)$ defined by~${xW_I \equiv_c yW_J \iff x \equiv_c y \text{ and } xw_{\circ,I} \equiv_c yw_{\circ,J}}$.
This relation is a lattice congruence of the facial weak order on faces of the permutahedron~$\Perm(W)$ \cite[Prop.~4.12]{DermenjianHohlwegPilaud} and we denote by~$\ProjDown^c$ and~$\ProjUp_c$ its down and up projections.
Moreover, the $c$-Cambrian facial congruence classes precisely correspond to the faces of the associahedron~$\Asso(c)$ of~\cite{HohlwegLangeThomas}.

For a $c$-Cambrian facial congruence class~$F$, we consider the $\Phi$-poset
\[
\rel(F) \eqdef \bigcap_{xW_I \in F} \rel(xW_I) = \rel \big( \ProjDown^c(F) \big)^- \cap \rel \big( \ProjUp_c(F) \big)^+.
\]
Note that if~$\ProjDown^c(F) = xW_I$ and~$\ProjUp_c(F) = yW_J$, then $\rel(F)$ coincides with the weak order interval poset~$\rel(x, yw_{\circ,J})$.
We say that~$\rel(F)$ is a \defn{$c$-Cambrian order face poset} and denote the set of such $\Phi$-posets by~$\COFP(c) \eqdef \set{\rel(F)}{F \text{ $c$-Cambrian facial congruence class}}$.

\begin{remark}
Table~\ref{table:numerology} reports the cardinality of~$\COFP(c)$ in type~$A_n$, $B_n$, $C_n$ and~$D_n$ for small values of~$n$.
Note that this cardinality is again independent of the choice of the Coxeter element~$c$ (it is the number of faces in the generalized associahedron, \ie the number of partial clusters in the corresponding cluster algebra).
\end{remark}

\begin{remark}
\label{rem:facesAssociahedron}
Geometrically, $\rel(F)$ is the set of roots of~$\Phi$ not contained in the cone of the face~$F$ in the generalized associahedron~$\Asso(c)$ of C.~Hohlweg, C.~Lange and H.~Thomas in~\cite{HohlwegLangeThomas}.
See \fref{fig:facesAssociahedron}.

\begin{figure}[t]
\DeclareDocumentCommand{\rs}{ O{1.1cm} O{->} m m O{0}} {
	\def \radius {#1}
	\def \inputPoints{#3}
	\def \includeRoots{#4}
	\def \style {#2}
	\def \initialRotation {#5}
	
	\pgfmathtruncatemacro{\points}{\inputPoints * 2}
	\pgfmathsetmacro{\degrees}{360 / \points}
	
	\coordinate (0) at (0,0);
	
	\foreach \x in {1,...,\points}{%
		\pgfmathsetmacro{\location}{(\points+(\x-1))*\degrees + \initialRotation}
		\ifthenelse{\equal{\inputPoints}{3}}{
			\coordinate (\x) at (\location:\radius);
		}{
			\coordinate (\x) at (\location:1.5*\radius*(1 + 0.41*Mod(\x,2);));		
		}
	}

	\ifthenelse{\equal{\includeRoots}{}}{
	}{
		\foreach \x in {1,...,\points}{%
			\edef \showPoint {0};

			\foreach \y in \includeRoots {
				\ifthenelse{\equal{\x}{\y}}{
					\xdef \showPoint {1};
				}{}
			}
			
			\ifthenelse{\equal{\showPoint}{1}}{
				\draw[->, ultra thick] (0) -- (\x);
			}{}
		}
	}  
}

\centerline{
	\begin{tikzpicture}
		[scale=2,
		bface/.style={color=blue},
		rface/.style={color=red},
		gface/.style={color=black}
		]
		%
		\coordinate (e) at (0,0.42);
		\coordinate (s) at (-1,1);
		\coordinate (t/ts) at (2,1.5);
		\coordinate (st) at (-1,2);
		\coordinate (sts) at (0,2.58);
		%
		\coordinate (Ws) at (-0.47,0.69);
		\coordinate (Wt) at (1,.96);
		\coordinate (sWt) at (-1,1.5);
		\coordinate (stWs) at (-0.47,2.31);
		\coordinate (tsWt) at (1,2.04);
		%
		\coordinate (W) at (0,1.5);
		%
		\draw (e) -- (s);
		\draw (e) -- (t/ts);
		\draw (s) -- (st);
		\draw (st) -- (sts);
		\draw (t/ts) -- (sts);
		%
		\begin{scope}[shift={(e)}, scale=0.25, bface]
			\begin{scope}[scale=0.8]
				\rs{3}{1,2,3}[210]
			\end{scope}
		\end{scope}
		\begin{scope}[shift={(s)}, scale=0.25, bface]
			\begin{scope}[scale=0.8]
				\rs{3}{1,2,6}[210]
			\end{scope}
		\end{scope}
		\begin{scope}[shift={(st)}, scale=0.25, bface]
			\begin{scope}[scale=0.8]
				\rs{3}{1,5,6}[210]
			\end{scope}
		\end{scope}
		\begin{scope}[shift={(t/ts)}, scale=0.25, bface]
			\begin{scope}[scale=0.8]
				\rs{3}{3,4}[210]
			\end{scope}
		\end{scope}
		\begin{scope}[shift={(sts)}, scale=0.25, bface]
			\begin{scope}[scale=0.8]
				\rs{3}{4,5,6}[210]
			\end{scope}
		\end{scope}
		%
		\begin{scope}[shift={(Ws)}, scale=0.25,rface]
			\begin{scope}[scale=0.8]
				\rs{3}{1,2}[210]
			\end{scope}
		\end{scope}
		\begin{scope}[shift={(Wt)}, scale=0.25, rface]
			\begin{scope}[scale=0.8]
				\rs{3}{2,3}[210]
			\end{scope}
		\end{scope}
		\begin{scope}[shift={(sWt)}, scale=0.25, rface]
			\begin{scope}[scale=0.8]
				\rs{3}{1,6}[210]
			\end{scope}
		\end{scope}
		\begin{scope}[shift={(stWs)}, scale=0.25, rface]
			\begin{scope}[scale=0.8]
				\rs{3}{5,6}[210]
			\end{scope}
		\end{scope}
		\begin{scope}[shift={(tsWt)}, scale=0.25, rface]
			\begin{scope}[scale=0.8]
				\rs{3}{4,5}[210]
			\end{scope}
		\end{scope}
	\end{tikzpicture}
	\hspace{.5cm}
	\begin{tikzpicture}
		[scale=1.1,
		bface/.style={color=blue},
		rface/.style={color=red},
		gface/.style={color=black}
		]
		%
		\coordinate (e) at (0,0);
		\coordinate (s) at (-1.4,0.6);
		\coordinate (t/ts/tst) at (4,2);
		\coordinate (st) at (-2,2);
		\coordinate (sts) at (-1.4,3.4);
		\coordinate (stst) at (0,4);
		%
		\coordinate (Ws) at (-0.7,0.3);
		\coordinate (Wt) at (2,1);
		\coordinate (sWt) at (-1.7,1.3);
		\coordinate (stWs) at (-1.7,2.7);
		\coordinate (tstWs) at (2,3);
		\coordinate (stsWt) at (-0.7,3.7);
		%
		\coordinate (W) at (0,2);
		%
		\draw (e) -- (s);
		\draw (e) -- (t/ts/tst);
		\draw (s) -- (st);
		\draw (st) -- (sts);
		\draw (t/ts/tst) -- (stst);
		\draw (sts) -- (stst);
		%
		\begin{scope}[shift={(e)}, scale=0.25, bface]
			\begin{scope}[scale=0.8]
				\rs{4}{1,2,3,4}[202]
			\end{scope}
		\end{scope}
		\begin{scope}[shift={(s)}, scale=0.25, bface]
			\begin{scope}[scale=0.8]
				\rs{4}{1,2,3,8}[202]
			\end{scope}
		\end{scope}
		\begin{scope}[shift={(st)}, scale=0.25, bface]
			\begin{scope}[scale=0.8]
				\rs{4}{1,2,7,8}[202]
			\end{scope}
		\end{scope}
		\begin{scope}[shift={(sts)}, scale=0.25, bface]
			\begin{scope}[scale=0.8]
				\rs{4}{1,6,7,8}[202]
			\end{scope}
		\end{scope}
		\begin{scope}[shift={(stst)}, scale=0.25, bface]
			\begin{scope}[scale=0.8]
				\rs{4}{5,6,7,8}[202]
			\end{scope}
		\end{scope}
		\begin{scope}[shift={(t/ts/tst)}, scale=0.25, bface]
			\begin{scope}[scale=0.8]
				\rs{4}{4,5}[202]
			\end{scope}
		\end{scope}
		%
		\begin{scope}[shift={(Ws)}, scale=0.25, rface]
			\begin{scope}[scale=0.8]
				\rs{4}{1,2,3}[202]
			\end{scope}
		\end{scope}
		\begin{scope}[shift={(sWt)}, scale=0.25, rface]
			\begin{scope}[scale=0.8]
				\rs{4}{1,2,8}[202]
			\end{scope}
		\end{scope}
		\begin{scope}[shift={(stWs)}, scale=0.25, rface]
			\begin{scope}[scale=0.8]
				\rs{4}{1,7,8}[202]
			\end{scope}
		\end{scope}
		\begin{scope}[shift={(stsWt)}, scale=0.25, rface]
			\begin{scope}[scale=0.8]
				\rs{4}{6,7,8}[202]
			\end{scope}
		\end{scope}
		\begin{scope}[shift={(tstWs)}, scale=0.25, rface]
			\begin{scope}[scale=0.8]
				\rs{4}{5,6,7}[202]
			\end{scope}
		\end{scope}
		\begin{scope}[shift={(Wt)}, scale=0.25, rface]
			\begin{scope}[scale=0.8]
				\rs{4}{2,3,4}[202]
			\end{scope}
		\end{scope}
	\end{tikzpicture}
}
	\caption{The sets~$\rel(F)$ for the faces~$F$ of the $c$-associahedron in type~$A_2$~(left) and~$B_2$ (right). Note that positive roots point downwards.}
	\label{fig:facesAssociahedron}
\end{figure}
\end{remark}

It would be interesting to have a characterization of the $\Phi$-posets in~$\COFP(c)$ similar to that given in~\cite{ChatelPilaudPons} in type~$A$ (see \cite[Prop.~46]{ChatelPilaudPons} for the Tamari faces and \cite[Prop.~63]{ChatelPilaudPons} for the type~$A$ Cambrian faces in general).

Here, we just connect the weak order on~$\COFP(c)$ with the facial weak order on the associahedron~$\Asso(c)$ considered in~\cite[Sec.~4.7.2]{DermenjianHohlwegPilaud}.
This order is the quotient of the facial weak order on the faces of the permutahedron~$\Perm(W)$ by the $c$-Cambrian facial congruence~$\equiv_c$.

\begin{proposition}
\label{prop:weakOrderCOFP}
For any two $c$-Cambrian facial congruence classes~$F$ and~$G$, we have~${\rel(F) \wole \rel(G)}$ in the weak order on~$\COFP(c)$ if and only if~$F \wole G$ in the $c$-Cambrian facial lattice.
\end{proposition}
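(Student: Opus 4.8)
The plan is to follow the same route as for Proposition~\ref{prop:weakOrderCOEP}: reduce the claim to Proposition~\ref{prop:weakOrderWOIP} on interval posets, using that the $c$-Cambrian facial lattice is a lattice quotient of the facial weak order. The preliminary step is to unwind the combinatorics of the $c$-Cambrian facial congruence from \cite[Sec.~4]{DermenjianHohlwegPilaud}: a congruence class~$F$ is an interval $[\ProjDown^c(F),\ProjUp_c(F)]$ of the facial weak order, the minimal (resp.~maximal) group elements of the faces in~$F$ form a single $c$-Cambrian class~$X_F$ (resp.~$Y_F$) of~$W$ with $X_F \wole Y_F$, and if $\ProjDown^c(F) = xW_I$ and $\ProjUp_c(F) = yW_J$ then
\[
x = \projDown^c(X_F), \quad xw_{\circ,I} = \projDown^c(Y_F), \quad y = \projUp_c(X_F), \quad yw_{\circ,J} = \projUp_c(Y_F)
\]
(and similarly $X_G \wole Y_G$ for~$G$). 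Combined with the identity $\rel(F) = \rel(x,yw_{\circ,J})$ recalled just before the statement, this yields $\rel(F) = \rel\big(\projDown^c(X_F),\projUp_c(Y_F)\big)$; in particular $\rel(F) \in \WOIP(\Phi)$.

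Granting this, I would run two parallel computations. On one side, Proposition~\ref{prop:weakOrderWOIP} applied to $\rel(F) = \rel\big(\projDown^c(X_F),\projUp_c(Y_F)\big)$ and $\rel(G) = \rel\big(\projDown^c(X_G),\projUp_c(Y_G)\big)$ gives
\[
\rel(F) \wole \rel(G) \iff \projDown^c(X_F) \wole \projDown^c(X_G) \;\text{ and }\; \projUp_c(Y_F) \wole \projUp_c(Y_G),
\]
and since $c$-sortable (resp.~$c$-antisortable) elements induce a sublattice of the weak order on~$W$, the right-hand side is equivalent to $X_F \wole X_G$ and $Y_F \wole Y_G$ in the $c$-Cambrian lattice. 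On the other side, since the $c$-Cambrian facial lattice is the quotient of the facial weak order by the lattice congruence~$\equiv_c$, we have $F \wole G$ if and only if $\ProjDown^c(F) \wole \ProjDown^c(G)$ in the facial weak order; by Proposition~\ref{prop:facialWeakOrder} and the identifications above this reads $\projDown^c(X_F) \wole \projDown^c(X_G)$ and $\projDown^c(Y_F) \wole \projDown^c(Y_G)$, \ie again $X_F \wole X_G$ and $Y_F \wole Y_G$. Comparing the two computations proves $\rel(F) \wole \rel(G) \iff F \wole G$. (One may equally well use $F \wole G \iff \ProjUp_c(F) \wole \ProjUp_c(G)$ and argue symmetrically.)

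I expect the implication $F \wole G \implies \rel(F) \wole \rel(G)$ to be the easy half: it only needs Proposition~\ref{prop:weakOrderWOIP} and the fact that sending a face to its minimal (resp.~maximal) group element is order preserving from the facial weak order to the weak order on~$W$ (first item of Proposition~\ref{prop:facialWeakOrder}). The real content, and the main obstacle, is the preliminary step — pinning $\ProjDown^c(F)$ and $\ProjUp_c(F)$ down in terms of the $c$-sortable and $c$-antisortable representatives of~$X_F$ and~$Y_F$, which is exactly where one must use that~$F$ is a genuine $\equiv_c$-class and not an arbitrary interval of faces, and which is what makes the converse implication go through.
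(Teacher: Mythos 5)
Your proof is correct and follows essentially the same route as the paper's: both unwind $\rel(F)$ in terms of the extremal representatives of the congruence class and invoke the fact that $\equiv_c$ is a lattice congruence of the facial weak order, so that order in the quotient is detected by the down/up projections. The paper phrases the computation directly via $\ProjDown^c(F)$, $\ProjUp_c(F)$ and Proposition~\ref{prop:facialWeakOrder}, whereas you route it through the identity $\rel(F)=\rel\big(\projDown^c(X_F),\projUp_c(Y_F)\big)$ and Proposition~\ref{prop:weakOrderWOIP}; the difference is cosmetic (if anything, your version makes explicit the identification of the extremal faces with the sortable/antisortable representatives, which the paper leaves implicit).
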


\begin{proof}
This is immediate from the definitions:
\[
\begin{array}[b]{c@{\quad\iff\quad}c@{\quad\text{and}\quad}c}
\rel(F) \wole \rel(G)
& \rel \big( \ProjUp_c(F) \big)^+ \supseteq \rel \big( \ProjUp_c(G) \big)^+ & \rel \big( \ProjDown^c(F) \big)^- \subseteq \rel \big( \ProjDown^c(G) \big)^- \\
& \ProjUp_c(F) \wole \ProjUp_c(G) & \ProjDown^c(F) \wole \ProjDown^c(G) \\
& \multicolumn{2}{@{}l}{F \wole G.}
\end{array}
\qedhere
\]
\end{proof}

\begin{corollary}
\label{coro:latticeCOFP}
For any Coxeter element~$c$, the weak order on~$\COFP(c)$ is a lattice.
\end{corollary}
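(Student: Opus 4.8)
The plan is to deduce Corollary~\ref{coro:latticeCOFP} directly from Proposition~\ref{prop:weakOrderCOFP}, exactly as Corollaries~\ref{coro:latticeWOFP}, \ref{coro:latticeCOEP} and~\ref{coro:latticeCOIP} were deduced from their respective order-identification propositions. The point is that Proposition~\ref{prop:weakOrderCOFP} exhibits $\COFP(c)$, with the weak order, as an isomorphic copy of the $c$-Cambrian facial lattice, and the latter is known to be a lattice.

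First I would record that the map $F \mapsto \rel(F)$ is by construction a surjection from the set of $c$-Cambrian facial congruence classes onto $\COFP(c)$, and that Proposition~\ref{prop:weakOrderCOFP} states $\rel(F) \wole \rel(G)$ in the weak order on $\COFP(c)$ if and only if $F \wole G$ in the $c$-Cambrian facial lattice. This ``if and only if'' immediately yields injectivity (if $\rel(F) = \rel(G)$ then $F \wole G$ and $G \wole F$, hence $F = G$), so $F \mapsto \rel(F)$ is a poset isomorphism from the $c$-Cambrian facial lattice onto $(\COFP(c), \wole)$. Second I would invoke Proposition~\ref{prop:facialWeakOrder}: the facial weak order on the faces of the permutahedron $\Perm(W)$ is a lattice; since $\equiv_c$ is a lattice congruence of the facial weak order by~\cite[Prop.~4.12]{DermenjianHohlwegPilaud}, the quotient --- which is precisely the $c$-Cambrian facial lattice --- is again a lattice. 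Transporting this lattice structure across the isomorphism of the previous step shows that the weak order on $\COFP(c)$ is a lattice.

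For completeness I would then spell out the meet and join: writing $F \meet G$ and $F \join G$ for the meet and join of two $c$-Cambrian facial classes in the $c$-Cambrian facial lattice --- obtained by applying the projections $\ProjDown^c$ and $\ProjUp_c$ to the facial weak order meet $\meetFWO$ and join of representatives --- the isomorphism gives $\rel(F) \meet \rel(G) = \rel(F \meet G)$ and $\rel(F) \join \rel(G) = \rel(F \join G)$, in full analogy with the earlier corollaries.

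The hard part, honestly, is already behind us: this is phrased as a corollary because the substantive content lies elsewhere --- on one side in verifying that $\rel(\cdot)$ intertwines the two orders (Proposition~\ref{prop:weakOrderCOFP}), and on the other in the lattice-congruence property of $\equiv_c$ established in~\cite{DermenjianHohlwegPilaud}. The only genuine point of care is noticing that the order-preserving-and-reflecting surjection $F \mapsto \rel(F)$ is automatically injective, hence a poset isomorphism, so that ``being a lattice'' transfers; everything else is a one-line application of the ``quotient of a lattice by a lattice congruence is a lattice'' principle.
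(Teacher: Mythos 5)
Your proposal is correct and matches the paper's intended argument: the paper gives no explicit proof of Corollary~\ref{coro:latticeCOFP}, precisely because it follows immediately from the order isomorphism of Proposition~\ref{prop:weakOrderCOFP} together with the fact that the $c$-Cambrian facial lattice is a lattice (as the quotient of the facial weak order by the lattice congruence~$\equiv_c$ of~\cite[Prop.~4.12]{DermenjianHohlwegPilaud}). Your additional observation that the order-reflecting property forces injectivity of~$F \mapsto \rel(F)$ is the right small point of care and is consistent with how the analogous Corollaries~\ref{coro:latticeWOFP}, \ref{coro:latticeCOEP} and~\ref{coro:latticeCOIP} are obtained.
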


\begin{remark}
To conclude, note that the weak order on~$\COFP(c)$ is a lattice but not a sublattice of the weak order on~$\IPos(\Phi)$, nor on~$\WOIP(\Phi)$, nor on~$\COIP(c)$.
This was already observed in~\cite[Rem.~47]{ChatelPilaudPons} in type~$A$.
For example, consider the example of Remark~\ref{rem:notSublatticeWOFP} for the Coxeter element~$s_1s_2$ in type~$A_2$.
\end{remark}

\subsection{Cube}
\label{subsec:cube}

To conclude this paper, we consider $\Phi$-posets corresponding to the vertices, the intervals and the faces of the cube (see Remarks~\ref{rem:verticesCube} and~\ref{rem:facesCube}), corresponding to the descent congruence on~$W$.
Recall that a (left) \defn{descent} of $w \in W$ is a simple root~$\alpha \in \Delta$ such that~$s_\alpha w \wole w$, or equivalently~$\alpha \in \inv(w)$.
The \defn{descent set} of~$w$ is~$\des(w) \eqdef \inv(w) \cap \Delta$.
The \defn{descent class} of~$w$ is the set of elements of~$W$ that have precisely the same descent set as~$w$.
Note that descent classes correspond to subsets of~$\Delta$: for~$A \subseteq \Delta$, we denote by~$Z_A$ the descent class of elements of~$W$ with $A$ as descent set.
These classes define the \defn{descent congruence} on~$W$, whose down and up projections we denote by~$\projDown^d$ and~$\projUp_d$.

\subsubsection{Elements}
\label{subsubsec:BOEP}

For a subset~$A \subseteq \Delta$ corresponding to the descent class~$Z_A$, we consider the $\Phi$-poset
\begin{align*}
\rel(A) & \eqdef \cl{\big( -A \sqcup (\Delta \ssm A) \big)} = \Phi \cap \N \big( -A \sqcup (\Delta \ssm A) \big) \\ 
& = \!\! \bigcap_{w \in Z_A} \!\! \rel(w) = \rel\big(\projDown^d(Z_A)\big) \cap \rel\big(\projUp_d(Z_A)\big) = \rel\big(\projDown^d(Z_A)\big)^- \sqcup \rel\big(\projUp_d(Z_A)\big)^+ \!\!.
\end{align*}
Note that by definition, $\rel(A)$ coincides with the weak order interval poset~$\rel \big( \projDown^c(Z_A), \projUp_c(Z_A) \big)$.
We say that~$\rel(A)$ is a \defn{boolean order element poset} and we denote the collection of all such $\Phi$-posets by~$\BOEP(\Phi) \eqdef \set{\rel(A)}{A \subseteq \Delta}$.
Note that there are $2^n$ many $\Phi$-posets in~$\BOEP(\Phi)$, see Table~\ref{table:numerology}.

\begin{remark}
\label{rem:verticesCube}
Geometrically, $\rel(A)$ is the set of roots of~$\Phi$ not contained in the cone of the vertex corresponding to~$A$ in the parallelepiped generated by the simple roots~$\Delta$.
See \fref{fig:facesCube}.
\end{remark}

These $\Phi$-posets are characterized in the next statement.
Its proof is delayed to Section~\ref{subsubsec:BOIP} as it requires the characterization of the boolean order interval posets.

\begin{proposition}
\label{prop:characterizationBOEP}
A $\Phi$-poset~$\rel \in \IPos(\Phi)$ is in~$\BOEP(\Phi)$ if and only if
\begin{enumerate}[(i)]
\item $\alpha + \beta \in \rel \implies \alpha \in \rel \text{ and } \beta \in \rel$ for all~$\alpha, \beta \in \Phi^+$ and all~$\alpha, \beta \in \Phi^-$,
\item $\alpha \in \rel$ or~$-\alpha \in \rel$ for any simple root~$\alpha \in \Delta$.
\end{enumerate}
\end{proposition}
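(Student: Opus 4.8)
The plan is to establish both implications directly from the definition $\rel(A) = \Phi \cap \N\big({-A} \sqcup (\Delta \ssm A)\big)$, bypassing any appeal to the (not yet characterized) boolean order interval posets.

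For the implication from $\rel \in \BOEP(\Phi)$ to (i) and~(ii), write $\rel = \rel(A)$. Condition~(ii) is immediate: each simple root lies in $\Delta \ssm A \subseteq \rel(A)$, or in $A$, in which case its opposite lies in $\rel(A)$. For~(i) in the positive case I would take $\alpha, \beta \in \Phi^+$ with $\alpha + \beta \in \rel(A)$; then $\alpha + \beta$ is a positive root, and writing it as $-\sum_{\delta \in A}\lambda_\delta\delta + \sum_{\delta \in \Delta\ssm A}\mu_\delta\delta$ with nonnegative integer coefficients and comparing simple-root coordinates (which for a positive root are all $\ge 0$) forces $\lambda_\delta = 0$, so $\alpha + \beta$ is supported on $\Delta \ssm A$. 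Since $\alpha$ and $\beta$ are positive roots whose simple-root coordinates are coordinatewise dominated by those of $\alpha + \beta$, they are supported on $\Delta \ssm A$ as well, hence $\alpha, \beta \in \N(\Delta\ssm A) \cap \Phi \subseteq \rel(A)$. The case $\alpha, \beta \in \Phi^-$ is the mirror image, with $\Delta \ssm A$ replaced by $-A$.

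For the converse, assume (i) and~(ii) and set $A \eqdef \set{\delta \in \Delta}{-\delta \in \rel}$. Antisymmetry of $\rel$ together with~(ii) gives $\Delta \cap \rel = \Delta \ssm A$ and $(-\Delta) \cap \rel = -A$, so $\big({-A}\big) \sqcup (\Delta \ssm A) \subseteq \rel$; since $\rel$ is closed and $\rel(A) = \cl{\big(({-A}) \sqcup (\Delta \ssm A)\big)}$, Remark~\ref{rem:closureOperator} yields $\rel(A) \subseteq \rel$. The substantive step is the reverse inclusion, which I would prove by induction on $|h|(\gamma)$ for $\gamma \in \rel$. If $\gamma \in \Phi^+$ has height at least $2$, choose a simple root $\delta$ occurring in $\gamma$ with $\dotprod{\gamma}{\delta} > 0$ (possible since $\dotprod{\gamma}{\gamma} > 0$ expands over the simple roots occurring in $\gamma$); by Theorem~\ref{thm:dpsum}\,(i), $\gamma - \delta$ is again a positive root, so $\gamma = (\gamma - \delta) + \delta$ is a sum of two positive roots and~(i) gives $\gamma - \delta, \delta \in \rel$. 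Then $\delta \in \rel$ forces $\delta \notin A$ by antisymmetry, and by induction $\gamma - \delta$ is supported on $\Delta \ssm A$; hence so is $\gamma$, so $\gamma \in \N(\Delta\ssm A) \cap \Phi \subseteq \rel(A)$. For $\gamma \in \Phi^-$ I would apply the same decomposition to $-\gamma$, obtaining $\gamma = (\gamma + \delta) + (-\delta)$ as a sum of two \emph{negative} roots, invoke~(i), note that $-\delta \in \rel$ forces $\delta \in A$, and conclude symmetrically that $-\gamma$ is supported on $A$, i.e. $\gamma \in \N(-A) \cap \Phi \subseteq \rel(A)$; the base case $|h|(\gamma) = 1$ is exactly the definition of $A$. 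Combining the two inclusions, $\rel = \rel(A) \in \BOEP(\Phi)$.

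The main obstacle I anticipate is keeping the decompositions sign-homogeneous: hypothesis~(i) only controls pairs of positive roots and pairs of negative roots, so one cannot split $\gamma$ arbitrarily — the induction must peel a single simple root off the correct end at each step (a positive root plus a simple root from a positive $\gamma$, a negative root plus the negative simple root $-\delta$ from a negative $\gamma$), which is why the argument is organized around heights rather than, say, the filtrations of Theorem~\ref{thm:filtrationSummableSubsets} applied blindly. Beyond that, the proof is routine bookkeeping with simple-root coordinates and the closure operator.
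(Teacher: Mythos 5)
Your proof is correct, but it takes a genuinely different route from the paper's. The paper deduces Proposition~\ref{prop:characterizationBOEP} in three lines from Proposition~\ref{prop:characterizationBOIP}: condition~(i) is exactly the $\BOIP(\Phi)$ characterization, so a $\Phi$-poset satisfying~(i) is already known to equal~$\rel(A,A')$ with~$A \eqdef -(\rel \cap -\Delta)$ and~$A' \eqdef \Delta \ssm (\rel \cap \Delta)$, and condition~(ii) then just forces~$A = A'$. You instead bypass~$\BOIP(\Phi)$ entirely and re-prove both inclusions from the definition~$\rel(A) = \Phi \cap \N\big({-A} \sqcup (\Delta \ssm A)\big)$, replacing the paper's appeal to Theorem~\ref{thm:filtrationSummableSubsets} (applied to the multiset of simple roots summing to~$\gamma$) by a direct peeling step: pick a simple root~$\delta$ in the support of~$\gamma$ with~$\dotprod{\gamma}{\delta} > 0$ and apply Theorem~\ref{thm:dpsum}\,(i) to get~$\gamma - \delta \in \Phi^+$. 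This is essentially the same mechanism — Theorem~\ref{thm:filtrationSummableSubsets} is itself proved via Theorem~\ref{thm:dpsum} — so your argument is a self-contained specialization rather than a new idea; what it buys is independence from Proposition~\ref{prop:characterizationBOIP} and an explicit handling of the sign-homogeneity issue (peeling~$\delta$ off a positive~$\gamma$ and~$-\delta$ off a negative~$\gamma$), which the paper leaves implicit inside the~$\BOIP$ proof. The cost is redundancy: you end up duplicating the induction that the paper performs once in Proposition~\ref{prop:characterizationBOIP} and then reuses here. All the individual steps check out, including the coordinate-domination argument for the forward direction of~(i) and the observation that~$\delta \in \rel$ forces~$\delta \notin A$ by antisymmetry.
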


The following statement characterizes the weak order induced by~$\BOEP(\Phi)$.

\begin{proposition}
\label{prop:weakOrderBOEP}
For any subsets~$A, B \subseteq \Delta$, we have~${\rel(A) \wole \rel(B)}$ in the weak order on~$\BOEP(\Phi)$ if and only if~$A \subseteq B$ in boolean order.
\end{proposition}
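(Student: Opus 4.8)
The plan is to make the two parts $\rel(A)^+$ and $\rel(A)^-$ completely explicit in terms of supports of roots, after which the equivalence drops out immediately. For a root $\alpha$, write $\alpha = \sum_{\delta \in \Delta} a_\delta\,\delta$ in the simple root basis and set $\mathrm{supp}(\alpha) \eqdef \set{\delta \in \Delta}{a_\delta \ne 0}$; recall that all coefficients $a_\delta$ are nonnegative when $\alpha \in \Phi^+$ and nonpositive when $\alpha \in \Phi^-$. The first step is to check that
\[
\rel(A)^+ = \set{\alpha \in \Phi^+}{\mathrm{supp}(\alpha) \cap A = \varnothing}
\qquad\text{and}\qquad
\rel(A)^- = \set{\alpha \in \Phi^-}{\mathrm{supp}(\alpha) \subseteq A}.
\]
This follows from the equality $\rel(A) = \Phi \cap \N\big(-A \sqcup (\Delta \ssm A)\big)$: since $\Delta$ is a linear basis, a root $\alpha = \sum_\delta a_\delta\delta$ lies in $\N\big(-A \sqcup (\Delta \ssm A)\big)$ precisely when $a_\delta \le 0$ for every $\delta \in A$ and $a_\delta \ge 0$ for every $\delta \in \Delta \ssm A$. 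Intersecting this condition with $\Phi^-$ (all $a_\delta \le 0$) forces $a_\delta = 0$ for $\delta \in \Delta \ssm A$, and intersecting it with $\Phi^+$ (all $a_\delta \ge 0$) forces $a_\delta = 0$ for $\delta \in A$; these are exactly the two support conditions above.

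Granting this, the forward implication is pure monotonicity. If $A \subseteq B$, then $\mathrm{supp}(\alpha) \cap B = \varnothing$ implies $\mathrm{supp}(\alpha) \cap A = \varnothing$, so $\rel(A)^+ \supseteq \rel(B)^+$; likewise $\mathrm{supp}(\alpha) \subseteq A$ implies $\mathrm{supp}(\alpha) \subseteq B$, so $\rel(A)^- \subseteq \rel(B)^-$; hence $\rel(A) \wole \rel(B)$ by Definition~\ref{def:weakOrder}. For the converse, I would assume $\rel(A) \wole \rel(B)$ and pick any $\delta \in A$: then $-\delta$ is a negative root with $\mathrm{supp}(-\delta) = \{\delta\} \subseteq A$, so $-\delta \in \rel(A)^- \subseteq \rel(B)^-$, and the displayed description of $\rel(B)^-$ forces $\{\delta\} \subseteq B$, \ie $\delta \in B$. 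Thus $A \subseteq B$.

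I do not anticipate any real obstacle: the only subtle point is the sign observation on the coefficients of a root in the simple root basis, which is precisely what collapses the two intersections in the first step to the clean support conditions. It is worth noting that the same computation in fact yields the stronger chain of equivalences $\rel(A) \wole \rel(B) \iff \rel(A)^+ \supseteq \rel(B)^+ \iff \rel(A)^- \subseteq \rel(B)^- \iff A \subseteq B$, in the spirit of Remarks~\ref{rem:weakOrderWOEP} and~\ref{rem:weakOrderCOEP}.
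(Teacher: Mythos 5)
Your proof is correct and follows essentially the same route as the paper: both arguments reduce the claim to the description $\rel(A) = \Phi \cap \N\big({-A} \sqcup (\Delta \ssm A)\big)$, from which $\rel(A)^+ \supseteq \rel(B)^+$ and $\rel(A)^- \subseteq \rel(B)^-$ are each seen to be equivalent to $A \subseteq B$. You merely spell out the support computation that the paper leaves implicit, and your closing observation is exactly the paper's Remark~\ref{rem:weakOrderBOEP}.
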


\begin{proof}
From the definition~$\rel(A) = \Phi \cap \N \big( -A \sqcup (\Delta \ssm A) \big)$, we obtain that
\[
\begin{array}[b]{c@{\quad\iff\quad}c@{\quad\text{and}\quad}c}
\rel(A) \wole \rel(B)
& \rel(A)^+ \supseteq \rel(B)^+ & \rel(A)^- \subseteq \rel(B)^- \\
& \Delta \ssm A \supseteq \Delta \ssm B & A \subseteq B.
\end{array}
\qedhere
\]
\end{proof}

\begin{remark}
\label{rem:weakOrderBOEP}
In fact, $\rel(A) \wole \rel(B) \iff \rel(A)^+ \supseteq \rel(B)^+ \iff \rel(A)^- \subseteq \rel(B)^- \iff A \subseteq B$.
\end{remark}

\begin{corollary}
\label{coro:latticeBOEP}
The weak order on~$\BOEP(\Phi)$ is a lattice with meet and join
\[
\rel(A) \meetBOEP \rel(B) = \rel(A \cap B)
\qquad\text{and}\qquad
\rel(A) \joinBOEP \rel(B) = \rel(A \cup B).
\]
\end{corollary}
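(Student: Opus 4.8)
The plan is to recognize that the map $A \mapsto \rel(A)$ is an order isomorphism from the boolean lattice $(2^\Delta, \subseteq)$ onto the subposet $\BOEP(\Phi)$ of the weak order, and then transport the lattice structure along it. First I would invoke Proposition~\ref{prop:weakOrderBOEP}, which states that $\rel(A) \wole \rel(B)$ in the weak order on $\BOEP(\Phi)$ exactly when $A \subseteq B$. In particular, $\rel(A) = \rel(B)$ forces both $A \subseteq B$ and $B \subseteq A$, hence $A = B$; so $A \mapsto \rel(A)$ is a bijection from $2^\Delta$ onto $\BOEP(\Phi)$ that is simultaneously order-preserving and order-reflecting, \emph{i.e.}~an isomorphism of posets.

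Since $(2^\Delta, \subseteq)$ is a lattice with meet $A \cap B$ and join $A \cup B$, its isomorphic image $\BOEP(\Phi)$ is a lattice as well, and the meet (resp.~join) of $\rel(A)$ and $\rel(B)$ in $\BOEP(\Phi)$ is the image under $A \mapsto \rel(A)$ of the meet (resp.~join) of $A$ and $B$ in $2^\Delta$. This yields precisely $\rel(A) \meetBOEP \rel(B) = \rel(A \cap B)$ and $\rel(A) \joinBOEP \rel(B) = \rel(A \cup B)$, as claimed.

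I expect no real obstacle here, as the substantive input is entirely contained in Proposition~\ref{prop:weakOrderBOEP}. If a self-contained verification were preferred instead of citing the isomorphism, one could argue directly: $A \cap B \subseteq A$ and $A \cap B \subseteq B$, so by Proposition~\ref{prop:weakOrderBOEP} the element $\rel(A \cap B) \in \BOEP(\Phi)$ lies weakly below both $\rel(A)$ and $\rel(B)$; and if $\rel(C) \in \BOEP(\Phi)$ satisfies $\rel(C) \wole \rel(A)$ and $\rel(C) \wole \rel(B)$, then $C \subseteq A$ and $C \subseteq B$, hence $C \subseteq A \cap B$ and $\rel(C) \wole \rel(A \cap B)$, so $\rel(A \cap B)$ is the meet. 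The argument for the join is symmetric. Either route is routine.
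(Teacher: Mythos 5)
Your argument is correct and is exactly the intended one: the paper states this as an immediate corollary of Proposition~\ref{prop:weakOrderBOEP} (with no separate proof written out), relying precisely on the fact that $A \mapsto \rel(A)$ is an order isomorphism from the boolean lattice $(2^\Delta,\subseteq)$ onto $\BOEP(\Phi)$, so the lattice structure transports as you describe.
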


Although it anticipates on the boolean order interval posets studied in the next section, let us state the following result that will be a direct consequence of Corollary~\ref{coro:latticeBOIP} and Proposition~\ref{prop:sublatticeBOIP}.

\begin{proposition}
\label{prop:sublatticeBOEP}
The set~$\BOEP(\Phi)$ induces a sublattice of the weak order on~$\BOIP(\Phi)$ and therefore on the weak orders on~$\IPos(\Phi)$, on~$\WOIP(\Phi)$ and on~$\COIP(c)$ for all Coxeter element~$c$.
\end{proposition}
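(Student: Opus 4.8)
The plan is to read off the statement from the combinatorial description of the meet and join on~$\BOIP(\Phi)$ obtained in Corollary~\ref{coro:latticeBOIP} together with the sublattice statement of Proposition~\ref{prop:sublatticeBOIP}. First I would record that~$\BOEP(\Phi) \subseteq \BOIP(\Phi)$: each descent class~$Z_A$ is an interval of the weak order, so by definition~$\rel(A)$ is the interval poset attached to it, and under the identification of the boolean order interval~$[Z_A, Z_A]$ with the pair~$(A,A)$ we have~$\rel(A) = \rel(A,A) \in \BOIP(\Phi)$.

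Next I would show that~$\BOEP(\Phi)$ induces a sublattice of~$\BOIP(\Phi)$. By Corollary~\ref{coro:latticeBOIP}, the meet and join in~$\BOIP(\Phi)$ of two boolean order interval posets are~$\rel(A,A') \meetBOIP \rel(B,B') = \rel(A \cap B, A' \cap B')$ and~$\rel(A,A') \joinBOIP \rel(B,B') = \rel(A \cup B, A' \cup B')$. Evaluating on the diagonal elements~$\rel(A) = \rel(A,A)$ and~$\rel(B) = \rel(B,B)$ yields~$\rel(A) \meetBOIP \rel(B) = \rel(A \cap B)$ and~$\rel(A) \joinBOIP \rel(B) = \rel(A \cup B)$, which belong to~$\BOEP(\Phi)$ and coincide with the meet and join computed inside~$\BOEP(\Phi)$ in Corollary~\ref{coro:latticeBOEP}. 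Hence~$\BOEP(\Phi)$ is closed under the operations of~$\BOIP(\Phi)$, i.e.\ it induces a sublattice.

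Finally, Proposition~\ref{prop:sublatticeBOIP} asserts that~$\BOIP(\Phi)$ induces a sublattice of the weak orders on~$\IPos(\Phi)$, on~$\WOIP(\Phi)$ and on~$\COIP(c)$ for every Coxeter element~$c$; since ``inducing a sublattice'' is transitive, composing this with the previous paragraph gives the statement. There is essentially no genuine obstacle: the only point requiring care is the bookkeeping identification~$\rel(A) = \rel(A,A)$ and the observation that the~$\BOIP$-meet and~$\BOIP$-join of two diagonal elements are again diagonal, both immediate. If one prefers an argument independent of the~$\BOIP(\Phi)$ results, one can instead use the explicit description~$\rel(A) = \Phi_{\Delta \ssm A}^+ \sqcup (-\Phi_A^+)$ (where~$\Phi_B^+$ denotes the positive roots supported on~$B \subseteq \Delta$) and compute directly from Theorem~\ref{thm:posets} that~$\rel(A) \meetC \rel(B) = \rel(A \cap B)$, and dually for the join; the two non-routine inputs are that~$\cl{(\Phi_{\Delta \ssm A}^+ \cup \Phi_{\Delta \ssm B}^+)} = \Phi_{\Delta \ssm (A \cap B)}^+$, which follows by assembling the required summable filtrations out of simple roots via Theorem~\ref{thm:filtrationSummableSubsets}, and that~$\ncd{\rel(A \cap B)} = \rel(A \cap B)$ because~$\rel(A \cap B)$ is already closed. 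The assertions for~$\WOIP(\Phi)$ and~$\COIP(c)$ then follow in the same manner from Corollaries~\ref{coro:latticeWOIP} and~\ref{coro:latticeCOIP}, or directly from the chain of inclusions~$\BOEP(\Phi) \subseteq \BOIP(\Phi) \subseteq \COIP(c) \subseteq \WOIP(\Phi)$.
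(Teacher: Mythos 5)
Your argument is correct and is exactly the route the paper intends: the paper gives no separate proof but states that the result is a direct consequence of Corollary~\ref{coro:latticeBOIP} and Proposition~\ref{prop:sublatticeBOIP}, and your proof simply spells out that deduction (the identification $\rel(A)=\rel(A,A)$, the stability of diagonal elements under $\meetBOIP$ and $\joinBOIP$, and transitivity of the sublattice property along $\BOEP(\Phi)\subseteq\BOIP(\Phi)\subseteq\COIP(c)\subseteq\WOIP(\Phi)\subseteq\IPos(\Phi)$). The alternative direct computation you sketch is a fine sanity check but is not needed.
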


\subsubsection{Intervals and Faces}
\label{subsubsec:BOIP}

We finally consider intervals in the boolean order, or equivalently faces of the cube (see Remark~\ref{rem:facesCube}).
For two subsets~$A \subseteq A'$ of~$\Delta$, we consider
\[
\rel(A,A') \eqdef = \bigcap_{A \subseteq B \subseteq A'} \rel(B) = \rel(A) \cap \rel(A') = \rel(A)^- \sqcup \rel(A')^+.
\]
Note that by definition, $\rel(A,A')$ coincides with the weak order interval poset~$\rel \big( \projDown^c(Z_A), \projUp_c(Z_{A'}) \big)$.
Observe also that $\BOIP(\Phi) \subseteq \COIP(c)$ for any Coxeter element~$c$ since the descent congruence coarsens the $c$-Cambrian congruence.
We say that $\rel(A,A')$ is a \defn{boolean order interval poset} and we denote the set of such $\Phi$-posets by~$\BOIP(\Phi) \eqdef \set{\rel(A,A')}{A \subseteq A' \subseteq \Delta}$.

\begin{remark}
\label{rem:facesCube}
Geometrically, $\rel(A,A')$ is the set of roots of~$\Phi$ not contained in the cone of the face corresponding to~$A \subseteq A'$ in the parallelepiped generated by the simple roots~$\Delta$.
See \fref{fig:facesCube}.

\begin{figure}[t]
\DeclareDocumentCommand{\rs}{ O{1.1cm} O{->} m m O{0}} {
	\def \radius {#1}
	\def \inputPoints{#3}
	\def \includeRoots{#4}
	\def \style {#2}
	\def \initialRotation {#5}
	
	\pgfmathtruncatemacro{\points}{\inputPoints * 2}
	\pgfmathsetmacro{\degrees}{360 / \points}
	
	\coordinate (0) at (0,0);
	
	\foreach \x in {1,...,\points}{%
		\pgfmathsetmacro{\location}{(\points+(\x-1))*\degrees + \initialRotation}
		\ifthenelse{\equal{\inputPoints}{3}}{
			\coordinate (\x) at (\location:\radius);
		}{
			\coordinate (\x) at (\location:2*\radius*(1 + 0.41*Mod(\x,2);));		
		}
	}

	\ifthenelse{\equal{\includeRoots}{}}{
	}{
		\foreach \x in {1,...,\points}{%
			\edef \showPoint {0};

			\foreach \y in \includeRoots {
				\ifthenelse{\equal{\x}{\y}}{
					\xdef \showPoint {1};
				}{}
			}
			
			\ifthenelse{\equal{\showPoint}{1}}{
				\draw[->, ultra thick] (0) -- (\x);
			}{}
		}
	}  
}

\centerline{
	\begin{tikzpicture}
		[scale=1.7,
		bface/.style={color=blue},
		rface/.style={color=red},
		gface/.style={color=black}
		]
		%
		\coordinate (e) at (0,0.42);
		\coordinate (s/st) at (-2,1.5);
		\coordinate (t/ts) at (2,1.5);
		\coordinate (sts) at (0,2.58);
		%
		\coordinate (Ws) at (-1,0.96);
		\coordinate (Wt) at (1,.96);
		\coordinate (stWs) at (-1,2.04);
		\coordinate (tsWt) at (1,2.04);
		%
		\coordinate (W) at (0,1.5);
		%
		\draw (e) -- (s/st);
		\draw (e) -- (t/ts);
		\draw (s/st) -- (sts);
		\draw (t/ts) -- (sts);
		%
		\begin{scope}[shift={(e)}, scale=0.25, bface]
			\begin{scope}[scale=0.8]
				\rs{3}{1,2,3}[210]
			\end{scope}
		\end{scope}
		\begin{scope}[shift={(s/st)}, scale=0.25, bface]
			\begin{scope}[scale=0.8]
				\rs{3}{1,6}[210]
			\end{scope}
		\end{scope}
		\begin{scope}[shift={(t/ts)}, scale=0.25, bface]
			\begin{scope}[scale=0.8]
				\rs{3}{3,4}[210]
			\end{scope}
		\end{scope}
		\begin{scope}[shift={(sts)}, scale=0.25, bface]
			\begin{scope}[scale=0.8]
				\rs{3}{4,5,6}[210]
			\end{scope}
		\end{scope}
		%
		\begin{scope}[shift={(Ws)}, scale=0.25,rface]
			\begin{scope}[scale=0.8]
				\rs{3}{1,2}[210]
			\end{scope}
		\end{scope}
		\begin{scope}[shift={(Wt)}, scale=0.25, rface]
			\begin{scope}[scale=0.8]
				\rs{3}{2,3}[210]
			\end{scope}
		\end{scope}
		\begin{scope}[shift={(stWs)}, scale=0.25, rface]
			\begin{scope}[scale=0.8]
				\rs{3}{5,6}[210]
			\end{scope}
		\end{scope}
		\begin{scope}[shift={(tsWt)}, scale=0.25, rface]
			\begin{scope}[scale=0.8]
				\rs{3}{4,5}[210]
			\end{scope}
		\end{scope}
	\end{tikzpicture}
	\hspace{.5cm}
	\begin{tikzpicture}
		[scale=.85,
		bface/.style={color=blue},
		rface/.style={color=red},
		gface/.style={color=black}
		]
		%
		\coordinate (e) at (0,0);
		\coordinate (s/st/sts) at (-4,2);
		\coordinate (t/ts/tst) at (4,2);
		\coordinate (stst) at (0,4);
		%
		\coordinate (Ws) at (-2,1);
		\coordinate (Wt) at (2,1);
		\coordinate (tstWs) at (2,3);
		\coordinate (stsWt) at (-2,3);
		%
		\coordinate (W) at (0,2);
		%
		\draw (e) -- (s/st/sts);
		\draw (e) -- (t/ts/tst);
		\draw (t/ts/tst) -- (stst);
		\draw (s/st/sts) -- (stst);
		%
		\begin{scope}[shift={(e)}, scale=0.25, bface]
			\begin{scope}[scale=0.8]
				\rs{4}{1,2,3,4}[202]
			\end{scope}
		\end{scope}
		\begin{scope}[shift={(s/st/sts)}, scale=0.25, bface]
			\begin{scope}[scale=0.8]
				\rs{4}{1,8}[202]
			\end{scope}
		\end{scope}
		\begin{scope}[shift={(stst)}, scale=0.25, bface]
			\begin{scope}[scale=0.8]
				\rs{4}{5,6,7,8}[202]
			\end{scope}
		\end{scope}
		\begin{scope}[shift={(t/ts/tst)}, scale=0.25, bface]
			\begin{scope}[scale=0.8]
				\rs{4}{4,5}[202]
			\end{scope}
		\end{scope}
		%
		\begin{scope}[shift={(Ws)}, scale=0.25, rface]
			\begin{scope}[scale=0.8]
				\rs{4}{1,2,3}[202]
			\end{scope}
		\end{scope}
		\begin{scope}[shift={(stsWt)}, scale=0.25, rface]
			\begin{scope}[scale=0.8]
				\rs{4}{6,7,8}[202]
			\end{scope}
		\end{scope}
		\begin{scope}[shift={(tstWs)}, scale=0.25, rface]
			\begin{scope}[scale=0.8]
				\rs{4}{5,6,7}[202]
			\end{scope}
		\end{scope}
		\begin{scope}[shift={(Wt)}, scale=0.25, rface]
			\begin{scope}[scale=0.8]
				\rs{4}{2,3,4}[202]
			\end{scope}
		\end{scope}
	\end{tikzpicture}
}
	\caption{The sets~$\rel(F)$ for the faces~$F$ of the cube in type~$A_2$~(left) and~$B_2$ (right). Note that positive roots point downwards.}
	\label{fig:facesCube}
\end{figure}
\end{remark}

These $\Phi$-posets are characterized as follows.

\begin{proposition}
\label{prop:characterizationBOIP}
A $\Phi$-poset~$\rel \in \IPos(\Phi)$ is in~$\BOIP(\Phi)$ if and only if ${\alpha + \beta \in \rel \implies \alpha \in \rel \text{ and } \beta \in \rel}$ for all~$\alpha, \beta \in \Phi^+$ and all~$\alpha, \beta \in \Phi^-$.
\end{proposition}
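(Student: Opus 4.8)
The plan is to reduce everything to an explicit description of the boolean order interval posets $\rel(A,A')$ in terms of standard parabolic root subsystems. For $B\subseteq\Delta$ write $\Phi_B\eqdef\Phi\cap\R B$ (the parabolic subsystem indexed by $B$, via $\delta\leftrightarrow s_\delta$) and $\Phi_B^+\eqdef\Phi^+\cap\N B$ for its positive roots; for $\gamma=\sum_{\delta\in\Delta}\gamma_\delta\delta\in\Phi$ let $\mathrm{supp}(\gamma)\eqdef\set{\delta\in\Delta}{\gamma_\delta\ne0}$, so that $\gamma\in\Phi_B^+$ iff $\gamma\in\Phi^+$ and $\mathrm{supp}(\gamma)\subseteq B$. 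First I would observe that, since $\Delta$ is a basis of $V$ and so is $-B\sqcup(\Delta\ssm B)$, comparing coordinates in $\rel(B)=\Phi\cap\N\big(-B\sqcup(\Delta\ssm B)\big)$ shows that a positive root lying in $\rel(B)$ must have zero coefficient on every $\delta\in B$, while a negative root lying in $\rel(B)$ must have zero coefficient on every $\delta\in\Delta\ssm B$. Hence $\rel(B)^+=\Phi_{\Delta\ssm B}^+$ and $\rel(B)^-=-\Phi_B^+$, and therefore $\rel(A,A')=\rel(A)^-\sqcup\rel(A')^+=(-\Phi_A^+)\sqcup\Phi_{\Delta\ssm A'}^+$.

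For the forward implication, let $\rel=\rel(A,A')\in\BOIP(\Phi)$ and take $\alpha,\beta\in\Phi^+$ with $\alpha+\beta\in\rel$. Then $\alpha+\beta\in\rel\cap\Phi^+=\Phi_{\Delta\ssm A'}^+$, so $\mathrm{supp}(\alpha)\cup\mathrm{supp}(\beta)=\mathrm{supp}(\alpha+\beta)\subseteq\Delta\ssm A'$ (all coefficients being nonnegative), whence $\alpha,\beta\in\Phi_{\Delta\ssm A'}^+\subseteq\rel$. The case $\alpha,\beta\in\Phi^-$ is symmetric, using $\rel\cap\Phi^-=-\Phi_A^+$.

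For the converse, assume $\rel\in\IPos(\Phi)$ satisfies the condition. The key preliminary step is to upgrade the pairwise hypothesis to arbitrary decompositions: whenever $\gamma\in\rel$ and $\gamma=\alpha_1+\dots+\alpha_p$ with all $\alpha_i\in\Phi^+$ (resp.\ all $\alpha_i\in\Phi^-$), then $\alpha_1,\dots,\alpha_p\in\rel$. I would prove this by induction on the height of $\gamma$, writing a non-simple positive root as $\gamma=\delta+(\gamma-\delta)$ with $\delta\in\Delta$ in its support and $\gamma-\delta\in\Phi^+$ (which follows from Theorem~\ref{thm:dpsum}\,(i)), or alternatively by applying Theorem~\ref{thm:filtrationSummableSubsets} to the multiset of simple summands. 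Now set $B'\eqdef\Delta\cap\rel$, $A\eqdef\set{\delta\in\Delta}{-\delta\in\rel}$ and $A'\eqdef\Delta\ssm B'$. Decomposing any $\gamma\in\rel^+$ into a sum of simple roots and applying the upgraded hypothesis gives $\mathrm{supp}(\gamma)\subseteq B'$, so $\rel^+\subseteq\Phi_{B'}^+$; conversely a positive root supported on $B'$ is a sum of simple roots all lying in $\rel$, hence lies in $\rel$ by closedness (Lemma~\ref{lem:closed}\,(iii)), so $\rel^+=\Phi_{B'}^+=\Phi_{\Delta\ssm A'}^+$. Symmetrically $\rel^-=-\Phi_A^+$. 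Finally, antisymmetry of $\rel$ rules out $\delta\in\rel$ and $-\delta\in\rel$ simultaneously, so $A\cap B'=\varnothing$, i.e.\ $A\subseteq A'$; then $\rel=\rel^+\sqcup\rel^-=\Phi_{\Delta\ssm A'}^+\sqcup(-\Phi_A^+)=\rel(A,A')\in\BOIP(\Phi)$.

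The only genuinely delicate point is the upgrade from the pairwise condition to decompositions into $p$ positive (or $p$ negative) roots; once this is in place, the parabolic-support bookkeeping in both directions is routine. I would also remark that this characterization is precisely what is needed afterwards to prove Proposition~\ref{prop:characterizationBOEP}: a $\Phi$-poset is in $\BOEP(\Phi)$ iff it satisfies the present condition \emph{and} contains $\alpha$ or $-\alpha$ for every simple root $\alpha$, which by the argument above amounts to forcing $A=A'$.
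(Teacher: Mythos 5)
Your proposal is correct and takes essentially the same route as the paper: the forward direction is the same coefficient/support comparison on the simple-root basis, and the converse likewise extracts $A$ and $A'$ from $\rel\cap(\pm\Delta)$ and proves $\rel^+=\rel(A')^+$ (and symmetrically $\rel^-=\rel(A)^-$) by an induction that peels one simple summand off a positive root --- the paper does this via Theorem~\ref{thm:filtrationSummableSubsets} applied to the multiset of simple summands, which is exactly the alternative you mention. The only cosmetic differences are that you make the parabolic description $\rel(A,A')=(-\Phi_A^+)\sqcup\Phi^+_{\Delta\ssm A'}$ explicit up front and that you explicitly record $A\subseteq A'$ via antisymmetry, which the paper leaves implicit.
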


\begin{proof}
Consider first~$\rel(A,A') \in \BOIP(\Phi)$ and~$\alpha + \beta \in \rel(A,A')$ with $\alpha, \beta \in \Phi^-$.
For~${\gamma \in \Delta}$, denote by~$[\alpha:\gamma]$ the coefficient of~$\gamma$ in the decomposition of~$\alpha$ on the simple root basis.
If~$[\alpha:\gamma] \ne 0$, then~$[\alpha+\beta:\gamma] \ne 0$ which implies that~$\gamma \in A$ since ${\alpha+\beta \in \rel(A,A')^- = \rel(A)^- \subseteq \N(-A)}$.
We therefore obtain that~$\alpha \in \Phi \cap \N(-A) = \rel(A)^- \subseteq \rel(A,A')$.
By symmetry, we conclude that~$\alpha \in \rel(A,A')$ and~$\beta \in \rel(A,A')$ for any~$\alpha, \beta \in \Phi^-$ such that~$\alpha+\beta \in \rel(A,A')$.
The proof is similar when~$\alpha, \beta \in \Phi^+$.

Conversely, consider~$\rel \in \IPos(\Phi)$ such that~${\alpha + \beta \in \rel \implies \alpha \in \rel \text{ and } \beta \in \rel}$ for all~${\alpha, \beta \in \Phi^+}$ and all~$\alpha, \beta \in \Phi^-$.
Define~$A \eqdef -(\rel \cap -\Delta)$ and~$A' \eqdef \Phi \ssm (\rel \cap \Delta)$.
We claim that~${\rel = \rel(A,A')}$, \ie that~$\rel^- = \rel(A)^-$ and~$\rel^+ = \rel(A')^+$.
We prove the latter, the former is similar.
Observe first that~$\Delta \ssm A' \subseteq \rel$, so that $\rel(A')^+ = \Phi \cap \N(\Delta \ssm A') \subseteq \rel$ since~$\rel$ is closed.
Conversely, we prove by induction on~$|\gamma|$ that any~$\gamma \in \rel^+$ belongs to~$\rel(A')^+$.
Consider~$\gamma \in \rel^+$, and let~$\rel[X]$ be the multiset of simple roots such that~$\gamma = \Sigma\rel[X]$.
By Theorem~\ref{thm:filtrationSummableSubsets}, there exists~$\alpha \in \rel[X]$ such that~$\beta = \Sigma(\rel[X] \ssm \{\alpha\}) \in \Phi$.
Since~$\alpha + \beta = \gamma \in {\rel}$, we get that~$\alpha \in {\rel}$ and~$\beta \in {\rel}$.
We have~$\alpha \in \Delta \cap \rel = \Phi \ssm A' \subseteq \rel(A')^+$ and ${\beta \in \rel(A')^+}$ (by induction hypothesis).
Since~$\rel(A')^+$ is closed, this shows~${\gamma = \alpha + \beta \in \rel(A')^+}$.
\end{proof}

We are now in position to provide the proof of Proposition~\ref{prop:characterizationBOEP} postponed in Section~\ref{subsubsec:BOEP}.

\begin{proof}[Proof of Proposition~\ref{prop:characterizationBOEP}]
Observe first that for~$A \subseteq \Delta$, the boolean order element poset~$\rel(A)$ satisfies~(i) by Proposition~\ref{prop:characterizationBOIP} and (ii) since~$\alpha \in \rel(A)$ if~$\alpha \in \Delta \ssm A$ and~$-\alpha \in \rel(A)$ if~$\alpha \in A$.

Conversely, consider a $\Phi$-poset~$\rel$ satisfying~(i) and~(ii). The proof of Proposition~\ref{prop:characterizationBOIP} ensures that $\rel = \rel(A,A')$ where $A \eqdef -(\rel \cap -\Delta)$ and~$A' \eqdef \Phi \ssm (\rel \cap \Delta)$.
Condition~(ii) ensures that~$A = A'$ so that~$\rel = \rel(A,A) = \rel(A) \in \BOEP(\Phi)$.
\end{proof}

The following statement characterizes the weak order induced by~$\BOIP(\Phi)$.

\begin{proposition}
\label{prop:weakOrderBOIP}
For two boolean intervals~$A \subseteq A'$ and~$B \subseteq B'$, we have~${\rel(A,A') \wole \rel(B,B')}$ in the weak order on~$\BOIP(\Phi)$ if and only if~$A \subseteq B$ and~$A' \subseteq B'$ in boolean order.
\end{proposition}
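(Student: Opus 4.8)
The plan is to follow verbatim the template already used for Propositions~\ref{prop:weakOrderWOIP} and~\ref{prop:weakOrderCOIP}, since the boolean order interval posets were deliberately set up to fit that pattern. First I would unfold the definition~$\rel(A,A') = \rel(A)^- \sqcup \rel(A')^+$ and observe that, because this disjoint union already separates roots by sign, the positive part of~$\rel(A,A')$ is exactly~$\rel(A')^+$ and its negative part is exactly~$\rel(A)^-$. This reduces a comparison of two interval posets to two independent comparisons of element posets: the positive part is governed by the \emph{larger} sets~$A', B'$ and the negative part by the \emph{smaller} sets~$A, B$.

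Next I would apply Definition~\ref{def:weakOrder}, which says $\rel(A,A') \wole \rel(B,B')$ holds if and only if $\rel(A,A')^+ \supseteq \rel(B,B')^+$ and $\rel(A,A')^- \subseteq \rel(B,B')^-$. Substituting the identities from the previous step, this becomes $\rel(A')^+ \supseteq \rel(B')^+$ together with $\rel(A)^- \subseteq \rel(B)^-$. Finally I would invoke Remark~\ref{rem:weakOrderBOEP}, which records $\rel(A)^+ \supseteq \rel(B)^+ \iff \rel(A)^- \subseteq \rel(B)^- \iff A \subseteq B$; applied once to the pair $(A,B)$ and once to $(A',B')$, this turns the two conditions into $A' \subseteq B'$ and $A \subseteq B$, as desired. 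I would present the whole thing as a single chain of equivalences inside an \texttt{array} display, exactly mirroring the proof of Proposition~\ref{prop:weakOrderWOIP}, and close with \texttt{\textbackslash qedhere}.

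I do not expect any genuine obstacle: every ingredient is already in place, and the statement is essentially the degenerate case of Proposition~\ref{prop:weakOrderCOIP} (the descent congruence coarsens every $c$-Cambrian congruence), so nothing new is needed. The only point demanding mild care is the bookkeeping of which coordinate controls which sign, and that is parallel to the $\WOIP$ and $\COIP$ arguments. As an immediate consequence, in the spirit of Corollary~\ref{coro:latticeWOIP}, the weak order on~$\BOIP(\Phi)$ is isomorphic to the Cartesian product of two boolean lattices, hence a lattice, with meet~$\rel(A,A') \meetBOIP \rel(B,B') = \rel(A \cap B, A' \cap B')$ and join~$\rel(A,A') \joinBOIP \rel(B,B') = \rel(A \cup B, A' \cup B')$, and $\BOEP(\Phi)$ embeds in it as the diagonal, giving the sublattice claim of Proposition~\ref{prop:sublatticeBOEP}.
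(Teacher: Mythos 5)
Your proposal is correct and follows essentially the same route as the paper: the paper's proof is exactly the chain of equivalences you describe, reducing $\rel(A,A')\wole\rel(B,B')$ to $\rel(A')^+\supseteq\rel(B')^+$ and $\rel(A)^-\subseteq\rel(B)^-$ and then to $A'\subseteq B'$ and $A\subseteq B$ via Remark~\ref{rem:weakOrderBOEP} (the paper merely spells out the intermediate step $\Delta\ssm A'\supseteq\Delta\ssm B'$). The extra remarks on the lattice consequences are fine but not part of the statement being proved.
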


\begin{proof}
Using Remark~\ref{rem:weakOrderBOEP}, we obtain that
\[
\begin{array}[b]{c@{\quad\iff\quad}c@{\quad\text{and}\quad}c}
\rel(A,A') \wole \rel(B,B')
& \rel(A,A')^+ \supseteq \rel(B,B')^+ & \rel(A,A')^- \subseteq \rel(B,B')^- \\
& \rel(A')^+ \supseteq \rel(B')^+ & \rel(A)^- \subseteq \rel(B)^- \\
& \Delta \ssm A' \supseteq \Delta \ssm B' & A \subseteq B \\
& A' \subseteq B' & A \subseteq B.
\end{array}
\qedhere
\]
\end{proof}

\begin{corollary}
\label{coro:latticeBOIP}
The weak order on~$\BOIP(\Phi)$ is a lattice with meet and join
\[
\rel(A,A') \meetBOIP \rel(B,B') = \rel(A \cap B, A' \cap B')
\quad\text{and}\quad
\rel(A,A') \joinBOIP \rel(B,B') = \rel(A \cup B, A' \cup B').
\]
\end{corollary}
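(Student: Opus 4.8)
The plan is to deduce the statement directly from Proposition~\ref{prop:weakOrderBOIP}, which already identifies the weak order on~$\BOIP(\Phi)$ with an elementary combinatorial order.

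First, I would check that the map $\phi : (A,A') \mapsto \rel(A,A')$ is a bijection from $P \eqdef \set{(A,A')}{A \subseteq A' \subseteq \Delta}$ onto~$\BOIP(\Phi)$. Surjectivity is just the definition of~$\BOIP(\Phi)$. For injectivity, if $\rel(A,A') = \rel(B,B')$, then each of these $\Phi$-posets is weak order smaller than the other, so Proposition~\ref{prop:weakOrderBOIP} forces $A \subseteq B$, $B \subseteq A$, $A' \subseteq B'$ and $B' \subseteq A'$, whence $(A,A') = (B,B')$.

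Next, endow $P$ with the componentwise inclusion order, so that $(A,A') \le (B,B')$ if and only if $A \subseteq B$ and $A' \subseteq B'$. With this order, Proposition~\ref{prop:weakOrderBOIP} states precisely that $\phi$ is an order isomorphism from $(P, \le)$ onto the weak order on~$\BOIP(\Phi)$. Hence it suffices to prove that $(P, \le)$ is a lattice with meet $(A \cap B,\, A' \cap B')$ and join $(A \cup B,\, A' \cup B')$, and to transport these operations through~$\phi$. But $P$ is a subset of the product of two boolean lattices $2^\Delta \times 2^\Delta$, and since $A \subseteq A'$ and $B \subseteq B'$ give $A \cap B \subseteq A' \cap B'$ and $A \cup B \subseteq A' \cup B'$, both the candidate meet and the candidate join lie in~$P$. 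As they are the meet and join of $(A,A')$ and $(B,B')$ already in the ambient product lattice and they belong to~$P$, they are a fortiori the meet and join in~$(P,\le)$. Transporting through the isomorphism~$\phi$ yields exactly the stated formulas for $\meetBOIP$ and~$\joinBOIP$.

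There is essentially no obstacle here: all the substantive work is contained in Proposition~\ref{prop:weakOrderBOIP} (and, upstream, in the characterization of~$\BOIP(\Phi)$ in Proposition~\ref{prop:characterizationBOIP} and the computation of $\rel(A)^\pm$), and once the order has been pinned down the lattice structure is the trivial one on pairs of nested subsets of~$\Delta$. The very same argument, \textit{mutatis mutandis}, establishes Corollaries~\ref{coro:latticeBOEP}, \ref{coro:latticeWOIP} and~\ref{coro:latticeCOIP}.
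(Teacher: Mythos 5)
Your proposal is correct and matches the paper's (implicit) argument: the corollary is stated without proof precisely because Proposition~\ref{prop:weakOrderBOIP} identifies the weak order on~$\BOIP(\Phi)$ with the componentwise inclusion order on nested pairs~$A \subseteq A' \subseteq \Delta$, which is a sublattice of~$2^\Delta \times 2^\Delta$ closed under componentwise meet and join. Your added checks (injectivity of $(A,A') \mapsto \rel(A,A')$ and closure of the set of nested pairs under the componentwise operations) are exactly the routine verifications the paper leaves to the reader.
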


We conclude with a connection between the lattice structure of the weak orders on~$\BOIP(\Phi)$ with that on~$\IPos(\Phi)$, $\WOIP(\Phi)$ and~$\COIP(c)$.

\begin{proposition}
\label{prop:sublatticeBOIP}
The set~$\BOIP(\Phi)$ induces a sublattice of the weak order on~$\IPos(\Phi)$, on~$\WOIP(\Phi)$ and on~$\COIP(c)$ for all Coxeter element~$c$.
\end{proposition}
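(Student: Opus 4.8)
The plan is to establish first the single assertion that $\BOIP(\Phi)$ is stable under the meet $\meetC$ and the join $\joinC$ of $\IPos(\Phi)$ provided by Proposition~\ref{prop:closed} --- equivalently, that it induces a sublattice of the weak order on $\IPos(\Phi)$ --- and then to deduce the two remaining claims by a soft order-theoretic observation. (Throughout $\Phi$ is crystallographic, as needed for $\IPos(\Phi)$ and $\COIP(c)$ to be defined.) The deduction goes as follows: $\WOIP(\Phi)$ and $\COIP(c)$ are lattices (Corollaries~\ref{coro:latticeWOIP} and~\ref{coro:latticeCOIP}) sitting as subposets of $\IPos(\Phi)$, with $\BOIP(\Phi) \subseteq \COIP(c) \subseteq \WOIP(\Phi) \subseteq \IPos(\Phi)$. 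If $\BOIP(\Phi)$ is a sublattice of $\IPos(\Phi)$, then for $\rel[R], \rel[S] \in \BOIP(\Phi)$ the element $\rel[R] \meetC \rel[S]$ already lies in $\BOIP(\Phi)$, hence in any intermediate poset $\mathcal{L} \in \{\COIP(c), \WOIP(\Phi)\}$; it is a lower bound of $\rel[R]$ and $\rel[S]$ in $\mathcal{L}$, and since any lower bound of $\rel[R], \rel[S]$ in $\mathcal{L}$ is in particular a lower bound in $\IPos(\Phi)$ and hence $\wole \rel[R] \meetC \rel[S]$, it is actually the meet of $\rel[R]$ and $\rel[S]$ in $\mathcal{L}$. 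The same argument applies to joins, so $\BOIP(\Phi)$ is a sublattice of $\mathcal{L}$. This detour is genuinely needed, since $\WOIP(\Phi)$ is itself not a sublattice of $\IPos(\Phi)$, so ``being a sublattice'' does not simply compose along the chain.

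For the key assertion I would argue by direct computation. Write $\rel[R] = \rel(A,A')$ and $\rel[S] = \rel(B,B')$ with $A \subseteq A' \subseteq \Delta$ and $B \subseteq B' \subseteq \Delta$, as permitted by the definition of $\BOIP(\Phi)$, and recall that $\rel[R]^+ = \Phi^+ \cap \N(\Delta \ssm A')$ and $\rel[R]^- = \Phi^- \cap \N(-A)$, and similarly for $\rel[S]$. Treating the meet (the join being symmetric), I would first compute the semiclosed meet $\rel[M] \eqdef \rel[R] \meetSC \rel[S]$ of Proposition~\ref{prop:semiclosed}. Its negative part $\rel[R]^- \cap \rel[S]^-$ equals $\Phi^- \cap \N\big(-(A \cap B)\big)$ because the simple roots form a basis; and its positive part $\cl{(\rel[R]^+ \cup \rel[S]^+)}$ equals $\Phi^+ \cap \N\big(\Delta \ssm (A' \cap B')\big)$, the inclusion $\subseteq$ being immediate and the reverse inclusion following from the fact that every simple root of $\Delta \ssm (A' \cap B') = (\Delta \ssm A') \cup (\Delta \ssm B')$ already lies in $\rel[R]^+ \cup \rel[S]^+$. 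Hence $\rel[M] = \rel(A \cap B, A' \cap B')$, which is an element of $\BOIP(\Phi)$ since $A \cap B \subseteq A' \cap B'$; in particular $\rel[M]$ is closed. The point is now that the negative closure deletion does nothing to a closed set: if $\alpha \in \rel[M]$ and $\rel[X] \subseteq \rel[M]^+$ with $\alpha + \Sigma \rel[X] \in \Phi$, then $\alpha + \Sigma \rel[X] \in \rel[M]$ by Lemma~\ref{lem:closed}\,(iii), so no root is removed and $\ncd{\rel[M]} = \rel[M]$. Therefore $\rel[R] \meetC \rel[S] = \ncd{\rel[M]} = \rel(A \cap B, A' \cap B') = \rel[R] \meetBOIP \rel[S]$ by Corollary~\ref{coro:latticeBOIP}. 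The join is handled symmetrically: the semiclosed join equals $\rel(A \cup B, A' \cup B') \in \BOIP(\Phi)$, which being closed is fixed by $\pcd{\cdot}$, so $\rel[R] \joinC \rel[S] = \rel(A \cup B, A' \cup B') = \rel[R] \joinBOIP \rel[S]$.

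I do not expect a serious obstacle here; the argument is short. The only genuine computation is the description of $\cl{(\rel[R]^+ \cup \rel[S]^+)}$, which amounts to the elementary remark that the positive roots supported on a subset of simple roots form a parabolic subsystem generated by those simple roots. The single point that must be seen clearly is that, because the semiclosed meet and join of two boolean order interval posets already lie inside $\BOIP(\Phi) \subseteq \IClo(\Phi)$, the deletion operators $\ncd{\cdot}$ and $\pcd{\cdot}$ act trivially, so that $\meetC$ and $\joinC$ restricted to $\BOIP(\Phi)$ literally agree with $\meetBOIP$ and $\joinBOIP$; this both yields the sublattice property inside $\IPos(\Phi)$ and, through the order-theoretic reduction of the first paragraph, transports it to $\WOIP(\Phi)$ and to $\COIP(c)$.
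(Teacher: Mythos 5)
Your proposal is correct and follows essentially the same route as the paper: compute the semiclosed meet $\rel[R] \meetSC \rel[S]$ of two boolean order interval posets, observe that it equals $\rel(A \cap B, A' \cap B') \in \BOIP(\Phi)$ and is therefore already closed so that $\ncd{\cdot}$ acts trivially, conclude that $\meetC$ restricted to $\BOIP(\Phi)$ coincides with $\meetBOIP$, and then transfer the sublattice property to $\COIP(c)$ and $\WOIP(\Phi)$ via the chain of inclusions $\BOIP(\Phi) \subseteq \COIP(c) \subseteq \WOIP(\Phi) \subseteq \IPos(\Phi)$. Your order-theoretic transfer step is phrased slightly more directly than the paper's squeeze of meets along the chain, but it is the same argument.
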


\begin{proof}
\enlargethispage{.5cm}
Let~$\rel[R] = \rel(A,A')$ and~$\rel[S] = \rel(B,B')$ be two boolean order interval posets, and consider ${\rel[M] = \rel[R] \meetSC \rel[S]}$.
Observe that
\begin{align*}
\rel[M]^- & = \rel^- \cap \rel[S]^- = -\cl{A} \cap -\cl{B} = - \cl{(A \cap B)} \\
\text{and}\qquad
\rel[M]^+ & = \cl{( \rel^+ \cup \rel[S]^+ )} = \cl{\big( \cl{(\Delta \ssm A')} \cup \cl{(\Delta \ssm B')} \big)} = \cl{\big( \Delta \ssm (A' \cap B') \big)}.
\end{align*}
In other words, we obtain that~$\rel[M] = \rel[R] \meetBOIP \rel[S]$ is already in~$\BOIP(\Phi)$, and consequently
\[
\rel[R] \meetC \rel[S] = \ncd{\rel[M]} = \rel[M] = \rel[R] \meetBOIP \rel[S] \in \BOIP(\Phi).
\]
As~$\BOIP(\Phi) \subseteq \COIP(c) \subseteq \WOIP(\Phi) \subseteq \IPos(\Phi)$, we have~$\rel[R] \meetBOIP \rel[S] \wole \rel[R] \meetCOIP \rel[S] \wole \rel[R] \meetWOIP \rel[S] \wole \rel[R] \meetBOIP \rel[S]$ so that all these meets coincide.
The proof is similar for the join.
\end{proof}


\bibliographystyle{alpha}
\bibliography{latticeClosedSetsRoots}
\label{sec:biblio}

\end{document}